\documentclass[12pt, a4paper]{amsart}
\usepackage{amsmath,amsfonts,amssymb,latexsym,amscd} 
\usepackage[noadjust]{cite}
\usepackage{hyperref}
\usepackage{epigraph}
\setlength{\epigraphrule}{0pt}
\setlength{\epigraphwidth}{.5\textwidth}

\setcounter{tocdepth}{1}

\newcommand{\ddf}{\,---\,}

\usepackage[percent]{overpic}
\usepackage{color}

\theoremstyle{plain}
\theoremstyle{definition}
\newtheorem{theorem}{Theorem}
\newtheorem{lemma}{Lemma}
\newtheorem{proposition}{Proposition}
\newtheorem{definition}{Definition}
\newtheorem{corollary}{Corollary}
\newtheorem{remark}{Remark}

\newtheorem{problem}{Problem}

\def\pro-{\text{$\mathrm{pro}$-}}
\def\H-Top{\text{$\mathrm{H}$-$\mathrm{Top}$}}
\def\Sh-Top{\text{$\mathrm{Sh}$-$\mathrm{Top}$}}
\def\Set{\mathrm{Set}}
\def\Grp{\mathrm{Grp}}
\def\(H-CW){\text{$\mathrm{H}$-$\mathrm{CW}$}}

\DeclareMathOperator{\ddim}{ddim}
\DeclareMathOperator{\Fd}{Fd}
\DeclareMathOperator{\sd}{sd}
\DeclareMathOperator{\sh}{sh}
\DeclareMathOperator{\dist}{dist}
\DeclareMathOperator{\diam}{diam}

\DeclareMathOperator{\Ob}{Ob}
\DeclareMathOperator{\Mor}{Mor}

\oddsidemargin=2mm \evensidemargin=2mm \textwidth=16cm
\textheight=25cm \topmargin=-10mm \footskip=5mm

\usepackage{graphicx}
\usepackage[all]{xy}


\makeatletter
\def\@tocline#1#2#3#4#5#6#7{\relax
  \ifnum #1>\c@tocdepth 
  \else
    \par \addpenalty\@secpenalty\addvspace{#2}%
    \begingroup \hyphenpenalty\@M
    \@ifempty{#4}{%
      \@tempdima\csname r@tocindent\number#1\endcsname\relax
    }{%
      \@tempdima#4\relax
    }%
    \parindent\z@ \leftskip#3\relax \advance\leftskip\@tempdima\relax
    \rightskip\@pnumwidth plus4em \parfillskip-\@pnumwidth
    #5\leavevmode\hskip-\@tempdima
      \ifcase #1
       \or\or \hskip 1em \or \hskip 2em \else \hskip 3em \fi%
      #6\nobreak\relax
    \dotfill\hbox to\@pnumwidth{\@tocpagenum{#7}}\par
    \nobreak
    \endgroup
  \fi}
\makeatother
%


\begin{document}

\title{Shape Theory}
\author{Pavel S. Gevorgyan}

\address{Moscow Pedagogical State University, Moscow, Russia}

\email{ps.gevorkyan@mpgu.su}
\email{pgev@yandex.ru}

\epigraph{To the blessed memory of my teacher Yurii Mikhailovich Smirnov}{}

\begin{abstract}
Shape theory was founded by K.~Borsuk 50 years ago. In essence, this is spectral homotopy theory;  
it occupies an important place in geometric topology. The article presents the basic concepts and 
the most important, in our opinion, results of shape theory. Unfortunately, many other interesting 
problems and results related to this theory could not be covered  because of space 
limitations. The article contains an extensive bibliography for those who wants to gain 
a more detailed and systematic insight into the issues considered in the survey. 

{\bf Keywords:} homotopy, inverse system, homotopy pro-category, 
shape category, shape functor, equivariant shape, $Z$-set, movability, stable space, shape retract, 
homotopy pro-group, shape group, shape dimension, cell-like map, $Q$-manifold.
\end{abstract}

\maketitle

\tableofcontents

\section{Introduction}

Shape theory was initiated in 1968  by the renowned Polish mathematician K.~Borsuk.
In his opening speech~\cite{alekc-1979} at the 1979 International Topology Conference, 
P.~S.~Aleksandrov identified three periods in the development of geometric 
topology\footnote{According to Aleksandrov, the first of these periods was marked by 
outstanding works of Brouwer (1909--1913), Fr\'echet (1907),  and Hausdorff (1914). During the 
second period (1925--1943), homology and cohomology theory (including duality theorems) was 
developed, dimension theory was set up, and continuum theory was elaborated. 
The third period began with Borsuk's work on retract theory and continued with the 
development of shape theory (see also~\cite{alekc-fedor}).}
and mentioned shape theory as one of the most important research directions during the third period.

Shape theory is the spectral form of homotopy theory; it uses set-theoretic, geometric, and 
combinatorial-algebraic ideas and methods of topology. This theory is related to 
Aleksandrov--\v Cech homology and, therefore, to algebraic topology. 
It is also closely related to infinite-dimensional topology, in particular, to the 
theory of $Q$-manifolds.

As is known, the fundamental notions of homotopy theory can be applied to spaces with 
good local structure (such as manifolds, polyhedra, CW-complexes, and ANRs).
Many theorems of homotopy topology are valid  for CW-complexes but not valid for 
compact metrizable spaces. An example is Whitehead's celebrated theorem, which asserts  
that {\it a map $f \colon X \to Y$ between {\rm CW}-complexes is a homotopy equivalence 
if and only if it induces isomorphisms $\pi_n(f)\colon \pi_n(X) \to \pi_n(Y)$, 
$n=1,2, \dots$, of all homotopy groups}. This theorem is not generally true for 
compact metrizable spaces. Indeed, consider the so-called \emph{Warsaw circle}, 
or \emph{quasi-circle}, $W$, which consists of the graph of the function 
$y=\sin {\dfrac{2\pi}x}$, $0<x\leqslant 1$, the interval $[-1,1]$ of the $y$-axis, 
and  the arc joining the points $(0,0)$ and $(1,0)$ (see Fig.~\ref{ris_1}). 
It has bad local structure and is not a CW-complex.
\begin{figure}[ht]\label{ris_1}
\begin{center}
\begin{overpic}[scale=0.6]{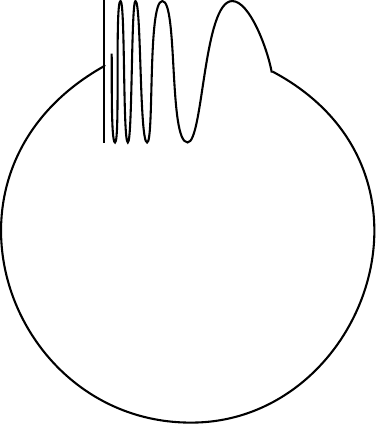}
\put(69,83){$W$}
\end{overpic}
\end{center}
\quad \ Fig. 1
 \label{wc}
\end{figure}
Any continuous map of the $n$-sphere $S^n$ to the Warsaw circle $W$ is homotopic to a constant map, 
and hence all homotopy groups of the Warsaw circle are trivial. However, $W$ does not have the 
homotopy type of a point. Thus, the constant map $f \colon W \to \{*\}$ to a one-point space 
induces isomorphisms of the homotopy groups for all $n$ but is not a homotopy equivalence.

Homotopy theory is ideally suited for being applied to spaces with good local 
structure but not to arbitrary spaces. The Warsaw circle $W$ and 
the usual circle $S^1$ are not homotopy equivalent, although they look similar 
and share certain important properties. For example, each of them separates the plane into
two parts.

The reason why the Warsaw circle and the circle $S^1$ are not homotopy equivalent is that there 
are not enough continuous maps from $S^1$ to $W$. 
The situation changes when continuous maps from $S^1$ to arbitrary neighborhoods 
of the Warsaw circle $W$ are considered. Such maps are already sufficiently many. This 
observation has enabled Borsuk to propose a remarkable idea of addressing this 
``drawback'' of homotopy theory for compact metrizable spaces and, thereby, open up a 
new direction of geometric topology, shape  theory \cite{borsuk-1}. 
Borsuk first reported his results at the 1967 Symposium on Infinite-Dimensional Topology, 
which was held in Baton Rouge, Louisiana,  on March 27--April 1, 1967, and then at the 
International Symposium on Topology and Its Applications in Herceg 
Novi on August 25--31, 1968 (see~\cite{borsuk-2}).

After the appearance of Borsuk's pioneering papers \cite{borsuk-6}--\cite{borsuk-5}, 
shape theory began to rapidly develop across the world. Warsaw, where Borsuk and his students 
worked, had become the center of scientific activities and research on shape theory. In the 
United States the first studies on shape theory were performed by R.~Fox, T.~A.~Chapman, 
J.~Segal, and J.~Keesling, and in Japan, by K.~Morita and Y.~Kodama. In Moscow research in 
shape theory was headed by Yu.~M.~Smirnov. In Zagreb a group of topologists working on this 
theory was formed; it was led by S.~Marde\v si\'c. In Frankfurt shape theory was developed 
by F.~Bauer and his students, and in Great Britain, by T.~Porter; many other 
topologists worked on it all over the world.

Borsuk's idea underlying shape theory is based on the fact that any compact metrizable space embeds 
in the Hilbert cube $Q$ (as well as in any other absolute retract) and consists in considering, 
instead of continuous maps between compact metrizable spaces $X$ and $Y$, so-called fundamental 
sequences $(f_{n}):Q\to Q$ coordinated in a certain way with neighborhoods of $X$ and $Y$ (see 
Sec.~\ref{osn_constr}).

In 1972 Fox \cite{fox} extended shape theory to metrizable spaces by using Borsuk's method. To this 
end, he embedded an arbitrary metric space $X$ in an absolute 
neighborhood retract $M$ as a closed subspace and constructed shape morphisms by using a 
system of neighborhoods of the space $X$ in $M$. Importantly, all these neighborhoods are 
absolute neighborhood retracts for metrizable spaces.

Borsuk's geometric method does not apply in the general case, because there are too few absolute 
neighborhood retracts in the category of all topological spaces.

In 1970 Marde\v{s}i\'{c} and Segal \cite{mard-seg-2}, \cite{mard-seg-3} constructed shape theory 
for arbitrary compact Hausdorff spaces by means of inverse systems. They essentially used the 
well-known theorem that \emph{any compact Hausdorff set $X$ is the limit of an inverse system 
${\bf X}=\{X_\alpha, p_{\alpha \alpha'}, A\}$ of compact ANR spaces 
$X_\alpha$}.

In 1975 Morita \cite{morita-1}  extended 
shape theory to  arbitrary topological spaces by the method of inverse ANR-systems. 
He had succeeded thanks to  \v Cech's functor 
$\check{C}$; to each topological space $X$ this functor assigns an inverse system $\check{C}(X)$ 
of the nerves $X_\alpha$ of normal locally finite open covers $\alpha$ with the natural projections 
generated by the refinement of covers. Morita introduced the notion of the \emph{associated inverse 
ANR-system} and proved that, for any topological space $X$, the inverse ANR-system 
$\check{C}(X)$ is associated with $X$. Moreover, all inverse ANR-systems associated with a given 
space $X$ are isomorphic to each other and, in particular, to the system $\check{C}(X)$.

The spectral method for constructing shape theory has turned out to be universal. Thus, 
in \cite{borsuk-1}, given a compact metrizable space $X$,  Borsuk essentially used  
an associated countable ANR-system of its neighborhoods which arises under an embedding of 
$X$ in an absolute retract $M$. Moreover, since the ANR-system obtained from the system of all 
neighborhoods of a closed subset $X$ of an absolute neighborhood retract $N$ in the class 
of metrizable spaces is associated with $X$, we see that Fox \cite{fox} essentially considered 
inverse ANR-systems associated with metrizable spaces. Finally, if a compact Hausdorff 
space $X$ is the limit of an inverse system ${\bf X}=\{X_\alpha, p_{\alpha \alpha'}, A\}$ 
of compact $\mathrm{ANR}$-spaces $X_\alpha$, i.e., 
$X=\lim\limits_{\longleftarrow}\{X_\alpha, p_{\alpha \alpha'}, A\}$, then the ANR-system  
$\{X_\alpha, \mathbf{p}_{\alpha \alpha'}, A\}$ in the homotopy category H-CW is associated with 
$X$. This has enabled Marde\v{s}i\'{c} and Segal \cite{mard-seg-2}, 
\cite{mard-seg-3} to construct shape theory for arbitrary compact Hausdorff spaces.

In the 1973 paper \cite{mard-1}  Marde\v{s}i\'{c} gave a short categorical definition of 
shape theory. The objects of the shape category Sh-Top are topological spaces, and morphism 
$F\colon X\to Y$ are maps which take each homotopy class $\boldsymbol{\psi}\colon Y\to P$, where 
$P$ is an ANR, to a homotopy class $\boldsymbol{\varphi}\colon X\to P$ 
such that, given any ANR $P'$ and any homotopy classes $\boldsymbol{\psi'}\colon Y\to P'$ and 
$\mathbf{q}\colon P'\to P$ satisfying the relation 
$\mathbf{q}\boldsymbol{\psi'}=\boldsymbol{\psi}$, we have 
$\mathbf{q}\boldsymbol{\varphi'}=\boldsymbol{\varphi}$.

The shape functor $S\colon \H-Top\to \Sh-Top$ satisfies the following two conditions: 

(i)~$S(X)=X$ 
for any topological space $X$ and 

(ii)~for any shape morphism $F\colon X\to Q$ to an ANR 
$Q$, there exists a unique homotopy class $\mathbf{f}\colon X\to Q$ such that $S(\mathbf{f})=F$. 

These conditions have the universality property,  and it is natural to call them the \emph{axioms 
of shape theory}. The first axiomatics of shape theory for the class of all compact metrizable  
spaces was proposed by Holszty\'{n}ski \cite{hol}. In the general case, an axiomatic 
characterization of shape theory was given by Marde\v{s}i\'{c}~\cite{mard-1}.

For compact metrizable spaces, all methods for constructing shape theory are equivalent
to Borsuk's. However, in the class of metrizable spaces, the shape theories of Borsuk 
\cite{borsuk-8}, Fox \cite{fox}, and Bauer \cite{bauer} differ from each other. Thus, there 
exist different shape theories (likewise, in algebraic topology, there are 
different homology and cohomology theories).

The shape category is weaker than the homotopy category. The former is not a generalization of the 
latter; it is rather a natural correction, or extension, of the homotopy category, because the shape 
functor $S\colon \H-Top\to \Sh-Top$ is an isomorphism onto the subcategory H-CW of all spaces 
with the homotopy type of an ANR. Thus, for the class of spaces having the homotopy type 
of a polyhedron, which includes all CW-complexes and ANRs, shape theory coincides 
with homotopy theory. The Warsaw circle and the circle $S^1$ are examples of compact metrizable 
spaces which have different homotopy types but are shape equivalent.

In essence, shape theory is spectral homotopy theory, because it is based on the idea 
of replacing the singular by the approximative; in this respect, it is closely 
related to Aleksandrov--\v Cech spectral homology. Shape theory is intended for application 
to spaces with bad local structure, which increasingly frequently arise 
in very diverse areas of mathematics; they also result 
from applying many topological constructions, such as bundles, cell-like maps, 
fixed point sets, attractors of dynamical systems, spectra of linear operators, 
remainders of compactifications, boundaries of groups, etc.

Ideas and problems closely related to shape theory have been known long before the 
publication of Borsuk's first papers \cite{borsuk-1} and \cite{borsuk-2}, in which  
the shape category for compact metrizable spaces was constructed. As early as in 1895 
Poincar\'e  laid the foundations of algebraic topology and introduced the important notions of  
a chain, a cycle, a boundary, the Betti numbers, homologous cycles, etc.\ in his celebrated treatise 
\emph{Analysis Situs} \cite{poincare-1}. Poincar\'e also conjectured (but not proved)  that 
the Betti numbers are topological invariants. The first rigorous proof of this fact was given 
by Alexander  in 1915 \cite{alexander}, which had put Poincar\'e's intuitive ideas on 
solid mathematical grounds. In 1922 Veblen \cite{veblen} proved the topological invariance 
of singular homology. The application of singular homology is based on the consideration of a 
family of continuous maps from a polyhedron $P$ to a topological space $X$. Homotopy theory is 
based on the same idea, because each homotopy group $\pi_n(X, *)$ 
is defined by using continuous maps from the $n$-sphere $S^n$ to a pointed space $(X,*)$. However, 
this idea does not work for spaces with bad local structure. This is why shape theory is based on 
the dual idea of considering families of continuous maps from a topological space $X$ to a 
polyhedron $P$. This approach was first applied by Aleksandrov to define inverse systems of 
spaces~\cite{aleks-1}, \cite{aleks-2} and introduce the notion of the nerve of a cover of a 
topological space \cite{aleks-3}. Thus, the introduction of \v Cech homology in various forms by 
Aleksandrov \cite{aleks-1}, \cite{aleks-2}, Vietoris \cite{vietoris}, and \v Cech \cite{cech} can 
be justly regarded as the rudiment of shape theory.

Shape theory is related to many fields of topology, and its ideas and results turn out to be 
important and useful for these fields. Because of space limitations, we could not cover all 
interesting problems and remarkable results of shape theory. Nevertheless, we  
have tried to touch upon the most important points in the development of shape theory 
during the past five decades.

The literature on shape theory is fairly extensive (see the bibliography at the end of this 
paper); we mention Borsuk's monograph \cite{borsuk-rus}, the books 
\cite{mard-seg-4} by  Marde\v si\'c and Segal and \cite{dydak-segal-1} by Dydak and Segal, 
and survey papers \cite{smirnov-teorsh} by  Smirnov, \cite{borsuk-dydak} by Borsuk and Dydak, 
 and \cite{mard-absolute} and \cite{mard-thirty} by Marde\v si\'c.

\section{Basic Constructions of Shape Theory}\label{osn_constr}

\subsection{The Shape Category of Compact Metrizable Spaces (Borsuk's Method)}

The main idea of Borsuk's construction of the shape category of compact metrizable spaces is 
the replacement of the class of continuous maps by a larger class of morphisms and 
the introduction of the notion of a homotopy between such morphisms.

Let $X$ and $Y$ be compact metrizable spaces contained in ARs $M$ and $N$, 
respectively.

\begin{definition}\label{def-1}
A sequence of maps $f_n\colon M\to N$, $n\in \mathbb{N}$, is called a \emph{fundamental sequence 
from $X$ to $Y$} and denoted  by $(f_n)_{MN}\colon X\to Y$ if, for any neighborhood $V$ of 
the compact set $Y$ in $N$, there exists a neighborhood $U$ of the compact set $X$ in $M$ and a 
positive integer $n_V\in \mathbb{N}$ such that, first, $f_n(U)\subset V$ for all $n\geqslant n_V$ 
and, secondly, %
\[
 f_n|U \simeq f_m|U
\]
in $V$ for all $n,m\geqslant n_V$.
\end{definition}

The fundamental sequence $(1_n)_{MM}\colon X\to X$, where $1_n=1_M\colon M\to M$ is the identity 
map for all $n\in \mathbb{N}$, is called the \emph{identity fundamental sequence}. The composition 
of fundamental sequences $(f_n)_{MN}\colon X\to Y$ and $(g_n)_{NP}\colon Y\to Z$, where $Z$ 
is a compact metrizable space contained in an AR $P$, is defined by %
\[ 
(g_n)(f_n)=(g_nf_n). 
\] %
Clearly, $(g_nf_n)_{MP}\colon X\to Z$ is a fundamental sequence, and 
the composition operation is associative.

The notion of homotopy naturally extends to fundamental sequences.

\begin{definition}
Two fundamental sequences $(f_n)_{MN}\colon X\to Y$ and $(f'_n)_{MN}\colon X\to Y$ are said to be 
\emph{homotopic} if, for any neighborhood $V$ of the compact set $Y$ in $N$, there exists a 
neighborhood $U$ of the compact set $X$ in $M$ and a positive integer $n_V\in \mathbb{N}$ such that %
\[ 
f_n|U \simeq f'_n|U 
\] 
in $V$ for all $n\geqslant n_V$. In this case, we write $(f_n)\simeq (f'_n)$. 
\end{definition}

This relation is reflexive, symmetric, and transitive. The equivalence class of a fundamental 
sequence $(f_n)_{MN}\colon X\to Y$ is called a \emph{fundamental class} and denoted by $[(f_n)]$.  
It is easy to see that if $(f_n)\simeq (f'_n)$ and $(g_n)\simeq (g'_n)$, then $g_nf_n \simeq 
g'_nf'_n$. Therefore, we can define the composition of fundamental classes $[(f_n)]$ and $[(g_n)]$ 
as 
\[ 
[(g_n)][(f_n)]=[(g_n)(f_n)]. 
\] 
Obviously, this composition operation is associative as well, and 
$$[(f_n)][(1_n)]=[(1_n)][(f_n)]=[(f_n)].$$

Thus, we obtain the so-called \emph{fundamental category}, whose objects are all pairs  
$(X,M)$, where $M$ is an AR and $X$ is a compact metrizable subspace of $M$, and 
morphisms are classes of fundamental sequences.  Isomorphic objects in this category are said to be 
\emph{fundamentally equivalent}. In other words, pairs $(X,M)$ and $(Y,N)$ are fundamentally 
equivalent if there exist fundamental sequences  $(f_n)_{MN}\colon X\to Y$ and 
$(g_n)_{NM}\colon Y\to X$ such that $[(g_n)][(f_n)]=[(1_n)_{MM}]$ and  
$[(f_n)][(g_n)]=[(1_n)_{NN}]$.

We say that a fundamental sequence  $(f_n)_{MN}\colon X\to Y$ is \emph{generated by a map 
$f\colon X\to Y$} if $f_n|X=f$ for all $n \in \mathbb{N}$. Clearly, any map $f\colon  X \to Y$ 
generates a fundamental sequence $(f_n)_{MN}\colon X\to Y$. Indeed, since $N$ is 
an AR, it follows that there exists an extension $\tilde{f}\colon M\to N$ of the map $f\colon X\to 
Y$. Setting $f_n=\tilde{f}$ for each $n=1,2, \dots$, we obtain a fundamental sequence 
generated by~$f$.

The following theorem shows that the fundamental class of a fundamental sequence $(f_n)_{MN}\colon 
X\to Y$ generated by a map  $f\colon  X \to Y$ depends only on the homotopy class of this map.

\begin{theorem}\label{th_1}
Suppose that fundamental sequences $(f_n)_{MN}\colon X\to Y$ and $(f'_n)_{MN}\colon X\to Y$ 
are generated by maps $f\colon X\to Y$ and $f'\colon X\to Y$, respectively, and $f\simeq 
f'$. Then $(f_n)\simeq (f'_n)$. 
\end{theorem}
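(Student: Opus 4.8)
The plan is to unwind the definitions. Since $f\simeq f'$, there is a homotopy $H\colon X\times[0,1]\to Y$ with $H_0=f$ and $H_1=f'$. Since $N$ is an AR, the map $H$ extends to a map $\widetilde{H}\colon M\times[0,1]\to N$; its restrictions $\widetilde{H}_0,\widetilde{H}_1\colon M\to N$ extend $f$ and $f'$ respectively, so they are among the admissible choices of ``extensions generating $(f_n)$ and $(f'_n)$'', but of course $(f_n)$ and $(f'_n)$ were generated by \emph{some} fixed extensions $\widetilde{f}$ and $\widetilde{f'}$, not necessarily these ones. So the argument splits into two parts: first show that any two fundamental sequences generated by the same map are homotopic, and then handle the homotopy between $f$ and $f'$.

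First I would prove the auxiliary claim that if $(f_n)_{MN}$ and $(f''_n)_{MN}$ are both generated by the \emph{same} map $f\colon X\to Y$, then $(f_n)\simeq(f''_n)$. Fix a neighborhood $V$ of $Y$ in $N$. Since $f_n|X=f''_n|X=f$ and $Y$ is compact, for each $n$ the two maps $f_n,f''_n\colon M\to N$ agree on $X$ and carry points near $X$ close to $Y$; using that $N$ is an ANR (being an AR) and the standard fact that two maps into an ANR which are sufficiently $C^0$-close on a set are homotopic there, together with compactness of $X$, one finds a neighborhood $U$ of $X$ and an $n_V$ so that $f_n|U\simeq f''_n|U$ in $V$ for all $n\geqslant n_V$. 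Concretely, the straight-line type homotopy is not available, but one can use the homotopy extension/ANR neighborhood retraction argument: choose $V'\subset V$ with a retraction-type homotopy, cover $X\times\{f\}$ appropriately, and pull back. This is the routine ANR-approximation step.

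Next I would use the extension $\widetilde H\colon M\times[0,1]\to N$ of the homotopy $H$ from $f$ to $f'$. Define $g_n=\widetilde H_0$ for all $n$ and $g'_n=\widetilde H_1$ for all $n$; these are fundamental sequences generated by $f$ and $f'$ respectively (one checks the defining condition using that $\widetilde H$ maps a neighborhood of $X\times[0,1]$ into any prescribed $V$, by compactness of $X\times[0,1]$). Now $(g_n)\simeq(g'_n)$: given $V$, pick $U$ and $n_V$ from the neighborhood condition for $\widetilde H$; then for each $n$ the map $t\mapsto \widetilde H(\cdot,t)|U$ is a homotopy in $V$ from $g_n|U=\widetilde H_0|U$ to $g'_n|U=\widetilde H_1|U$. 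Finally, combine: $(f_n)\simeq(g_n)$ by the auxiliary claim (both generated by $f$), $(g_n)\simeq(g'_n)$ just shown, and $(g'_n)\simeq(f'_n)$ by the auxiliary claim again (both generated by $f'$); transitivity of homotopy of fundamental sequences gives $(f_n)\simeq(f'_n)$.

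The main obstacle is the auxiliary claim, i.e. the ANR-approximation step: making precise that two maps $M\to N$ agreeing on the compact set $X$ and sending a neighborhood of $X$ into a neighborhood $V$ of $Y$ are homotopic on some (possibly smaller) neighborhood of $X$ rel nothing, inside $V$. This requires invoking the standard homotopy-theoretic properties of ANRs (the Homotopy Extension Theorem and the fact that nearby maps into an ANR are homotopic) and a compactness argument to get uniformity over $n$. Everything else — that $\widetilde H_0,\widetilde H_1$ generate fundamental sequences, and the chaining via transitivity — is a direct check against Definitions~\ref{def-1} and the subsequent one, using compactness of $X$ and of $X\times[0,1]$.
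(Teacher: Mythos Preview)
Your approach is correct but differs from the paper's. The paper proceeds in a single pass: given $V$, it uses the fundamental-sequence condition to find $\widetilde U$ and $n_V$ with $f_n|\widetilde U\simeq f_{n_V}|\widetilde U$ and $f'_n|\widetilde U\simeq f'_{n_V}|\widetilde U$ in $V$ for $n\geqslant n_V$, and then applies the ANR homotopy extension theorem \emph{directly with the homotopy $F\colon X\times I\to Y$ from $f$ to $f'$} to produce a neighborhood $U\subset\widetilde U$ and a homotopy $\widetilde F\colon U\times I\to V$ between $f_{n_V}|U$ and $f'_{n_V}|U$ extending $F$. Transitivity then gives $f_n|U\simeq f'_n|U$ for all $n\geqslant n_V$. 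No global extension of $H$ to $M\times I$ is used, and the special case $f=f'$ (your auxiliary claim) is obtained afterwards as Corollary~\ref{cor_1}.

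Your decomposition reverses the logical order: you first prove the case $f=f'$ as a lemma, then handle the homotopy by extending $H$ to $\widetilde H\colon M\times I\to N$ via the AR property and chaining through the constant sequences $(\widetilde H_0)$ and $(\widetilde H_1)$. This works, but note that your auxiliary claim still requires exactly the same ANR homotopy extension theorem the paper uses (two maps into an ANR agreeing on a closed set are homotopic on a neighborhood), so you are not avoiding that ingredient---you are applying it with the constant homotopy instead of with $F$. The phrase ``nearby maps into an ANR are homotopic'' is a red herring here, since $f_n$ and $f''_n$ need not be uniformly close on any fixed neighborhood; the operative fact is that they \emph{agree on $X$}, and you should invoke the extension-of-homotopies property directly (as you do in your second formulation). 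The paper's route is shorter because it feeds the actual homotopy $F$ into that lemma once, rather than invoking it twice with the constant homotopy and inserting a separate global-extension step in between.
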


\begin{proof}
Consider any neighborhood $V$ of the compact set $Y$ in $N$. By Definition~\ref{def-1} there 
exists a neighborhood $U_1$ of the compact set $X$ in $M$ and a positive integer $n'_V \in 
\mathbb{N}$ such that $f_n|U_1 \simeq f_m|U_1$ in $V$ for all $n,m\geqslant n'_V$. 
Similarly, there exists a neighborhood $U_2$ of the compact set $X$ in $M$ 
and a positive integer $n''_V \in \mathbb{N}$ such that $f'_n|U_2 \simeq f'_m|U_2$ in $V$ for all 
$n,m\geqslant n''_V$. We set $\widetilde{U}=U_1\cap U_2$ and $n_V=\max\{n'_V,n''_V\}$. Clearly, 
\begin{equation}\label{eq-1} f_n|\widetilde{U} \simeq f_m|\widetilde{U} 
\quad \text{and} \quad 
f'_n|\widetilde{U} \simeq f'_m|\widetilde{U} 
\end{equation} 
in $V$ for all $n,m\geqslant n_V$.

Now consider the maps $f_{n_V}, f'_{n_V}\colon \widetilde{U} \to V$. By the assumption of 
the theorem, we have $f_{n_V}|X=f$, $f'_{n_V}|X=f'$, and $f\simeq f'$. Let $F\colon X\times I \to 
Y$ be a homotopy between $f$ and $f'$. Since $V$ is an ANR, it follows that there exists 
a neighborhood $U$ of $X$ in $M$ such that $U\subset \widetilde{U}$ and a homotopy 
$\widetilde{F}\colon U\times I \to V$ between $f_{n_V}|U$ and $f'_{n_V}|U$ such that 
$\widetilde{F}|X \times I = F$ (see \cite{mard-seg-4}, Theorem 8, p.~40). Thus, 
\begin{equation}
\label{eq-2} f_{n_V}|U \simeq f'_{n_V}|U. 
\end{equation}

It remains to note that
\[
f_n|U \simeq f'_n|U
\]
in $V$ for all $n\geqslant n_V$; this follows from \eqref{eq-1} and~\eqref{eq-2}.
\end{proof}

\begin{corollary}\label{cor_1}
Let $(f_n)_{MN}\colon X\to Y$ and $(f'_n)_{MN}\colon X\to Y$ be fundamental sequences generated 
by a map $f\colon X\to Y$. Then  $(f_n)\simeq (f'_n)$. 
\end{corollary}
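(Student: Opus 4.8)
The plan is to read this off Theorem~\ref{th_1} essentially for free. That theorem takes two fundamental sequences generated by maps $f$ and $f'$ with $f\simeq f'$ and concludes $(f_n)\simeq(f'_n)$; it does not require the two generating maps to be distinct. Here $(f_n)_{MN}$ and $(f'_n)_{MN}$ are generated by the \emph{same} map $f\colon X\to Y$, and $f\simeq f$ via the constant homotopy $F\colon X\times I\to Y$, $F(x,t)=f(x)$. So I would simply apply Theorem~\ref{th_1} with $f'$ taken to be $f$ and obtain $(f_n)\simeq(f'_n)$ at once. The only ingredient beyond the quoted theorem is reflexivity of the homotopy relation on maps $X\to Y$, which is immediate.

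If one prefers a self-contained argument rather than a citation to the previous theorem, one reruns its proof verbatim in this special case: given a neighbourhood $V$ of $Y$ in $N$, Definition~\ref{def-1} supplies neighbourhoods $U_1,U_2$ of $X$ in $M$ and an integer $n_V$ with $f_n|U_1\simeq f_m|U_1$ and $f'_n|U_2\simeq f'_m|U_2$ in $V$ for all $n,m\geqslant n_V$; put $\widetilde U=U_1\cap U_2$. Since $f_{n_V}|X=f=f'_{n_V}|X$ and $V$ is an ANR, applying the homotopy extension property (\cite{mard-seg-4}, Theorem~8, p.~40) to the constant homotopy on $X$ yields a neighbourhood $U\subset\widetilde U$ of $X$ with $f_{n_V}|U\simeq f'_{n_V}|U$ in $V$; chaining this with the two families of homotopies above gives $f_n|U\simeq f'_n|U$ in $V$ for all $n\geqslant n_V$, which is exactly $(f_n)\simeq(f'_n)$. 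There is no genuine obstacle here — the corollary is precisely the instance $f=f'$ of Theorem~\ref{th_1}.
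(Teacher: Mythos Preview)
Your proposal is correct and matches the paper's intent exactly: the corollary is stated without proof precisely because it is the special case $f'=f$ of Theorem~\ref{th_1}, using reflexivity of homotopy. The optional self-contained rerun you sketch is also fine and simply reproduces the proof of Theorem~\ref{th_1} in that special case.
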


This corollary implies, in particular,  that all fundamental sequences $(i_n)_{MM}\colon 
X\to X$ generated by the map $1_X\colon X\to X$ are homotopic to the identity fundamental sequence 
$(1_n)_{MM}\colon X\to X$.

The following important theorem shows that the fundamental equivalence relation is absolute 
in the sense that it does not depend on the choice of the ARs in which 
metrizable compact spaces are embedded.

\begin{theorem}
Let $X\subset M\cap M'$. Then the pairs $(X,M)$ and $(X,M')$ are fundamentally equivalent.
\end{theorem}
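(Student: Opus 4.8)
The plan is to exhibit mutually inverse morphisms in the fundamental category between $(X,M)$ and $(X,M')$, both of them ``supported'' by the identity map of $X$, and then to let Corollary~\ref{cor_1} do all the work of identifying their composites with the identities. First I would note that $X$, being compact in the Hausdorff (indeed metrizable) space $M$, is closed in $M$, and likewise closed in $M'$. Since $M'$ is an AR, the inclusion $X\hookrightarrow M'$ then extends to a map $\widetilde{i}\colon M\to M'$; putting $f_n=\widetilde{i}$ for every $n$ gives, exactly as in the remark preceding Theorem~\ref{th_1}, a fundamental sequence $(f_n)_{MM'}\colon X\to X$ generated by $1_X$. Symmetrically, since $M$ is an AR, the inclusion $X\hookrightarrow M$ extends to a map $\widetilde{j}\colon M'\to M$, and $(g_n)_{M'M}\colon X\to X$ with $g_n=\widetilde{j}$ is a fundamental sequence generated by $1_X$. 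Note that no common ambient space is needed, since fundamental sequences are maps $M\to M'$ between the two ARs directly.

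Next I would inspect the two composites. The composition $(g_n)(f_n)=(g_nf_n)_{MM}\colon X\to X$ is again a fundamental sequence (composites of fundamental sequences are fundamental sequences), and on $X$ one has $g_nf_n=\widetilde{j}\circ\widetilde{i}$, which sends each $x\in X$ first to $x\in M'$ and then back to $x\in M$; hence $(g_nf_n)|X=1_X$. Thus $(g_nf_n)_{MM}$ is generated by $1_X$, and so is the identity fundamental sequence $(1_n)_{MM}$ (because $1_M|X=1_X$). By Corollary~\ref{cor_1}, $(g_nf_n)\simeq(1_n)_{MM}$, i.e. $[(g_n)][(f_n)]=[(1_n)_{MM}]$. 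Running the same argument with the roles of $M$ and $M'$ interchanged yields $[(f_n)][(g_n)]=[(1_n)_{M'M'}]$, and therefore $(X,M)$ and $(X,M')$ are fundamentally equivalent.

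There is no hard analytic core here; the single point that needs care is conceptual rather than computational. One must resist the temptation to try to prove $\widetilde{j}\circ\widetilde{i}\simeq 1_M$ as self-maps of $M$ \ddf this is false in general, as $M$ and $M'$ need not even be homotopy equivalent \ddf and instead recognise that the fundamental category only ``sees'' behaviour near $X$, so that Corollary~\ref{cor_1} is precisely the instrument that upgrades ``agrees with $1_X$ on $X$'' to ``homotopic, as a fundamental sequence, to the identity''. The remaining ingredients are the genuinely routine facts already recorded in the text \ddf that a constant sequence of an extension of a map is a fundamental sequence generated by that map, and that a composite of fundamental sequences is a fundamental sequence \ddf which I would cite rather than reprove.
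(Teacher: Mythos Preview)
Your proof is correct and follows essentially the same route as the paper's: take fundamental sequences $(f_n)_{MM'}$ and $(g_n)_{M'M}$ generated by $1_X$, observe that both composites are again generated by $1_X$, and invoke Corollary~\ref{cor_1}. You are simply more explicit than the paper about how such sequences are built (extension via the AR property, closedness of $X$).

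One small factual slip in your closing commentary: ARs are contractible, so $M$ and $M'$ \emph{are} homotopy equivalent, and in fact $\widetilde{j}\circ\widetilde{i}\simeq 1_M$ globally. The genuine reason a global homotopy is not enough is that the definition of homotopic fundamental sequences demands, for every neighborhood $V$ of $X$, a homotopy taking place \emph{inside} $V$ on some smaller $U$; a global contraction of $M$ need not respect this. Your conclusion that Corollary~\ref{cor_1} is the right tool stands, but the justification should be this locality requirement rather than a failure of homotopy equivalence.
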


\begin{proof}
Consider fundamental sequences $(i_n)_{MM'}\colon X\to X$ and $(i_n)_{M'M}\colon X\to X$ generated 
by the identity map $1_X\colon X\to X$. Obviously, the compositions $(i_n)_{M'M}(i_n)_{MM'}$ 
and $(i_n)_{MM'}(i_n)_{M'M}$ are generated by the identity map $1_X\colon X\to X$ as well; 
hence, according to Corollary~\ref{cor_1}, they are homotopic to the identity fundamental sequences 
$(1_n)_{MM}$ and $(1_n)_{M'M'}$, respectively. Thus, $[(i_n)_{M'M}][(i_n)_{MM'}]=[(1_n)_{MM}]$ and 
$[(i_n)_{MM'}][(i_n)_{M'M}]=[(1_n)_{M'M'}]$, i.e.,  $(X,M)$ and $(X,M')$ are fundamentally 
equivalent. 
\end{proof}

\begin{remark}
Since each metrizable compact space $X$ is homeomorphic to a compact subset of the 
Hilbert cube $Q$, in constructing the fundamental category, is suffices to consider pairs 
of the form $(X,Q)$. 
\end{remark}

The category whose objects are compact metrizable spaces and morphisms are classes 
of fundamental sequences between them is called the \emph{shape category} of compact metrizable 
spaces. We denote it by Sh(CM). Since the fundamental equivalence relation is an 
equivalence relation, it follows that the class of all compact metrizable spaces splits 
into pairwise disjoint classes of spaces, which are called \emph{shapes}. 
The shape containing a space $X$ is called the \emph{shape of the space} $X$ and denoted by 
sh($X$). We say that two compact spaces $X$ and $Y$ \emph{are shape equivalent}, 
or \emph{have the same shape}, and write $\sh (X)= \sh (Y)$, if they are fundamentally equivalent.


\subsection{The Spectral Method for Constructing Shape Theory (the Marde\v{s}i\'c--Morita Method  \cite{mard-1}, \cite{morita-1})}

To define the shape category of arbitrary topological spaces, we need the pro-homotopy 
category pro-N-TOP, which was introduced by Grothendieck in \cite{groth}. The objects of this 
category are inverse systems $\mathbf{X}=\{ X_\alpha , \mathbf{p}_{\alpha\alpha '}, A \}$ 
in the homotopy category  H-Top of topological spaces. The morphisms of the category pro-N-TOP are 
defined in two steps. First, a morphism $(\mathbf{f}_\beta, \varphi) \colon  
\mathbf{X}\to \mathbf{Y}=\{Y_\beta, \mathbf{q}_{\beta\beta'}, B\}$ between inverse systems is 
defined as a pair of maps $\varphi \colon B \to A$ and $\mathbf{f}_\beta \colon X_{\varphi(\beta)} 
\to Y_\beta$ satisfying the following condition:

(*) for any $\beta, \beta'\in B$, $\beta' \geqslant \beta$, there exists an index 
$\alpha \geqslant \varphi(\beta), \varphi(\beta')$ such that $\mathbf{f}_\beta 
\mathbf{p}_{\varphi(\beta)\alpha} = \mathbf{q}_{\beta\beta'} \mathbf{f}_{\beta'} 
\mathbf{p}_{\varphi(\beta')\alpha}$, i.e., the following diagram is commutative: 
$$ \xymatrix { & 
{X_\alpha} \ar[dl]_{\mathbf{p}_{\varphi(\beta)\alpha}}   
\ar[dr]^{\mathbf{p}_{\varphi(\beta')\alpha}} &
\\ 
X_{\varphi(\beta)}  \ar[d]_{\mathbf{f}_\beta} & & X_{\varphi(\beta')} \ar[d]^{\mathbf{f}_{\beta'}} 
\\ Y_{\beta}   & & Y_{\beta'} 
\ar[ll]^{\mathbf{q}_{\beta\beta'}} } 
$$

Then, the  notion of equivalent morphisms is introduced.
Morphisms $(\mathbf{f}_\beta, \varphi)$ and $(\mathbf{g}_\beta, \psi)$ from an inverse 
system $\mathbf{X}=\{ X_\alpha , \mathbf{p}_{\alpha\alpha '}, A \}$ to an inverse system 
$\mathbf{Y}=\{Y_\beta, \mathbf{q}_{\beta\beta'}, B\}$ are said to be \emph{equivalent} if the 
following condition holds:

(**) for any $\beta \in B$, there exists an index $\alpha \geqslant \varphi(\beta), 
\psi(\beta)$ such that $\mathbf{f}_\beta \mathbf{p}_{\varphi(\beta)\alpha} = \mathbf{g}_{\beta} 
\mathbf{p}_{\psi(\beta)\alpha}$, i.e.,  the following diagram is commutative: 
$$ \xymatrix { & 
{X_\alpha} \ar[dl]_{\mathbf{p}_{\varphi(\beta)\alpha}}   \ar[dr]^{\mathbf{p}_{\psi(\beta)\alpha}}&
\\ 
X_{\varphi(\beta)}  \ar[dr]_{\mathbf{f}_\beta} & & X_{\psi(\beta)} \ar[dl]^{\mathbf{g}_{\beta}} 
\\ 
& Y_{\beta} & } 
$$ 
This relation is indeed an equivalence relation. Thus, the set of all morphisms 
from an inverse system $\mathbf{X}=\{ X_\alpha , \mathbf{p}_{\alpha\alpha '}, A \}$ to an 
inverse system $\mathbf{Y}=\{Y_\beta, \mathbf{q}_{\beta\beta'}, B\}$ splits into 
equivalence classes $[(\mathbf{f}_\beta, \varphi)]$; it is these classes which are morphisms 
$\mathbf{f} \colon\mathbf{X} \to \mathbf{Y}$ of the pro-homotopy category pro-N-TOP. 
The composition of two morphisms $\mathbf{f} \colon\mathbf{X} \to \mathbf{Y}$ and $\mathbf{g} 
\colon\mathbf{Y} \to \mathbf{Z}=\{ Z_\gamma , \mathbf{r}_{\gamma\gamma '}, \Gamma \}$ is defined 
by using their representatives $\mathbf{f}=[(\mathbf{f}_\beta, \varphi)]$ and 
$\mathbf{g}=[(\mathbf{g}_\gamma, \psi)]$ as $\mathbf{g}\circ 
\mathbf{f}=[(\mathbf{g}_\gamma\circ \mathbf{f}_{\psi(\gamma)}, \varphi\circ \psi)]$.

\begin{remark}
The morphisms between inverse systems 
$\mathbf{X}=\{ X_\alpha , \mathbf{p}_{\alpha\alpha '}, A \}$ and 
$\mathbf{Y}=\{Y_\beta, \mathbf{q}_{\beta\beta'}, B\}$ in the pro-homotopy category pro-N-TOP are   
described by Grothendieck's formula \cite{groth} 
\[ 
\Mor(\mathbf{X}, \mathbf{Y})=\lim_{\underset{\beta}{\longleftarrow}} 
\lim_{\underset{\alpha}{\longrightarrow}}[X_\alpha, Y_\beta], 
\] 
where the $[X_\alpha, Y_\beta]$ are homotopy classes of maps. 
\end{remark}

In constructing shape theory, of special interest is the pro-homotopy category pro-H-CW of inverse 
systems in the complete subcategory H-CW of the category H-Top. The following well-known result 
gives an important characterization of spaces in the category H-CW (see \cite{mard-seg-4}, Chap.~I, 
Sec.~4.1, Theorem 1).

\begin{lemma}
For a topological space $X$, the following conditions are equivalent:

(a) $X$ has the homotopy type of a CW-complex;

(b) $X$ has the homotopy type of a simplicial complex with the metric topology;

(c) $X$ has the homotopy type of an ANR (the class of metrizable spaces).
\end{lemma}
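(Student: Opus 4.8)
The plan is to prove the cycle of implications $(a) \Rightarrow (b) \Rightarrow (c) \Rightarrow (a)$, since each arrow only requires the existence of \emph{one} space of the appropriate type homotopy equivalent to $X$, and homotopy equivalence is transitive. The statement is really a compilation of three classical facts from infinite-dimensional topology and CW-theory, so the work is in assembling the right citations and the right constructions rather than in a single deep argument.

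\textbf{Step 1: $(a) \Rightarrow (b)$.} Suppose $X$ has the homotopy type of a CW-complex $K$. The point here is that every CW-complex is homotopy equivalent to a simplicial complex carrying the metric (Whitehead) topology. I would invoke the simplicial approximation / triangulation results: a CW-complex admits a CW-structure whose skeleta can be successively replaced, up to homotopy, by simplicial complexes, and the direct limit (in the CW topology) is again homotopy equivalent to a simplicial complex; the metric topology on that simplicial complex agrees with the CW topology on each skeleton and, for the purposes of homotopy type, one passes from the CW topology to the metric topology using the fact that the identity map from the metric-topology complex to the CW-topology complex is a homotopy equivalence (it induces isomorphisms on all homotopy groups and both are CW, hence Whitehead applies; alternatively use that both are ANRs for the class of metrizable spaces once one knows local contractibility). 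So $X \simeq K \simeq |L|_{\mathrm{metric}}$ for a simplicial complex $L$.

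\textbf{Step 2: $(b) \Rightarrow (c)$.} Suppose $X$ has the homotopy type of $|L|$, a simplicial complex with the metric topology. The key fact is that such a space is an ANR for the class of metrizable spaces. This is a theorem going back to Dugundji and to Hu: a simplicial complex with the metric topology is metrizable (indeed it is metrizable, being a subset of a suitable Hilbert space / $\ell^2$-barycentric-coordinate model) and is locally contractible, in fact an ANR; this is exactly the content of the results cited in \cite{mard-seg-4}, Chapter I. Hence $X$ has the homotopy type of an ANR (in the metrizable class), which is condition $(c)$.

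\textbf{Step 3: $(c) \Rightarrow (a)$.} Suppose $X$ has the homotopy type of an ANR $P$ (metrizable). By the classical theorem that every ANR has the homotopy type of a CW-complex -- which follows, e.g., from the fact that an ANR is dominated by a CW-complex together with Milnor's theorem that a space dominated by a CW-complex is itself homotopy equivalent to a CW-complex -- we conclude $X \simeq P \simeq$ (a CW-complex), giving $(a)$. Concretely, $P$ is dominated by a polyhedron (cover $P$ by contractible open sets, take a nerve and the canonical maps), and then Milnor's CW-domination theorem upgrades the domination to a homotopy equivalence.

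\textbf{Main obstacle.} None of the three steps is hard once the correct black boxes are in hand; the only genuinely delicate point is the topology bookkeeping in Step 1 --- keeping straight the distinction between the CW (weak) topology and the metric topology on an infinite simplicial complex, and checking that the comparison map between them is a homotopy equivalence in full generality (not merely on finite subcomplexes). I would handle this by reducing to the countable / locally finite case where the two topologies literally coincide, and in general by citing the Whitehead-topology vs. metric-topology comparison from \cite{mard-seg-4}, Chapter I, Section 4, which is precisely the reference attached to the lemma. Everything else is a transitivity argument.
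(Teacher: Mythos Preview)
Your proposal is essentially correct and follows the standard route; the paper itself does not give a proof of this lemma at all, merely stating it as a well-known result with a reference to \cite{mard-seg-4}, Chap.~I, Sec.~4.1, Theorem~1. So there is nothing in the paper to compare against beyond the citation, and your cycle of implications with the classical inputs (simplicial approximation of CW-complexes, Dugundji/Hu on metric simplicial complexes being ANRs, and the domination-plus-Milnor argument for ANR $\Rightarrow$ CW-type) is exactly the content one finds when unpacking that reference.
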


Taking into account this lemma, we can say that H-CW is the homotopy category of spaces having 
the homotopy type of an ANR. In what follows, we shall adhere to this approach, because in 
shape theory only properties of ANRs are used; we refer to inverse systems in the category 
pro-H-CW as \emph{ANR-systems}.

An important role in the construction of  shape theory for the class of all topological spaces 
is played by  the notion of an associated ANR-system, which was introduced by Morita 
in~\cite{morita-1}.

\begin{definition}[Morita \cite{morita-1}]\label{def-3}
An inverse ANR-system $\mathbf{X}=\{ X_\alpha , \mathbf{p}_{\alpha\alpha '}, A \}$ is said to be 
\emph{associated with a topological space $X$} if there exist 
homotopy classes $\mathbf{p}_\alpha \colon  X\to X_\alpha$, $\alpha \in A$, satisfying the following 
conditions:

(i) $\mathbf{p}_\alpha = \mathbf{p}_{\alpha\alpha'} \mathbf{p}_{\alpha'}$ 
for all $\alpha \leqslant \alpha'$;

(ii) for any ANR-space $P$ and any homotopy class $\mathbf{f}\colon X\to P$, there exists an index  
$\alpha\in A$ and a homotopy class $\mathbf{h}_\alpha\colon X_\alpha \to P$ such that 
$\mathbf{h}_\alpha \mathbf{p}_\alpha = \mathbf{f}$;

(iii) if $\boldsymbol{\varphi} \mathbf{p}_\alpha = \boldsymbol{\psi} \mathbf{p}_\alpha$ for two  
homotopy classes $\boldsymbol{\varphi}, \boldsymbol{\psi} \colon X_\alpha \to P$, then there exists 
an index $\alpha'\geqslant \alpha$ such that $\boldsymbol{\varphi} 
\mathbf{p}_{\alpha\alpha'} = \boldsymbol{\psi} \mathbf{p}_{\alpha\alpha'}$. 
\end{definition}

\begin{theorem}[Morita \cite{morita-1}]
For any topological space $X$, there exists an associated inverse ANR-system.
\end{theorem}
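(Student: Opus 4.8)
The plan is to exhibit a concrete associated ANR-system, namely the one coming from \v Cech's functor $\check{C}$ applied to $X$. First I would recall the construction: for each normal locally finite open cover $\alpha$ of $X$, let $X_\alpha$ denote the nerve of $\alpha$ with the (metric) CW-topology, which is an ANR for metrizable spaces; the collection of such covers is directed by refinement, and a refinement $\alpha'\geqslant\alpha$ induces a canonical homotopy class $\mathbf{p}_{\alpha\alpha'}\colon X_{\alpha'}\to X_\alpha$ (the projection determined by a choice of refining function, which is well-defined up to homotopy). One checks $\mathbf{p}_{\alpha\alpha'}\mathbf{p}_{\alpha'\alpha''}=\mathbf{p}_{\alpha\alpha''}$ in H-CW, so $\mathbf{X}=\{X_\alpha,\mathbf{p}_{\alpha\alpha'},A\}$ is an inverse ANR-system. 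The canonical maps $\mathbf{p}_\alpha\colon X\to X_\alpha$ are the homotopy classes of the canonical maps into the nerve associated to a partition of unity subordinate to $\alpha$; condition (i) of Definition~\ref{def-3} is then the compatibility $\mathbf{p}_\alpha=\mathbf{p}_{\alpha\alpha'}\mathbf{p}_{\alpha'}$, again up to homotopy, which follows from the usual straight-line homotopy between two canonical maps of a space into a nerve.

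Next I would verify conditions (ii) and (iii). For (ii), given an ANR $P$ (in the metrizable class) and a homotopy class $\mathbf{f}\colon X\to P$, pick a representative $f\colon X\to P$; since $P$ is an ANR, $f$ factors up to homotopy through the nerve of a suitable open cover. Concretely, embed $P$ in a normed space, take an open cover of $P$ by sets small enough that the inclusion of that cover's nerve into $P$ is controlled, pull it back along $f$ to a normal locally finite open cover $\alpha$ of $X$, and use the standard fact that $f$ is homotopic to the composition $X\xrightarrow{p_\alpha}X_\alpha\xrightarrow{h_\alpha}P$ for an appropriate $h_\alpha$ (this is essentially the nerve theorem / canonical-map argument; see \cite{mard-seg-4}). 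For (iii), suppose $\boldsymbol{\varphi}\mathbf{p}_\alpha=\boldsymbol{\psi}\mathbf{p}_\alpha$ for $\boldsymbol{\varphi},\boldsymbol{\psi}\colon X_\alpha\to P$. Choosing representatives, $\varphi p_\alpha\simeq\psi p_\alpha$, and one must produce a finer cover $\alpha'$ with $\varphi\,p_{\alpha\alpha'}\simeq\psi\,p_{\alpha\alpha'}$; this uses that the maps $p_\alpha$ are, in the limit, ``jointly surjective onto homotopy information,'' i.e. that a homotopy $\varphi p_\alpha\simeq\psi p_\alpha$ defined on $X$ can be realized on a nerve $X_{\alpha'}$ for $\alpha'$ fine enough, again by a canonical-map/partition-of-unity argument applied to the homotopy $X\times I\to P$.

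The main obstacle is the third condition, since (i) and (ii) are fairly direct consequences of the nerve construction and the defining property of ANRs, whereas (iii) requires the somewhat delicate point that homotopies between maps out of $X$ can be ``detected'' at a finite stage of the system. The key technical input there is the normality and local finiteness of the covers (so that subordinate partitions of unity exist and canonical maps into nerves are defined) together with the homotopy extension/approximation properties of ANRs. I would isolate this as the heart of the proof and handle it by applying the canonical-map argument not to $X$ but to $X\times I$: a homotopy $H\colon X\times I\to P$ between $\varphi p_\alpha$ and $\psi p_\alpha$ factors, up to homotopy rel ends, through the nerve of a locally finite normal cover of $X\times I$ refining the product cover, and by further refining one arranges the refinement index $\alpha'\geqslant\alpha$ on the $X$-factor so that the induced homotopy has the required form $\varphi\,p_{\alpha\alpha'}\simeq\psi\,p_{\alpha\alpha'}$. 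Finally I would remark that this also shows the system $\check{C}(X)$ is associated with $X$, and that any two associated ANR-systems are isomorphic in pro-H-CW, so the construction is essentially unique.
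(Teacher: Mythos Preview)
Your proposal is correct and follows essentially the same route as the paper: both exhibit the \v Cech system $\check{C}(X)$ built from nerves of normal locally finite open covers with canonical maps and projections, and assert that this system is associated with $X$. The paper's proof is in fact terser than yours---it defines the canonical maps $p_\alpha$ via the star condition $p_\alpha^{-1}(\mathrm{St}(u,X_\alpha))\subset U$ and the simplicial projections $p_{\alpha\alpha'}$, then simply states the resulting system is associated with $X$ without spelling out the verification of conditions (ii) and (iii); your outline of those verifications (factoring through a nerve via a pulled-back cover for (ii), and the $X\times I$ canonical-map argument for (iii)) is a reasonable expansion of what the paper leaves implicit.
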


\begin{proof}
Let $\mathcal{U}_{\alpha}, \alpha \in A$, be all normal locally finite open covers of the space 
$X$. If a cover $\mathcal{U}_{\alpha '}$ is a refinement of $\mathcal{U}_{\alpha}$, 
then we write $\alpha ' > \alpha$. Let $X_\alpha$ be the nerve of $\mathcal{U}_{\alpha}$; 
this is a simplicial complex with the weak topology. For each $\alpha\in A$, consider 
a continuous map $p_\alpha\colon  X\to X_\alpha$ satisfying the condition
\[
p_\alpha^{-1}(St(u,X_\alpha))\subset U,
\]
where $u$ is the vertex of $X_\alpha$ corresponding to the element $U$ of the cover 
$\mathcal{U}_{\alpha}$. Such a map is said to be \emph{canonical}. All canonical maps are homotopic 
to each other. If $\alpha ' > \alpha$, then there exists a simplicial map $p_{\alpha\alpha 
'}\colon X_{\alpha '}\to X_\alpha $ such that $p_{\alpha \alpha '}(u)=v$ implies 
$U\subset V$, where $u$ and $v$ are the vertices of the simplicial complexes $X_{\alpha}$ and 
$X_{\alpha '}$ corresponding to the elements $U$ and $V$ of the covers $\mathcal{U}_{\alpha}$ and 
$\mathcal{U}_{\alpha '}$, respectively. The map $p_{\alpha \alpha '}$ is called a \emph{canonical 
projection}. Any two canonical projections are homotopic to each other. Moreover, 
\[ 
\mathbf{p}_{\alpha \alpha '} \mathbf{p}_{\alpha '} = \mathbf{p}_{\alpha} 
\] 
for all $\alpha, \alpha '\in A$, $\alpha > \alpha '$.

The inverse ANR-system $\{X_\alpha, \mathbf{p}_{\alpha \alpha '}, A\}$ thus obtained is associated 
with the space~$X$. 
\end{proof}

Inverse ANR-systems associated with different spaces can be constructed by different methods. For 
example, any compact Hausdorff space $X$ is the limit of an inverse system $\{X_\alpha, 
p_{\alpha \alpha '}, A\}$ consisting of compact ANRs $X_\alpha$: 
$X=\lim\limits_{\longleftarrow}\{X_\alpha, p_{\alpha \alpha '}, A\}$. It turns out that the 
inverse ANR-system $\{X_\alpha, \mathbf{p}_{\alpha \alpha '}, A\}$ is associated with~$X$.  
However, importantly, all  inverse ANR-systems associated with the same space $X$ are equivalent in 
the category  pro-H-CW.

Thus, each topological space $X$ is assigned an inverse ANR-system $S(X)$ in the category pro-H-CW. 
Moreover, this assignment can be uniquely extended in a natural way to 
morphisms of the category H-Top, i.e., each homotopy class  $\mathbf{f}\colon X\to Y$ can be assigned 
a morphism $S(\mathbf{f})\colon S(X)\to S(Y)$ of the category pro-H-CW. Thereby, we obtain 
a functor S: H-Top $\to$ pro-H-CW, which is called the \emph{shape functor}.

The shape functor $S$ makes it possible to naturally define the \emph{shape category} Sh-Top for 
the class of all topological spaces. The objects of the category Sh-Top are those of the 
category H-Top (i.e., these are topological spaces), and the morphisms are those of the category 
pro-H-CW: $S(X,Y)=\text{MOR}(S(X),S(Y))$. The shape classification of spaces is weaker 
than the homotopy classification, but on the class H-CW, both classifications coincide. In the class 
of all compact metrizable spaces, Marde\v si\'c--Morita shape theory  coincides with Borsuk's one.

Shape theory can be constructed without employing associated inverse systems as follows
(see Marde\v{s}i\'{c} \cite{mard-1}). To each topological space 
$X$ we assign a category $W^X$ whose  objects are homotopy classes $\mathbf{f} \colon  X \to P$, $P\in \text{H-CW}$, and morphisms $\mathbf{u}\colon \mathbf{f} \to \mathbf{f'}$, where $\mathbf{f'} \colon  X \to P'$, $P'\in \text{H-CW}$, are homotopy classes $\mathbf{u}\colon  
P\to P'$ such that $\mathbf{u} \mathbf{f} = \mathbf{f'}$. Note that the identity morphism 
$1_\mathbf{f}\colon \mathbf{f} \to \mathbf{f}$ is the homotopy class 
$\mathbf{1}_P\colon P\to P$, and the compositions of morphisms in the category $W^X$ are 
the compositions of the corresponding homotopy classes in the category H-CW. Now the shape 
morphisms $F\colon X\to Y$ are defined as the covariant functors $F \colon  W^Y \to W^X$ 
satisfying the following two conditions: (i)~if $\mathbf{g}\in W^Y$, $\mathbf{g}\colon Y \to P$, 
then $F(\mathbf{g}) =\mathbf{f}\in W^X$, where $\mathbf{f}\colon X \to P$; (ii)~if 
$\mathbf{u}\colon \mathbf{g} \to \mathbf{g'}$ is the morphism determined by a homotopy class 
$\mathbf{u}\colon P\to P'$,  then the morphism $F(\mathbf{u}) \colon  F(\mathbf{g}) \to 
F(\mathbf{g'})$ is determined by the same homotopy class $\mathbf{u}\colon  P\to P'$.

By using this definition of the shape category, is easy to prove the following criterion for a  
continuous map to be a shape equivalence.

\begin{theorem}
A continuous map $f\colon X\to Y$ is a shape equivalence if and only if, for any ANR $P$, 
the induced map $f^{*}\colon [Y,P]\to [X,P]$ is one-to-one. 
\end{theorem}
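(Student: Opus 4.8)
The plan is to argue entirely inside Marde\v{s}i\'c's functor-category description of the shape category recalled just above, since in that picture the objects of $W^{X}$ lying over a fixed ANR $P$ are literally the elements of $[X,P]$. Recall that a continuous map $f\colon X\to Y$ induces the shape morphism $S(f)\colon X\to Y$ given by the functor $F\colon W^{Y}\to W^{X}$ with $F(\mathbf{g}\colon Y\to P)=\mathbf{g}\mathbf{f}\colon X\to P$ (where $\mathbf{f}$ is the homotopy class of $f$) and $F(\mathbf{u})=\mathbf{u}$ on morphisms, that composition of shape morphisms corresponds to composition of the associated functors in the reverse order, and that $1_{X}$ corresponds to the identity functor of $W^{X}$. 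Here I read ``one-to-one'' as ``bijective'': mere injectivity does not suffice, as the inclusion $\{0,1\}\hookrightarrow[0,1]$ shows (it induces an injection $[[0,1],P]\to[\{0,1\},P]$ for every ANR $P$, yet it is not a shape equivalence). The one structural remark I would isolate first is that, by conditions (i)--(ii), every shape morphism $H\colon A\to B$ restricts, for each ANR $P$, to a well-defined map $r_{P}(H)\colon[B,P]\to[A,P]$ (the associated functor $W^{B}\to W^{A}$ acting on objects over $P$); that $H\mapsto r_{P}(H)$ is functorial, $r_{P}(K\circ H)=r_{P}(H)\circ r_{P}(K)$, and sends identities to identities; and that $r_{P}(S(f))$ is precisely $f^{*}$. (Passing from ANRs to the homotopically larger class H-CW changes nothing, as $[X,P]$ depends only on the homotopy type of $P$.)

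For the ``only if'' direction, suppose $f$ is a shape equivalence and let $G\colon Y\to X$ be a shape inverse of $S(f)$. Applying $r_{P}$ to $G\circ S(f)=1_{X}$ and to $S(f)\circ G=1_{Y}$ yields $f^{*}\circ r_{P}(G)=\mathrm{id}_{[X,P]}$ and $r_{P}(G)\circ f^{*}=\mathrm{id}_{[Y,P]}$, so $f^{*}$ is a bijection for every ANR $P$.

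For the ``if'' direction, assume $f^{*}\colon[Y,P]\to[X,P]$ is bijective for all ANRs $P$ and construct a shape inverse of $S(f)$ by hand, i.e. a functor $\widetilde{G}\colon W^{X}\to W^{Y}$ satisfying (i)--(ii) with $F\circ\widetilde{G}=1_{W^{X}}$ and $\widetilde{G}\circ F=1_{W^{Y}}$. On an object $\mathbf{h}\colon X\to P$, let $\widetilde{G}(\mathbf{h})$ be the unique $\mathbf{g}\colon Y\to P$ with $\mathbf{g}\mathbf{f}=\mathbf{h}$ (it exists by surjectivity and is unique by injectivity of $f^{*}$). On a morphism $\mathbf{u}\colon\mathbf{h}\to\mathbf{h}'$ coming from a homotopy class $\mathbf{u}\colon P\to P'$, one checks that $\mathbf{u}$ also defines a morphism $\widetilde{G}(\mathbf{h})\to\widetilde{G}(\mathbf{h}')$: indeed $\bigl(\mathbf{u}\,\widetilde{G}(\mathbf{h})\bigr)\mathbf{f}=\mathbf{u}\mathbf{h}=\mathbf{h}'=\widetilde{G}(\mathbf{h}')\mathbf{f}$, so injectivity of $f^{*}$ on $[Y,P']$ forces $\mathbf{u}\,\widetilde{G}(\mathbf{h})=\widetilde{G}(\mathbf{h}')$; set $\widetilde{G}(\mathbf{u})=\mathbf{u}$. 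Functoriality of $\widetilde{G}$ and conditions (i)--(ii) are then immediate, since $\widetilde{G}$ is the identity on the underlying homotopy classes of morphisms, and both composite identities reduce to the relation $\mathbf{g}\mathbf{f}=\mathbf{h}$ together with injectivity of $f^{*}$. Hence $S(f)$ is invertible as a shape morphism, i.e. $f$ is a shape equivalence.

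The mathematical content is light once the functor-category picture is in place; the only step that requires care is the bookkeeping in the ``if'' part --- checking that $\widetilde{G}$ is well defined on morphisms and that $\widetilde{G}\circ F$ is literally the identity functor, which is exactly where injectivity of $f^{*}$ is used, surjectivity being needed only to define $\widetilde{G}$ on objects. I do not expect any genuine obstacle beyond this.
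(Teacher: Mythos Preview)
Your proof is correct and follows precisely the approach the paper indicates: the paper does not spell out a proof but merely remarks that the theorem ``is easy to prove'' from Marde\v{s}i\'{c}'s functor-category description of shape morphisms via $W^{X}$, and this is exactly what you carry out. Your clarification that ``one-to-one'' must be read as \emph{bijective} (with the $\{0,1\}\hookrightarrow[0,1]$ counterexample) and your explicit construction of the inverse functor $\widetilde{G}$ supply the details the paper leaves to the reader.
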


Here $[Y, P]$ denotes the set of homotopy classes $\mathbf{g}\colon  Y\to P$ and $f^*$ is the 
map assigning the homotopy class $f^*(\mathbf{g})=\mathbf{g} \mathbf{f} \in [X,P]$ to each 
$\mathbf{g}\colon  Y\to P$.

\subsection{Equivariant Shape Theory}

There are many ways to construct shape category for $G$-spaces (continuous transformation groups 
with a fixed action of a group $G$). Smirnov 
\cite{smirnov-g-par}, \cite{smirnov-1979} 
constructed shape theory for the categories of metrizable $G$-spaces  and compact $G$-spaces in the 
case of a compact acting group $G$ by the Borsuk--Fox method \cite{borsuk-rus}, \cite{fox}.  
Equivariant shape theory for arbitrary $G$-spaces in the case of a finite group $G$ was constructed 
independently by Pop \cite{pop-shape} and Matumoto \cite{matumoto} and in the case of a compact 
group $G$, by Antonyan and Marde\v{s}i\'{c} \cite{antonyan-mard} and 
Gevorgyan \cite{gev-smirnov}. \v{C}erin \cite{cerin} constructed the shape category for arbitrary 
$G$-spaces in the case of any acting group $G$ by using $G$-homotopy classes of families 
of set-valued maps.

In \cite{gev-smirnov} equivariant shape theory was constructed by a method based on the application 
of all invariant continuous pseudometrics on a given $G$-space. Let $X$ be any $G$-space, and 
let $\mu$ be an invariant continuous pseudometric on $X$. Consider the equivalence relation on $X$ 
defined by setting $x\sim x'$ if and only if $\mu(x,x')=0$. We denote the corresponding quotient 
space $X|_\sim$ by $X_{\mu}$ and the equivalence class of an element $x\in X$ by $[x]_{\mu}$. 
The quotient space $X_{\mu}$ is a $G$-space with action $g[x]_{\mu}=[gx]_{\mu}$, and $\rho 
([x]_{\mu},[x']_{\mu})=\mu(x,x')$ is an invariant metric on $X_{\mu}$. Note also that 
the quotient map $p_\mu\colon X\to X_\mu$ defined by $p_\mu(x)=[x]_{\mu}$ is 
continuous and equivariant. Thus, the following assertion is valid.

\begin{proposition}
The quotient space $X_\mu$ is an invariant metrizable $G$-space,
and the quotient map $p_\mu\colon X\to X_\mu$ is an equivariant continuous map.
\end{proposition}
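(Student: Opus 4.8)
The plan is to verify directly that the three claimed properties — invariance of the quotient construction, metrizability of $X_\mu$, and equivariance and continuity of $p_\mu$ — follow straightforwardly from the definitions already set up in the discussion preceding the statement. Most of the work has in fact been done in the paragraph introducing $\mu$, $X_\mu$, $\rho$, and $p_\mu$; the proposition simply packages those observations, so the proof will be a short verification rather than a substantive argument.

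First I would check that $\rho$ is well defined on $X_\mu$. Given $[x]_\mu = [y]_\mu$ and $[x']_\mu = [y']_\mu$, i.e. $\mu(x,y) = 0$ and $\mu(x',y') = 0$, the triangle inequality for the pseudometric $\mu$ gives $|\mu(x,x') - \mu(y,y')| \leqslant \mu(x,y) + \mu(x',y') = 0$, so $\rho([x]_\mu,[x']_\mu) := \mu(x,x')$ does not depend on representatives. That $\rho$ is symmetric and satisfies the triangle inequality is inherited from $\mu$, and $\rho([x]_\mu,[x']_\mu) = 0$ means $\mu(x,x') = 0$, i.e. $x \sim x'$, i.e. $[x]_\mu = [x']_\mu$; hence $\rho$ is a genuine metric and $X_\mu$ is metrizable. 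Invariance of $\rho$ is the computation $\rho(g[x]_\mu, g[x']_\mu) = \rho([gx]_\mu,[gx']_\mu) = \mu(gx,gx') = \mu(x,x') = \rho([x]_\mu,[x']_\mu)$, using invariance of $\mu$; I should also note in passing that the $G$-action on $X_\mu$ is well defined, since $\mu(x,x')=0$ implies $\mu(gx,gx')=0$, so $g[x]_\mu = [gx]_\mu$ is independent of the representative.

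Next I would address continuity of $p_\mu$. Since $X_\mu$ carries the metric topology from $\rho$, it suffices to observe that $\rho(p_\mu(x), p_\mu(x')) = \mu(x,x')$ and that $\mu$ is continuous on $X$ as a function $X \times X \to \mathbb{R}$; hence for each $x$ and each $\varepsilon > 0$ the preimage under $p_\mu$ of the $\rho$-ball of radius $\varepsilon$ about $[x]_\mu$ contains the open set $\{x' : \mu(x,x') < \varepsilon\}$, which is a neighbourhood of $x$. Equivariance is immediate from $p_\mu(gx) = [gx]_\mu = g[x]_\mu = g\,p_\mu(x)$. Collecting these facts — $\rho$ a metric, invariant; the $G$-action on $X_\mu$ well defined; $p_\mu$ continuous and equivariant — completes the proof. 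There is no real obstacle here; the only point requiring a moment's care is the well-definedness of $\rho$ (and of the action) on equivalence classes, which is exactly where the triangle inequality for the pseudometric is used, and that is the step I would write out explicitly.
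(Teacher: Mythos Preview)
Your proposal is correct and follows exactly the approach the paper takes: the paper does not give a separate formal proof of this proposition but instead records the relevant verifications (well-definedness of the action and of $\rho$, invariance, continuity and equivariance of $p_\mu$) in the paragraph immediately preceding the statement, and your write-up simply spells those same checks out in greater detail. One small point you might add for completeness---and which the paper also leaves implicit---is the continuity of the induced action $G\times X_\mu\to X_\mu$; this follows at once from the estimate $\rho(g[x]_\mu,g_0[x_0]_\mu)\leqslant \mu(x,x_0)+\mu(gx_0,g_0x_0)$, using invariance of $\mu$ and continuity of the original action.
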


The following theorem makes it possible to construct equivariant shape theory (see Gevorgyan's 
paper \cite{gev-smirnov}) for the class of all $G$-spaces in the case a compact group $G$ by the 
Marde\v si\'c--Morita method of inverse systems.

\begin{theorem}\label{th-main}
For any $G$-space $X$, there exists an associated inverse system $\{X_\alpha, 
\mathbf{p}_{\alpha\alpha'}, A\}$ in the equivariant homotopy category H-G-ANR. 
\end{theorem}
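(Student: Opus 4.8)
The plan is to mimic Morita's construction of an associated ANR-system, but carried out entirely inside the equivariant category, using the preceding Proposition to manufacture enough metrizable $G$-quotients. First I would fix a $G$-space $X$ and let $A$ be the set of all invariant continuous pseudometrics $\mu$ on $X$, directed by $\mu \leqslant \mu'$ iff $\mu(x,x') \leqslant \mu'(x,x')$ for all $x,x'$ (so $\mu'$ is ``finer''); this is a directed set since the maximum of two invariant continuous pseudometrics is again one. For each $\mu$ the Proposition gives an invariant metrizable $G$-space $X_\mu$ and an equivariant quotient map $p_\mu\colon X\to X_\mu$, and for $\mu \leqslant \mu'$ there is a well-defined equivariant map $p_{\mu\mu'}\colon X_{\mu'}\to X_\mu$ with $p_{\mu\mu'}p_{\mu'}=p_\mu$ and $p_{\mu\mu'}p_{\mu'\mu''}=p_{\mu\mu''}$. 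This produces an inverse system of metrizable $G$-spaces; however, metrizable $G$-spaces need not be $G$-ANRs, so the second step is to interpose, for each $\mu$, an inverse system of $G$-ANRs converging to $X_\mu$ (for a compact group $G$, every metrizable $G$-space embeds $G$-equivariantly in a normed linear $G$-space, hence admits a cofinal system of invariant-ANR neighborhoods — this is the equivariant analogue of the classical fact, and is where compactness of $G$ is essential). Reindexing by the combined directed set, I obtain an inverse system $\mathbf{X}=\{X_\alpha,\mathbf{p}_{\alpha\alpha'},A\}$ in H-$G$-ANR together with equivariant homotopy classes $\mathbf{p}_\alpha\colon X\to X_\alpha$ satisfying condition (i) of the equivariant analogue of Definition~\ref{def-3}.

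The third step is to verify conditions (ii) and (iii). For (ii): given a $G$-ANR $P$ and an equivariant homotopy class $\mathbf{f}\colon X\to P$ with representative $f$, pull back an invariant metric on $P$ (composed with $f$) to get an invariant continuous pseudometric $\mu_0$ on $X$; then $f$ factors continuously and equivariantly through $X_{\mu_0}$, and after passing to a $G$-ANR approximation $X_\alpha$ of $X_{\mu_0}$ finer than $\mu_0$ one obtains $\mathbf{h}_\alpha\colon X_\alpha\to P$ with $\mathbf{h}_\alpha\mathbf{p}_\alpha=\mathbf{f}$. The key technical input is the equivariant homotopy extension / near-factorization lemma for $G$-ANRs (the analogue of \cite{mard-seg-4}, Theorem 8, p.~40, used in the proof of Theorem~\ref{th_1}): maps into a $G$-ANR that agree on a ``fine enough'' quotient are $G$-homotopic. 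For (iii): if $\boldsymbol{\varphi}\mathbf{p}_\alpha=\boldsymbol{\psi}\mathbf{p}_\alpha$ for $\boldsymbol{\varphi},\boldsymbol{\psi}\colon X_\alpha\to P$, a $G$-homotopy between the composites is realized over $X$, and because $X_\alpha$ is a $G$-ANR the homotopy spreads to a neighborhood-level quotient, i.e.\ there is $\alpha'\geqslant\alpha$ with $\boldsymbol{\varphi}\mathbf{p}_{\alpha\alpha'}=\boldsymbol{\psi}\mathbf{p}_{\alpha\alpha'}$; this again rests on the same equivariant $G$-ANR extension lemma.

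I expect the main obstacle to be precisely the supply of equivariant ANRs: showing that every invariant metrizable $G$-space $X_\mu$ sits in a cofinal inverse system of $G$-ANRs, which requires the $G$-equivariant embedding theorem for metrizable $G$-spaces into linear $G$-spaces and the fact that invariant open neighborhoods of a $G$-invariant closed subset of such a space are $G$-ANRs — both of which genuinely use compactness of $G$ (existence of an invariant Haar measure / averaging, slice theorems). The bookkeeping of directed sets (combining the pseudometric index with the ANR-approximation index into a single cofinal directed set, and checking the coherence relations $\mathbf{p}_{\alpha\alpha'}\mathbf{p}_{\alpha'}=\mathbf{p}_\alpha$) is routine but must be done carefully. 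Once the equivariant $G$-ANR approximation and the equivariant version of the homotopy extension lemma are in hand, the verification of (i)--(iii) follows the classical pattern almost verbatim.
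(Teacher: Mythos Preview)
Your proposal is correct and follows essentially the same route as the paper: index by invariant continuous pseudometrics $\mu$, form the metrizable $G$-quotients $X_\mu$, embed each equivariantly as a closed subspace of a normed $G$-space, take as the index set $A$ the pairs $(\mu,U)$ with $U$ an invariant open neighborhood of $X_\mu$, and verify (ii) by pulling back an invariant metric on $P$ along $f$. One small correction for step (iii): the homotopy extends from $X_\mu$ to a smaller invariant neighborhood $V\subset U$ because the \emph{target} $P$ is a $G$-ANR (equivariant homotopy extension into $G$-ANRs), not because $X_\alpha$ is; with that adjustment your argument matches the paper's exactly.
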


\begin{proof}
Consider the family $\mathcal{P}$ of all invariant pseudometrics on the $G$-space $X$.
We introduce a natural order on this family by setting $\mu\leqslant \mu'$ if 
$\mu(x,x')\leqslant \mu'(x,x')$ for $x,x'\in X$.

Given any pseudometrics $\mu, \mu'\in \mathcal{P}$, $\mu\leqslant \mu'$, we define a map 
$p_{\mu\mu'}\colon X_{\mu'}\to X_\mu$ by $p_{\mu\mu'}([x]_{\mu'})=[x]_{\mu}$, where 
$[x]_{\mu'}\in X_{\mu'}$ and $[x]_{\mu}\in X_{\mu}$. This map is well-defined, because 
it does not depend on the choice of a representative in the class $[x]_{\mu'}$. Indeed, let 
$[x]_{\mu'}=[x']_{\mu'}$, that is, $\mu'(x,x')=0$. Then $\mu(x,x')\leqslant \mu'(x,x')=0$, whence 
$\mu(x,x')=0$. This means that $[x]_{\mu}=[x']_{\mu}$, that is, 
$p_{\mu\mu'}([x]_{\mu'})=p_{\mu\mu'}([x']_{\mu'})$. It is easy to verify that the map 
$p_{\mu\mu'}\colon X_{\mu'}\to X_\mu$ satisfies the conditions $p_{\mu\mu'}p_{\mu'}=p_{\mu}$ and 
$p_{\mu\mu'}p_{\mu'\mu''}=p_{\mu\mu''}$ for any $\mu, \mu', \mu'' \in \mathcal{P}$, $\mu\leqslant 
\mu'\leqslant \mu''$.

Thus, $\{X_\mu, p_{\mu\mu'}\}$ is an inverse system of invariant metrizable $G$-spaces.
The $G$-space $X_\mu$ is  isometrically and equivariantly
embedded in some normed $G$-space $M(X_\mu)$ as a closed subspace, and the equivariant map
$p_{\mu\mu'}\colon X{_\mu'}\to X{_\mu}$ extends to equivariant maps 
$\bar{p}_{\mu\mu'}\colon M(X{_\mu'})\to M(X{_\mu})$, which satisfy the relation 
$\bar{p}_{\mu\mu'}\bar{p}_{\mu'\mu''}=\bar{p}_{\mu\mu''}$ (see Gevorgyan's paper 
\cite{gev-anequiv}). Thus, we obtain the inverse system $\{M(X_\mu), \bar{p}_{\mu\mu'}\}$ of normed 
$G$-spaces.

Let $A$ be the set of all pairs $(\mu, U)$, where $\mu\in \mathcal{P}$, and let $U$ be an open 
invariant neighborhood of the metrizable $G$-space $X_{\mu}$ in the normed $G$-space $M(X_\mu)$.  
On the set $A$ we define a natural order by setting $\alpha\leqslant \alpha'$ for $\alpha=(\mu, 
U)$ and $\alpha'=(\mu', U')$ if and only if $\mu\leqslant \mu'$ and 
$\bar{p}_{\mu\mu'}(U')\subset U$. Now we set $X_\alpha=U$ and define an equivariant map 
$p_{\alpha\alpha'}\colon X_{\alpha'}\to X_{\alpha}$ by 
$p_{\alpha\alpha'}=\bar{p}_{\mu\mu'}|_{U'}\colon U'\to U$. Note that if $\mu=\mu'$, then 
$\bar{p}_{\mu\mu'}=id\colon M(X{_\mu})\to M(X{_\mu})$; therefore, $p_{\alpha\alpha'}=i\colon 
U'\hookrightarrow U$.

Thus, we have constructed the inverse system $\{X_\alpha, \mathbf{p}_{\alpha\alpha'}, A\}$ 
in the category H-G-ANR. Let us prove that it is equivariantly associated with the $G$-space~$X$.

For each $\alpha=(\mu, U)\in A$, we define an equivariant map $p_{\alpha}\colon X\to X_{\alpha}=U$
by $p_{\alpha}=ip_{\mu}$, where $p_\mu\colon X\to X_\mu$ is the quotient map and $i\colon 
X_\mu \to U$ is an embedding. It is easy to see that $p_\alpha=p_{\alpha\alpha'}p_{\alpha'}$ 
if $\alpha \leqslant \alpha'$.

Now let $f\colon X\to Q$ be a continuous equivariant map, where $Q$ is any $G$-ANR. 
Consider an invariant metric $\rho$ on the space $Q$, whose existence follows from 
the compactness of the group $G$. On $X$ we define a continuous invariant pseudometric $\mu$ 
by $\mu(x,x')=\rho(f(x),f(x'))$. Note that the map $\varphi\colon X_\mu \to Q$ given by 
$\varphi([x]_\mu)=f(x)$ is an equimorphism of the $G$-spaces $X_\mu$ and $f(X)$, and 
$\varphi p_\mu=f$. Since $X_\mu$ is a closed invariant subset of the normed $G$-space $M(X_\mu)$ 
and $Q$ is a $G$-ANR, it follows that there exists an equivariant extension $h\colon  U\to Q$ of 
the map $\varphi\colon X_\mu \to Q$, where $U$ is an invariant neighborhood of $X_\mu$ in 
$M(X_\mu)$. In other words, $h|_{X_\mu}=\varphi$, or $h i=\varphi$.

Take any index $\alpha=(\mu, U)\in A$. We have $X_\alpha=U$, $p_{\alpha}=ip_{\mu}$, and 
$h p_\alpha=f$. Indeed, $h p_\alpha = h ip_{\mu}= \varphi p_{\mu}=f$. Now suppose 
that $h_0, h_1\colon X_\alpha \to Q$ is an equivariant map to the G-ANR $Q$ such that $h_0 
p_\alpha \simeq_G h_1 p_\alpha$. Since $X_\alpha =U$ and $p_{\alpha}=ip_{\mu}$, it follows that  
the equivariant maps $h_0|_{X_\mu}$ and $h_1|_{X_\mu}$ are equivariantly homotopic. Hence 
there exists an invariant neighborhood $V\subset U$ of the closed invariant subset $X_\mu$ such 
that $h_0|_V$ and $h_1|_V$ are equivariantly homotopic as well. This means that 
$h_0p_{\alpha\alpha'}\simeq_G h_1p_{\alpha\alpha'}$, where $\alpha'=(\mu, V)$. 
\end{proof}

\section{The Shape Classification of Spaces}

The shape category, as any other category,  generates a classification of all of its 
objects, that is, a shape classification of topological spaces. Of primary 
interest are classes of spaces in which the shape classification coincides with the homotopy 
classification. One of such classes is the class of ANRs. However, there are also other 
classes for which the shape classification coincides even with the topological one. An example  
is the class of all zero-dimensional spaces (see Godlewski's paper \cite{godlewski-solenoids} 
and Marde\v{s}i\'{c} and Segal's paper~\cite{mard-seg-3}).

\begin{theorem}
Zero-dimensional spaces $X$ and $Y$ have the same shape if and only if they are homeomorphic.
\end{theorem}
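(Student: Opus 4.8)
The plan is to lean on the spectral description of shape and on the fact that a zero-dimensional compact space is recovered, up to homeomorphism, from its associated ANR-system, which in this case can be chosen to consist of \emph{finite discrete} spaces. The ``if'' direction is immediate: a homeomorphism is a homotopy equivalence, so the shape functor carries it to an isomorphism in the shape category and $\sh(X)=\sh(Y)$. For the converse, the key preliminary step is to fix a convenient associated system. Given a zero-dimensional compact space $X$, every finite open cover is refined by a finite partition of $X$ into pairwise disjoint nonempty clopen sets; the nerve of such a partition is a finite discrete space $X_\alpha$, and the canonical map $X\to X_\alpha$ sends each point to the block containing it. Since these clopen partitions are cofinal among all finite open covers under refinement, the associated ANR-system $S(X)$ is represented by an inverse system $\mathbf{X}=\{X_\alpha,\mathbf{p}_{\alpha\alpha'},A\}$ of finite discrete spaces. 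Moreover, since $X$ is zero-dimensional and compact Hausdorff, the canonical map $X\to\varprojlim\mathbf{X}$ is a continuous bijection of a compact space onto a Hausdorff one (injective because clopen sets separate points, surjective by compactness), hence a homeomorphism; thus $X=\varprojlim\mathbf{X}$.

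The second step is the observation that homotopy is invisible between finite discrete spaces: a path in a discrete space is constant, so homotopic maps between discrete spaces coincide. Hence the full subcategory of H-CW on finite discrete spaces is just the category of finite sets, and the pro-homotopy category of ANR-systems built from finite discrete spaces is the pro-category of finite sets; in particular, an isomorphism in pro-H-CW between two such systems is literally an isomorphism of pro-(finite sets). The third step, which I expect to be the real obstacle, is to prove that the inverse-limit functor from the pro-category of finite sets to topological spaces is fully faithful, with essential image precisely the compact zero-dimensional spaces. Essential surjectivity is exactly the first step. For full faithfulness one takes a continuous map $g\colon\varprojlim\mathbf{X}\to\varprojlim\mathbf{Y}$ between limits of systems of finite discrete spaces; composing $g$ with the projection onto a fixed finite $Y_\beta$ gives a continuous map from the compact space $\varprojlim\mathbf{X}$ to a finite discrete space, which must be constant on the fibres of some projection $\varprojlim\mathbf{X}\to X_{\alpha(\beta)}$; choosing the indices $\alpha(\beta)$ compatibly yields a morphism of inverse systems inducing $g$, and the same compactness argument gives its uniqueness. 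Consequently the limit functor reflects isomorphisms.

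The fourth step merely assembles these. If $\sh(X)=\sh(Y)$ with $X,Y$ zero-dimensional compact spaces, then $S(X)$ and $S(Y)$ are isomorphic objects of pro-H-CW; by the first step both may be represented by inverse systems of finite discrete spaces, by the second step those systems are then isomorphic as pro-(finite sets), and by the third step their inverse limits are homeomorphic --- but by the first step those limits are $X$ and $Y$. Hence $X$ and $Y$ are homeomorphic. One can package the second and third steps more conceptually via Stone duality: the Boolean algebra of clopen subsets of a compact zero-dimensional space is a complete homeomorphism invariant, and it is the directed union of its finite subalgebras, which is the Boolean dual of the pro-(finite set) structure of the associated system. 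I would mention this but carry out the explicit limit-functor argument above, since it is self-contained and uses only material already developed.
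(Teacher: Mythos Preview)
The paper is a survey and does not supply its own proof of this theorem; it simply attributes the result to Godlewski and to Marde\v{s}i\'{c}--Segal. Your argument is correct and is in fact the standard one in the spirit of \cite{mard-seg-3}: represent a zero-dimensional compactum by the cofinal system of its finite clopen partitions, observe that homotopy is trivial between finite discrete spaces so that pro-H-CW collapses to pro-(finite sets) there, and then pass to inverse limits.

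Two small remarks. First, you are tacitly assuming compactness (indeed compact Hausdorff) throughout---this is the hypothesis in the cited sources and is what the corollary that follows actually uses---but the statement as printed says only ``zero-dimensional spaces''; you should make the compactness hypothesis explicit at the outset. Second, your ``third step'' proves more than you need. Full faithfulness of the limit functor is true and is essentially Stone duality, as you note, but for the theorem it suffices that $\varprojlim$ is a functor from pro-(finite sets) to compact Hausdorff spaces and that $X\cong\varprojlim\mathbf{X}$: any functor preserves isomorphisms, so once your second step upgrades the shape isomorphism to an isomorphism in pro-(finite sets), the homeomorphism $X\cong Y$ is immediate. The explicit construction of a morphism of systems from a continuous map $g$ is unnecessary for this direction.
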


\begin{corollary}\label{cor-aleph}
There exist $\aleph_1$ countable compact metrizable spaces with pairwise different shapes.
\end{corollary}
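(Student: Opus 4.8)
The plan is to deduce this from the preceding theorem, which reduces the shape classification of zero-dimensional spaces to topological classification. First I would recall that every countable metrizable space is zero-dimensional: given a point $x$ and $\varepsilon>0$, the space being countable, only countably many of the spheres $\{y:d(x,y)=r\}$ with $0<r<\varepsilon$ are nonempty, so one can choose $r$ with $\{y:d(x,y)=r\}=\emptyset$, for which the open ball $\{y:d(x,y)<r\}$ is then also closed; thus the space has a base of clopen sets and, being separable metrizable, has covering dimension $0$. Consequently every countable compact metrizable space is zero-dimensional, and by the preceding theorem two such spaces have the same shape if and only if they are homeomorphic. Hence it suffices to produce $\aleph_1$ pairwise non-homeomorphic countable compact metrizable spaces.

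For each countable ordinal $\alpha$ I would take $W_\alpha=[0,\omega^\alpha]$ with the order topology. Each $W_\alpha$ is countable, is compact (it has a largest element and every subset has a supremum), and is metrizable since a countable compact Hausdorff space is metrizable. To separate the $W_\alpha$ I would use the Cantor--Bendixson derivative: put $X^{(0)}=X$, $X^{(\beta+1)}=(X^{(\beta)})'$ (the set of non-isolated points), and $X^{(\lambda)}=\bigcap_{\beta<\lambda}X^{(\beta)}$ at limit ordinals, and let the \emph{Cantor--Bendixson rank} of $X$ be the least $\rho$ with $X^{(\rho)}$ finite. A straightforward transfinite induction on $\alpha$ --- using $\omega^\alpha=\sup_{\beta<\alpha}\omega^\beta$ at limit stages --- shows that the rank of $W_\alpha$ is exactly $\alpha$. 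Since the derived-set operation, hence the rank, is a homeomorphism invariant, $W_\alpha$ homeomorphic to $W_{\alpha'}$ forces $\alpha=\alpha'$. As there are $\aleph_1$ countable ordinals, the family $\{W_\alpha:\alpha<\omega_1\}$ consists of $\aleph_1$ countable compact metrizable spaces with pairwise distinct shapes.

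The only inputs used as black boxes are two classical facts --- that a countable compact Hausdorff space is metrizable, and the transfinite computation of the Cantor--Bendixson derivatives of $[0,\omega^\alpha]$ --- and neither presents a genuine obstacle; the computation is routine once one unwinds the ordinal arithmetic. Alternatively one could invoke the Mazurkiewicz--Sierpi\'nski classification, by which every countable compact metrizable space is homeomorphic to some $\omega^\alpha\cdot n+1$, and then simply count; but the explicit family $W_\alpha$ together with its Cantor--Bendixson rank is more self-contained.
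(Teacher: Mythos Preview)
Your argument is correct and follows the same route as the paper: reduce to the topological classification of zero-dimensional spaces via the preceding theorem, then exhibit $\aleph_1$ pairwise non-homeomorphic countable compact metrizable spaces. The only difference is that the paper simply cites the Mazurkiewicz--Sierpi\'nski theorem for the last step, whereas you supply the explicit family $W_\alpha=[0,\omega^\alpha]$ and distinguish them by Cantor--Bendixson rank; this is more self-contained but amounts to reproving a fragment of the result the paper quotes.
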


Indeed, all countable compact metrizable sets are zero-dimensional, and  
Mazurkiewicz and Sierpi\'{n}ski \cite{mazur-sierp} proved that there are $\aleph_1$ different 
topological types of countable compact metrizable spaces.

Corollary~\ref{cor-aleph} leads to the conclusion that the number of different shapes of compact 
sets in $\mathbf{R}^1$ is uncountable.

Yet another class of spaces in which the shape classification coincides with 
the topological one is the class of all $P$-adic solenoids $S_P$, where $P=(p_1, p_2, \ldots)$ 
is a sequence of primes. Recall that a $P$-adic solenoid is defined as the limit of an inverse 
sequence $\{X_n, p_{n n+1}, N\}$, where $X_n=S^1$ and $p_{n n+1}$ is a map of degree $p_n$ for each 
$n\in \mathbb{N}$.

\begin{theorem}[\cite{godlewski-solenoids}, \cite{mard-seg-3}]
Solenoids $S_P$ and $S_Q$ have the same shape if and only if they are homeomorphic.
\end{theorem}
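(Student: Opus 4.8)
The plan is to trap the statement between a one-dimensional cohomological shape invariant and the known classification of solenoids up to homeomorphism. The ``if'' direction is trivial: a homeomorphism is in particular a shape equivalence. So assume $\sh(S_P)=\sh(S_Q)$, where $P=(p_1,p_2,\dots)$ and $Q=(q_1,q_2,\dots)$. First I would fix the evident ANR-systems: since $S_P=\lim\limits_{\longleftarrow}\{S^1,p_{n\,n+1},\mathbb{N}\}$ with $\deg p_{n\,n+1}=p_n$ is the limit of an inverse sequence of the compact ANR $S^1$, the discussion in Section~\ref{osn_constr} shows that the ANR-system $\mathbf{S}_P=\{S^1,\mathbf{p}_{n\,n+1},\mathbb{N}\}$ (in H-CW) is associated with $S_P$, and likewise $\mathbf{S}_Q$ with $S_Q$. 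Hence $\sh(S_P)=\sh(S_Q)$ means precisely that $\mathbf{S}_P$ and $\mathbf{S}_Q$ are isomorphic objects of the pro-homotopy category pro-H-CW.

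Next I would apply one-dimensional \v Cech cohomology $\check{H}^1(\,\cdot\,;\mathbb{Z})$, which is a shape invariant because it factors through pro-H-CW, being the direct limit of the groups $H^1$ of the terms of an associated ANR-system. Since $H^1(S^1)=\mathbb{Z}$ and a degree-$p_n$ self-map of $S^1$ induces multiplication by $p_n$ on $H^1$,
\[
\check{H}^1(S_P)\;\cong\;\lim\limits_{\longrightarrow}\bigl(\,\mathbb{Z}\xrightarrow{\;\times p_1\;}\mathbb{Z}\xrightarrow{\;\times p_2\;}\mathbb{Z}\to\cdots\,\bigr)\;=\;G_P,
\]
where $G_P\subset\mathbb{Q}$ is the subgroup generated by $1$ and all the fractions $1/(p_1p_2\cdots p_n)$. (Alternatively, applying $\pi_1$ and using $[S^1,S^1]=\operatorname{Hom}(\mathbb{Z},\mathbb{Z})=\mathbb{Z}$, one sees that $\mathbf{S}_P\cong\mathbf{S}_Q$ in pro-H-CW iff the pro-groups $\{\mathbb{Z},\times p_n\}$ and $\{\mathbb{Z},\times q_n\}$ are isomorphic in pro-Grp, which on passing to direct limits again gives $G_P\cong G_Q$.) Thus shape equivalence forces $G_P\cong G_Q$ as abstract abelian groups.

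The algebraic core is Baer's classification of rank-one torsion-free abelian groups. For a prime $p$ the $p$-height of $1\in G_P$ equals $n_p(P):=\#\{i:p_i=p\}\in\{0,1,2,\dots,\infty\}$, so the characteristic of $1$ is $(n_p(P))_p$ and the type of $G_P$ is its equivalence class; hence $G_P\cong G_Q$ if and only if $n_p(P)=n_p(Q)$ for all but finitely many primes $p$ and, at the finitely many exceptions, $n_p(P)$ and $n_p(Q)$ are both finite or both infinite. Equivalently, the supernatural numbers $\prod_p p^{n_p(P)}$ and $\prod_p p^{n_p(Q)}$ agree up to a rational factor, which is exactly the condition that $Q$ arises from $P$ by deleting and inserting finitely many terms. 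By the classification of solenoids up to homeomorphism this is precisely the condition for $S_P$ and $S_Q$ to be homeomorphic (indeed, isomorphic as topological groups). Chaining the implications gives the assertion.

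The content is concentrated in this last step. Identifying the associated ANR-system, computing $\check{H}^1$ of an inverse limit, and invoking the shape-invariance of \v Cech cohomology are routine, and Baer's type invariant is classical; the genuinely topological input, and the main obstacle, is the theorem that solenoids with the same type are homeomorphic, which is what one would cite. It is worth recording why passing to $\check{H}^1$ costs nothing: the system $\mathbf{S}_P$ consists of Eilenberg--MacLane complexes $K(\mathbb{Z},1)$, so its whole pro-homotopy type is already encoded in the single pro-group $\{\mathbb{Z},\times p_n\}$, and this is exactly why a one-dimensional invariant detects the full shape of a solenoid.
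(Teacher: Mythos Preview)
The paper does not give its own proof of this theorem; it is a survey that merely states the result with citations and remarks that Godlewski showed the shape of a solenoid is completely determined by its first cohomology group. Your argument is correct and follows exactly that route: you pass from $\sh(S_P)=\sh(S_Q)$ to an isomorphism $\check{H}^1(S_P)\cong\check{H}^1(S_Q)$ of rank-one torsion-free groups, identify the type via Baer's classification, and then invoke the topological classification of solenoids (Bing, McCord) to conclude homeomorphism. This is precisely the strategy the paper attributes to the cited sources, so there is nothing to correct and no genuinely different approach to compare.
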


Godlewski \cite{godlewski-solenoids} showed that the shape of any solenoid is completely determined 
by its first cohomology group, in much the same way that the shape of any plane continuum 
is determined by its first Betti number.

Another interesting case in which shape theory gives nothing new is that of all 
compact connected Abelian groups. The shape morphisms between groups are in one-to-one  
correspondence with the continuous homomorphisms of these groups. Moreover, Keesling 
\cite{keesling-10} proved the following theorem.

\begin{theorem}
Let $X$ and $Y$ be compact connected Abelian groups. Then $\sh X = \sh Y$ if and only if $X$ 
and $Y$ are isomorphic (algebraically and topologically). 
\end{theorem}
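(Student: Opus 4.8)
The plan is to reduce the theorem to the identification, recalled just above the statement, of shape morphisms between compact connected Abelian groups with their continuous homomorphisms; the real work lies in establishing that identification and checking it is functorial, after which both implications are formal. The \emph{if} part is immediate: an isomorphism $X\to Y$ of topological groups is in particular a homeomorphism, hence a homotopy equivalence, hence a shape equivalence, so $\sh X=\sh Y$. So assume $\sh X=\sh Y$.

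First I would set up the Pontryagin-dual picture. The character group $\widehat X$ of a compact connected Abelian group $X$ is a discrete torsion-free Abelian group; writing it as the directed union of its finitely generated subgroups $G_\alpha\cong\mathbb{Z}^{n_\alpha}$ and dualizing yields $X=\varprojlim_\alpha T_\alpha$, an inverse limit of finite-dimensional tori $T_\alpha=\widehat{G_\alpha}$, with bonding homomorphisms $p_{\alpha\alpha'}$ dual to the inclusions $G_\alpha\hookrightarrow G_{\alpha'}$. Each $T_\alpha$ is a compact ANR, so by the Marde\v si\'c--Segal fact recalled in the introduction the ANR-system $\mathbf X=\{T_\alpha,\mathbf p_{\alpha\alpha'},A\}$ (with the homotopy classes of the structure maps) is associated with $X$; similarly $Y=\varprojlim_\beta T'_\beta$ gives an associated ANR-system $\mathbf Y=\{T'_\beta,\mathbf q_{\beta\beta'},B\}$.

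Next, by Grothendieck's formula a shape morphism $X\to Y$ is an element of $\varprojlim_\beta\varinjlim_\alpha[T_\alpha,T'_\beta]$. Since a torus $T^n$ is a $K(\mathbb{Z}^n,1)$, one has a natural identification $[T_\alpha,T'_\beta]\cong\mathrm{Hom}(\pi_1 T_\alpha,\pi_1 T'_\beta)=\mathrm{Hom}(\mathbb{Z}^{n_\alpha},\mathbb{Z}^{m_\beta})$, and each such homotopy class contains a unique continuous homomorphism of tori; thus $[T_\alpha,T'_\beta]$ is naturally the group $\mathrm{Hom}(T_\alpha,T'_\beta)$ of continuous homomorphisms. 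Substituting and passing to the limits gives
\[
\mathrm{Sh}(X,Y)\;\cong\;\varprojlim_\beta\varinjlim_\alpha\mathrm{Hom}(T_\alpha,T'_\beta)\;\cong\;\mathrm{Hom}\!\bigl(\varprojlim_\alpha T_\alpha,\varprojlim_\beta T'_\beta\bigr)\;=\;\mathrm{Hom}(X,Y),
\]
where the middle step uses that $\mathrm{Hom}(-,\varprojlim)=\varprojlim\,\mathrm{Hom}(-,-)$ and that, each $\widehat{T'_\beta}$ being finitely generated, $\mathrm{Hom}(\varprojlim_\alpha T_\alpha,T'_\beta)=\varinjlim_\alpha\mathrm{Hom}(T_\alpha,T'_\beta)$ — most cleanly checked by dualizing, since $\mathrm{Hom}(\widehat{T'_\beta},-)$ commutes with the filtered colimit $\widehat X=\varinjlim_\alpha G_\alpha$. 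A routine verification that this bijection sends Grothendieck's composition of pro-H-CW-morphisms to composition of homomorphisms makes it functorial. Granting this, if $F\colon X\to Y$ and $G\colon Y\to X$ are mutually inverse shape morphisms, the corresponding continuous homomorphisms $f\colon X\to Y$, $g\colon Y\to X$ satisfy $g\circ f=\mathrm{id}_X$ and $f\circ g=\mathrm{id}_Y$, so $f$ is an isomorphism of topological groups.

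The main obstacle is the third paragraph: showing that the double limit of the homotopy-class sets $[T_\alpha,T'_\beta]$ computes $\mathrm{Hom}(X,Y)$ \emph{and does so functorially}. The asphericity identification $[T^m,T^n]\cong\mathrm{Hom}(T^m,T^n)$ is classical, but one must make sure it is natural in both variables (so that it survives the inverse--direct limit) and that interchanging $\mathrm{Hom}$ with $\varprojlim$ in the second variable and with $\varinjlim$ in the first is legitimate; the latter is precisely where finite generation of the $G_\alpha$, equivalently compactness and finite-dimensionality of the $T_\alpha$, enters, and Pontryagin duality is the natural bookkeeping device throughout.
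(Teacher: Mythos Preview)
The paper does not give its own proof of this theorem: it is a survey, and the statement is simply attributed to Keesling \cite{keesling-10}. The only hint the paper provides is the sentence immediately preceding the theorem, namely that \emph{the shape morphisms between compact connected Abelian groups are in one-to-one correspondence with the continuous homomorphisms of these groups}. Your proposal is precisely a careful unpacking of that sentence, and the argument you outline is correct and is the standard one.

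A few remarks on the details. Your use of Pontryagin duality to exhibit $X$ as an inverse limit of tori is exactly right: connectedness of $X$ gives torsion-freeness of $\widehat X$, hence the finitely generated subgroups $G_\alpha$ are free, and the dual system consists of compact ANRs, so the paper's remark that limits of inverse systems of compact ANRs yield associated ANR-systems applies. The identification $[T^m,T^n]\cong\mathrm{Hom}(T^m,T^n)$ is fine in the unpointed setting too, since $T^n$ is a topological group (so free and pointed homotopy classes agree). Your handling of the double limit is also sound; the cleanest bookkeeping is indeed to dualize everything and compute
\[
\varprojlim_\beta\varinjlim_\alpha\mathrm{Hom}(H_\beta,G_\alpha)
=\varprojlim_\beta\mathrm{Hom}(H_\beta,\widehat X)
=\mathrm{Hom}(\widehat Y,\widehat X)\cong\mathrm{Hom}(X,Y),
\]
where the first equality uses that $H_\beta$ is finitely generated (your stated hypothesis) and the second that $\mathrm{Hom}(-,\widehat X)$ sends filtered colimits to limits. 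The functoriality check you flag as the ``main obstacle'' is genuinely routine once the identifications are set up naturally in both variables, as you do; there is no hidden difficulty there.

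In short: your proof is correct, and it realizes exactly the idea the paper gestures at but does not itself carry out.
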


It should be mentioned that this result turns out to be very useful for 
constructing various counterexamples in shape theory. The class of compact connected Abelian 
groups is substantially larger than the class of all solenoids. Moreover,  the following 
theorem of Keesling is valid \cite{keesling-7} (see also~\cite{eber-gor-mack}).

\begin{theorem}\label{th-kess}
Let $X$ be a $T^n$-like continuum, where $T^n = S^1 \times \ldots \times S^1$ is the 
$n$-torus. Then $X$ has the shape of a compact connected Abelian topological group. 
\end{theorem}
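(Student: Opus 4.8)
The plan is to realize $X$ as an inverse limit of $n$-tori, replace the bonding maps by continuous group homomorphisms without changing the associated ANR-system, and recognize the resulting inverse limit as a compact connected Abelian group.

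First I would invoke the classical characterization of $P$-like continua of Marde\v si\'c and Segal: a continuum $X$ is $T^n$-like if and only if it is homeomorphic to the inverse limit of an inverse sequence $\{X_k,f_{k,k+1},\mathbb{N}\}$ in which every $X_k$ is the $n$-torus $T^n$ and the $f_{k,k+1}\colon T^n\to T^n$ are (surjective) continuous maps. Since $T^n$ is a compact manifold, hence an ANR, the inverse sequence of homotopy classes $\mathbf{X}=\{T^n,\mathbf{f}_{k,k+1},\mathbb{N}\}$ is an ANR-system, and by the theorem recalled in the Introduction --- a compact Hausdorff space that is the limit of an inverse system of compact ANRs is associated with that system --- the system $\mathbf{X}$ is associated with $X$; thus $\mathbf{X}$ may be taken as $S(X)$.

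The key step is to replace each $f_{k,k+1}$ by a homomorphism. The $n$-torus is an Eilenberg--MacLane space $K(\mathbb{Z}^n,1)$, so two self-maps of $T^n$ are homotopic precisely when they induce the same endomorphism of $\pi_1(T^n)\cong\mathbb{Z}^n$; hence $[T^n,T^n]\cong\operatorname{End}(\mathbb{Z}^n)$, the monoid of $n\times n$ integer matrices. Every such matrix $A$ satisfies $A\mathbb{Z}^n\subset\mathbb{Z}^n$, so the linear map $A$ descends to a continuous group endomorphism of $T^n=\mathbb{R}^n/\mathbb{Z}^n$, which lies in the homotopy class corresponding to $A$. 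Applying this to each bonding map, I obtain continuous homomorphisms $h_{k,k+1}\colon T^n\to T^n$ with $f_{k,k+1}\simeq h_{k,k+1}$, so $\mathbf{f}_{k,k+1}=\mathbf{h}_{k,k+1}$ in H-CW and the ANR-systems $\{T^n,\mathbf{f}_{k,k+1},\mathbb{N}\}$ and $\{T^n,\mathbf{h}_{k,k+1},\mathbb{N}\}$ are literally the same object of pro-H-CW.

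Finally I would put $G=\lim\limits_{\longleftarrow}\{T^n,h_{k,k+1},\mathbb{N}\}$, the inverse limit taken in the category of topological groups: being a closed subgroup of $\prod_k T^n$, it is a compact Abelian topological group. It is connected because the images of the iterated bonding maps into any fixed coordinate form a nested sequence of subtori of $T^n$, which stabilizes; replacing each $T^n$ by the eventual image subtorus and restricting the bonding maps (which does not change $G$) presents $G$ as the inverse limit of a sequence of tori with surjective homomorphic bonding maps, and such a limit is a continuum. By the theorem quoted above, $\{T^n,\mathbf{h}_{k,k+1},\mathbb{N}\}$ is also associated with $G$, so $S(G)$ and $S(X)$ coincide in pro-H-CW; therefore $\sh(X)=\sh(G)$ with $G$ a compact connected Abelian topological group, as claimed. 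The main obstacle is the homotopy--homomorphism replacement in the third step, which rests on the asphericity of $T^n$ together with the realization of integer matrices by linear maps modulo the lattice $\mathbb{Z}^n$; the connectedness of $G$ is the only other point needing a little care, and is handled as above by passing to stable images.
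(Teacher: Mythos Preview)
The paper does not prove this theorem; it merely states it with a reference to Keesling \cite{keesling-7} (and \cite{eber-gor-mack}). So there is nothing to compare against within the paper itself, and your argument stands or falls on its own.

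Your proof is correct and follows the natural line one would expect in Keesling's original treatment: represent the $T^n$-like continuum as an inverse limit of tori, use the fact that $T^n$ is a $K(\mathbb{Z}^n,1)$ to replace each bonding map by the homomorphism in its homotopy class (integer matrices act on $\mathbb{R}^n/\mathbb{Z}^n$), and observe that the new inverse limit $G$ is a compact Abelian group with the same associated ANR-system, hence the same shape. Two minor remarks. First, your parenthetical about surjectivity of the $f_{k,k+1}$ is part of the standard characterization of $P$-like continua, but you do not actually need it, and indeed the replacing homomorphisms $h_{k,k+1}$ need \emph{not} be surjective (a surjective self-map of $T^n$ can be null-homotopic). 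Second, the work you do at the end to secure connectedness of $G$ is unnecessary: the inverse limit of an inverse sequence of nonempty compact connected Hausdorff spaces is automatically nonempty, compact, and connected, regardless of whether the bonding maps are onto. So the ``stable images'' maneuver, while not wrong, can simply be dropped.
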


However, if $\Pi$ is the family of all compact connected Lie groups and $X$ is a $\Pi$-like 
continuum, then the shape of $X$ may be different from that of a compact connected topological  
group. The corresponding example was constructed in~\cite{keesling-7}.

In the case of plane continua, shape depends only on the first Betti number of a continuum, i.e., 
on the number of domains into which the plane is separated by this continuum.

\begin{theorem}\cite{borsuk-rus}\label{th-borsuk}
Two plane continua $X$ and $Y$ have the same shape if and only if their first Betti numbers 
coincide. 
\end{theorem}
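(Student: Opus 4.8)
The plan is to prove the two implications by entirely different means. The ``only if'' part is formal: \v Cech cohomology is a shape invariant — it is computed from any ANR-system associated with the space, hence is carried along by the shape functor — so $\sh X=\sh Y$ forces $\check H^1(X;\mathbb Z)\cong\check H^1(Y;\mathbb Z)$. On the other hand, for a plane continuum $Z\subset S^2$, Alexander duality gives $\check H^1(Z)\cong\widetilde H_0(S^2\setminus Z)$, a free abelian group whose rank equals the number of bounded components of $\mathbb R^2\setminus Z$, i.e.\ the first Betti number of $Z$. Hence equal shapes imply equal first Betti numbers.

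For the converse, suppose $X$ has first Betti number $n$; I treat finite $n$ first and aim to show $\sh X=\sh\bigl(\bigvee_{j=1}^{n}S^1\bigr)$. Write $\mathbb R^2\setminus X=U_\infty\sqcup D_1\sqcup\dots\sqcup D_n$, with $U_\infty$ the unbounded domain, fix a point $x_j\in D_j$, and set $N=\mathbb R^2\setminus\{x_1,\dots,x_n\}$, an open — hence ANR — neighbourhood of $X$ homotopy equivalent to $\bigvee_{j=1}^{n}S^1$. The first step is to present $X$ as an inverse limit $X=\lim\limits_{\longleftarrow}\{P_m,i_{mm'}\}$ of \emph{connected} compact polyhedral neighbourhoods with $X\subset\mathrm{int}\,P_{m+1}\subset P_m\subset N$, normalized so that each $P_m$ has exactly $n$ bounded complementary domains, namely those containing $x_1,\dots,x_n$: start from any nested sequence of connected polyhedral neighbourhoods of $X$ and fill in every bounded complementary component that contains none of the $x_j$. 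Since the complement of any neighbourhood of $X$ is contained in $\mathbb R^2\setminus X$, each of its components lies in a single $D_j$ or in $U_\infty$, so after filling exactly one hole survives around each $x_j$; a routine check shows the filled sets are again connected polyhedral neighbourhoods and that the nesting can be preserved. As $X=\lim\limits_{\longleftarrow}\{P_m\}$ is compact Hausdorff, the ANR-system $\{P_m,i_{mm'}\}$ is associated with $X$, so $\sh X$ is represented by it, and the inclusions $P_m\hookrightarrow N$ form a level morphism to the constant system $N$.

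The second step is to show this level morphism is an isomorphism in pro-H-CW. Each $P_m$ is a compact connected planar polyhedron, hence homotopy equivalent to a finite graph; its fundamental group is free, of rank equal to that of $\check H^1(P_m)$, which by Alexander duality is the number of bounded complementary domains of $P_m$, i.e.\ $n$. So $\pi_1(P_m)\cong F_n\cong\pi_1(N)$, and since graphs are aspherical it suffices, by Whitehead's theorem, to see that the inclusions $P_m\hookrightarrow N$ and $P_{m+1}\hookrightarrow P_m$ induce isomorphisms on $\pi_1$. The crucial point is \emph{surjectivity} on $\pi_1$: every loop in $P_m$ is freely homotopic, inside $P_m$, to a loop pushed toward $X$ and lying in $P_{m+1}$; and every standard generator of $\pi_1(N)$ — a small loop around some $x_j$ — is freely homotopic in $N$ to a loop encircling the hole of $P_m$ around $x_j$, hence to a loop inside $P_m$. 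Since a surjective endomorphism of the free group $F_n$ is an isomorphism, all the maps $\pi_1(P_{m+1})\to\pi_1(P_m)$ and $\pi_1(P_m)\to\pi_1(N)$ are isomorphisms; thus each $P_m\hookrightarrow N$ is a homotopy equivalence, and these fit together to an isomorphism $\{P_m\}\cong N$ in pro-H-CW. Therefore
\[
\sh X=\sh N=\sh\Bigl(\bigvee_{j=1}^{n}S^1\Bigr),
\]
so $\sh X$ depends only on $n$; with the first half this proves the theorem for finite $n$.

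For $n=0$ the complement of $X$ is connected, $X$ is an intersection of planar disks, hence cell-like, hence of trivial shape — the $n=0$ instance of the displayed equality. For $n=\infty$ I would run the same argument with each $P_m$ having a finite but unbounded number of holes and compare with the model continuum realizing countably many complementary domains, the decisive input again being $\pi_1$-surjectivity. That surjectivity on $\pi_1$ — and not merely on $H_1$ — for the inclusions of planar neighbourhoods is the hard part: it is exactly what fails for non-planar continua (for a $p$-adic solenoid the corresponding bonding maps are multiplication by $p$ on $\pi_1(S^1)=\mathbb Z$, which is not onto), and it is what makes the theorem special to the plane. The normalization step is routine bookkeeping but needs care, since crude polyhedral neighbourhoods acquire spurious holes.
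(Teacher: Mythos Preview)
The paper does not prove this theorem; it only states it with a reference to Borsuk's monograph~\cite{borsuk-rus}, so there is no proof here to compare against. Your plan for finite $n$---normalize a cofinal system of compact connected polyhedral neighbourhoods $P_m\subset N=\mathbb R^2\setminus\{x_1,\dots,x_n\}$ to have exactly $n$ holes each, then show that all inclusions $P_{m+1}\hookrightarrow P_m\hookrightarrow N$ are homotopy equivalences---is essentially the standard one and is correct in outline, but the step you yourself flag as ``crucial'' has a real gap. You argue $\pi_1$-surjectivity of $P_m\hookrightarrow N$ by noting that each generator of $\pi_1(N)$ is \emph{freely} homotopic to a loop in $P_m$; but free homotopy only gives conjugacy, and a subgroup of $F_n$ containing a conjugate of every generator can be proper. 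For instance $H=\langle a,\ [a,b]\,b\,[a,b]^{-1}\rangle$ is a proper rank-$2$ subgroup of $F_2$ of infinite index (its Stallings graph has three vertices), and $H$ is even the image of an endomorphism of $F_2$ inducing the identity on $\mathbb Z^2$---so appealing to ``isomorphism on $H_1$'' would not rescue the argument either. The companion claim, that every loop in $P_m$ can be pushed toward $X$ into $P_{m+1}$, you simply assert.

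The fix bypasses $\pi_1$ altogether: by the polygonal Schoenflies theorem the components of $N\setminus P_m$ are the exterior of the outer boundary curve of $P_m$ (a half-open annulus) together with the $n$ punctured disks $H_j\setminus\{x_j\}$; each of these deformation retracts onto its boundary circle in $\partial P_m$, and gluing these retractions with the identity on $P_m$ gives a deformation retraction of $N$ onto $P_m$. Hence every inclusion $P_m\hookrightarrow N$ is a homotopy equivalence, and by two-out-of-three so is each $P_{m+1}\hookrightarrow P_m$. This geometric step is where planarity is actually used, and it replaces your Hopfian argument entirely. Your $n=\infty$ case remains, as you acknowledge, only a gesture: the $P_m$ now have unboundedly many holes, there is no single ANR target $N$, and one must genuinely compare two pro-systems of wedges of circles with increasing numbers of factors; Borsuk's fundamental-sequence construction does real work here that your sketch does not yet supply.
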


In particular, $\sh X \geqslant \sh Y$ if and only if the number of domains in 
$\mathbf{R}^2\backslash X$ is greater than or equal to that of domains in $\mathbf{R}^2\backslash 
Y$. Therefore, for any two plane continua $X$ and $Y$, we have either $\sh X = \sh Y$, $\sh 
X < \sh Y$, or $\sh X > Y$. However, for continua in Euclidean 3-space 
$\mathbf{R}^3$, the situation is different: there exist continua $X, 
Y \subset \mathbf{R}^3$ such that $\sh X \leqslant \sh Y$ and $\sh X \geqslant \sh Y$ but $\sh X 
\neq \sh Y$.

Theorem~\ref{th-borsuk} implies the existence of countably many different shapes 
of plane continua. Representatives of these shapes are a point (the trivial shape), the wedge of 
$n$ circles for every $n \in \mathbb{N}$, and the wedge of infinitely many circles. This gives a 
complete shape classification of plane continua.

It is easy to show that the family of all different shapes of compact sets in the plane 
has cardinality $2^{\aleph_0}$. Godlewski \cite{godlewski-on} proved that in $\mathbf{R}^3$ there 
exist $2^{\aleph_0}$ continua with pairwise different shapes (see also~\cite{mard-seg-3}). In fact, 
these different shapes can be found among solenoids.

Complete shape classifications of $\Pi$-similar continua for various classes $\Pi$ can be 
found in \cite{mard-seg-3}, \cite{segal-on}, \cite{segal-shapeclass}, \cite{segal-shape}, 
\cite{handle-segal}, \cite{handle-segal-finite}, \cite{handle-segal-on}, \cite{keesling-7}, 
\cite{eber-gor-mack}, and \cite{watanabe-shape}.

\section{Complement Theorems in Shape Theory}

Shape theory is a very useful tool for solving classical problems of infinite-dimensional and 
geometric topology. This has become quite clear after deep results of Chapman \cite{chapman-on}, 
\cite{chapman-4},  Geoghegan and Summerhill \cite{geoghegan-summ-1},  Edwards 
\cite{edwardsR.-char}, West \cite{west-map}, and other authors, which have demonstrated 
the effectiveness of  shape theory methods in geometric topology. On the other 
hand, methods of infinite-dimensional topology are very useful for studying the shapes 
of metrizable compact sets. As far as we know, in homotopy theory, these methods were first 
applied by Borsuk \cite{borsuk-homotopy}. He proved a theorem about 
the homotopy type of a quotient space of Euclidean space $\mathbf{R}^n$ by using Klee's theorem 
\cite{klee} on extending homeomorphisms of compact sets in a Hilbert space to the entire 
Hilbert space. These methods were also applied by Henderson~\cite{henderson}.

A special role in the application of methods of infinite-dimensional topology to 
shape theory is played by the so-called complement theorems, which answer 
the following question in various situations: When are the complements  
$M\backslash X$ and $M\backslash Y$ of compact sets $X$ and $Y$ embedded in 
an ambient space $M$ in a special way homeomorphic? One of the first results is this 
kind was Borsuk's theorem \cite{borsuk-rus} that if $X$ and $Y$ are plane continua, 
then the complements $\mathbf{R}^2\backslash X$ and $\mathbf{R}^2\backslash Y$ are homeomorphic if 
and only if $\sh(X)=\sh(Y)$. Remarkable results in this direction were also obtained by Chapman 
\cite{chapman-on}, \cite{chapman-4}. In essence, Chapman's theorems assert that, under certain 
constraint on the embedding of two compact spaces $X$ and $Y$ in the Hilbert cube $Q$, these spaces 
have the same shape if and only if their complements are homeomorphic. Therefore, if $X$ 
and $Y$ are absolute neighborhood retracts, then they have the same homotopy type if and 
only if their complements are homeomorphic.

We represent the Hilbert cube $Q$ in the form the product $\prod\limits_{n=1}^\infty I_n$, 
where $I_n=[0,1]$. The set $s=\prod\limits_{n=1}^\infty \mathring{I}_n$, where 
$\mathring{I}_n=(0,1)$, is called the \emph{pseudointerior} of the Hilbert cube~$Q$.

Recall that a compact subset $X$ of the Hilbert cube $Q$ is called a \emph{$Z$-set} if the identity 
map $1_Q$ can be approximated arbitrarily closely by continuous maps from $Q$ to $Q\backslash X$, 
i.e., for any $\varepsilon >0$, there exists a continuous map $f\colon  Q \to Q\backslash X$ such 
that $d(f, 1_Q)< \varepsilon$. This notion was introduced by Anderson \cite{anderson} and is one 
of the fundamental notions of the theory of $Q$-manifolds~\cite{chapman-1}.

Note that any compact metrizable space is embedded in the Hilbert cube as a $Z$-set.
Compact sets lying in the pseudointerior $s$ of the Hilbert cube $Q$ form an important class 
of $Z$-sets. The following theorem shows that any $Z$-set in the Hilbert cube $Q$ can be mapped 
to the pseudointerior $s$ by a homeomorphism $h\colon Q \to Q$.

\begin{theorem}
Let $X\in Q$ be a $Z$-set. Then, for any $\varepsilon >0$, there exists a homeomorphism 
$h\colon Q \to Q$ such that $h(X)\subset s$ and $d(h, 1_Q)< \varepsilon $. 
\end{theorem}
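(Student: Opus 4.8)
The plan is to build the homeomorphism $h$ as an infinite composition of homeomorphisms, each of which ``pushes'' one more coordinate of $X$ off the endpoints $0$ and $1$, while keeping careful control of the size of each move so that the total displacement stays below $\varepsilon$ and the infinite product converges to a homeomorphism. First I would fix a metric $d$ on $Q=\prod_{n=1}^\infty I_n$ of the form $d(x,y)=\sum_{n=1}^\infty 2^{-n}|x_n-y_n|$, and choose a summable sequence $\varepsilon_n>0$ with $\sum_n \varepsilon_n<\varepsilon$. The goal is to construct homeomorphisms $h_n\colon Q\to Q$ with $d(h_n,1_Q)<\varepsilon_n$ such that $h_n$ only modifies the $n$-th coordinate, and so that the image of $X$ under $h_n\circ\cdots\circ h_1$ has its first $n$ coordinates bounded away from $\{0,1\}$; then $h=\lim_{n\to\infty} h_n\circ\cdots\circ h_1$ is the desired homeomorphism, landing $X$ inside $s$.

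The key step is the construction of a single $h_n$. Here is where the $Z$-set hypothesis enters. Let $X_n=(h_{n-1}\cdots h_1)(X)$, still a $Z$-set (being $Z$-set is preserved by homeomorphisms of $Q$). The projection $\pi_n\colon Q\to I_n$ is continuous, so $\pi_n(X_n)$ is a compact subset of $[0,1]$. The crucial point is that, because $X_n$ is a $Z$-set, we may assume --- after a preliminary small homeomorphism if necessary, or by a direct compactness-and-approximation argument --- that $\pi_n(X_n)$ does \emph{not} contain some nondegenerate interval near each endpoint; more precisely, there is $\delta_n>0$ and a point $t_n\in(0,1)$ such that one can define a homeomorphism $\lambda_n\colon I_n\to I_n$ fixing a neighborhood of $\pi_n(X_n)$'s ``interior part'' and dragging the portion of $\pi_n(X_n)$ lying in $[0,\delta_n]\cup[1-\delta_n,1]$ into the interior $(\delta_n',1-\delta_n')$, with $\sup_{t}|\lambda_n(t)-t|<2^{n}\varepsilon_n$. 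Setting $h_n=\lambda_n$ acting on the $n$-th factor and the identity elsewhere gives $d(h_n,1_Q)<\varepsilon_n$. The reason such a $\lambda_n$ exists with small sup-norm is that, by approximating $1_Q$ by maps into $Q\setminus X_n$ and composing with coordinate projections, the set $\pi_n(X_n)$ can be made to miss a prescribed small set; iterating the ``clean-up'' coordinate by coordinate is exactly what the $Z$-set condition buys us. (One standard packaging of this is: a $Z$-set in $Q$ can be instantly isotoped off any given $Z$-set, and $I_{n}\times\{0\}\cup I_n\times\{1\}$ sits as a $Z$-set of the relevant slice; I would cite the basic $Z$-set manipulation lemmas of Anderson and Chapman rather than reprove them.)

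The main obstacle --- and the place I expect to spend the most care --- is the convergence and \emph{bijectivity} of the infinite composition $h=\lim_n g_n$ where $g_n=h_n\circ\cdots\circ h_1$. Uniform convergence of the $g_n$ is immediate from $d(h_n,1_Q)<\varepsilon_n$ and summability, so $h=\lim g_n$ exists and is continuous with $d(h,1_Q)<\sum\varepsilon_n<\varepsilon$. That $h$ is injective requires that the moves not ``pile up'': since $h_n$ touches only the $n$-th coordinate and leaves all coordinates $\neq n$ fixed, the action on coordinate $m$ is $\lambda_m$ applied once (in $g_n$ for all $n\geq m$) and never again, so coordinate-wise $h(x)_m=\lambda_m(x_m)$ --- i.e.\ $h=\prod_m\lambda_m$ is literally a product homeomorphism of $Q$, manifestly a homeomorphism with inverse $\prod_m\lambda_m^{-1}$. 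This product structure makes both invertibility and the inclusion $h(X)\subset s$ transparent: $\pi_m(h(X))=\lambda_m(\pi_m(X))\subset(0,1)$ for every $m$, hence $h(X)\subset\prod_m(0,1)=s$. The only genuinely delicate quantitative point is choosing the endpoint-avoidance tolerances $\delta_m,\delta_m'$ and the sup-bounds on $\lambda_m$ compatibly, so that each $\lambda_m$ both displaces $\pi_m(X)$ strictly into the interior and has sup-norm $<2^m\varepsilon_m$; this is a routine estimate once the first two paragraphs are in place.
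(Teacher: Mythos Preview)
Your argument has a fatal gap: a product homeomorphism $h=\prod_m \lambda_m$ of $Q$, with each $\lambda_m\colon I_m\to I_m$ a homeomorphism, necessarily preserves the pseudoboundary $Q\setminus s$. Any self-homeomorphism of $[0,1]$ sends $\{0,1\}$ onto $\{0,1\}$; hence if $x_m\in\{0,1\}$ then $(h(x))_m=\lambda_m(x_m)\in\{0,1\}$ and $h(x)\notin s$. So if the $Z$-set $X$ meets $Q\setminus s$ at all, no product homeomorphism can carry it into $s$. A concrete instance: the face $X=\{0\}\times\prod_{n\ge 2}I_n$ is a $Z$-set (approximate $1_Q$ by $(x_1,x_2,\dots)\mapsto(\epsilon+(1-\epsilon)x_1,x_2,\dots)$), yet $\pi_1(X)=\{0\}$, and your $\lambda_1$ cannot move $0$ into $(0,1)$. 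Your sentence claiming that a homeomorphism $\lambda_n\colon I_n\to I_n$ can ``drag the portion of $\pi_n(X_n)$ lying in $[0,\delta_n]\cup[1-\delta_n,1]$ into the interior $(\delta_n',1-\delta_n')$'' is therefore impossible whenever $0$ or $1$ actually belongs to $\pi_n(X_n)$, and nothing in the $Z$-set hypothesis prevents that.

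The paper itself does not prove this theorem; it attributes it to Anderson and uses it as input. Anderson's proof does \emph{not} proceed via product homeomorphisms: it requires homeomorphisms that mix coordinates, typically shears of the shape $(x_1,x_2,\dots)\mapsto(\phi(x_1,x_2),x_2,\dots)$, where an auxiliary coordinate is used to push a given coordinate off its endpoints. The $Z$-set hypothesis enters not by controlling one-dimensional projections (which it does not), but by ensuring $X$ can be isotoped off prescribed auxiliary $Z$-sets (faces, slices) that serve as anchors for these coordinate-mixing moves. Your reduction to factor-by-factor one-variable homeomorphisms discards exactly the degree of freedom the proof needs.
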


This theorem, which was proved by Anderson~\cite{anderson}, is exceptionally important, 
because it reduces studying $Z$-sets to the theory of compact sets in the pseudointerior~$s$.

Chapman \cite{chapman-on} proved that the shape of any $Z$-set (in particular, of any compact set 
in $s$) depends only on the topological type of its complement. To be more precise, the following 
remarkable \emph{complement theorem} is valid (see~\cite{chapman-on}).

\begin{theorem}\label{th-chapman}
Let $X$ and $Y$ be any $Z$-sets in the Hilbert cube $Q$. Then they have the same shape if and only 
if their complements $Q\backslash X$ and $Q\backslash Y$ are homeomorphic. 
\end{theorem}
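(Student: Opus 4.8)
The plan is to deduce the theorem from the structure theory of Hilbert cube manifolds; the shape-theoretic part is the soft half of the argument. The first observation is that, since $X$ is a $Z$-set, it is homotopy negligible in $Q$, so $Q\setminus X$ is a contractible open subset of $Q$ and hence a $Q$-manifold, and likewise for $Q\setminus Y$. The heavy inputs I would invoke about $Q$-manifolds are Chapman's triangulation theorem (every $Q$-manifold is homeomorphic to $K\times Q$ for a locally finite simplicial complex $K$), stability ($M\cong M\times Q$), and, crucially, Chapman's classification: a suitably controlled proper homotopy equivalence between $Q$-manifolds is homotopic to a homeomorphism once the associated infinite simple-homotopy (Whitehead torsion) obstruction vanishes. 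None of this is accessible by the elementary means of this survey, and the whole theorem should be viewed as resting on it.

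The bridge between the two sides of the equivalence is that $\sh(X)$ is recorded by the behaviour of $Q\setminus X$ near $X$. Fix a decreasing basis $Q\supset U_1\supset U_2\supset\cdots$ of open ANR neighbourhoods of $X$, chosen to shrink to $X$ so that $\bigcap_n U_n=X$; then $\{U_n,\ \text{inclusions}\}$ is an ANR-system associated with $X$, so $\sh(X)$ is precisely the pro-homotopy type of $\{U_n\}$ in pro-H-CW. Now $C_n:=Q\setminus U_n$ is compact, $C_n\subset Q\setminus X$, and every compact subset of $Q\setminus X$ lies in some $C_n$; hence the sets $U_n\setminus X=(Q\setminus X)\setminus C_n$ form a cofinal inverse system of complements of compacta in $Q\setminus X$. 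Since $X$ is a $Z$-set in each $U_n$, the inclusion $U_n\setminus X\hookrightarrow U_n$ is a homotopy equivalence, so $\{U_n\setminus X\}$ has the same pro-homotopy type as $\{U_n\}$; thus the pro-homotopy type of the end of $Q\setminus X$ at $X$ equals $\sh(X)$.

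This already yields the easy implication. A homeomorphism $h\colon Q\setminus X\to Q\setminus Y$ carries complements of compacta to complements of compacta, cofinally, hence induces (after a cofinal reindexing) an isomorphism in pro-H-CW between $\{U_n\setminus X\}$ and an analogous system $\{V_m\setminus Y\}$ built from a neighbourhood basis of $Y$. Combining with the identifications of the previous paragraph, $\{U_n\}\cong\{V_m\}$ in pro-H-CW, and since all associated ANR-systems of a space are isomorphic and determine its shape, $\sh(X)=\sh(Y)$.

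For the converse, assume $\sh(X)=\sh(Y)$, i.e.\ $\{U_n\}\cong\{V_m\}$ in pro-H-CW. After reindexing I would realize this isomorphism by a fundamental sequence in Borsuk's sense, i.e.\ by maps $f_n\colon U_n\to V_n$ (together with homotopy-inverse maps $g_n\colon V_n\to U_n$ and compatibility homotopies) which, pushed off the $Z$-sets, glue into a proper map $Q\setminus X\to Q\setminus Y$; the homotopies witnessing $(g_n)(f_n)\simeq(1_n)$ and $(f_n)(g_n)\simeq(1_n)$ become proper homotopies, so this map is a proper homotopy equivalence. Finally I would feed this into Chapman's classification of $Q$-manifolds to upgrade it to a homeomorphism $Q\setminus X\cong Q\setminus Y$. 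The main obstacle is exactly this last step: one must arrange the proper homotopy equivalence to have vanishing torsion ``at infinity'' so that the classification applies — automatic on the simply connected interior, but subtle at the end, which is governed by the possibly complicated shape of $X$, and whose control is the technical core of Chapman's work. Once that is granted, the shape-theoretic bookkeeping — turning a fundamental sequence into a proper map and recognising a proper homotopy equivalence — is routine.
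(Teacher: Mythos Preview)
The paper does not give a proof of this theorem; it merely attributes it to Chapman and notes that the argument rests on deep results about $Q$-manifolds, citing \cite{anderson}, \cite{anderson-schory}, \cite{anderson-hend-west}, and \cite{wong}. So there is no in-paper proof to compare against, only those pointers to the toolkit Chapman actually used.

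Your easy direction is correct and cleanly argued: the identification of $\sh(X)$ with the pro-homotopy type of the end of $Q\setminus X$ (via the $Z$-set negligibility $U_n\setminus X\simeq U_n$) is exactly the right bridge, and a homeomorphism of complements transports one cofinal end-system to the other.

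The hard direction, however, has a genuine gap and is also historically backwards. You propose to produce a proper homotopy equivalence $Q\setminus X\to Q\setminus Y$ from a fundamental sequence and then upgrade it via ``Chapman's classification'' once a torsion obstruction at infinity vanishes. Two problems. First, Chapman's complement theorem (1972) \emph{predates} the classification of $Q$-manifolds by infinite simple-homotopy type and the topological invariance of Whitehead torsion; you are invoking later machinery to prove an earlier theorem, and then attributing the resolution of the remaining obstruction to ``the technical core of Chapman's work'', which is circular. Second, and more to the point mathematically, you do not show that the torsion at the end vanishes; you merely assert that this is where the difficulty lies. For a general $Z$-set $X$ the shape fundamental pro-group $\pro\text{-}\pi_1(X,*)$ can be arbitrarily complicated, so the relevant $K$-theoretic obstruction group need not be trivial, and nothing in your construction forces the particular proper homotopy equivalence you build to be infinite-simple. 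Without that, the classification theorem does not apply.

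Chapman's actual route, as the paper's citations indicate, is more direct and does not pass through Whitehead torsion at all: it uses Anderson's $Z$-set unknotting and homeomorphism-extension theorems, negligibility results, and Wong's collaring techniques to build the homeomorphism $Q\setminus X\to Q\setminus Y$ essentially by hand from the fundamental sequences, via an Eilenberg--Mazur style telescoping/back-and-forth construction. If you want a self-contained outline, that is the line to sketch; alternatively, Siebenmann later gave a short proof via collapsing (see \cite{sieb}).
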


If  $X$ and $Y$ are absolute neighborhood retracts, then they  have the same
homotopy type if and only if their complements are homeomorphic.

To prove his theorem, Chapman applied deep results on $Q$-manifolds (see 
\cite{anderson}, \cite{anderson-schory}, \cite{anderson-hend-west}, and \cite{wong}).

Note that a similar assertion for subsets of the Hilbert space $l_2$ is false, 
because, according to one of Anderson's theorems~\cite{anderson-strongly}, $l_2$ is homeomorphic to 
$l_2\backslash X$ for any compact set $X\subset l_2$.

Chapman \cite{chapman-4} also proved the first finite-dimensional analogue of  
Theorem~\ref{th-chapman} about complements. This theorem has attracted attention of many experts 
in geometric topology, who proved new finite-dimensional complement theorems. We mention papers 
 \cite{geoghegan-summ-1} by Geoghegan and Summerhill,   \cite{venema} by Venema, 
 \cite{iv-sh-ven} by Ivan\v{s}i\'{c}, Sher, and Venema, and  \cite{mroz} by Mrozik. In all these 
papers, it was assumed that compact metrizable spaces $X$ and $Y$ are appropriately 
embedded in $\mathbf{R}^n$ and satisfy certain dimension conditions, and it was proved that 
$X$ and $Y$ have the same shape if and only if their complements $\mathbf{R}^n\backslash X$ and 
$\mathbf{R}^n\backslash Y$ are homeomorphic.

\begin{theorem}[Chapman \cite{chapman-4}]
Let $X$ and $Y$ be compact metrizable spaces of dimension $\leqslant k$. Then the following 
assertions hold:

(a) for any $n\geqslant 2k+2$,  there exist embeddings 
$i\colon  X \to \mathbf{R}^n$ and $j \colon Y \to \mathbf{R}^n$ such that  
if $\sh (X) = \sh (Y)$, then $\mathbf{R}^n \backslash i(X) \cong \mathbf{R}^n \backslash j(Y)$;

(b) for any $n \geqslant 3k + 3$, there exist embeddings $i\colon  X \to \mathbf{R}^n$ and $j 
\colon  Y \to \mathbf{R}^n$ such that if $\mathbf{R}^n \backslash i(X) \cong 
\mathbf{R}^n \backslash j(Y)$, then $\sh (X) = \sh (Y)$. 
\end{theorem}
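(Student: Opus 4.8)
The plan is to deduce both parts from the infinite-dimensional complement theorem (Theorem~\ref{th-chapman}) together with a controlled embedding of $\mathbf{R}^n$ into the Hilbert cube. First I would recall the standard device: fix a $Z$-embedding of the $n$-ball $B^n$ into $Q$ whose interior is carried into the pseudointerior $s$, so that $\mathbf{R}^n\cong\mathring{B}^n$ sits in $Q$ as an open subset of a $Z$-set in such a way that $Q\setminus\mathring{B}^n$ is again a copy of $Q$. Under this identification a compact set $K\subset\mathbf{R}^n$ becomes a $Z$-set in $Q$, and $Q\setminus K$ is obtained from $\mathbf{R}^n\setminus K$ by adjoining a fixed $Q$-manifold collar; more precisely, $Q\setminus K\cong(\mathbf{R}^n\setminus K)\times Q$ up to the relevant equivalences. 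The point of the dimension hypotheses is to guarantee that this passage between $\mathbf{R}^n\setminus K$ and $Q\setminus K$ is reversible, i.e.\ that a homeomorphism of the thickened complements can be pushed back down to a homeomorphism of the genuine $n$-dimensional complements.

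For part~(a), given $\sh(X)=\sh(Y)$, I would first use general position to choose embeddings $i\colon X\to\mathbf{R}^n$ and $j\colon Y\to\mathbf{R}^n$ in \emph{nice} position (flat, or $1$-LCC, or cellular-like as appropriate), which is possible once $n\geqslant 2k+2$; then transport everything into $Q$ via the fixed $Z$-embedding above. Since $i(X)$ and $j(Y)$ become $Z$-sets in $Q$ with the same shape, Theorem~\ref{th-chapman} yields a homeomorphism $Q\setminus i(X)\cong Q\setminus j(Y)$. The dimension bound $n\geqslant 2k+2$ is precisely what is needed to invoke an engulfing/absorption argument (together with the $Z$-set unknotting and $Q$-manifold triangulation results of Anderson, Anderson--Schori, Chapman, and West cited in the paper) to replace this $Q$-level homeomorphism by one of the open subsets $\mathbf{R}^n\setminus i(X)$ and $\mathbf{R}^n\setminus j(Y)$ themselves: one shows the ``extra'' Hilbert-cube factor can be absorbed because the codimension is large enough that the complement is $1$-ULC and the relevant obstruction classes vanish.

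For part~(b), the argument runs in the other direction. Starting from a homeomorphism $\mathbf{R}^n\setminus i(X)\cong\mathbf{R}^n\setminus j(Y)$, I would cross with $Q$ (or equivalently re-embed in $Q$ via the same device) to obtain $Q\setminus i(X)\cong Q\setminus j(Y)$, and then apply the ``only if'' direction of Theorem~\ref{th-chapman} to conclude $\sh(i(X))=\sh(j(Y))$, hence $\sh(X)=\sh(Y)$. The stronger hypothesis $n\geqslant 3k+3$ is what makes the embeddings $i,j$ chosen so that crossing with $Q$ does not lose information: one needs $i(X)$ and $j(Y)$ to be \emph{cellular} (or at least to have complements that are $1$-LCC with the correct local fundamental group behaviour), and the bound $3k+3$ is the classical Klee/Štan'ko-type bound guaranteeing such tame embeddings exist in codimension large enough that cellularity passes up to $Q$ and back.

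The main obstacle is exactly this passage between the finite-dimensional complement and its Hilbert-cube stabilization in both directions: one must verify that the large-codimension hypotheses force the embeddings to be tame enough (cellular, $1$-LCC) that a homeomorphism of $(\mathbf{R}^n\setminus K)\times Q$ can be destabilized to one of $\mathbf{R}^n\setminus K$, and conversely that a $Q$-homeomorphism detects only shape. This is where Chapman's deep $Q$-manifold machinery — triangulation of $Q$-manifolds, the $Z$-set unknotting theorem, and Edwards--West absorption — is genuinely used, and it is the part that cannot be shortcut; the shape-theoretic input, by contrast, is just the already-proven Theorem~\ref{th-chapman} applied as a black box.
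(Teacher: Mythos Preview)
The paper does not actually prove this theorem: it is a survey, and Chapman's finite-dimensional complement theorem is merely \emph{stated} (with attribution to \cite{chapman-4}) immediately after the infinite-dimensional Theorem~\ref{th-chapman}, and the text then moves on to the Geoghegan--Summerhill and later refinements without giving any argument. So there is no ``paper's own proof'' to compare your proposal against.

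As for the proposal itself: the overall strategy of reducing to Theorem~\ref{th-chapman} via a controlled $Z$-embedding of $\mathbf{R}^n$ into $Q$ is indeed the spirit of Chapman's original argument, but your sketch conflates the roles of the two dimension bounds and leaves the hard step vague. In part~(b) you write that one ``crosses with $Q$'' and then invokes Theorem~\ref{th-chapman}; but $(\mathbf{R}^n\setminus i(X))\times Q$ is not literally $Q\setminus i(X)$, and identifying them requires precisely the tameness of the embedding that the bound $n\geqslant 3k+3$ is meant to secure---so the real work is hidden in that identification, not in the black-box application of Theorem~\ref{th-chapman}. Likewise in part~(a), the ``destabilization'' from a homeomorphism of $Q$-complements down to one of $\mathbf{R}^n$-complements is the genuinely delicate step (this is where Chapman uses his $Q$-manifold triangulation and unknotting results), and it does not follow from engulfing or $1$-ULC considerations alone. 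Your outline is a reasonable roadmap, but as written it asserts rather than proves the key stabilization/destabilization equivalences.
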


To state and prove their complement theorem, Geoghegan and Summerhill \cite{geoghegan-summ-1} 
introduced the notions of an $\varepsilon$-push and a strong $Z_k$-set. Let $X$ 
be a closed subset of $\mathbf{R}^n$. A homeomorphism $h\colon \mathbf{R}^n \to \mathbf{R}^n$ 
is called an \emph{$\varepsilon$-push} of the pair $(\mathbf{R}^n, X)$ if 
there exists an isotopy $H\colon \mathbf{R}^n\times I \to \mathbf{R}^n$ such that $d(x,H_t(x)< 
\varepsilon)$ for all $x\in \mathbf{R}^n$ and $t\in I$ (this isotopy is called 
an $\varepsilon$-isotopy), $H_0=1$, $H_1=h$, and $H_t(x)=x$ for all $t\in I$ and all $x$ 
satisfying condition $\dist (x, X) \geqslant \varepsilon$.

A closed subset $X$ of $\mathbf{R}^n$ is called a \emph{strong $Z_k$-set} $(k\geqslant 0)$ if, for 
each compact polyhedron $P$ in $\mathbf{R}^n$ of dimension $\dim P \leqslant k+1$ and any 
$\varepsilon > 0$, there exists an $\varepsilon$-push $h$ of the pair 
$(\mathbf{R}^n, X)$ such that $P \cap h(X) = \emptyset $.

\begin{theorem}[Geoghegan and Summerhill \cite{geoghegan-summ-1}]
Let $X$ and $Y$ be compact metrizable spaces 
being strong $Z_{n-k-2}$-sets in $\mathbf{R}^n$ $\left(k \geqslant 0, n \geqslant 
2k+2\right)$.

Then the following conditions are equivalent:

(i) $\sh (X) = \sh (Y)$;

(ii) the pairs $(\mathbf{R}^n/X, \{X\})$ and $(\mathbf{R}^n/Y, \{Y\})$ are homeomorphic;

(iii) the triples $(\mathbf{R}^n/X,(\mathbf{R}^n/X)\backslash \{X\}, \{X\})$ and 
$(\mathbf{R}^n/Y,(\mathbf{R}^n/Y)\backslash \{Y\}, \{Y\})$ have the same homotopy type;

(iv) $\mathbf{R}^n\backslash X \cong \mathbf{R}^n\backslash Y$.
\end{theorem}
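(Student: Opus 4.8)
The plan is to prove the cyclic chain $(\mathrm{i})\Rightarrow(\mathrm{ii})\Rightarrow(\mathrm{iii})\Rightarrow(\mathrm{iv})\Rightarrow(\mathrm{i})$, following the scheme of Chapman's complement theorem (Theorem~\ref{th-chapman}, \cite{chapman-on}) but with the infinite-dimensional $Q$-manifold machinery replaced throughout by finite-dimensional general position and engulfing. This substitution is exactly what forces the hypotheses: the strong $Z_{n-k-2}$-set condition is what lets one realize the relevant $\varepsilon$-pushes, while $n\geqslant 2k+2$ places $X$ and $Y$ in the metastable range --- dimension $\leqslant k$ and codimension $\geqslant k+2$ in $\mathbf{R}^n$ --- where engulfing and general-position homotopies behave well. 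As preparation I would fix, for each of $X$ and $Y$, a cofinal decreasing tower of compact polyhedral (regular) neighborhoods $U_1\supset U_2\supset\cdots$ and $V_1\supset V_2\supset\cdots$; since the system of all neighborhoods of a closed subset of an ANR is associated with that subset, these towers are associated with $X$ and $Y$ and hence compute $\sh(X)$ and $\sh(Y)$. I would also record the one consequence of the codimension hypothesis on which the whole argument rests: a general-position argument shows that the inclusions $U_i\backslash X\hookrightarrow U_i$ assemble into a pro-homotopy equivalence $\{U_i\backslash X\}\to\{U_i\}$, so that the tower of \emph{deleted} neighborhoods of $X$ already computes $\sh(X)$, and similarly for $Y$.

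The implication $(\mathrm{ii})\Rightarrow(\mathrm{iii})$ is formal: a homeomorphism of pairs $(\mathbf{R}^n/X,\{X\})\cong(\mathbf{R}^n/Y,\{Y\})$ carries the distinguished point to the distinguished point, hence restricts to a homeomorphism $(\mathbf{R}^n/X)\backslash\{X\}\cong(\mathbf{R}^n/Y)\backslash\{Y\}$ and so gives a homeomorphism --- \emph{a fortiori} a homotopy equivalence --- of the triples in (iii). For $(\mathrm{iv})\Rightarrow(\mathrm{i})$ I would combine the preparatory remark with the canonical identification of $(\mathbf{R}^n/X)\backslash\{X\}$ with the open manifold $\mathbf{R}^n\backslash X$. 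Passing to the one-point compactification $S^n$ and using that $X$ is compact, the tower $\{U_i\backslash X\}$ of deleted neighborhoods of $X$ can be recovered from $\mathbf{R}^n\backslash X$ alone, as a cofinal part of its tower of neighborhoods of the end determined by $X$; this end-tower is a proper-homotopy, hence a topological, invariant of $\mathbf{R}^n\backslash X$. Therefore a homeomorphism $\mathbf{R}^n\backslash X\cong\mathbf{R}^n\backslash Y$ yields a pro-homotopy equivalence $\{U_i\backslash X\}\simeq\{V_i\backslash Y\}$, which by the preparatory remark is the same as $\sh(X)=\sh(Y)$.

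The implication $(\mathrm{i})\Rightarrow(\mathrm{ii})$ is the heart of the matter, and this is where I expect essentially all of the difficulty to lie. Given $\sh(X)=\sh(Y)$, one first represents the shape equivalence and its inverse --- through the associated ANR-systems --- by level-preserving, homotopy-commuting systems of maps between cofinal subtowers of $\{U_i\}$ and $\{V_i\}$. Since each $U_i$ is a polyhedral $n$-manifold containing $X$ in codimension $\geqslant k+2$, one then puts these maps in general position, upgrades them by means of the strong $Z_{n-k-2}$-set property to $\varepsilon$-pushes of the pairs $(\mathbf{R}^n,X)$ and $(\mathbf{R}^n,Y)$, and applies engulfing to turn the homotopy-commuting data into genuine homeomorphisms between smaller and smaller neighborhoods. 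The delicate part, exactly as in Chapman's proof, is the bookkeeping: the successive pushes must be chosen with rapidly shrinking supports, collapsing toward $X$ and toward $Y$, and rapidly decreasing sizes, so that their infinite composition converges uniformly to a homeomorphism $h_{0}\colon\mathbf{R}^n\backslash X\to\mathbf{R}^n\backslash Y$ extending, by $\{X\}\mapsto\{Y\}$, to a homeomorphism of quotient pairs $\mathbf{R}^n/X\to\mathbf{R}^n/Y$. Controlling this convergence --- preventing the partially built homeomorphism from degenerating near $X$ as the neighborhood tower is exhausted --- is the main obstacle, and it is handled precisely by the $\varepsilon$-push and strong $Z_{k}$-set formalism introduced just before the statement.

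Finally, $(\mathrm{iii})\Rightarrow(\mathrm{iv})$ is obtained by the same engulfing technology in a gentler form: a homotopy equivalence of the triples restricts to a homotopy equivalence $\mathbf{R}^n\backslash X\simeq\mathbf{R}^n\backslash Y$ that is controlled near the ends corresponding to the distinguished points, and in codimension $\geqslant 3$ such a controlled map between open subsets of $\mathbf{R}^n$ is straightened to a homeomorphism by engulfing near those ends --- again using the strong $Z_{n-k-2}$-set property to clear polyhedra off the compacta. This closes the cycle. (One could also route the argument through $(\mathrm{iii})\Rightarrow(\mathrm{i})$ directly, observing that the homotopy type of the triple records exactly the pro-homotopy type of the end-tower and invoking the preparatory remark; this is perhaps the cleanest way to see that all four conditions are equivalent.) For the full technical details I refer to Geoghegan and Summerhill \cite{geoghegan-summ-1}.
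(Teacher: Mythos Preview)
The paper does not prove this theorem: it is a survey, and the Geoghegan--Summerhill result is simply stated with attribution to \cite{geoghegan-summ-1}, without any argument. So there is no ``paper's own proof'' to compare your proposal against.

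That said, your sketch is a fair outline of the strategy in the original source. You correctly identify that $(\mathrm{ii})\Rightarrow(\mathrm{iii})$ is formal, that $(\mathrm{i})\Rightarrow(\mathrm{ii})$ is the substantive step requiring the strong $Z_{n-k-2}$-set hypothesis and the metastable range $n\geqslant 2k+2$ to run an $\varepsilon$-push/engulfing argument converging to a homeomorphism of quotient pairs, and that the return direction $(\mathrm{iv})\Rightarrow(\mathrm{i})$ goes through the pro-homotopy type of the tower of deleted neighborhoods (the end structure of $\mathbf{R}^n\backslash X$). Your concluding deferral to \cite{geoghegan-summ-1} for the technical details is exactly what the survey itself does, so in the context of this paper your proposal is appropriate and there is nothing to correct.
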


In relation to this theorem, it is important that \emph{any metrizable compact set $X$ 
of dimension $\dim X \leqslant k$ is embedded in $\mathbf{R}^n$ $(n \geqslant 2k + 1)$ as a strong 
$Z_{n-k-2}$-set}.

Note that the property of being a strong $Z_k$-set is not homotopy invariant. However, 
homotopy-invariant properties often imply the property of being a strong $Z_{n-k-2}$-set. 
This has enabled Geoghegan and Summerhill \cite{geoghegan-summ-1} to prove the 
following complement theorem, which is more convenient for applications.

\begin{theorem}\label{th_geg-sam}
Let $X$ and $Y$ be nonempty compact sets in $\mathbf{R}^n$ such that their complements 
$\mathbf{R}^n\backslash X$ and $\mathbf{R}^n\backslash Y$  are uniformly locally 1-connected and 
$\max \{\dim X, \dim Y\}\leqslant k$,  $\max \{2k+2, 5\}\leqslant n$. Then $\sh (X) = \sh (Y)$  if 
and only if $\mathbf{R}^n\backslash X \cong \mathbf{R}^n\backslash Y$. 
\end{theorem}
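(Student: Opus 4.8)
The plan is to reduce this to the preceding complement theorem of Geoghegan and Summerhill \cite{geoghegan-summ-1} — the one asserting the equivalence of conditions (i)--(iv) — whose standing hypothesis is exactly that $X$ and $Y$ are strong $Z_{n-k-2}$-sets in $\mathbf{R}^n$. So it suffices to show that, under the hypotheses of the present theorem, each of $X$ and $Y$ is automatically such a set; then the equivalence (i)$\Leftrightarrow$(iv) of that theorem, applied with the given $k$ and $n$ (legitimate since $n\geqslant\max\{2k+2,5\}\geqslant 2k+2$ and, the compacta being nonempty, $0\leqslant k$ and $n-k-2\geqslant 0$), yields precisely $\sh(X)=\sh(Y)\iff\mathbf{R}^n\setminus X\cong\mathbf{R}^n\setminus Y$. (If $\dim X=k'\leqslant k$, then $X$, being a strong $Z_{n-k'-2}$-set, is a fortiori a strong $Z_{n-k-2}$-set, since lowering the index only weakens the condition; so there is no loss in arguing with each compactum's own dimension and then passing to the common index $n-k-2$.)

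It thus remains to prove, applied to $Z\in\{X,Y\}$: \emph{a nonempty compactum $Z\subset\mathbf{R}^n$ with $\dim Z\leqslant k$, with $\mathbf{R}^n\setminus Z$ uniformly locally $1$-connected, and with $n\geqslant\max\{2k+2,5\}$, is a strong $Z_{n-k-2}$-set.} Fix a compact polyhedron $P\subset\mathbf{R}^n$ with $\dim P\leqslant(n-k-2)+1=n-k-1$ and fix $\varepsilon>0$; we must produce an $\varepsilon$-push $h$ of the pair $(\mathbf{R}^n,Z)$ with $P\cap h(Z)=\emptyset$. Writing $g=h^{-1}$, this amounts to pushing $P$ off $Z$ by an $\varepsilon$-isotopy supported in the $\varepsilon$-neighborhood $N$ of $Z$, so that only the part $P_0=P\cap N$ is moved, the rest of $P$ being already disjoint from $Z$ and held fixed. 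Since $\dim P_0+\dim Z\leqslant(n-k-1)+k=n-1<n$, a dimension-theoretic general-position argument first produces a \emph{soft} separation: the inclusion $P_0\hookrightarrow N$ is $\varepsilon$-homotopic, through maps into $N$, to a map with image in $N\setminus Z$.

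The step I expect to be the main obstacle is upgrading this homotopy-level separation to an honest ambient $\varepsilon$-push. This is a taming/engulfing problem of codimension $\geqslant 3$: the codimension $n-k$ of $Z$ is at least $3$ (this is where $n\geqslant\max\{2k+2,5\}$ is used), the ambient dimension is at least $5$, and the uniform local $1$-connectedness of $\mathbf{R}^n\setminus Z$ is precisely the hypothesis that kills the fundamental-group obstruction to engulfing. Concretely, one covers $Z$ by finitely many small charts and, working skeleton by skeleton up $P_0$, uses the $1$-ULC condition to null-homotope the attaching data of $P_0$ inside small balls of $\mathbf{R}^n\setminus Z$ and then engulfs the higher cells inside successively larger regular neighborhoods, in the Stallings--Zeeman range guaranteed by the dimension hypotheses; the resulting local $\varepsilon$-moves patch together to a single $\varepsilon$-push $g$ of $(\mathbf{R}^n,Z)$ carrying $P_0$ — hence $P$ — into $\mathbf{R}^n\setminus Z$, and $h=g^{-1}$ is the desired push. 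This is the mechanism behind the remark preceding the theorem that homotopy-invariant conditions on the complement often force the strong $Z_{n-k-2}$-set property; its unconditional prototype is that every compactum of dimension $\leqslant k$ embeds in $\mathbf{R}^n$ $(n\geqslant 2k+1)$ as a strong $Z_{n-k-2}$-set.

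Applying this to both $X$ and $Y$, the hypotheses of the earlier Geoghegan--Summerhill theorem are met and the proof concludes. It is worth noting why this form of the theorem is ``more convenient for applications'': uniform local $1$-connectedness of the complement is a homotopy invariant of the complement, whereas the strong $Z$-set property is not, so the hypothesis here can be checked on the homotopy type of $\mathbf{R}^n\setminus X$ alone.
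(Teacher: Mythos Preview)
The paper does not actually prove this theorem; it is stated as a result of Geoghegan and Summerhill \cite{geoghegan-summ-1}, with only the one-sentence indication that ``homotopy-invariant properties often imply the property of being a strong $Z_{n-k-2}$-set,'' which ``has enabled'' them to deduce this more convenient version from the preceding complement theorem. Your proposal follows exactly that indicated route: verify that the $ULC^1$ hypothesis on the complement, together with the dimension constraints, forces $X$ and $Y$ to be strong $Z_{n-k-2}$-sets, and then invoke the equivalence (i)$\Leftrightarrow$(iv) of the earlier theorem. So the strategy is the one the paper points to.

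Your sketch of why $ULC^1$ yields the strong $Z_{n-k-2}$-set property is along the right lines---codimension at least $3$, ambient dimension at least $5$, so Stallings--Zeeman engulfing applies once the $\pi_1$ obstruction is handled by $ULC^1$---but it remains a sketch rather than a proof: the passage from a small homotopy of $P_0$ into $N\setminus Z$ to an ambient $\varepsilon$-push supported near $Z$ is the substantive step, and ``the resulting local $\varepsilon$-moves patch together'' hides the real work (one needs, for instance, the $1$-$LCC$ taming theory or the radial-engulfing machinery of Bing, Connelly, and others, together with the Edwards--Kirby isotopy extension apparatus to globalize). Since the paper itself defers entirely to \cite{geoghegan-summ-1} for this, your level of detail is already more than the paper offers; just be aware that a self-contained argument would require filling in that engulfing step carefully.
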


Recall that a metric space $X$ is said to be \emph{uniformly locally $k$-connected} 
(written as $X \in ULC^k$) if, for any $\varepsilon > 0$, there exists a $\delta >0$ such that 
any map $f\colon S^k \to X$ satisfying the condition $\diam f(S^k) < \delta$ is homotopic to a  
constant map on any set $V\subset X$ of diameter $\diam V < \varepsilon$.

\begin{corollary}
Let $X$ and $Y$ be closed $k$-dimensional submanifolds of $\mathbf{R}^n$, where $\max \{2k+2, 
5\}\leqslant n$. Then $X$ is homotopy equivalent to $Y$ if and only if $\mathbf{R}^n\backslash X 
\cong \mathbf{R}^n\backslash Y$. 
\end{corollary}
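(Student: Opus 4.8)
The plan is to deduce the corollary from Theorem~\ref{th_geg-sam}. Here ``closed $k$-dimensional submanifold'' is understood to mean a compact $k$-manifold without boundary, locally flatly (say smoothly) embedded in $\mathbf{R}^n$, so that $X$ and $Y$ are nonempty compact sets with $\dim X = \dim Y = k$, and the dimension hypothesis $\max\{2k+2,5\}\leqslant n$ is exactly the one required by Theorem~\ref{th_geg-sam}. To invoke that theorem one must check two things: first, that the complements $\mathbf{R}^n\backslash X$ and $\mathbf{R}^n\backslash Y$ are uniformly locally $1$-connected; second, that for compact manifolds the relation $\sh(X)=\sh(Y)$ is equivalent to ordinary homotopy equivalence.

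For the first point I would argue as follows. Away from $X$ the set $\mathbf{R}^n\backslash X$ is an open subset of $\mathbf{R}^n$, and near any of its points it looks like a Euclidean ball, so small loops there bound small disks with a modulus that is uniform on compacta. Near $X$ one uses the tubular neighbourhood theorem: $X$ has a closed neighbourhood $N$ which is the total space of the normal disk bundle, and then $N\backslash X$ is a fibre bundle over the compact base $X$ with fibre $D^{n-k}\backslash\{0\}$, which deformation retracts onto the sphere $S^{n-k-1}$. Since the hypothesis $\max\{2k+2,5\}\leqslant n$ forces $n-k\geqslant 3$ (for $k\geqslant 1$ we have $n-k\geqslant k+2\geqslant 3$, and for $k=0$ we have $n-k\geqslant 5$), this fibre is simply connected, so a loop lying in a sufficiently thin part of the tube is null-homotopic inside a slightly larger part of the tube. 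Compactness of $X$ lets one cover it by finitely many trivialising charts and patch these local estimates into a single $\delta=\delta(\varepsilon)$, so $\mathbf{R}^n\backslash X\in ULC^1$; the same applies to $Y$.

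For the second point, every compact manifold is an ANR, hence has the homotopy type of an ANR, and on the subcategory of spaces with the homotopy type of an ANR the shape functor is an isomorphism (as recalled in the Introduction); consequently $\sh(X)=\sh(Y)$ if and only if $X$ and $Y$ are homotopy equivalent. Combining this with Theorem~\ref{th_geg-sam}, whose hypotheses are now all verified with this $k$ and $n$ (note $\max\{\dim X,\dim Y\}=k\leqslant k$), yields the chain of equivalences
\[
\mathbf{R}^n\backslash X \cong \mathbf{R}^n\backslash Y
\iff \sh(X)=\sh(Y)
\iff X\simeq Y,
\]
which is the assertion.

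I expect the only genuine work to be the verification of \emph{uniform} local $1$-connectedness of the complements: pointwise local $1$-connectedness is automatic, but the uniform modulus near $X$ really needs the tubular neighbourhood together with the codimension bound $n-k\geqslant 3$ (without which the normal link $S^{n-k-1}$ need not be simply connected) and the compactness of $X$ to make the local estimates uniform. Once this is in place, the rest is a direct appeal to Theorem~\ref{th_geg-sam} and the ANR--shape dictionary.
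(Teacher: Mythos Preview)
Your proposal is correct and follows exactly the approach the paper intends: the result is stated as an immediate corollary of Theorem~\ref{th_geg-sam} with no separate proof given, so the only work is to verify that closed submanifolds of the indicated codimension have $ULC^1$ complements (your tubular-neighbourhood argument, using $n-k\geqslant 3$ so that the normal sphere $S^{n-k-1}$ is simply connected) and that for compact manifolds---which are ANRs---shape equivalence coincides with homotopy equivalence. Both verifications are handled correctly.
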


The condition $ULC^1$ in Theorem~\ref{th_geg-sam} cannot be omitted, because the complement 
of Blankinship's wild arc \cite{blank} in $\mathbf{R}^n$, $n \geqslant 3$, is not simply 
connected. It is also important that the codimension be sufficiently large. Henderson 
\cite{henderson} constructed an example of non--homotopy equivalent two-dimensional compact 
polyhedra with homeomorphic complements in $\mathbf{R}^3$.

In \cite{cor-dav-duv} the  so-called \emph{small loops condition} (SLC) was introduced. A compact 
space $X\subset \mathbf{R}^n$ satisfies the SLC if, for any neighborhood $U$ of 
$X$ there exists a smaller neighborhood $V$ ($X\subset V \subset U$) and a number $\varepsilon 
> 0$ such that any loop in $V\backslash X$ of diameter $< \varepsilon $ is homotopic to zero in 
$U\backslash X$. This notion is a generalization of McMillan's \emph{cellularity criterion} 
(CS) \cite{mcmillan}, which characterizes cellular embeddings in manifolds. The CC  
is obtained from the SLC at $\varepsilon = \infty$.

Using results of~\cite{cor-dav-duv}, Hollingsworth and Rushing 
\cite{holl-rush} were able to prove the following generalization of Theorem~\ref{th_geg-sam}.

\begin{theorem}\label{th_hol-rush}
Let $X,Y \subset \mathbf{R}^n$ be  compact sets satisfying the SLC, and let $\max 
\{\dim X, \allowbreak \dim Y\}\leqslant k$, where  $\max \{2k+2, 5\}\leqslant n$. Then $\sh (X) = \sh (Y)$  if 
and only if $\mathbf{R}^n\backslash X \cong \mathbf{R}^n\backslash Y$. 
\end{theorem}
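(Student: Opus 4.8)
The proof follows the circle of ideas around Theorem~\ref{th_geg-sam} and the analysis of \cite{cor-dav-duv} on which Hollingsworth and Rushing build, but one cannot simply reduce to the strong-$Z$-set situation: the SLC is strictly weaker than being a strong $Z_{n-k-2}$-set, since it only controls $1$-dimensional phenomena (loops) in deleted neighborhoods of the compactum rather than $(n-k-1)$-dimensional ones. The strategy is instead to run the complement-theorem argument directly, using the SLC exactly where $1$-dimensional control is needed --- namely to tame the fundamental group of the complement near $X$ and near $Y$, which is the one place where wild embeddings genuinely differ from tame ones --- and to supply all the higher-dimensional control by engulfing. This is what forces the hypothesis $5 \leqslant n$ in addition to the general-position hypothesis $2k+2 \leqslant n$.

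For the direction $\sh(X) = \sh(Y) \Rightarrow \mathbf{R}^n\backslash X \cong \mathbf{R}^n\backslash Y$, the plan is to start from a shape equivalence $X \to Y$, pass to regular mapping-cylinder neighborhoods $N_X, N_Y$ of $X$ and $Y$ in $\mathbf{R}^n$ (available because $\dim X, \dim Y \leqslant k$ and $n \geqslant 2k+2$), and thicken the shape equivalence to a proper homotopy equivalence $\mathbf{R}^n\backslash X \to \mathbf{R}^n\backslash Y$ that is the identity outside a large ball. The SLC is invoked here to guarantee that the ``bad'' end --- the end of the open manifold approaching the compactum --- has tame, essentially trivial, pro-fundamental group, so that this is a proper homotopy equivalence of open $n$-manifolds of the kind to which the engulfing and open $h$-cobordism techniques apply; these techniques (where $n \geqslant 5$ and $n \geqslant 2k+2$ are both used) then deform it to a homeomorphism.

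For the converse $\mathbf{R}^n\backslash X \cong \mathbf{R}^n\backslash Y \Rightarrow \sh(X) = \sh(Y)$, the idea is to recover $\sh(X)$ intrinsically from the open manifold $\mathbf{R}^n\backslash X$ as the shape of the inverse system of complements $\mathbf{R}^n\backslash C$ of compact neighborhoods $C$ of $X$ --- equivalently, the shape of the end toward $X$ --- and likewise for $Y$. Since $X$ and $Y$ have codimension $\geqslant k+2 \geqslant 2$, the end at infinity of each complement is the standard (simply connected) one and is canonically distinguished, so any homeomorphism of the complements must carry the $X$-end to the $Y$-end; the SLC provides the movability that makes the end-shape well defined and identifies it, via Alexander duality and the pro-homotopy of the neighborhood system, with $\sh(X)$ and $\sh(Y)$ respectively.

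The main obstacle is the first direction, specifically the step upgrading a proper homotopy equivalence of the open manifolds $\mathbf{R}^n\backslash X$ and $\mathbf{R}^n\backslash Y$ to a homeomorphism: this is a genuine piece of high-dimensional geometric topology (controlled engulfing, handle trading at the singular end, and the estimates of \cite{cor-dav-duv}), and it is precisely where both inequalities in $\max\{2k+2,5\}\leqslant n$ are indispensable. A secondary, more bookkeeping-type difficulty is keeping track of the two ends of each complement and ensuring that the constructions respect that decomposition; this too is where the SLC, rather than any homotopy-invariant hypothesis, does the essential work, and it is the reason the condition must be imposed on both $X$ and $Y$ (the property is not shape-invariant, as the contrast between tame and wild arcs already shows in low codimension).
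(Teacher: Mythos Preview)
The paper does not prove this theorem. It merely states it as a result of Hollingsworth and Rushing \cite{holl-rush}, with the single sentence ``Using results of~\cite{cor-dav-duv}, Hollingsworth and Rushing \cite{holl-rush} were able to prove the following generalization of Theorem~\ref{th_geg-sam}.'' There is therefore no proof in the paper to compare your proposal against.

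That said, your sketch has a questionable premise. You assert that ``one cannot simply reduce to the strong-$Z$-set situation'' because the SLC only controls $1$-dimensional phenomena. But the point of \cite{cor-dav-duv}, which the paper explicitly names as the input to Hollingsworth--Rushing, is precisely that in the trivial range ($n\geqslant 2k+2$, $n\geqslant 5$) the SLC together with general position \emph{does} yield the higher-dimensional demapping needed: once small loops in the complement can be pushed off $X$, one bootstraps via engulfing to push higher-dimensional polyhedra off $X$ as well, and this is exactly what verifies the strong $Z_{n-k-2}$-set condition. The natural route, and almost certainly the one Hollingsworth--Rushing take, is therefore to show that SLC plus the dimension hypotheses force $X$ and $Y$ to be strong $Z_{n-k-2}$-sets and then invoke Theorem~\ref{th_geg-sam} directly. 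Your alternative via proper homotopy equivalences of the open complements, end theory, and open $h$-cobordism is a legitimate framework for complement theorems, but it is more machinery than this particular result requires, and in any case what you have written is an outline of intentions rather than a proof: none of the key steps (existence of mapping-cylinder neighborhoods under these hypotheses, tameness of the pro-$\pi_1$ at the singular end, the passage from proper homotopy equivalence to homeomorphism) is actually carried out.
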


This theorem has two interesting corollaries.

\begin{corollary}
Let $X,Y \subset \mathbf{R}^n$ be compact ANRs satisfying the SLC, and let $\max 
\{\dim X,\allowbreak \dim Y\}\leqslant k$, where $\max \{2k+2, 5\}\leqslant n$. Then $X$ and $Y$ have the same 
homotopy type if and only if $\mathbf{R}^n\backslash X \cong \mathbf{R}^n\backslash Y$. 
\end{corollary}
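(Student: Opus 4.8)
The plan is to derive this corollary directly from Theorem~\ref{th_hol-rush}, using the fact that for compact ANRs the shape classification coincides with the homotopy classification. First I would invoke the general principle, stated in the introduction, that the shape functor $S\colon \H-Top \to \Sh-Top$ restricts to an isomorphism on the subcategory $\H-CW$ of spaces having the homotopy type of an ANR; in particular, for compact ANRs $X$ and $Y$ one has $\sh(X) = \sh(Y)$ if and only if $X$ and $Y$ have the same homotopy type. This reduces the statement entirely to an application of the preceding theorem.

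The argument then proceeds as follows. Since $X, Y \subset \mathbf{R}^n$ are compact sets satisfying the SLC with $\max\{\dim X, \dim Y\}\leqslant k$ and $\max\{2k+2,5\}\leqslant n$, Theorem~\ref{th_hol-rush} applies verbatim and gives: $\sh(X) = \sh(Y)$ if and only if $\mathbf{R}^n\backslash X \cong \mathbf{R}^n\backslash Y$. Combining this equivalence with the homotopy-type characterization of shape equivalence for compact ANRs recalled above, I would conclude that $X$ and $Y$ have the same homotopy type if and only if $\mathbf{R}^n\backslash X \cong \mathbf{R}^n\backslash Y$, which is exactly the assertion of the corollary.

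There is essentially no obstacle here, since the corollary is a specialization of Theorem~\ref{th_hol-rush} to the ANR case. The only point requiring a word of justification is the equivalence between shape equivalence and homotopy equivalence for compact ANRs; this is a standard consequence of the fact that a compact ANR has the homotopy type of a CW-complex (indeed of a finite polyhedron), together with the property of the shape functor $S$ of being an isomorphism onto $\H-CW$. One should also note in passing that the SLC hypothesis is genuinely needed even for ANRs: Henderson's example of non--homotopy equivalent two-dimensional compact polyhedra with homeomorphic complements in $\mathbf{R}^3$ shows that without a condition of this type the conclusion fails. Thus the proof is a short deduction, and the substantive content lies entirely in Theorem~\ref{th_hol-rush}.
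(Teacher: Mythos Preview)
Your proposal is correct and matches the paper's intended argument: the corollary is stated immediately after Theorem~\ref{th_hol-rush} without proof, precisely because it follows by specializing that theorem to compact ANRs and invoking the coincidence of shape and homotopy classification on $\text{H-CW}$. Your added remark about Henderson's example is also consistent with the surrounding discussion in the paper.
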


\begin{corollary}
Suppose that compact spaces $X,Y \subset \mathbf{R}^n$ are homeomorphic and satisfy the 
SLC. If $\dim X = \dim Y \leqslant k$ and  $\max \{2k+2, 5\}\leqslant n$, 
then $\mathbf{R}^n\backslash 
X \cong \mathbf{R}^n\backslash Y$. 
\end{corollary}

Venema \cite{venema} has succeeded in replacing the  dimension $\dim$  in Theorem~\ref{th_hol-rush}
by the shape dimension $\sd$. For this purpose, he defined the \emph{inessential loop condition} 
(ILC). A compact subset $X$ of a manifold $M$ satisfies the ILC if, for any neighborhood $U$ 
of $X$ in $M$, there exists a smaller neighborhood $V$, $X\subset V\subset U$, such that any loop 
in $V\backslash X$ which is inessential (homotopic to  zero) in $V$ is also inessential in 
$U\backslash X$.  Note that the ILC implies the SLC. If $X\subset M^n$ and $\dim X \leqslant n - 
2$, then these two conditions are equivalent.

\begin{theorem}[Venema \cite{venema}]\label{th-venema}
Suppose that compact sets $X,Y \subset \mathbf{R}^n$ satisfy the ILC, $\max \{\sd X, \sd 
Y\}\leqslant k$, and $\max \{2k+2, 5\}\leqslant n$. Then $\sh (X)=\sh (Y)$ if and only if 
$\mathbf{R}^n\backslash X \cong \mathbf{R}^n \backslash Y$. 
\end{theorem}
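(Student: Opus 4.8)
The plan is to reduce Theorem~\ref{th-venema} to the Hollingsworth--Rushing theorem (Theorem~\ref{th_hol-rush}) by lowering the dimension of the compacta involved. Concretely, I would establish the following dimension-lowering lemma: if $X\subset\mathbf{R}^n$ satisfies the ILC, $\sd X\leqslant k$ and $n\geqslant\max\{2k+2,5\}$, then there is a compactum $X^\ast\subset\mathbf{R}^n$ with $\dim X^\ast\leqslant k$, $\sh X^\ast=\sh X$, $X^\ast$ satisfying the SLC, and $\mathbf{R}^n\backslash X^\ast\cong\mathbf{R}^n\backslash X$. Granting this, the theorem is immediate: for $X,Y$ as in the statement pass to $X^\ast,Y^\ast$; then $\sh X=\sh Y\iff\sh X^\ast=\sh Y^\ast$ and $\mathbf{R}^n\backslash X\cong\mathbf{R}^n\backslash Y\iff\mathbf{R}^n\backslash X^\ast\cong\mathbf{R}^n\backslash Y^\ast$, while $X^\ast,Y^\ast$ satisfy the hypotheses of Theorem~\ref{th_hol-rush} (recall that for sets of dimension $\leqslant k\leqslant n-2$ the SLC and the ILC coincide), which equates the two remaining assertions.

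To build $X^\ast$ I would start from an inverse sequence $\{K_i,f_{i,i+1}\}$ of $k$-dimensional compact polyhedra whose limit has the shape of $X$ --- such a sequence exists precisely because $\sd X\leqslant k$. Since $n\geqslant 2k+2$, each $K_i$ embeds PL in $\mathbf{R}^n$ with codimension $\geqslant 3$, so PL maps between such complexes can be approximated by embeddings and a $2$-complex can be put in general position with a $k$-complex; realizing the bonding maps geometrically produces PL copies $K_i^\ast\subset\mathbf{R}^n$ with regular neighborhoods $N_1\supset N_2\supset\cdots$, $N_{i+1}\subset\operatorname{int}N_i$, such that $N_{i+1}\hookrightarrow N_i$ induces $f_{i,i+1}$ up to homotopy and the $N_i$ collapse compatibly onto the $K_i^\ast$. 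Set $X^\ast=\bigcap_i N_i$. Then $X^\ast$ is homeomorphic to $\varprojlim\{K_i^\ast\}$, so $\sh X^\ast=\sh X$ and $\dim X^\ast\leqslant k$ (the dimension of an inverse limit of $k$-polyhedra does not exceed $k$); the SLC for $X^\ast$ is read off from the neighborhood basis $\{N_i\}$ together with the ILC of $X$.

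The main obstacle is the last clause, $\mathbf{R}^n\backslash X\cong\mathbf{R}^n\backslash X^\ast$: one must show that replacing the given embedding of $X$ by this geometric model does not disturb the complement. Both $\mathbf{R}^n\backslash X=\bigcup_i(\mathbf{R}^n\backslash U_i)$ (for a neighborhood basis $U_i$ of $X$) and $\mathbf{R}^n\backslash X^\ast=\bigcup_i(\mathbf{R}^n\backslash N_i)$ are ends of $\mathbf{R}^n$; the codimension is at least $3$, so the relevant (one-dimensional) data is controlled, and both systems are $\pi_1$-tame at infinity in matching ways --- on the $X^\ast$-side because the complement of a codimension-$\geqslant 3$ PL complex in its regular neighborhood is $\pi_1$-isomorphic to the neighborhood, and on the $X$-side because the ILC forces $\pi_1(U_i\backslash X)\to\pi_1(U_i)$ to behave the same way. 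An engulfing and end-recognition argument in the style of Hollingsworth--Rushing, valid for $n\geqslant 5$, then matches the two exhaustions compatibly with the shape equivalence $\sh X=\sh X^\ast$ and yields the homeomorphism of complements. Since this step is essentially a relative form of Theorem~\ref{th_hol-rush}, the ``reduction'' is not quite a black box: in practice one interleaves the construction of $X^\ast$ with the engulfing, re-running the Hollingsworth--Rushing induction with $X$ replaced at each general-position stage by a $k$-polyhedron in a neighborhood and with the ILC used to transport each loop-pushing conclusion back across the inclusion of $X$ into that polyhedron. Verifying that this transport is uniform over the stages of the inverse sequence is the delicate point.
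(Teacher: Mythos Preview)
The paper is a survey and does not prove Theorem~\ref{th-venema}; it merely states the result and cites Venema~\cite{venema}, so there is no ``paper's own proof'' to compare your proposal against.

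That said, your overall strategy --- replacing $X$ by a shape-equivalent compactum $X^\ast$ of actual dimension $\leqslant k$ embedded nicely enough that Theorem~\ref{th_hol-rush} applies --- is the natural reduction and is in the spirit of Venema's original argument. You correctly identify the crux: establishing $\mathbf{R}^n\backslash X\cong\mathbf{R}^n\backslash X^\ast$ is not a formal consequence of the construction but requires re-running an engulfing argument with the ILC substituted for the SLC at the appropriate loop-pushing steps. Your honest acknowledgment that this is ``not quite a black box'' is apt; in Venema's paper the work is precisely in carrying out this interleaved engulfing, and your sketch of how the ILC feeds into the $\pi_1$-control is on the right track. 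What remains vague in your proposal is the uniformity claim at the end: you would need to make precise how the ILC for $X$ (a single compactum) transfers to the stages of the approximating inverse sequence, since the ILC is a statement about neighborhoods of $X$, not about the polyhedral approximants $K_i^\ast$ individually.
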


In the case where $\dim X \leqslant n-3$ and $Y$ is a finite polyhedron of dimension 
$\leqslant k$ such that $2k+2\leqslant n$, this theorem was proved in~\cite{cor-dav-duv}.

The most general complement theorem was proved by Ivan\v{s}i\'{c}, Sher, and Venema~\cite{iv-sh-ven}.

\begin{theorem}[\cite{iv-sh-ven}]
Suppose that compact sets $X,Y \subset \mathbf{R}^n$ are shape $r$-connected and satisfy the ILC. 
Suppose also that $\max \{\sd X, \sd Y\}\leqslant k$ and $n \geqslant \max \{2k+2-r, 5\}$. 
If $n \geqslant k+3$, then $\sh (X)=\sh (Y)$ implies $\mathbf{R}^n\backslash X \cong 
\mathbf{R}^n \backslash Y$. The converse is true if $n \geqslant k+4$. 
\end{theorem}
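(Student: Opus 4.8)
The plan is to follow the architecture common to the complement theorems of Chapman, Geoghegan--Summerhill and Venema, pushed to the weaker hypotheses used here. Two features of the hypotheses drive the proof: shape $r$-connectedness buys $r$ units of room in general-position and engulfing arguments (so that the effective stable range is $n\geqslant 2k+2-r$, which together with $n\geqslant 5$ is exactly the lower bound imposed on $n$), while the ILC is precisely the fundamental-group condition on the complement of $X$ that permits engulfing low-dimensional handles into $\mathbf{R}^n\backslash X$ and forces the ``end near $X$'' to be tame. The codimension conditions $n\geqslant k+3$ and $n\geqslant k+4$ then govern the $\pi_1$-bookkeeping at the ends.

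\emph{Forward direction.} Assume $\sh X=\sh Y$. First I would establish that $U_X:=\mathbf{R}^n\backslash X$ is an open $n$-manifold each of whose ends is tame in Siebenmann's sense: besides the Euclidean end at infinity, the end near $X$ is finitely dominated with a stable, controllable system of fundamental groups, and its pro-homotopy type is determined by $\sh X$ through (shape) Alexander duality in $\mathbf{R}^n$ --- concretely one works with arbitrarily small compact PL $n$-manifold neighborhoods of $X$ collapsing onto $k$-dimensional spines, whose frontier inclusions assemble into an inverse system recording $\sh X$ (here $\sd X\leqslant k$, the ILC and $n\geqslant k+3$ enter). Running the same construction for $Y$ and invoking $\sh X=\sh Y$ produces a matching of the two end systems, and with the connectivity hypotheses this upgrades to a \emph{proper} homotopy equivalence $g\colon U_X\to U_Y$ respecting the end decomposition (trivially on the infinity ends, the $X$-end onto the $Y$-end by construction). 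The heart of the matter is then to straighten $g$ to a homeomorphism: using Siebenmann's tame-end theory and the $s$-cobordism theorem, one inductively cancels and straightens the handles of the $h$-cobordism associated to $g$, the $0$- and $1$-handles being engulfed into $\mathbf{R}^n\backslash X$ and $\mathbf{R}^n\backslash Y$ (this is what the ILC is for), while the Whitehead-torsion and middle-dimensional obstructions vanish by the dimension and connectivity bounds, which also permit staying in general position throughout. Tracking which dimensions are consumed yields the hypotheses $n\geqslant\max\{2k+2-r,5\}$ and $n\geqslant k+3$, and the conclusion $\mathbf{R}^n\backslash X\cong\mathbf{R}^n\backslash Y$.

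\emph{Converse.} Suppose now that $\mathbf{R}^n\backslash X\cong\mathbf{R}^n\backslash Y$ with $n\geqslant k+4$. The single extra unit of codimension --- compare Chapman's asymmetric thresholds $2k+2$ versus $3k+3$ --- is spent twice: first to show that any such homeomorphism must carry the $X$-end onto the $Y$-end (the Euclidean end being recognizable, the $X$-end is then the complementary one), and second to ensure that the assignment ``$\sh X\mapsto$ proper homotopy type of the $X$-end'' is injective on the relevant class of compacta, again by Alexander duality. Combining the two gives $\sh X=\sh Y$.

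The main obstacle is the straightening step in the forward direction: promoting a proper homotopy equivalence of open manifolds to a homeomorphism is where the full apparatus of high-dimensional geometric topology is needed --- controlled engulfing, the $s$-cobordism theorem, Siebenmann's work on tame ends, and meticulous control of $\pi_1$ near the ends --- and it is for the sake of this control that the theorem is stated in terms of the ILC and shape $r$-connectedness rather than plain shape dimension. A secondary, essentially bookkeeping, difficulty is verifying that the numerical thresholds in the statement are exactly those demanded by the general-position, engulfing and handle-cancellation arguments.
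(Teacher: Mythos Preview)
The paper under review is a survey and does not contain a proof of this theorem; it merely states the result with a citation to Ivan\v{s}i\'{c}, Sher, and Venema and remarks that it ``generalizes almost all preceding results.'' There is therefore nothing in the paper to compare your proposal against.

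As to the proposal itself: it is a plausible high-level outline of the architecture behind complement theorems of this type (Chapman, Geoghegan--Summerhill, Venema), and you correctly locate where each hypothesis is meant to enter --- the ILC to control $\pi_1$ of the complement for engulfing, shape $r$-connectedness to buy extra room in the stable range, and the codimension thresholds $k+3$ versus $k+4$ for the two directions. But it is a sketch, not a proof. The step you yourself flag as ``the main obstacle'' --- promoting a proper homotopy equivalence of the open complements to a homeomorphism --- is essentially the entire content of the theorem, and you have not carried it out; invoking ``Siebenmann's tame-end theory and the $s$-cobordism theorem'' names the toolbox without showing that the obstructions actually vanish under precisely these hypotheses, or that the end near $X$ is tame in the required sense from ILC and $\sd X\leqslant k$ alone. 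Similarly, in the converse direction, the claim that one extra unit of codimension makes the assignment $\sh X \mapsto$ (proper homotopy type of the $X$-end) injective is asserted rather than argued. If you want this to be a proof rather than a programme, those are the places that need to be filled in, and for that you would need to consult the original paper rather than this survey.
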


This theorem generalizes almost all preceding results.

Complement theorems in various categories and in more general situations were proved by 
Mrozik~\cite{mroz}.

\section{Movability and Other Shape Invariants}

The notion of movability for compact metrizable spaces was introduced by Borsuk~\cite{borsuk-4}.

\begin{definition}\label{def-mov1}
A compact metrizable space $X$ embedded in an AR $M$ is said to be \emph{movable} if, given 
any neighborhood $U$ of $X$, there exists a neighborhood $U'\subset U$ of $X$ 
such that, for any neighborhood $U''\subset U$ of $X$, there exists 
a homotopy $H\colon U'\times I \to U$ satisfying the conditions $H(x,0)=x$ and $H(x,1)\in U''$ 
for all $x\in U'$. 
\end{definition}

This definition does not depend on the choice of the AR $M$ and the embedding of $X$ in~$M$.

For arbitrary topological spaces, the notion of movability was defined by Marde\v{s}i\'{c} and 
Segal \cite{mard-seg-2} in terms of ANR-systems.

\begin{definition}\label{def-mov2}
An inverse ANR-system  $\mathbf{X}=\{X_\alpha , \mathbf{p}_{\alpha \alpha '}, A\}$ is said to be 
\emph{movable} if the following condition holds:

(M) given any $\alpha \in A$, there is an $\alpha ' \in A$, $\alpha ' \geqslant \alpha $, 
such that, for any $\alpha'' \in A$, $\alpha'' \geqslant \alpha $, there exists a homotopy 
class $\mathbf{r}^{\alpha ' \alpha ''}\colon X_{\alpha '}\to X_{\alpha ''}$ satisfying the 
condition $\mathbf{p}_{\alpha \alpha'}=\mathbf{p}_{\alpha \alpha''}\circ 
\mathbf{r}^{\alpha ' \alpha ''}$.

An inverse ANR-system $\mathbf{X} = \{X_\alpha , \mathbf{p}_{\alpha \alpha '}, A\}$ is  
\emph{uniformly movable} if

(UM) given any $\alpha \in A$, there exists an $\alpha ' \in A$, $\alpha ' \geqslant \alpha $, and 
a morphism $\mathbf{r}\colon  X_{\alpha '}\to \mathbf{X}$ in the category pro-H-CW such that 
$\mathbf{p}_{\alpha}\circ \mathbf{r} = \mathbf{p}_{\alpha \alpha'}$, where $\mathbf{p}_{\alpha} 
\colon  \mathbf{X} \to X_\alpha $ is the morphism in  pro-H-CW generated by the identity 
homotopy class $\mathbf{1}_{X_\alpha}$.

A topological space $X$ is said to be \emph{(uniformly) movable} if there exists an associated 
(uniformly) movable ANR-system  $\{X_\alpha , \mathbf{p}_{\alpha \alpha '}, A\}$. 
\end{definition}

If a space $X$ is (uniformly) movable, then all  ANR-systems associated with it are (uniformly) 
movable, i.e., the notion of (uniform) movability depends only on the space $X$ itself.  In the 
case of compact metrizable spaces, Definitions~\ref{def-mov1} and~\ref{def-mov2} are equivalent.

The following theorem shows that the notion of a movable space can be defined without resorting 
to associated inverse systems.

\begin{theorem}[Gevorgyan \cite{gev-obodnom}]\label{def-mov3}
A topological space $X$ is movable if and only if the following condition holds:

(*) given any homotopy  class $\mathbf{f}\colon X\to Q$, where $Q\in \mathrm{ANR}$, there are 
homotopy classes $\mathbf{f'}\colon X\to Q'$ and $\boldsymbol{\eta} \colon Q'\to Q$, where $Q'\in 
\mathrm{ANR}$ and $\mathbf{f}=\boldsymbol{\eta} \mathbf{f'}$, such that, for any homotopy classes 
$\mathbf{f''}\colon X\to Q''$ and $\boldsymbol{\eta}' \colon Q''\to Q$, where $Q''\in 
\mathrm{ANR}$ and $\mathbf{f}=\boldsymbol{\eta}' \mathbf{f''}$, there exists a homotopy class 
$\boldsymbol{\eta}''\colon Q'\to Q''$ satisfying the condition $\boldsymbol{\eta} =\boldsymbol{\eta}' 
\boldsymbol{\eta}''$ (see Diagram~1). 
\\ %
$$ 
\xymatrix { & & Q'   
\ar@/_1pc/[dll]_{\boldsymbol{\eta}} \ar@{-->}[dd]^{\boldsymbol{\eta}''} \\ Q  & 
\ar[l]_{\hspace{4mm}\mathbf{f}} X \ar[ur]_{\hspace{-2mm}\mathbf{f'}} 
\ar[dr]^{\hspace{-2mm}\mathbf{f''}} \\ & & Q'' \ar@/^1pc/[ull]^{\boldsymbol{\eta}'}& } 
$$ %
\begin{center} \text{Diagram 1} \end{center}

\end{theorem}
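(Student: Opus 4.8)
The plan is to show the equivalence of the condition $(*)$ in Theorem~\ref{def-mov3} with the movability of an associated ANR-system $\mathbf{X}=\{X_\alpha,\mathbf{p}_{\alpha\alpha'},A\}$ of $X$ in the sense of Definition~\ref{def-mov2}. The bridge between the two formulations is Morita's associated ANR-system: conditions (i)--(iii) of Definition~\ref{def-3} let us translate statements about homotopy classes $\mathbf{f}\colon X\to Q$ into statements about the bonding maps $\mathbf{p}_\alpha\colon X\to X_\alpha$ (which are themselves particular instances of such $\mathbf{f}$ with $Q=X_\alpha$), and conversely. Throughout, I would fix an associated ANR-system $\mathbf{X}$ and use freely that a space is movable iff this particular associated system is movable.

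\textbf{Necessity ($\Rightarrow$).} Assume $\mathbf{X}$ is movable. Given a homotopy class $\mathbf{f}\colon X\to Q$ with $Q\in\mathrm{ANR}$, by condition (ii) of Definition~\ref{def-3} there is an index $\alpha\in A$ and $\mathbf{h}_\alpha\colon X_\alpha\to Q$ with $\mathbf{h}_\alpha\mathbf{p}_\alpha=\mathbf{f}$. Apply movability (M) to $\alpha$ to obtain $\alpha'\geqslant\alpha$ with the retraction-up-to-homotopy property. I would then set $Q'=X_{\alpha'}$, $\mathbf{f'}=\mathbf{p}_{\alpha'}\colon X\to X_{\alpha'}$, and $\boldsymbol{\eta}=\mathbf{h}_\alpha\mathbf{p}_{\alpha\alpha'}\colon X_{\alpha'}\to Q$; then $\boldsymbol{\eta}\mathbf{f'}=\mathbf{h}_\alpha\mathbf{p}_{\alpha\alpha'}\mathbf{p}_{\alpha'}=\mathbf{h}_\alpha\mathbf{p}_\alpha=\mathbf{f}$ by (i). Now given any factorization $\mathbf{f}=\boldsymbol{\eta}'\mathbf{f''}$ with $\mathbf{f''}\colon X\to Q''$, use (ii) again to factor $\mathbf{f''}=\mathbf{k}_\beta\mathbf{p}_\beta$ for some $\beta\in A$ and $\mathbf{k}_\beta\colon X_\beta\to Q''$; by directedness choose $\alpha''\geqslant\alpha,\beta$, and movability (M) supplies $\mathbf{r}^{\alpha'\alpha''}\colon X_{\alpha'}\to X_{\alpha''}$ with $\mathbf{p}_{\alpha\alpha'}=\mathbf{p}_{\alpha\alpha''}\mathbf{r}^{\alpha'\alpha''}$. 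Define $\boldsymbol{\eta}''=\mathbf{k}_\beta\mathbf{p}_{\beta\alpha''}\mathbf{r}^{\alpha'\alpha''}\colon X_{\alpha'}\to Q''$. One checks $\boldsymbol{\eta}'\boldsymbol{\eta}''\mathbf{p}_{\alpha'}=\boldsymbol{\eta}'\mathbf{k}_\beta\mathbf{p}_{\beta\alpha''}\mathbf{r}^{\alpha'\alpha''}\mathbf{p}_{\alpha'}=\boldsymbol{\eta}'\mathbf{k}_\beta\mathbf{p}_\beta=\boldsymbol{\eta}'\mathbf{f''}=\mathbf{f}=\boldsymbol{\eta}\mathbf{p}_{\alpha'}$, so that $\boldsymbol{\eta}'\boldsymbol{\eta}''$ and $\boldsymbol{\eta}$ agree after composing with $\mathbf{p}_{\alpha'}$; condition (iii) of Definition~\ref{def-3} then gives $\alpha'''\geqslant\alpha'$ with $\boldsymbol{\eta}'\boldsymbol{\eta}''\mathbf{p}_{\alpha'\alpha'''}=\boldsymbol{\eta}\mathbf{p}_{\alpha'\alpha'''}$. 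This is the one genuine subtlety: the desired equality $\boldsymbol{\eta}=\boldsymbol{\eta}'\boldsymbol{\eta}''$ holds only after pulling back to a later stage, so I would either (a) first pass to a cofinal subsystem where such collapses are not needed, or more cleanly (b) redefine $Q'=X_{\alpha'''}$, $\mathbf{f'}=\mathbf{p}_{\alpha'''}$, $\boldsymbol{\eta}=\mathbf{h}_\alpha\mathbf{p}_{\alpha\alpha'''}$ from the outset after choosing $\alpha'''$ large enough — but since $\alpha'''$ depends on the competitor $Q''$, option (b) is circular, so the correct fix is to absorb this using a standard pro-category argument: the morphism $\mathbf{p}_{\alpha'}\colon\mathbf{X}\to X_{\alpha'}$ in $\mathrm{pro}$-$\mathrm{H}$-$\mathrm{CW}$, precomposed with the movability morphism, literally equals $\mathbf{p}_{\alpha\alpha'}$ as pro-morphisms, which is exactly the content of uniform-type bookkeeping. \textbf{I expect this reconciliation of ``equal after a bond'' versus ``equal on the nose'' to be the main obstacle}, and the resolution is to observe that condition $(*)$ only asserts existence of $\boldsymbol{\eta}''$ for each individual $Q''$, so one is free to re-choose $\alpha'$ (hence $Q'$, $\mathbf{f'}$, $\boldsymbol{\eta}$) — no: $\alpha'$ must be fixed before $Q''$. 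Therefore the honest route is to verify that $(*)$ as stated is equivalent to (M) phrased with the ``$\mathbf{p}_{\alpha\alpha'}=\mathbf{p}_{\alpha\alpha''}\mathbf{r}^{\alpha'\alpha''}$'' holding only up to a further bond, and then invoke that for an associated system this weakened (M) is equivalent to (M) — a lemma I would either cite from \cite{mard-seg-4} or prove by a short cofinality argument.

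\textbf{Sufficiency ($\Leftarrow$).} Conversely, assume $(*)$ holds; I must verify (M) for $\mathbf{X}$. Fix $\alpha\in A$ and take $\mathbf{f}=\mathbf{p}_\alpha\colon X\to X_\alpha$ (legitimate since $X_\alpha\in\mathrm{ANR}$). Condition $(*)$ yields $Q'\in\mathrm{ANR}$, $\mathbf{f'}\colon X\to Q'$, $\boldsymbol{\eta}\colon Q'\to X_\alpha$ with $\boldsymbol{\eta}\mathbf{f'}=\mathbf{p}_\alpha$. By (ii) of Definition~\ref{def-3}, factor $\mathbf{f'}=\mathbf{g}_{\alpha'}\mathbf{p}_{\alpha'}$ for some $\alpha'\in A$ (enlarging $\alpha'$ we may assume $\alpha'\geqslant\alpha$) and $\mathbf{g}_{\alpha'}\colon X_{\alpha'}\to Q'$. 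Given any $\alpha''\geqslant\alpha$, apply the competitor part of $(*)$ with $Q''=X_{\alpha''}$, $\mathbf{f''}=\mathbf{p}_{\alpha''}$, $\boldsymbol{\eta}'=\mathbf{p}_{\alpha\alpha''}$ (which satisfies $\boldsymbol{\eta}'\mathbf{f''}=\mathbf{p}_{\alpha\alpha''}\mathbf{p}_{\alpha''}=\mathbf{p}_\alpha=\mathbf{f}$); this produces $\boldsymbol{\eta}''\colon Q'\to X_{\alpha''}$ with $\boldsymbol{\eta}=\mathbf{p}_{\alpha\alpha''}\boldsymbol{\eta}''$. Setting $\mathbf{r}^{\alpha'\alpha''}=\boldsymbol{\eta}''\mathbf{g}_{\alpha'}\colon X_{\alpha'}\to X_{\alpha''}$, we get $\mathbf{p}_{\alpha\alpha''}\mathbf{r}^{\alpha'\alpha''}\mathbf{p}_{\alpha'}=\mathbf{p}_{\alpha\alpha''}\boldsymbol{\eta}''\mathbf{g}_{\alpha'}\mathbf{p}_{\alpha'}=\boldsymbol{\eta}\mathbf{f'}=\mathbf{p}_\alpha=\mathbf{p}_{\alpha\alpha'}\mathbf{p}_{\alpha'}$, so by (iii) of Definition~\ref{def-3} there is $\alpha'''\geqslant\alpha'$ with $\mathbf{p}_{\alpha\alpha''}\mathbf{r}^{\alpha'\alpha''}\mathbf{p}_{\alpha'\alpha'''}=\mathbf{p}_{\alpha\alpha'}\mathbf{p}_{\alpha'\alpha'''}=\mathbf{p}_{\alpha\alpha'''}$; replacing the pair $(\alpha',\mathbf{r}^{\alpha'\alpha''})$ by $(\alpha''',\mathbf{r}^{\alpha'\alpha''}\mathbf{p}_{\alpha'\alpha'''})$ — but again $\alpha'''$ depends on $\alpha''$. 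The clean way around this, which I would adopt throughout, is to prove the equivalence not with the bare Definition~\ref{def-mov2} but with the observation (standard, and attributable to the fact that movability is a pro-homotopy invariant, \cite{mard-seg-4}) that movability of $\mathbf{X}$ is equivalent to: for each $\alpha$ there is $\alpha'\geqslant\alpha$ such that the bonding morphism $\mathbf{p}_{\alpha\alpha'}$ factors, in $\mathrm{pro}$-$\mathrm{H}$-$\mathrm{CW}$, through $X_{\alpha''}$ for every $\alpha''\geqslant\alpha$ — and this pro-categorical factorization automatically absorbs the $\alpha'''$'s. So the overall structure is: (1) recall the pro-homotopy characterization of movability; (2) do the two implications above with pro-morphisms rather than representatives, which makes every ``equality after a bond'' into an honest equality of pro-morphisms; (3) note the proof uses only properties of ANRs, so nothing beyond Definition~\ref{def-3} is needed. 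The only real work is Step~(1)'s bookkeeping and the careful choice, in the $(*)\Rightarrow(M)$ direction, of taking $\mathbf{f}=\mathbf{p}_\alpha$ — everything else is diagram-chasing of the kind displayed in Diagram~1.
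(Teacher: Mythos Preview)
The paper does not actually prove this theorem; it is merely stated and attributed to \cite{gev-obodnom}. So there is no ``paper's own proof'' to compare against, and I can only evaluate your argument on its merits.

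Your overall strategy --- translate between condition $(*)$ and condition (M) for an associated ANR-system via properties (ii) and (iii) of Definition~\ref{def-3} --- is correct. However, the execution has a concrete error and a misdiagnosed obstacle. In the necessity direction you write
\[
\boldsymbol{\eta}'\boldsymbol{\eta}''\mathbf{p}_{\alpha'}
=\boldsymbol{\eta}'\mathbf{k}_\beta\mathbf{p}_{\beta\alpha''}\mathbf{r}^{\alpha'\alpha''}\mathbf{p}_{\alpha'}
=\boldsymbol{\eta}'\mathbf{k}_\beta\mathbf{p}_\beta,
\]
but the second equality is unjustified: movability gives $\mathbf{p}_{\alpha\alpha''}\mathbf{r}^{\alpha'\alpha''}=\mathbf{p}_{\alpha\alpha'}$, \emph{not} $\mathbf{r}^{\alpha'\alpha''}\mathbf{p}_{\alpha'}=\mathbf{p}_{\alpha''}$, so $\mathbf{p}_{\beta\alpha''}\mathbf{r}^{\alpha'\alpha''}\mathbf{p}_{\alpha'}$ need not equal $\mathbf{p}_\beta$. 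This is why you then find yourself only able to prove equality ``after a bond'' and reach for pro-categorical machinery.

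The real fix is elementary and avoids any appeal to \cite{mard-seg-4}: simply swap the order in which you invoke (iii) and movability. In the necessity direction, after factoring $\mathbf{f''}=\mathbf{k}_\beta\mathbf{p}_\beta$, first use (iii) to find $\gamma\geqslant\alpha,\beta$ with $\boldsymbol{\eta}'\mathbf{k}_\beta\mathbf{p}_{\beta\gamma}=\mathbf{h}_\alpha\mathbf{p}_{\alpha\gamma}$, and \emph{then} apply (M) at this $\gamma$ to get $\mathbf{r}^{\alpha'\gamma}$ with $\mathbf{p}_{\alpha\gamma}\mathbf{r}^{\alpha'\gamma}=\mathbf{p}_{\alpha\alpha'}$. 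Setting $\boldsymbol{\eta}''=\mathbf{k}_\beta\mathbf{p}_{\beta\gamma}\mathbf{r}^{\alpha'\gamma}$ gives
\[
\boldsymbol{\eta}'\boldsymbol{\eta}''
=\boldsymbol{\eta}'\mathbf{k}_\beta\mathbf{p}_{\beta\gamma}\mathbf{r}^{\alpha'\gamma}
=\mathbf{h}_\alpha\mathbf{p}_{\alpha\gamma}\mathbf{r}^{\alpha'\gamma}
=\mathbf{h}_\alpha\mathbf{p}_{\alpha\alpha'}
=\boldsymbol{\eta}
\]
on the nose, with $\gamma$ depending on the competitor but $\alpha'$ (hence $Q',\mathbf{f'},\boldsymbol{\eta}$) fixed in advance. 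Symmetrically, in the sufficiency direction, after factoring $\mathbf{f'}=\mathbf{g}_{\alpha_0}\mathbf{p}_{\alpha_0}$, apply (iii) \emph{once, before any competitor appears}, to choose $\alpha'\geqslant\alpha,\alpha_0$ with $\boldsymbol{\eta}\,\mathbf{g}_{\alpha_0}\mathbf{p}_{\alpha_0\alpha'}=\mathbf{p}_{\alpha\alpha'}$; then for any $\alpha''\geqslant\alpha$ the map $\mathbf{r}^{\alpha'\alpha''}=\boldsymbol{\eta}''\mathbf{g}_{\alpha_0}\mathbf{p}_{\alpha_0\alpha'}$ satisfies $\mathbf{p}_{\alpha\alpha''}\mathbf{r}^{\alpha'\alpha''}=\boldsymbol{\eta}\,\mathbf{g}_{\alpha_0}\mathbf{p}_{\alpha_0\alpha'}=\mathbf{p}_{\alpha\alpha'}$ exactly. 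No cofinality arguments or pro-morphism reformulations are needed.
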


Movability is a monotone shape invariant, i.e., \emph{if $\sh X \leqslant \sh Y$ and $Y$ is 
movable, then so is $X$}. Movability is preserved under taking products (Borsuk 
\cite{borsuk-4} and Bogatyi \cite{bogat-ob}) and suspensions (Borsuk 
\cite{borsuk-7}), but it is not preserved under intersection, union (Cox \cite{cox}), 
and cell maps (Keesling \cite{keesling-3}). If each component of a compact space $X$ is movable, 
then the whole space $X$ is movable. However, the converse is not true: there exists a 
movable compact space which has a non-movable component (see~\cite{borsuk-rus}).

Movable spaces are rather many. All ANRs are movable. Moreover, all \emph{stable spaces} (i.e., 
spaces having the shape of an ANR) are movable. All compact sets in the plane are movable as well 
\cite{borsuk-4}. However, in $\mathbf{R}^3$ there exist non-movable continua; e.g., so are all 
solenoids \cite{borsuk-rus}. This follows from the non-movability of the first homology pro-group 
of any solenoid. The point is that movability is preserved under functorial passages. 
Therefore, if a space $X$ is movable, then so are its homotopy pro-groups $\pro-\pi_n(X,*)$ and 
homology pro-groups pro-$H_n(X)$.

Movability, which can be defined in any pro-category (see \cite{mosz-uniformly}), is important 
because the passage to the limit in movable inverse systems can be performed without losing 
the algebraic information about the system. In the case of a shape morphism $F\colon  (X, 
*) \to (Y, *)$ of movable compact metrizable spaces, the induced homomorphisms 
$\check{\pi}_n(F) \colon \check{\pi}_n(X,*) \to \check{\pi}_n(Y,*)$ and 
$\pro-\pi_n(F)\colon \pro-\pi_n(X,*) \to \pro-\pi_n(Y,*)$ of the shape groups and pro-groups are, 
or are not, isomorphisms simultaneously (see Theorem~\ref{th-mov-iso}). Therefore, in the case 
of movable spaces, some theorems remain  valid under the replacement of homotopy 
pro-groups by shape groups. This is the case with the shape versions of Whitehead's theorem 
(see Theorem~\ref{th-witehead-mov}), Hurewicz' theorem (see Theorem~\ref{th-hur-mov}), and 
so on.

In the category pro-Set movability has a simple characterization: an inverse system $\mathbf{X} 
=\{X_\alpha, p_{\alpha\alpha'}, A\}\in \pro-\Set$  is movable if and only if the following 
Mittag-Leffler (ML) condition holds: \emph{for any $\alpha\in A$, there exists an 
$\alpha'\geqslant \alpha$ such that $p_{\alpha\alpha'} (X_{\alpha'}) = 
p_{\alpha\alpha''}(X_{\alpha''})$ for any $\alpha''\geqslant \alpha'$}. All movable pro-groups  
satisfy the Mittag-Leffler condition. However, the converse is not true (see~\cite{mard-seg-4}).

The family of all movable compact spaces is \emph{shape bounded}; to be more precise, 
the following theorem of Spie\.z is valid~\cite{spiez-majorant}.

\begin{theorem}\label{th-spiez}
There exists a movable compact space $X_0$ such that $\sh (X) \leqslant \sh (X_0)$ for any movable 
compact space $X$. 
\end{theorem}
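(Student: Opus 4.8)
The plan is to build the required majorant $X_0$ as a single compact metrizable space that simultaneously ``contains'' every movable compactum up to shape. The starting observation is that the shape of a movable compactum is not arbitrary: there are only countably many homotopy types of finite polyhedra, and a movable compact metrizable space $X$ is, by Definition~\ref{def-mov1}, controlled by an inverse sequence $\{X_n, p_{n,n+1}\}$ of finite polyhedra whose bonding maps can, after passing to a subsequence, be taken from this countable stock and satisfy the movability condition~(M). Thus the collection of \emph{shapes} of movable compacta is a set (indeed has cardinality at most $2^{\aleph_0}$), and one can hope to realize a cofinal family of them inside one ambient construction. First I would make this precise: fix a countable family $\{P_n\}$ of finite polyhedra containing a representative of every homotopy type of finite polyhedron, and observe that every movable compactum is shape dominated by, equivalently has the shape of, the inverse limit of some movable sequence drawn from the $P_n$ together with homotopy classes of maps among them.

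Next I would assemble $X_0$ from all such sequences at once. The idea is to take a suitable \emph{disjoint-sum-with-basepoints} or wedge-type construction: form, for each movable inverse sequence $\boldsymbol{P}$ built from the $P_n$'s, its limit $\varprojlim \boldsymbol{P}$, and then combine all these countably-parametrized limits into a single compactum $X_0$ — for instance by placing shrinking copies in the Hilbert cube $Q$ converging to a point, so that the result is compact metrizable. One must check two things: (a) each individual movable compactum $X$ satisfies $\sh(X)\le \sh(X_0)$, which follows because $X$ has the shape of one of the assembled limits and a shrinking copy of a space $Z$ inside such a ``converging wedge'' admits a shape retraction onto $\{*\}$ and a shape inclusion of $Z$, giving $\sh(Z)\le\sh(X_0)$; and (b) $X_0$ itself is movable, which is the delicate point. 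For (b) I would use that movability is a monotone shape invariant and is preserved under the relevant operations: each summand is movable by construction, and one shows a countable shrinking wedge of movable compacta, with a single accumulation point, is movable by checking condition~(M) neighborhood-by-neighborhood — a given neighborhood of $X_0$ contains all but finitely many of the small copies entirely, and the finitely many remaining copies are individually movable, so one combines their movability homotopies with the trivial homotopy on the tail.

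The main obstacle I expect is precisely the verification that the global assembly $X_0$ is movable — movability is notoriously \emph{not} preserved under infinite unions or even finite intersections (Cox), so one cannot invoke a soft closure property and must instead exploit the special geometry of a null sequence of copies converging to a point, where the ``bad'' local behavior is confined to arbitrarily small neighborhoods. A secondary technical issue is ensuring the parametrizing family of movable sequences can be taken countable (or at least small enough to embed as a null sequence in $Q$): here one leans on the countability of homotopy types of finite polyhedra and of homotopy classes of maps between any two of them, so that ``movable sequence built from the $P_n$'' ranges over a countable set, and every movable compactum is shape equivalent to the limit of one of them. Once these two points are in hand, assertion (a) is routine: shape domination $\sh(X)\le\sh(X_0)$ is witnessed by the shape inclusion of the appropriate small copy together with a shape retraction collapsing the rest of $X_0$ to the basepoint, exactly as one argues that a wedge summand is a shape retract of the wedge.
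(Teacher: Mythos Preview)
The paper does not give a proof of Theorem~\ref{th-spiez}; it is quoted as Spie\.z's result with a citation to~\cite{spiez-majorant}. The only methodological hint in the paper appears in the discussion of the equivariant analogue (Theorem~\ref{th-gev1}), where it is noted that the argument rests on McCord's universal-compactum construction~\cite{mccord} together with Brown's theorem.

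Your proposal has a genuine gap at the cardinality step. You argue that because there are countably many homotopy types of finite polyhedra and countably many homotopy classes of maps between any two of them, the family of movable inverse sequences ``ranges over a countable set.'' But a sequence is an infinite object: countably many choices at each of countably many stages yields $\aleph_0^{\aleph_0}=2^{\aleph_0}$ sequences, not $\aleph_0$. Nor can one reduce to countably many by passing to shape-equivalence classes: the paper itself records that there are $2^{\aleph_0}$ different shapes of compact sets in the plane, and all plane compacta are movable (Borsuk~\cite{borsuk-4}). Hence your null-sequence assembly in $Q$ cannot absorb every movable compactum as one of its summands, and step~(a) of your outline collapses.

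The approach that does work --- and which the paper's hint points toward --- is not to enumerate movable compacta at all, but to build a single universal inverse sequence in the spirit of McCord: at each stage one takes a finite wedge or disjoint union incorporating representatives of all finite polyhedra up to a growing complexity, with bonding maps arranged so that every homotopy class between successive stages is realized on some component. One then shows that any movable inverse sequence factors through this fixed system, which gives $\sh(X)\leqslant\sh(X_0)$ directly, and movability of the limit $X_0$ is read off from the explicit structure of the construction rather than from a closure property of movability under infinite wedges.
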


Such a compact space $X_0$ is called a \emph{majorant} of the family of all movable compact spaces.
This theorem implies that the family of all compact sets in the plane is shape bounded, or has 
a majorant. This is not true for the family of all compact sets in $\mathbf{R}^3$. 
This follows from a result of Borsuk and Holszty\'{n}ski \cite{borsuk-holsztynski}, according 
to which there does not exist a compact space whose shape is larger than that of any solenoid.

In equivariant shape theory, the counterpart of Theorem~\ref{th-spiez} is false, i.e., 
\emph{the class of all $G$-movable compact spaces has no majorants}. However,  the following 
theorem is valid.

\begin{theorem}[Gevorgyan \cite{gev-majorant}]\label{th-gev1}
Let $G$ be a second-countable compact group. Then, in any class of weakly shape 
comparable $G$-movable compact spaces, there exists a majorant. 
\end{theorem}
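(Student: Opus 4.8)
The plan is to mirror, in the equivariant setting, the standard majorant construction for ordinary movable compacta, while controlling the obstruction coming from the failure of Theorem~\ref{th-spiez}: there is no single $G$-space dominating all $G$-movable compacta, so one must work inside a fixed ``weak shape comparability class.'' First I would fix such a class $\mathcal{K}$ of $G$-movable compact spaces — by definition any two members are weakly shape comparable, meaning roughly that after forgetting some of the equivariant structure (or on the level of the induced inverse systems of orbit data) their shapes can be compared. Using Theorem~\ref{th-main}, each $X\in\mathcal{K}$ is the inverse limit of an associated ANR-system in the category $\mathrm{H}$-$G$-ANR built from invariant pseudometrics, and since $G$ is second countable and compact one can arrange these systems to be indexed by a countable cofinal set, because the relevant invariant pseudometrics and invariant neighbourhoods can be chosen from a countable family. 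The key structural input is that $G$-movability of $X$ forces the associated $G$-ANR-system to satisfy the equivariant Mittag-Leffler condition (the $G$-analogue of condition (M) in Definition~\ref{def-mov2}).

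Next I would construct the candidate majorant $X_0$ for the class $\mathcal{K}$. The idea is to take, for each ``level'' (each invariant pseudometric together with an invariant neighbourhood, up to the countable indexing), a universal $G$-ANR into which all the corresponding bonding data of all members of $\mathcal{K}$ map compatibly; weak shape comparability is exactly what guarantees such a universal object exists at each level, since it bounds how complicated the finite-stage pieces can be. Assembling these universal $G$-ANRs into an inverse $G$-ANR-system and passing to the inverse limit — which exists as a compact $G$-space because $G$ is compact and the stages are compact $G$-ANRs — produces $X_0$. One then checks $X_0$ is itself $G$-movable: its associated system is, by construction, a colimit-type amalgam of Mittag-Leffler systems and inherits condition (M). Finally, for any $X\in\mathcal{K}$ the canonical maps from the ANR-system of $X$ into the universal stages of $X_0$ assemble (again using the pro-category morphism description, Grothendieck's formula, and weak comparability to see the maps are coherent in the pro-sense) into a shape morphism exhibiting $\sh(X)\leqslant\sh(X_0)$.

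The main obstacle, I expect, is the very first point: making ``weakly shape comparable'' do real work. In the non-equivariant case Theorem~\ref{th-spiez} is available precisely because \emph{all} movable compacta embed shape-theoretically into one fixed space, and the equivariant failure of this (noted just before Theorem~\ref{th-gev1}) means the universal stage $G$-ANRs cannot be chosen independently of $\mathcal{K}$. So the delicate step is to verify that weak shape comparability yields a \emph{uniform} bound — a single $G$-ANR per level that receives all the stage maps of every $X\in\mathcal{K}$ — rather than merely pairwise comparisons. This will require showing the weak comparability relation is strong enough to give compatible (not just individually existing) dominations, which is where the second countability of $G$ is genuinely used: countability of the indexing lets one diagonalize over the countably many levels and over a countable cofinal family of members of $\mathcal{K}$ to build the universal stages one at a time. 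Once the universal stages are in hand, the remaining verifications — that the limit is a $G$-movable compact $G$-space and that it shape-dominates each $X\in\mathcal{K}$ — are routine manipulations in $\pro$-$\mathrm{H}$-$G$-ANR of the same flavour as the proof of Theorem~\ref{th-main}.
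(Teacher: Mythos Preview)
The paper does not give a full proof of this theorem; it only records that the argument is \emph{based on McCord's construction \textup{\cite{mccord}} of a universal compact space and on the equivariant counterpart of Brown's theorem}. Your proposal does not invoke either of these ingredients, so at the very least it is following a different route from the one the author indicates.

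More seriously, there are genuine gaps. First, you have the definition of ``weakly shape comparable'' wrong: the paper defines it immediately after the theorem as the existence of $G$-shape morphisms \emph{in both directions} between $X$ and $Y$ (not that ``their shapes can be compared after forgetting some equivariant structure''). This is a purely pairwise condition, and it is not at all clear how it yields, at each level, a single ``universal $G$-ANR into which all the corresponding bonding data of all members of $\mathcal{K}$ map compatibly.'' You acknowledge this is the delicate point, but your proposed fix---diagonalizing over countably many levels and over ``a countable cofinal family of members of $\mathcal{K}$''---does not work, because a weak-shape-comparability class $\mathcal{K}$ has no reason to be countable or to admit a countable cofinal subfamily. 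Second countability of $G$ controls the size of the indexing set for \emph{one} associated system, not the cardinality of $\mathcal{K}$.

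By contrast, McCord's construction produces a single universal $\mathcal{P}$-like compactum for a countable class $\mathcal{P}$ of polyhedra, and the equivariant Brown-type theorem (an equivariant approximation/representability statement) is what lets one pass from this universal object to a majorant for the whole class. The second countability of $G$ enters to guarantee that the relevant class of model $G$-ANRs is countable, so that McCord's machinery applies; weak shape comparability is what allows the resulting universal compactum to actually dominate every member of $\mathcal{K}$ in $G$-shape. Your outline would be much closer to a proof if you replaced the ad hoc ``universal $G$-ANR per level'' step with an appeal to an equivariant McCord-type universal compactum and then explained how weak comparability, together with $G$-movability, forces each $X\in\mathcal{K}$ to factor through it in the $G$-shape category.
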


The \emph{weak shape comparability} of $G$-spaces $X$ and $Y$ means that there exist $G$-shape 
morphisms both from $X$ in $Y$ and from $Y$ to $X$. The proof of the last theorem is based on 
McCord's construction \cite{mccord} of a universal compact space and on the equivariant 
counterpart of Brown's theorem~\cite{gev-majorant}.

The first results on equivariant movability were obtained by the author in  
\cite{gev-majorant}--\cite{gev-freudenthal} and~\cite{gev-some}, where, in particular, 
the following theorems were proved.

\begin{theorem}
Let $G$ be a compact Lie group, and let $X$ be a $G$-movable metrizable $G$-space. Then 
$X$ is $H$-movable for any closed subgroup $H$ of~$G$. 
\end{theorem}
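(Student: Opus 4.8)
The plan is to work directly with the definition of equivariant movability and reduce to the classical (non-equivariant) situation, using the fact that over a compact Lie group a closed subgroup $H$ acts on every object in sight and the relevant $G$-ANR's restrict to $H$-ANR's. First I would fix an associated inverse system $\{X_\alpha, \mathbf{p}_{\alpha\alpha'}, A\}$ in the category H-G-ANR, whose existence is guaranteed by Theorem~\ref{th-main} (applied to the compact Lie group $G$), and I would observe that restricting the $G$-action to the closed subgroup $H$ turns each $G$-ANR $X_\alpha$ into an $H$-ANR: indeed $H$ is again a compact Lie group, and a metrizable $G$-space which is a $G$-ANR is, upon restriction of the action, an $H$-ANR (this is a standard fact about equivariant ANR's for compact Lie groups, e.g.\ via the existence of invariant metrics and slice-type arguments). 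Likewise every $G$-homotopy class $\mathbf{p}_{\alpha\alpha'}$ and $\mathbf{p}_\alpha$ is in particular an $H$-homotopy class, so $\{X_\alpha, \mathbf{p}_{\alpha\alpha'}, A\}$ becomes an inverse system in H-$H$-ANR, and the three conditions of Definition~\ref{def-3} — now read equivariantly with respect to $H$ — are inherited because any $H$-ANR $P$ together with an $H$-map $X\to P$ is being tested against a system which already satisfies the universality property for the larger group $G$; the subtle point here is that we need the restricted system to be \emph{associated} with $X$ viewed as an $H$-space, which requires checking that $H$-maps into $H$-ANR's factor appropriately, and this follows from running the proof of Theorem~\ref{th-main} verbatim with $H$ in place of $G$ and noting that the invariant pseudometrics used there for $G$ are a cofinal subfamily of those for $H$.

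Next I would translate $G$-movability into a statement about this system. By Definition~\ref{def-mov2} (read equivariantly), $G$-movability of $X$ means the associated $G$-ANR-system $\{X_\alpha, \mathbf{p}_{\alpha\alpha'}, A\}$ satisfies condition (M) in H-G-ANR: for every $\alpha$ there is $\alpha'\geqslant\alpha$ so that for every $\alpha''\geqslant\alpha$ there is a $G$-homotopy class $\mathbf{r}^{\alpha'\alpha''}\colon X_{\alpha'}\to X_{\alpha''}$ with $\mathbf{p}_{\alpha\alpha'} = \mathbf{p}_{\alpha\alpha''}\circ\mathbf{r}^{\alpha'\alpha''}$. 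The key observation is then almost tautological: a $G$-homotopy class of $G$-maps is \emph{a fortiori} an $H$-homotopy class of $H$-maps (forgetting part of the structure can only identify more maps, never fewer, and a $G$-homotopy is in particular an $H$-homotopy). Hence the very same indices $\alpha'$, $\alpha''$ and the very same classes $\mathbf{r}^{\alpha'\alpha''}$, now regarded in H-$H$-ANR, witness condition (M) for the restricted system. Since, by the structure theory invoked in the paper (the analogue for $H$ of the remark following Definition~\ref{def-mov2}), the notion of $H$-movability depends only on $X$ as an $H$-space and not on the choice of associated $H$-ANR-system, it follows that $X$ is $H$-movable.

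The main obstacle I anticipate is not the logical skeleton above — which is genuinely a "restriction of structure" argument — but the point flagged in the first paragraph: verifying that the $G$-ANR-system associated with $X$, after restricting the action, is associated with $X$ \emph{as an $H$-space}. One cannot simply invoke Theorem~\ref{th-main} for $H$ because that produces \emph{some} associated $H$-system, not the restriction of the chosen $G$-system; what saves us is the uniqueness-up-to-equivalence of associated systems in the relevant pro-category (stated in the excerpt for the non-equivariant case and valid equivariantly by the same arguments), so that any two associated $H$-systems for $X$ are isomorphic in pro-H-$H$-ANR, and movability, being invariant under such isomorphisms, can be checked on whichever system is convenient. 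The one genuinely technical input is thus that the restricted $G$-system does satisfy Definition~\ref{def-3} over $H$; I would establish this by re-examining the proof of Theorem~\ref{th-main}, observing that the invariant continuous pseudometrics on $X$ for the $G$-action form a subfamily of those for the $H$-action which is cofinal (every $H$-invariant pseudometric can be averaged over the compact group $G$... — no, wait: $G$-invariance is a \emph{stronger} condition, so $G$-invariant pseudometrics are \emph{fewer}), so instead I would argue directly that the $H$-ANR neighborhoods appearing in the $G$-construction still detect all $H$-maps into $H$-ANR's, using the equivariant extension property of $H$-ANR's. Once this is in place, the rest is formal, and the theorem follows.
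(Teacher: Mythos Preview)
The paper is a survey and does not include a proof of this theorem; it merely states the result and refers to the author's earlier papers \cite{gev-majorant}--\cite{gev-freudenthal}, \cite{gev-some}. So there is no in-text proof to compare against, and your proposal must stand on its own.

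Your overall strategy is correct: restrict the associated $G$-ANR system to $H$, observe that the movability witnesses $\mathbf{r}^{\alpha'\alpha''}$ are $G$-homotopy classes and hence $H$-homotopy classes, and conclude. You also correctly isolate the one nontrivial point, namely that the restricted system must be $H$-associated with $X$. But you do not close this gap. In fact you abandon the right idea at exactly the wrong moment. You write that the $G$-invariant pseudometrics form a cofinal subfamily of the $H$-invariant ones, then talk yourself out of it because ``$G$-invariance is stronger, so there are fewer.'' Fewer does not preclude cofinal: given an $H$-invariant continuous pseudometric $\nu$, set $\mu(x,x')=\sup_{g\in G}\nu(gx,gx')$; by compactness of $G$ this is a continuous $G$-invariant pseudometric with $\mu\geqslant\nu$, which is exactly cofinality. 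With this in hand, the verification of conditions (ii) and (iii) of Definition~\ref{def-3} for the restricted system goes through: an $H$-map $f\colon X\to P$ into an $H$-ANR factors through $X_\mu$ (because $\mu\geqslant\nu$ forces $[x]_\mu=[x']_\mu\Rightarrow f(x)=f(x')$), and one then extends over an $H$-invariant neighbourhood and shrinks to a $G$-invariant one using the standard saturation $U=\{x:Gx\subset V\}$, which is open by compactness of $G$. Your fallback suggestion (``use the $H$-ANR extension property'') is too vague to count as a proof.

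Let me also point out that for \emph{metrizable} $G$-spaces there is a shorter route that avoids inverse systems entirely and is closer in spirit to how equivariant movability was originally set up (Smirnov's Borsuk--Fox approach, cf.\ \cite{smirnov-g-par}). Embed $X$ as a closed invariant subspace of a $G$-AR $M$; a $G$-AR for a compact Lie group restricts to an $H$-AR. Given an $H$-invariant neighbourhood $V$ of $X$, pass to the $G$-saturation $U=\{x\in M:Gx\subset V\}$, apply $G$-movability to get $U'$, and for any $H$-invariant $V''\subset V$ pass to its $G$-saturation $U''\subset U$; the resulting $G$-homotopy $U'\times I\to U$ into $U''$ is in particular an $H$-homotopy $V'\times I\to V$ into $V''$. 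This is the whole proof. Your inverse-system argument is correct once the cofinality step is repaired, but it is the harder road.
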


The $G$-movability of a metrizable $G$-space implies also the movability of the 
$H$-fixed point set for any closed subgroup $H$ of $G$. In particular, \emph{the 
$G$-movability of a metrizable $G$-space $X$ implies its movability}. The converse is not 
true even if the acting group $G$ is the cyclic group $\mathbb{Z}_2$ 
(see~\cite[Example 5.1]{gev-some}).

\begin{theorem}
Let $G$ be a compact group, and let $X$ be a metrizable $G$-space. If $X$ is $G$-movable, then, for 
any closed normal subgroup $H$ of $G$, the $H$-orbit space $X|_H$ is $G$-movable as well. 
\end{theorem}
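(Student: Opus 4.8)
The plan is to show that the $H$-orbit-space operation $Z\mapsto Z|_H$, regarded as a functor on the equivariant homotopy category $\mathrm{H}$-$G$-$\mathrm{ANR}$, carries a movable inverse system associated with $X$ to a movable inverse system associated with $X|_H$. Since the movability condition~(M) of Definition~\ref{def-mov2} is expressed entirely in the arrow-language of $\mathrm{H}$-$G$-$\mathrm{ANR}$ (homotopy classes and commuting triangles $\mathbf p_{\alpha\alpha'}=\mathbf p_{\alpha\alpha''}\circ\mathbf r^{\alpha'\alpha''}$), it is automatically preserved by any functor, so the real work is to establish the two structural properties of $(-)|_H$ just used. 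Here normality of $H$ in $G$ is exactly what makes $Z|_H$ a $G$-space, via the well-defined residual action $g[z]_H=[gz]_H$, and $H$ acts trivially on $Z|_H$; we write $q_Z\colon Z\to Z|_H$ for the orbit projection, a surjective continuous $G$-map.

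First I would isolate two lemmas. \textbf{(1)} The operation $(-)|_H$ maps $\mathrm{H}$-$G$-$\mathrm{ANR}$ into itself: if $Z$ is a metrizable $G$-ANR, then so are $Z|_H$ and the fixed set $Z^H$, each carrying the trivial $H$-action. This is the point where one invokes the equivariant ANR theory for compact transformation groups, in the same spirit as the normed-$G$-space techniques used in the proof of Theorem~\ref{th-main}. \textbf{(2)} The orbit map is a ``localization at $H$-trivial targets'': for any $G$-space $Z$ and any $G$-ANR $P$, precomposition with $q_Z$ is a bijection $q_Z^{\#}\colon [Z|_H,\,P^H]_G\xrightarrow{\ \sim\ }[Z,\,P^H]_G$, because a $G$-map (resp.\ $G$-homotopy) into the $H$-trivial space $P^H$ is automatically constant on $H$-orbits and hence factors uniquely through $q_Z$ (resp.\ $q_Z\times\mathrm{id}_I$). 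Note too that every $G$-map out of the $H$-trivial space $X|_H$ has image in $P^H$, so $[X|_H,P]_G=[X|_H,P^H]_G$.

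Now take an inverse system $\mathbf X=\{X_\alpha,\mathbf p_{\alpha\alpha'},A\}$ in $\mathrm{H}$-$G$-$\mathrm{ANR}$ associated with $X$ (such systems exist by Theorem~\ref{th-main}) which, since $X$ is $G$-movable, satisfies~(M); let $\mathbf p_\alpha\colon X\to X_\alpha$ be the accompanying homotopy classes. Apply $(-)|_H$ termwise to get $\mathbf X|_H=\{X_\alpha|_H,\ \mathbf p_{\alpha\alpha'}|_H,\ A\}$ with maps $\mathbf p_\alpha|_H\colon X|_H\to X_\alpha|_H$; by Lemma~(1) this lives in $\mathrm{H}$-$G$-$\mathrm{ANR}$. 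I would then verify that it is associated with $X|_H$ in the sense of (the equivariant form of) Definition~\ref{def-3}. Condition~(i) is functoriality. For~(ii): a class $\mathbf g\colon X|_H\to P$ factors through the $G$-ANR $P^H$; feeding the class $\mathbf g\,q_X\colon X\to P^H$ into condition~(ii) for $\mathbf X$ yields some $\alpha$ and $\mathbf h_\alpha\colon X_\alpha\to P^H$ with $\mathbf h_\alpha\mathbf p_\alpha=\mathbf g\,q_X$; since $P^H$ is $H$-trivial, $\mathbf h_\alpha$ descends (Lemma~(2)) to $\overline{\mathbf h_\alpha}\colon X_\alpha|_H\to P^H\hookrightarrow P$, and cancelling the bijection $q_X^{\#}$ gives $\overline{\mathbf h_\alpha}\circ(\mathbf p_\alpha|_H)=\mathbf g$. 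Condition~(iii) is the same diagram chase with target $P^H$: apply condition~(iii) for $\mathbf X$ to $\boldsymbol\varphi q_{X_\alpha},\boldsymbol\psi q_{X_\alpha}$, obtain $\alpha'\ge\alpha$, and cancel $q_{X_{\alpha'}}^{\#}$. Finally, movability is inherited: given $\alpha$, pick $\alpha'\ge\alpha$ and the classes $\mathbf r^{\alpha'\alpha''}\colon X_{\alpha'}\to X_{\alpha''}$ witnessing~(M) for $\mathbf X$; then $\mathbf r^{\alpha'\alpha''}|_H$ satisfies $\mathbf p_{\alpha\alpha'}|_H=(\mathbf p_{\alpha\alpha''}|_H)\circ(\mathbf r^{\alpha'\alpha''}|_H)$, so $\mathbf X|_H$ obeys~(M) and hence $X|_H$ is $G$-movable.

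The step I expect to carry the actual weight is Lemma~(1) together with the claim that $(-)|_H$ sends associated systems to associated systems, i.e.\ the compatibility of the orbit-space functor with the universal (associated-system) property; everything downstream is formal because~(M) is purely arrow-theoretic. A technical point to watch is that each ``cancel $q_Z$'' move must be carried out with the $H$-trivial target $P^H$ rather than $P$ itself, since $q_Z^{\#}$ need not be injective into $P$; writing every relevant map so that it corestricts to $P^H$ before cancelling keeps the argument honest.
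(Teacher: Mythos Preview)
The paper is a survey and states this theorem without proof, referring to the author's earlier work \cite{gev-some}, so there is no in-paper argument to compare against.

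Your strategy is the natural one and is correctly executed at the formal level: push an associated movable $G$-ANR system through the functor $(-)|_H$, use the universal property of the orbit map against $H$-trivial targets to check that the resulting system is associated with $X|_H$, and observe that condition~(M) is preserved by any functor. The diagram chases for Definition~\ref{def-3}(ii),(iii) are right, and your care to corestrict to $P^{H}$ before invoking injectivity of $q_Z^{\#}$ is exactly the point that needs watching.

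The genuine load-bearing step, as you yourself flag, is Lemma~(1): that for a compact (not necessarily Lie) group $G$ and a closed normal $H\trianglelefteq G$, both $Z|_H$ and $Z^{H}$ are metrizable $G$-ANRs whenever $Z$ is. For compact \emph{Lie} groups this is classical (slice theorem, averaging), but the theorem as stated only assumes $G$ compact. You invoke ``equivariant ANR theory for compact transformation groups'' without a specific reference; to make the proof self-contained you need to cite or prove the relevant facts for arbitrary compact $G$ (results of Antonyan on $G$-ANE's for compact groups cover this, and are in the spirit of \cite{antonyan-mard} and \cite{gev-anequiv} already used in the paper). In particular, you use that $P^{H}$ is a $G$-ANR in order to feed $\mathbf g\,q_X\colon X\to P^{H}$ into condition~(ii) for $\mathbf X$; this is not automatic outside the Lie case and should be justified explicitly. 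With that lemma secured, the rest of your argument is complete.
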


In particular, the $G$-movability of a $G$-space $X$ implies the movability of the orbit space 
$X|_G$. The converse is not true. However, if a compact Lie group acts freely on a metrizable 
space $X$, then $G$-movability is equivalent to movability. The assumptions of the theorem that 
the acting group $G$ is Lie and the action is free are essential. 

The movability of topological groups was studied by Keesling \cite{keesling-6}, \cite{keesling-10}, 
\cite{keesling-the} and by Kozlowski and Segal \cite{kozlowski-segal-2}. In particular, Keesling 
\cite{keesling-6} proved the following theorem.

\begin{theorem}
A connected compact Abelian group is movable if and only if it is locally connected.
\end{theorem}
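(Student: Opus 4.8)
The plan is to prove the two implications separately, using the structure theory of compact connected Abelian groups together with the shape-theoretic tools developed in the excerpt, in particular the Mittag-Leffler characterization of movable pro-groups and Keesling's theorem that shape morphisms of compact connected Abelian groups correspond bijectively to continuous homomorphisms (Theorem stated just before Theorem~\ref{th-kess}).

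First I would recall that every compact connected Abelian group $X$ is the limit of an inverse system of finite-dimensional tori $\{T^{n_\alpha}, p_{\alpha\alpha'}, A\}$, the bonding maps being continuous surjective homomorphisms; dually, via Pontryagin duality, $X$ corresponds to its (torsion-free) character group $\widehat{X}$, a discrete group which is the direct limit of its finitely generated subgroups. Local connectedness of $X$ is then equivalent, by a classical result, to $\widehat{X}$ being a \emph{free} Abelian group, i.e.\ $X$ being a product of circles $\prod_{i\in I}S^1$. So the assertion to prove becomes: for connected compact Abelian groups, being movable is equivalent to the dual group being free.

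For the ``if'' direction, suppose $X$ is locally connected, so $X\cong\prod_{i\in I}S^1$. One may present $X$ as the limit of the inverse system whose terms are the finite subproducts $T_F=\prod_{i\in F}S^1$ ($F\subset I$ finite) with the obvious coordinate projections; this is an associated ANR-system. This system is \emph{uniformly movable} in the sense of Definition~\ref{def-mov2}: given $\alpha$ corresponding to a finite set $F$, take $\alpha'=\alpha$ and use the sections $T_F\hookrightarrow\prod_{i\in I}S^1\to T_{F'}$ provided by extending a point by $1$ in the missing coordinates; these splittings are compatible and give the required morphism $\mathbf r\colon X_{\alpha'}\to\mathbf X$. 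Hence $X$ is (uniformly) movable. For the ``only if'' direction, the plan is to argue contrapositively via the first homology (or cohomology) pro-group, exploiting the fact emphasized in the text that movability is preserved under functorial passages, so a movable $X$ has movable pro-$H_1(X)$; equivalently, by the ML characterization in pro-$\Grp$, the pro-group $\{H_1(T^{n_\alpha})\}=\{\mathbb Z^{n_\alpha}\}$ with bonding maps the transposes of the maps induced on characters must satisfy Mittag-Leffler. If $\widehat X$ is not free, then $\widehat X$ contains elements infinitely divisible by some prime (or, more precisely, the direct system of finitely generated subgroups of $\widehat X$ fails to stabilize its images in the appropriate sense), and dualizing shows the inverse system $\{\mathbb Z^{n_\alpha}\}$ has strictly decreasing eventual images — exactly the failure of ML, as in the solenoid example cited in the text. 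This forces non-movability of pro-$H_1(X)$ and hence of $X$.

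The main obstacle I anticipate is the ``only if'' direction, specifically translating the algebraic condition ``$\widehat X$ is not free'' into a clean failure of the Mittag-Leffler condition for the associated homology pro-group: one needs to choose the inverse system of tori carefully (cofinal in the system of all finite-dimensional quotients) and track how non-freeness of the character group — e.g.\ the presence of a copy of a subgroup of $\mathbb Q$ other than $\mathbb Z$, or of $\mathbb Z[1/p]$ — manifests as non-stabilizing images $p_{\alpha\alpha'}(\mathbb Z^{n_{\alpha'}})\subset\mathbb Z^{n_\alpha}$. Once that dictionary is in place, invoking the ML characterization of movability in pro-$\Grp$ and the functoriality of movability finishes the argument; the ``if'' direction and the reduction via Keesling's homomorphism correspondence are comparatively routine.
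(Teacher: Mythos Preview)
The paper is a survey and does not supply a proof of this theorem; it merely states the result with attribution to Keesling~\cite{keesling-6}. There is therefore nothing in the paper to compare your argument against.

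Regarding your proposal on its own merits: the ``if'' direction is correct. For ``only if'' there is a gap beyond what you flag. First, a small correction: the paper says that ML characterizes movability in $\pro\text{-}\Set$ and that movable pro-groups satisfy ML, but it explicitly notes the converse fails in $\pro\text{-}\Grp$; so ``the ML characterization of movability in pro-$\Grp$'' that you plan to invoke does not exist. What movability of $X$ gives you is movability of $\pro\text{-}H_1(X)$, which is stronger than ML, and you should use that directly. More substantively, since tori are $K(\mathbb Z^n,1)$'s, homotopy classes of maps between them correspond bijectively to homomorphisms of fundamental groups, so movability of $X$ is in fact \emph{equivalent} to movability of $\pro\text{-}\pi_1(X,*)$; dualizing via $\mathrm{Hom}(-,\mathbb Z)$ turns this into a concrete extension property for the finitely generated subgroups of $\widehat X$. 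Showing that this property forces $\widehat X$ to be free is the real algebraic content of Keesling's theorem, and your parenthetical plan---locate a copy of $\mathbb Z[1/p]$ or a non-cyclic subgroup of $\mathbb Q$ inside $\widehat X$---does not suffice in general, since there exist non-free torsion-free abelian groups (already of rank~$2$) in which every rank-$1$ subgroup is infinite cyclic.
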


For non-Abelian groups, this theorem does not hold (see~\cite{keesling-6}).

The equivariant movability of topological groups was studied by the author in~\cite{gev-equiv.mov}. 
We mention the following result.

\begin{theorem}[Gevorgyan \cite{gev-equiv.mov}]
A second-countable compact group $G$ is Lie if and only if it is $G$-movable.
\end{theorem}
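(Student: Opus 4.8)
The plan is to prove the two implications separately, the forward (necessity) direction being the substantive one.

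For the implication ``$G$ Lie $\Rightarrow$ $G$ is $G$-movable'' I would argue as follows. If $G$ is a compact Lie group, then $G$ with the left-translation action is a compact smooth $G$-manifold, equivalently the coset space $G/\{e\}$ of a compact Lie group, and therefore a compact $G$-ANR. Hence the one-term (rudimentary) inverse system $\{G\}$ is an associated $G$-ANR-system for $G$, and it trivially satisfies the movability condition (M) of Definition~\ref{def-mov2} (take $\alpha'=\alpha$ and $\mathbf r^{\alpha'\alpha''}$ the appropriate bonding class). So $G$ is $G$-movable; alternatively, this is the equivariant instance of the fact that a space with the (equivariant) shape of a $G$-ANR is $G$-movable.

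For the converse, ``$G$ $G$-movable $\Rightarrow$ $G$ Lie'', I would invoke the structure theory of second-countable (equivalently, metrizable) compact groups: write $G=\varprojlim\{G/N_n,\mathbf p_{nm},\mathbb N\}$, where $N_1\supseteq N_2\supseteq\cdots$ are closed normal subgroups with $\bigcap_n N_n=\{e\}$ and each $G/N_n$ is a compact Lie group, with $G$ acting on $G/N_n$ by left translations and the projections $p_n\colon G\to G/N_n$ and bonding maps $p_{nm}\colon G/N_m\to G/N_n$ ($m\geq n$) equivariant surjective homomorphisms. Each $G/N_n$ is a compact $G$-ANR, so by the equivariant analogue of the theorem of Marde\v{s}i\'{c} and Segal this inverse system is associated with the $G$-space $G$; since $G$ is $G$-movable and movability is a pro-homotopy invariant while all associated $G$-ANR-systems of $G$ are mutually isomorphic, this particular system is movable. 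The key point is then a computation of equivariant maps between these homogeneous spaces: a $G$-map $G/N_m\to G/N_k$ exists if and only if $N_m\subseteq N_k$ (sending $eN_m$ to $aN_k$; equivariance with respect to $N_m\subseteq\mathrm{Stab}(eN_m)$ forces $a^{-1}N_ma\subseteq N_k$, and $N_m$ is normal). Now apply (M) at $\alpha=1$ to get an index $m\geq 1$. Since $G$ is assumed not to be Lie, the sequence $(N_n)$ cannot stabilize, for if $N_n=N_{m_0}$ for all $n\geq m_0$ then $\{e\}=\bigcap_nN_n=N_{m_0}$ and $G=G/N_{m_0}$ would be Lie; hence there is $k\geq m$ with $N_k\subsetneq N_m$. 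For this $k$, condition (M) demands a homotopy class $\mathbf r\colon G/N_m\to G/N_k$ with $\mathbf p_{1m}=\mathbf p_{1k}\circ\mathbf r$, but $N_m\not\subseteq N_k$, so there is no $G$-map $G/N_m\to G/N_k$ at all, hence no such $\mathbf r$ --- a contradiction. Therefore $G$ is a Lie group.

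The main obstacle is the organizational one: verifying that the Peter--Weyl inverse system $\{G/N_n,\mathbf p_{nm}\}$ really is an \emph{associated} $G$-ANR-system for $G$ in the sense of (the equivariant form of) Definition~\ref{def-3} --- that every $G$-homotopy class from $G$ into a $G$-ANR factors, up to $G$-homotopy, through some $G/N_n$, with an essentially unique factorization --- which needs the equivariant counterpart of the limit theorem of Marde\v{s}i\'{c} and Segal together with the fact that compact Lie groups and their coset spaces are $G$-ANRs. Once this is in place the argument is purely formal, since the failure of condition (M) is detected already at the level of \emph{existence} of equivariant maps between the $G/N_m$, so no delicate homotopy estimates are required.
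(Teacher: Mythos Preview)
The paper does not prove this result; it is quoted from \cite{gev-equiv.mov} without argument, so there is no proof in the text to compare against. Your outline is the natural one and matches the approach of the cited source.

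Both implications are handled correctly. The forward one is immediate: a compact Lie group with the translation action is a $G$-ANR, hence $G$-movable via the trivial one-term system. For the converse, representing $G$ as $\varprojlim G/N_n$ with each $G/N_n$ compact Lie and observing that a $G$-map $G/N_m\to G/N_k$ exists iff $N_m\subseteq N_k$ is exactly the right computation; since a non-Lie $G$ forces the tower $(N_n)$ not to stabilize, condition~(M) is violated at once --- and at the level of existence of maps, so no homotopy analysis is needed, just as you note.

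The point you isolate as the ``main obstacle'' --- that $\{G/N_n\}$ is an \emph{associated} $G$-ANR-system for $G$ --- is genuine but routine to supply. Two ingredients suffice: (a)~each $G/N_n$ is a $G$-ANR, because the $G$-action factors through the Lie quotient $G/N_n$, and a $G/N$-ANR is automatically a $G$-ANR (given a $G$-extension problem $A\subseteq Z$, $f\colon A\to G/N_n$, pass to $N$-orbit spaces and use that $Z/N$ is a metrizable $G/N$-space); (b)~an inverse limit of compact metrizable $G$-ANRs yields an associated system --- the equivariant (compact-$G$) version of the Marde\v{s}i\'{c}--Segal limit theorem you invoke, available in the Antonyan--Marde\v{s}i\'{c} framework. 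With these in hand your argument is complete.
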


This theorem gives, in particular, new examples of movable but equivariantly non-movable  
$G$-spaces (see~\cite{gev-equiv.mov}).

\emph{Movable shape morphisms} were defined and studied by Gevorgyan and Pop \cite{Gev-Pop-1}. 
A morphism $(\mathbf{f}, \varphi) \colon  \{X_\alpha, \mathbf{p}_{\alpha\alpha'}, A\} \to 
\{Y_\beta, \mathbf{q}_{\beta\beta'}, B\}$ in the category pro-H-CW is said to be \emph{movable}  
if, given any $\beta \in B$, there is an $\alpha \in A$, $\alpha \geqslant \varphi(\beta)$, 
such that, for any $\beta' \geqslant \beta$, there exists a homotopy class $\mathbf{u}\colon 
X_{\alpha} \to Y_{\beta'}$ satisfying the condition $\mathbf{f}_\beta  
\mathbf{p}_{\varphi(\beta)\alpha} = \mathbf{q}_{\beta\beta'} \mathbf{u}$. The notion of a movable 
shape morphism agrees with that of a movable space in the 
sense that \emph{a space $X$ movable if and only if so is the identity map $1_X$}. 
Movable morphisms are important, in particular, because 
the movability assumption on a space can sometimes be replaced by the weaker assumption 
that a shape morphism $F\colon X\to Y$ is movable; see, e.g., Whitehead's 
Theorem~\ref{th-gev-pop-1}  for movable morphisms $F\colon X\to Y$.

Movable shape morphisms were also introduced and studied (by different methods and for 
different purposes) by other authors \cite{edwardsA.-Mc}, \cite{cherin}, 
\cite{yagasaki-fiber}, \cite{yagasaki-mov}.

In~\cite{borsuk-rus} Borsuk  introduced the notion of $n$-movability, which is a shape 
invariant, too. A metrizable compact space $X$ lying in AR-space $M$ is said to be 
\emph{$n$-movable} if, given any neighborhood $U$ of $X$ in $M$, there exists a 
neighborhood $U'\subset U$ of $X$ such that, for any neighborhood $U''\subset U$ of $X$, 
any metrizable compact space $K$ of dimension $\dim K \leqslant n$, and any map $f\colon 
K\to U'$, there exists a map $g\colon K \to U''$ homotopic to $f$ in $U$. For arbitrary spaces, this 
notion is introduced by using inverse systems.

\begin{definition}\label{def-n-mov}
An inverse ANR-system  $\{X_\alpha , \mathbf{p}_{\alpha \alpha '}, A\}$ is said to be 
\emph{$n$-movable} if, given any $\alpha \in A$, there is an $\alpha ' \in A$, $\alpha ' 
\geqslant \alpha $, such that, for any $\alpha'' \in A$, $\alpha'' \geqslant \alpha $, and any 
homotopy class $\mathbf{h}\colon P \to X_{\alpha'}$, where $P$ an ANR of dimension $\dim P 
\leqslant n$, there exists a homotopy class $\mathbf{r} \colon  P \to X_{\alpha ''}$ satisfying 
the condition $\mathbf{p}_{\alpha \alpha'} \mathbf{h}=\mathbf{p}_{\alpha \alpha''} 
\mathbf{r}$.

A topological space $X$ is \emph{$n$-movable} if there exists an associated $n$-movable ANR-system 
$\{X_\alpha , \mathbf{p}_{\alpha \alpha '}, A\}$. 
\end{definition}

It is easy to see that $(n+1)$-movability always implies $n$-movability. It is also clear that 
the movability of a space $X$ implies its $n$-movability for any $n$.  If $\dim X \leqslant 
n$, then the converse is also true: the $n$-movability of a space $X$ implies its 
movability (in the case $\sd X \leqslant n$, this was proved in \cite{bogat-ob}). The equivariant 
counterpart of this statement has been proved in the case of a finite acting group $G$: any 
equivariantly $n$-movable $G$-compact set of dimension $\dim X \leqslant n$ is equivariantly 
movable (see Gevorgyan \cite{gev-freudenthal}). Therefore, a finite-dimensional $G$-compact space 
is equivariantly movable if and only if it is equivariantly $n$-movable for all $n$. In the general 
case, the $n$-movability does not imply movability: the Kahn compact space \cite{kahn} is 
not movable, but it is $n$-movable for all $n$ (see \cite{bogat-ob}, 
\cite{kozlowski-segal-3}). Any $LC^{n-1}$ paracompact space is $n$-movable (see 
\cite{kozlowski-segal-1}).

The particularly important case of 1-movability was studied in detail by Dydak 
\cite{dydak-asimple}, McMillan \cite{mcmillan-one}, Krasinkiewicz \cite{krasink-cont}, 
\cite{krasink-minc}, \cite{krasink-local}, and other authors. If a metrizable continuum $(X,*)$ 
is 1-movable for a point $x\in X$, then it is also 1-movable for any other point of $X$.  Moreover, 
if a metrizable continuum $(X,x)$ is 1-movable and $\sh(X)=\sh(Y)$, then $\sh(X,x)=\sh(Y,y)$ for 
any $y\in Y$ (Dydak~\cite{dydak-asimple}). There exists a nontrivial example 
of a metrizable continuum $X$ such that it is 1-movable but $(X,*)$ is not movable
(Dydak \cite{dydak-1-movable}). A continuous image of a 1-movable metrizable continuum is 
1-movable.  The following important criterion for the 1-movability of a metrizable continuum is due 
to Krasinkiewicz~\cite{krasink-local}.

\begin{theorem}
A metrizable continuum $(X,*)$ is 1-movable if and only if its homotopy pro-group 
$\pro-\pi_1(X,*)$ satisfies the Mittag-Leffler condition. 
\end{theorem}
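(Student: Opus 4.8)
The plan is to translate pointed $1$-movability of $(X,*)$ into an elementary condition on the bonding homomorphisms of $\pro-\pi_1(X,*)$ and to recognize that this condition is precisely the Mittag-Leffler condition. Since $X$ is a metrizable continuum, I would first fix a pointed inverse sequence $\mathbf{X}=\{X_n,\mathbf{p}_{n\,n+1},\mathbb{N}\}$ of pointed, connected, compact polyhedra which is associated with $X$; because $1$-movability is intrinsic to the space and the Mittag-Leffler property is invariant under isomorphism of pro-groups, it is harmless to argue with this particular system. Writing $\mathbf{p}_{nk}=\mathbf{p}_{n\,n+1}\cdots\mathbf{p}_{k-1\,k}$ and $H^{(n)}_{k}=\im\bigl(\mathbf{p}_{nk\,\#}\colon\pi_1(X_k)\to\pi_1(X_n)\bigr)$ for $k\geqslant n$, one always has $H^{(n)}_{k}\subseteq H^{(n)}_{m}$ whenever $k\geqslant m\geqslant n$ (factor $\mathbf{p}_{nk}$ through $X_m$); hence the Mittag-Leffler condition for $\pro-\pi_1(X,*)$ amounts exactly to the statement that for every $n$ there is an $m\geqslant n$ with $H^{(n)}_{m}\subseteq H^{(n)}_{k}$ for all $k\geqslant m$. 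I claim this is equivalent to $1$-movability of $\mathbf{X}$, and the whole proof is this two-way translation.

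For the direction ``$1$-movable $\Rightarrow$ Mittag-Leffler'' I would simply apply Definition~\ref{def-n-mov} to the one-dimensional ANR $P=S^1$. Given $n$, let $m\geqslant n$ be the index it provides. For any $k\geqslant n$ and any $g\in\pi_1(X_m)$, represent $g$ by a pointed loop $\mathbf{h}\colon S^1\to X_m$; $1$-movability yields $\mathbf{r}\colon S^1\to X_k$ with $\mathbf{p}_{nm}\mathbf{h}=\mathbf{p}_{nk}\mathbf{r}$, that is, $\mathbf{p}_{nm\,\#}(g)=\mathbf{p}_{nk\,\#}([\mathbf{r}])\in H^{(n)}_{k}$. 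Thus $H^{(n)}_{m}\subseteq H^{(n)}_{k}$ for every $k\geqslant n$, which combined with the automatic reverse inclusion for $k\geqslant m$ is the Mittag-Leffler condition.

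For the converse I fix $n$, take $m\geqslant n$ as in the Mittag-Leffler condition, and let $\mathbf{h}\colon P\to X_m$ be a homotopy class with $P$ an arbitrary ANR of dimension $\leqslant1$; I must produce, for each $k\geqslant n$, some $\mathbf{r}\colon P\to X_k$ with $\mathbf{p}_{nm}\mathbf{h}=\mathbf{p}_{nk}\mathbf{r}$. The first step is to replace $P$ by a wedge of circles: a finite-dimensional ANR has the homotopy type of a CW complex of the same dimension, so $P$ is homotopy equivalent to a graph, and hence (collapsing a spanning forest, and reducing to the connected pointed case, which is legitimate because the $X_n$ are path-connected) to a pointed wedge of circles $\bigvee_i S^1$; transporting the problem along such a homotopy equivalence reduces it to $P=\bigvee_i S^1$, where a pointed homotopy class into a connected polyhedron is just a family $(g_i)$ of elements of its fundamental group. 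Then I split into cases. If $k\geqslant m$, each $\mathbf{p}_{nm\,\#}(g_i)$ lies in $H^{(n)}_{m}=H^{(n)}_{k}$, hence equals $\mathbf{p}_{nk\,\#}(h_i)$ for some $h_i\in\pi_1(X_k)$, and the family $(h_i)$ defines the desired $\mathbf{r}$. If $n\leqslant k\leqslant m$, take $\mathbf{r}=\mathbf{p}_{k\,m}\mathbf{h}$, so that $\mathbf{p}_{nk}\mathbf{r}=\mathbf{p}_{nk}\mathbf{p}_{k\,m}\mathbf{h}=\mathbf{p}_{nm}\mathbf{h}$. (In a general directed system one also treats an index $k$ incomparable with $m$: pick $l\geqslant k,m$, solve over $X_l$ by the first case, and compose with $\mathbf{p}_{k\,l}$.) This gives $1$-movability and completes the equivalence.

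I expect the main obstacle, beyond the bookkeeping already indicated (that the representing sequence may be taken pointed, connected and polyhedral, and that finite-dimensional ANRs have the homotopy type of CW complexes of the same dimension, so that an arbitrary one-dimensional ANR can be traded for a wedge of circles), to be the careful handling of basepoints. The Mittag-Leffler characterization holds for $1$-movability understood in the \emph{pointed} sense -- consistent with the phrase ``$1$-movable for a point $x$'' used above and with Krasinkiewicz's original formulation; if one reads Definition~\ref{def-n-mov} literally in the unpointed form, free-homotopy data control $\pi_1$ only up to conjugacy, and the argument of the second paragraph then delivers only a conjugacy-weakened form of the condition. With the pointed convention in force, the proof is the elementary equivalence sketched above.
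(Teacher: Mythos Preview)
The paper is a survey and states this theorem without proof, attributing it to Krasinkiewicz; there is no argument in the paper to compare your attempt against.

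Your proof is essentially correct for the pointed notion of $1$-movability, which is what the statement intends (the surrounding text speaks of $(X,*)$ being ``$1$-movable for a point $x$''). The forward implication is immediate, as you note: testing Definition~\ref{def-n-mov} on $P=S^1$ in the pointed category yields the Mittag-Leffler condition on images directly. For the converse, your reduction of an arbitrary $1$-dimensional ANR $P$ to a wedge of circles is the right idea; one small point worth making explicit is that a $1$-dimensional ANR need not literally be a CW complex, but it is homotopy dominated by a $1$-dimensional polyhedron (a graph), and domination suffices to transport the lifting problem, so the reduction goes through. After that, the argument is exactly the observation that a pointed homotopy class $\bigvee_i S^1\to Y$ is a family of elements of $\pi_1(Y)$, and the Mittag-Leffler equality $H^{(n)}_m=H^{(n)}_k$ lets you lift coordinatewise.

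Your final paragraph correctly isolates the one genuine subtlety: read literally in the free (unpointed) sense, Definition~\ref{def-n-mov} only controls $\pi_1$ up to conjugacy, and then neither direction is quite so immediate (images $H^{(n)}_k$ need not be normal, so a conjugate of an element of $H^{(n)}_k$ need not lie in $H^{(n)}_k$). Part of what Krasinkiewicz and the related work of Dydak and Krasinkiewicz--Minc cited in the same passage accomplish is sorting out these pointed/unpointed issues for continua; for the pointed statement as formulated here, however, your elementary translation is the standard argument.
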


\begin{theorem}
A metrizable continuum $(X,*)$ is 1-movable if and only if it has the shape of a locally connected 
continuum. 
\end{theorem}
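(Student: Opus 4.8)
The plan is to prove both implications by combining the two preceding characterizations of $1$-movability. The forward direction relies on the theorem just stated: if a metrizable continuum $(X,*)$ is $1$-movable, then $\pro\text{-}\pi_1(X,*)$ satisfies the Mittag-Leffler condition. The backward direction will use the same theorem in reverse, after one extracts the correct pointed inverse system from a locally connected model.

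First I would assume $(X,*)$ is $1$-movable. Represent $X$ as the limit of an associated ANR-system $\mathbf{X}=\{X_\alpha,\mathbf{p}_{\alpha\alpha'},A\}$ of polyhedra, which by the theory may be taken with compatible base points (since $X$ is a continuum each $X_\alpha$ may be assumed connected). By the previous theorem, $\pro\text{-}\pi_1(X,*)$ satisfies the Mittag-Leffler condition, i.e.\ for each $\alpha$ there is $\alpha'\geqslant\alpha$ with $\im\bigl(\pi_1(X_{\alpha'})\to\pi_1(X_\alpha)\bigr)=\im\bigl(\pi_1(X_{\alpha''})\to\pi_1(X_\alpha)\bigr)$ for all $\alpha''\geqslant\alpha'$. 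The idea is then to replace each bonding map $\mathbf{p}_{\alpha\alpha'}$ up to homotopy by a map whose image absorbs exactly the stable $\pi_1$-image, and pass to a cofinal subsequence; the inverse limit of this modified system is a continuum $Y$ that is locally connected (one arranges the bonding maps to be, up to homotopy, inclusions of deformation retracts of polyhedra with the ``right'' fundamental group, so the limit has the ULC property in dimension $1$, hence is locally connected by the Hahn–Mazurkiewicz-type characterization for continua) and that is shape equivalent to $X$. Thus $X$ has the shape of a locally connected continuum.

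Conversely, suppose $X$ has the shape of a locally connected continuum $Y$, so $\sh(X)=\sh(Y)$. A locally connected metrizable continuum is $ULC^0$; moreover it can be written as the limit of an inverse sequence of polyhedra whose bonding maps are $\pi_1$-surjective (this is the standard fact that for a locally connected continuum one has, along a cofinal subsystem, epimorphisms on $\pi_1$, because loops in $Y$ can be pushed into arbitrarily fine nerves). An inverse sequence with epimorphic bonding maps trivially satisfies the Mittag-Leffler condition, so $\pro\text{-}\pi_1(Y,y)$ is Mittag-Leffler. Since the Mittag-Leffler property of $\pro\text{-}\pi_1$ is a shape invariant for pointed continua (and, by Dydak's result cited above, $1$-movability for a continuum does not depend on the base point, so $\sh(X)=\sh(Y)$ forces $\sh(X,x)=\sh(Y,y)$ once one side is $1$-movable — one sets this up carefully to avoid circularity by arguing directly that $\pro\text{-}\pi_1(X,*)$ and $\pro\text{-}\pi_1(Y,y)$ are pro-isomorphic), the pro-group $\pro\text{-}\pi_1(X,*)$ is Mittag-Leffler as well. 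By the preceding theorem, $(X,*)$ is $1$-movable.

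The main obstacle will be the forward direction: upgrading the purely algebraic Mittag-Leffler condition on $\pro\text{-}\pi_1(X,*)$ to a genuine \emph{geometric} locally connected model of the same shape. The delicate point is that knowing the images of fundamental groups stabilize is not literally the same as having bonding maps that are homotopic to nice retractions; one must invoke the standard realization results (attaching cells to kill and build the stable fundamental groups, à la Dydak–Krasinkiewicz) to replace the system, up to pro-homotopy equivalence, by one of $2$-dimensional polyhedra with surjective $\pi_1$-bonding maps, and then check that such a system has ULC limit. Handling base points throughout, and confirming that the resulting limit is indeed a locally connected \emph{continuum} (connectedness is automatic; local connectedness is the content), is where the real work lies.
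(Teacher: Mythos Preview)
The paper is a survey and does not prove this theorem; it is simply stated and attributed to Krasinkiewicz~\cite{krasink-local}. So your proposal must be judged on its own merits.

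Your backward direction is essentially sound and can be shortened: a locally connected continuum is $LC^0$, hence $1$-movable by the result cited just above in the paper (any $LC^{n-1}$ paracompact space is $n$-movable), and $1$-movability is a shape invariant; the base-point issue is handled by Dydak's theorem. You do not need to argue that the bonding maps are $\pi_1$-surjective.

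The forward direction, however, has a genuine gap --- and you half-acknowledge it. Knowing that $\pro\text{-}\pi_1(X,*)$ is Mittag-Leffler lets you pass to a pro-isomorphic system with $\pi_1$-epimorphic bonding maps, but that is a statement about the system \emph{up to homotopy}; it does not by itself produce an inverse limit that is locally connected. Changing bonding maps within their homotopy classes changes the limit space drastically, and there is no general principle saying ``$\pi_1$-surjective bonding maps $\Rightarrow$ locally connected limit.'' Your appeal to ``the ULC property in dimension $1$'' conflates $ULC^1$ (small loops bound small disks) with local connectedness, which is $LC^0$; and the Hahn--Mazurkiewicz theorem characterizes Peano continua as continuous images of $[0,1]$, which is not what you are constructing. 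Krasinkiewicz's actual argument is not a straightforward algebraic replacement of bonding maps: it passes through the geometric machinery of \emph{pointed $1$-movability}, joinability, and generalized paths (see \cite{krasink-local}, \cite{krasink-minc}, \cite{krasink-cont}) to build a locally connected model, and this construction is the entire content of the theorem. Your outline does not supply a substitute for it.
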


It follows from these theorems that a solenoid $X$ does not have the shape of a locally connected 
continuum, because the pro-group $\pro-\pi_1(X)$ does not satisfy the Mittag-Leffler condition.

Many results of shape theory are easy to transfer from the 
case of pointed spaces to that of nonpointed spaces. However, the 
reverse transfer of results is not always easy or even possible (see \cite{geoghegan-the}, 
\cite{mard-seg-4}). For example, the movability of a pointed space $(X,*)$ readily 
implies that of $X$, but the question of whether the converse is true is difficult and has not been 
answered so far. A partial answer is given by the following theorem.

\begin{theorem}[Krasinkiewicz \cite{krasink-cont}]
Let $(X,*)$ be a 1-movable metrizable continuum. Then the movability of $X$ implies that 
of $(X,*)$. 
\end{theorem}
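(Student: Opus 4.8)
The plan is to work with an associated inverse sequence $\mathbf X=\{X_i,\mathbf p_{ij},\mathbb N\}$ of connected pointed polyhedra for the metrizable continuum $(X,*)$ --- such a sequence exists because $X$ is compact, connected and metrizable, and the nerves of connected covers of a continuum are connected polyhedra --- and to verify that $\mathbf X$ satisfies the pointed analogue of movability condition (M) of Definition~\ref{def-mov2}; since movability of a space is detected by any one associated system, this is precisely the pointed movability of $(X,*)$. The two hypotheses are read off this sequence as follows. Since movability ignores base points, the movability of the unpointed continuum $X$ means: for every $i$ there is $\mu(i)\geqslant i$ so that for every $j\geqslant i$ there is an \emph{unpointed} homotopy class $\mathbf r\colon X_{\mu(i)}\to X_j$ with $\mathbf p_{i,\mu(i)}=\mathbf p_{ij}\mathbf r$. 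By the Mittag--Leffler criterion for $1$-movability quoted above, the $1$-movability of $(X,*)$ means: for every $i$ there is $\nu(i)\geqslant i$ with $\im\bigl((\mathbf p_{i,\nu(i)})_{*}\colon\pi_1 X_{\nu(i)}\to\pi_1 X_i\bigr)=\im\bigl((\mathbf p_{ij})_{*}\colon\pi_1 X_j\to\pi_1 X_i\bigr)$ for all $j\geqslant\nu(i)$.

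First I would fix $i$, set $i_1=\nu(i)$ and $i_2=\mu(i_1)$, and declare $m:=i_2$ as the index demanded by pointed (M) at level $i$. For a target $l$ with $i\leqslant l\leqslant i_2$ nothing is needed, since the bonding class $\mathbf p_{l,i_2}\colon X_{i_2}\to X_l$ is already pointed and $\mathbf p_{i,i_2}=\mathbf p_{il}\mathbf p_{l,i_2}$. So assume $l\geqslant i_2$. Unpointed movability at level $i_1$ supplies a map $r\colon X_{i_2}\to X_l$ and a \emph{free} homotopy $H$ in $X_{i_1}$ between the bonding map $\mathbf p_{i_1,i_2}$ and $\mathbf p_{i_1,l}\circ r$. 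Using the homotopy extension property of the CW pair $(X_l,\{x_l\})$, slide $r(x_{i_2})$ to the base point $x_l$ along a path in the connected polyhedron $X_l$; this replaces $r$ by a \emph{pointed} map $r'$, and splicing $H$ with the image under $\mathbf p_{i_1,l}$ of the sliding homotopy yields a free homotopy $G$ \emph{between the pointed maps} $\mathbf p_{i_1,i_2}$ and $\mathbf p_{i_1,l}\circ r'$ whose base-point track $\omega$ is a genuine loop, an element of $\pi_1(X_{i_1},x_{i_1})$. Pushing $G$ forward by $\mathbf p_{i,i_1}$ gives a free homotopy in $X_i$ between $\mathbf p_{i,i_2}$ and $\mathbf p_{il}\circ r'$ with base-point track $\bar\omega:=(\mathbf p_{i,i_1})_{*}(\omega)\in\pi_1(X_i,x_i)$.

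The heart of the matter --- and the step I expect to be the main obstacle --- is to show that the base-point defect $\bar\omega$ can be absorbed. By construction $\bar\omega$ lies in $\im\bigl((\mathbf p_{i,i_1})_{*}\colon\pi_1 X_{i_1}\to\pi_1 X_i\bigr)$; since $i_1=\nu(i)$ and $l\geqslant i_1$, the Mittag--Leffler property forces this subgroup to equal $\im\bigl((\mathbf p_{il})_{*}\colon\pi_1 X_l\to\pi_1 X_i\bigr)$, so $\bar\omega=(\mathbf p_{il})_{*}(\zeta)$ for some $\zeta\in\pi_1(X_l,x_l)$. Let $s\colon X_{i_2}\to X_l$ be a pointed map representing the class $\zeta^{-1}\cdot[r']$, where $\pi_1(X_l,x_l)$ acts in the standard way on the set of based homotopy classes $[(X_{i_2},x_{i_2}),(X_l,x_l)]$. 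Naturality of that action under post-composition with $\mathbf p_{il}$ gives $[\mathbf p_{il}\circ s]=(\mathbf p_{il})_{*}(\zeta^{-1})\cdot[\mathbf p_{il}\circ r']=\bar\omega^{-1}\cdot[\mathbf p_{il}\circ r']$, whereas the free homotopy of the previous paragraph, read through the same action, says precisely $[\mathbf p_{i,i_2}]=\bar\omega^{-1}\cdot[\mathbf p_{il}\circ r']$. Hence $\mathbf p_{i,i_2}\simeq_{*}\mathbf p_{il}\circ s$, which is the pointed (M) relation at level $i$ with index $m=i_2$; therefore $\mathbf X$ is pointed-movable and $(X,*)$ is movable.

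The ingredients I would invoke without proof are all standard: a metrizable continuum admits an associated inverse sequence of connected pointed polyhedra, and both movability and the Mittag--Leffler condition on $\pro-\pi_1(X,*)$ are equivalently tested on that sequence; the homotopy extension property for CW pairs; and the elementary facts about the action of the fundamental group on based homotopy classes --- that two based maps are freely homotopic exactly when they lie in one orbit, that a free homotopy realises the orbit element by its base-point track, and that the action is natural under post-composition. The single genuinely non-formal point is that the base-point defect produced by one movability homotopy is merely \emph{some} element of $\pi_1(X_i)$, and it becomes harmless only once it is seen to come from the deep term $X_l$; it is exactly the Mittag--Leffler stability furnished by $1$-movability that makes the stable $\pi_1$-image at $X_i$ already attained from $X_l$, which is what lets the defect be twisted away into the target of $r'$.
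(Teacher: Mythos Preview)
The paper is a survey and does not supply its own proof of this theorem; it merely states the result and attributes it to Krasinkiewicz~\cite{krasink-cont}. So there is no in-paper argument to compare against.

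Your proof is correct and is, in substance, the standard argument. You correctly identify the two inputs on an associated inverse sequence of connected pointed polyhedra: free (unpointed) movability factorizations from the hypothesis on $X$, and the Mittag--Leffler stabilization of $\pi_1$-images from the $1$-movability of $(X,*)$ via the Krasinkiewicz criterion quoted just before the theorem. The core step---upgrading a free factorization $p_{i_1,i_2}\simeq p_{i_1,l}\circ r$ to a pointed one in $X_i$---is handled exactly as it should be: make $r$ pointed by sliding the base point in $X_l$, read the residual obstruction as a loop $\bar\omega\in\im\bigl((p_{i,i_1})_*\bigr)$, use Mittag--Leffler (with $i_1=\nu(i)$) to rewrite $\bar\omega$ as $(p_{il})_*(\zeta)$, and then twist $r'$ by $\zeta^{\mp1}$ using the natural $\pi_1$-action on based homotopy classes. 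The naturality identity $(p_{il})_*(\zeta\cdot[r'])=(p_{il})_*(\zeta)\cdot[p_{il}\circ r']$ you invoke is standard and does the job regardless of sign convention for the track, since one simply chooses the sign of $\zeta$ to match. The bookkeeping with $m=i_2=\mu(\nu(i))$ and the trivial case $i\leqslant l\leqslant i_2$ is fine.

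One cosmetic remark: strictly speaking the movability condition requires the factorization for all $l\geqslant i$, and your unpointed movability at level $i_1$ already produces $r\colon X_{i_2}\to X_l$ for every $l\geqslant i_1$, not just $l\geqslant i_2$; your case split is therefore slightly over-cautious but harmless.
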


The notion of $n$-movability has initiated \emph{$n$-shape theory}, which was developed by 
Chigogidze in \cite{chigogidze}. This theory is an important tool in the study of $k$-dimensional 
Menger manifolds, which was begun by Bestvina in~\cite{bestvina}.

There are several more modifications of the notions of movability and $n$-movability 
(see papers \cite{bogat-aproc} by Bogatyi,  \cite{kodama-fine} by Kodama, 
\cite{mard-strongly} by Marde\v{s}i\'{c}, and  \cite{mosz-uniformly} by Moszynska; we also 
mention papers \cite{bauer-character} by Bauer,  \cite{dydak-oninternally} by Dydak,  
\cite{kozlowski-segal-2} by Koslowski and Segal,  \cite{spiez-mov} by Spie\.z,  and  
\cite{watanabe-onstrong} by Watanabe). The notion of the \emph{$n$-movability of shape morphisms} 
was introduced and studied by Gevorgyan and Pop~\cite{Gev-Pop-2}.

Yet another important shape invariant is \emph{strong movability}. This notion has played an 
important role, primarily in the study of the stability of topological spaces and 
absolute neighborhood shape retracts. A space $X$ is said to be \emph{strongly movable} if there 
exists an associated \emph{strongly movable ANR-system} $\{X_\alpha , \mathbf{p}_{\alpha 
\alpha '}, A\}$, which means that, given any $\alpha \in A$, there is an $\alpha 
' \geqslant \alpha $ such that, for any $\alpha'' \geqslant \alpha $,  there exists an 
$\alpha^* \geqslant \alpha', \alpha''$ and a homotopy class $\mathbf{r}^{\alpha'\alpha''} \colon  
X_{\alpha'} \to X_{\alpha ''}$ satisfying the conditions $\mathbf{p}_{\alpha \alpha''} 
\mathbf{r}^{\alpha'\alpha''}=\mathbf{p}_{\alpha \alpha'}$ and $\mathbf{r}^{\alpha'\alpha''}   
\mathbf{p}_{\alpha' \alpha^*}=\mathbf{p}_{\alpha'' \alpha^*}$. Obviously, strong 
movability implies movability. Strong movability is preserved under shape domination. For a 
connected space $(X,*)$, strong movability is equivalent to stability (Dydak \cite{dydak-on}, 
Watanabe \cite{watanabe-onstrong}).

The notion of movability was extended to and studied in more general cases by 
Segal \cite{segal-movableshapes}, Shostak \cite{shostak}, Gevorgyan \cite{gev-obodnom}, 
\cite{gev-mov}, Gevorgyan and Pop \cite{Gev-Pop-1}, \cite{Gev-Pop-2}, \cite{Gev-Pop-4}, and Avakyan 
and Gevorgyan~\cite{av-gev}.

Avakyan and Gevorgyan~\cite{av-gev} (see also the author's papers \cite{gev-obodnom} and 
\cite{gev-mov}) introduced the notion of a \emph{movable category} and proved criteria 
for the movability and strong movability of a topological space.

\begin{definition}\label{movcat}
Let $K$ and $L$ be any categories, and let $\Phi\colon K\to L$ be any covariant functor.
The category $K$ is said to be \emph{movable with respect to the category $L$ and
the functor $\Phi\colon K\to L$} if, given any object $X\in \Ob (K)$,
there is an object $Y\in \Ob(K)$ and a morphism  $f\in
\Mor_{K}(Y, X)$ such that, for any object $Z\in \Ob(K)$ and any
morphism $g\in \Mor_{K}(Z, X)$, there exists a morphism $h\in
\Mor_L(\Phi(Y), \Phi(Z))$ satisfying the condition $\Phi(g) h = \Phi(f)$.

If $K$ is a full subcategory of the category $L$ and $\Phi\colon K\hookrightarrow L$ is 
the embedding functor, then the subcategory $K$ is said to be \emph{movable with respect to 
the category $L$}.

If $K=L$ and $\Phi = 1_K$, then the category $K$ is called \emph{strongly movable}.
\end{definition}

\begin{theorem}[Avakyan and Gevorgyan \cite{av-gev}]
A topological space $X$ is movable if and only if the category $W^X$ is movable with respect to 
the category H-CW and forgetful functor $\Omega\colon W^X\to \text{H-CW}$. 
\end{theorem}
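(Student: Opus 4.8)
The plan is to show that the movability condition (*) from Theorem~\ref{def-mov3}, which characterizes movable spaces, is literally a translation of the categorical movability of $W^X$ with respect to $\mathrm{H}$-$\mathrm{CW}$ and the forgetful functor $\Omega$. Recall that the objects of $W^X$ are homotopy classes $\mathbf{f}\colon X\to P$ with $P\in\mathrm{H}$-$\mathrm{CW}$, a morphism $\mathbf{u}\colon\mathbf{f}\to\mathbf{f'}$ (with $\mathbf{f'}\colon X\to P'$) is a homotopy class $\mathbf{u}\colon P\to P'$ satisfying $\mathbf{u}\mathbf{f}=\mathbf{f'}$, and the forgetful functor $\Omega$ sends $\mathbf{f}\colon X\to P$ to $P$ and $\mathbf{u}\colon\mathbf{f}\to\mathbf{f'}$ to $\mathbf{u}\colon P\to P'$. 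So I would first unwind Definition~\ref{movcat} with $K=W^X$, $L=\mathrm{H}$-$\mathrm{CW}$, and $\Phi=\Omega$: given an object $\mathbf{f}\colon X\to Q$ of $W^X$, categorical movability demands an object $\mathbf{f'}\colon X\to Q'$ and a morphism in $W^X$ from $\mathbf{f'}$ to $\mathbf{f}$ (i.e.\ a homotopy class $\boldsymbol{\eta}\colon Q'\to Q$ with $\boldsymbol{\eta}\mathbf{f'}=\mathbf{f}$) such that for every object $\mathbf{f''}\colon X\to Q''$ and every morphism in $W^X$ from $\mathbf{f''}$ to $\mathbf{f}$ (i.e.\ $\boldsymbol{\eta}'\colon Q''\to Q$ with $\boldsymbol{\eta}'\mathbf{f''}=\mathbf{f}$), there is a morphism $\mathbf{h}\in\Mor_{\mathrm{H}\text{-}\mathrm{CW}}(\Omega(\mathbf{f'}),\Omega(\mathbf{f''}))=[Q',Q'']$ with $\Omega(\boldsymbol{\eta}')\,\mathbf{h}=\Omega(\boldsymbol{\eta})$, that is $\boldsymbol{\eta}'\mathbf{h}=\boldsymbol{\eta}$.

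The second step is to observe that this is word-for-word the condition (*) of Theorem~\ref{def-mov3}, after renaming $\mathbf{h}$ as $\boldsymbol{\eta}''$: the data $(\mathbf{f'},\boldsymbol{\eta})$ with $\mathbf{f}=\boldsymbol{\eta}\mathbf{f'}$ is exactly the factorization required there, the arbitrary $(\mathbf{f''},\boldsymbol{\eta}')$ with $\mathbf{f}=\boldsymbol{\eta}'\mathbf{f''}$ is the arbitrary factorization in (*), and the conclusion $\boldsymbol{\eta}=\boldsymbol{\eta}'\boldsymbol{\eta}''$ matches. Thus $W^X$ is movable with respect to $\mathrm{H}$-$\mathrm{CW}$ and $\Omega$ if and only if condition (*) holds for $X$. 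Applying Theorem~\ref{def-mov3}, condition (*) holds if and only if $X$ is movable, and the equivalence follows. I would write both implications explicitly: if $X$ is movable, produce from (*) the required object $\mathbf{f'}$, morphism $\boldsymbol{\eta}$, and factorizing morphism $\boldsymbol{\eta}''$ for each competitor, checking that all the arrows land in $W^X$ and in $\mathrm{H}$-$\mathrm{CW}$ as required; conversely, from categorical movability of $W^X$ extract exactly the data needed for (*).

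The only genuine subtlety — and the place I would be most careful — is bookkeeping about \emph{which category each arrow lives in}. In Definition~\ref{movcat} the morphism $f\in\Mor_K(Y,X)$ must be a genuine $W^X$-morphism, so one must verify that the $\boldsymbol{\eta}\colon Q'\to Q$ furnished by (*) does satisfy $\boldsymbol{\eta}\mathbf{f'}=\mathbf{f}$ and hence \emph{is} a morphism $\mathbf{f'}\to\mathbf{f}$ in $W^X$; this is exactly the hypothesis $\mathbf{f}=\boldsymbol{\eta}\mathbf{f'}$ in (*), so it is immediate, but it should be stated. Similarly, the intermediate object $Q''$ must lie in $\mathrm{H}$-$\mathrm{CW}$ (equivalently, have the homotopy type of an ANR in the metrizable class, by the earlier lemma), which is built into the definition of $W^X$ and of (*) alike. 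Since $\Omega$ acts as the identity on underlying homotopy classes, no compatibility of compositions is lost in passing back and forth. Hence the proof reduces to careful translation, and I expect no real obstacle beyond this dictionary-matching.

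\begin{proof}
By Theorem~\ref{def-mov3}, $X$ is movable if and only if condition~(*) of that theorem holds. We show that~(*) is equivalent to the movability of the category $W^X$ with respect to $\mathrm{H}$-$\mathrm{CW}$ and the forgetful functor $\Omega\colon W^X\to\mathrm{H}$-$\mathrm{CW}$.

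Unwinding Definition~\ref{movcat} with $K=W^X$, $L=\mathrm{H}$-$\mathrm{CW}$, and $\Phi=\Omega$, the category $W^X$ is movable with respect to $(\mathrm{H}\text{-}\mathrm{CW},\Omega)$ if and only if for every object $\mathbf{f}\in\Ob(W^X)$, $\mathbf{f}\colon X\to Q$, there is an object $\mathbf{f'}\in\Ob(W^X)$, $\mathbf{f'}\colon X\to Q'$, and a morphism $\boldsymbol{\eta}\in\Mor_{W^X}(\mathbf{f'},\mathbf{f})$ such that for any object $\mathbf{f''}\in\Ob(W^X)$, $\mathbf{f''}\colon X\to Q''$, and any morphism $\boldsymbol{\eta}'\in\Mor_{W^X}(\mathbf{f''},\mathbf{f})$, there is a morphism $\boldsymbol{\eta}''\in\Mor_{\mathrm{H}\text{-}\mathrm{CW}}(\Omega(\mathbf{f'}),\Omega(\mathbf{f''}))$ with $\Omega(\boldsymbol{\eta}')\circ\boldsymbol{\eta}''=\Omega(\boldsymbol{\eta})$.

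Now recall the description of $W^X$: a morphism $\boldsymbol{\eta}\colon\mathbf{f'}\to\mathbf{f}$ is precisely a homotopy class $\boldsymbol{\eta}\colon Q'\to Q$ with $\boldsymbol{\eta}\mathbf{f'}=\mathbf{f}$, and $\Omega$ sends $\mathbf{f'}\colon X\to Q'$ to $Q'$ and acts as the identity on underlying homotopy classes, so $\Omega(\mathbf{f'})=Q'$, $\Omega(\mathbf{f''})=Q''$, $\Omega(\boldsymbol{\eta})=\boldsymbol{\eta}$, $\Omega(\boldsymbol{\eta}')=\boldsymbol{\eta}'$, and $\Mor_{\mathrm{H}\text{-}\mathrm{CW}}(Q',Q'')=[Q',Q'']$. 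Therefore the displayed condition becomes: for every homotopy class $\mathbf{f}\colon X\to Q$ with $Q\in\mathrm{ANR}$, there are homotopy classes $\mathbf{f'}\colon X\to Q'$ and $\boldsymbol{\eta}\colon Q'\to Q$ with $Q'\in\mathrm{ANR}$ and $\mathbf{f}=\boldsymbol{\eta}\mathbf{f'}$, such that for any homotopy classes $\mathbf{f''}\colon X\to Q''$ and $\boldsymbol{\eta}'\colon Q''\to Q$ with $Q''\in\mathrm{ANR}$ and $\mathbf{f}=\boldsymbol{\eta}'\mathbf{f''}$, there exists a homotopy class $\boldsymbol{\eta}''\colon Q'\to Q''$ with $\boldsymbol{\eta}=\boldsymbol{\eta}'\boldsymbol{\eta}''$. (Here we used that, by the lemma characterizing $\mathrm{H}$-$\mathrm{CW}$, membership of $Q,Q',Q''$ in $\mathrm{H}$-$\mathrm{CW}$ is the same as having the homotopy type of an ANR, and that the hypotheses $\mathbf{f}=\boldsymbol{\eta}\mathbf{f'}$ and $\mathbf{f}=\boldsymbol{\eta}'\mathbf{f''}$ are exactly what guarantee that $\boldsymbol{\eta}$ and $\boldsymbol{\eta}'$ are genuine morphisms of $W^X$.)

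This last statement is verbatim condition~(*) of Theorem~\ref{def-mov3}. Hence $W^X$ is movable with respect to $\mathrm{H}$-$\mathrm{CW}$ and $\Omega$ if and only if~(*) holds, and by Theorem~\ref{def-mov3} this holds if and only if $X$ is movable.
\end{proof}
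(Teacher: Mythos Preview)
Your proof is correct. The paper does not spell out a proof of this theorem (it is stated with a citation to \cite{av-gev}), but your argument is exactly the intended one: the categorical movability of $W^X$ with respect to $\mathrm{H}$-$\mathrm{CW}$ and $\Omega$, when unwound using the description of objects and morphisms of $W^X$ and the action of $\Omega$, coincides verbatim with condition~(*) of Theorem~\ref{def-mov3}, so the result follows immediately from that theorem.
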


The \emph{forgetful functor} $\Omega\colon W^X\to \text{H-CW}$ takes each object $f\colon X\to Q$ 
to the object $Q\in \text{H-CW}$ and each morphism $\eta \colon  (f\colon X\to Q)\to (f'\colon 
X\to Q')$, $\eta \circ f = f'$, of the category $W^X$ to a morphism $\eta \colon  Q\to Q'$ 
of the category H-CW.

\begin{theorem}[Avakyan and Gevorgyan \cite{av-gev}]\label{th-main1}
The topological space $X$ is strongly movable if and only if so is the category $W^X$.
 \end{theorem}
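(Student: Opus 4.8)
The plan is to reduce the statement to the equivalence between the inverse-system form of strong movability and an \emph{intrinsic} factorization condition expressed purely in terms of homotopy classes of $X$ into spaces of $\text{H-CW}$, in the same spirit as the intrinsic characterization of movability in Theorem~\ref{def-mov3}. Unwinding Definition~\ref{movcat} with $K=L=W^X$ and $\Phi=1$, together with the description of $W^X$, the category $W^X$ is strongly movable if and only if the following holds: for every homotopy class $\mathbf{f}\colon X\to Q$ with $Q\in\text{H-CW}$ there are homotopy classes $\mathbf{f}'\colon X\to Q'$ and $\boldsymbol{\eta}\colon Q'\to Q$ with $\boldsymbol{\eta}\mathbf{f}'=\mathbf{f}$ such that, for every pair $\mathbf{f}''\colon X\to Q''$, $\boldsymbol{\eta}'\colon Q''\to Q$ with $\boldsymbol{\eta}'\mathbf{f}''=\mathbf{f}$, there is a homotopy class $\boldsymbol{\eta}''\colon Q'\to Q''$ satisfying \emph{both} $\boldsymbol{\eta}''\mathbf{f}'=\mathbf{f}''$ and $\boldsymbol{\eta}'\boldsymbol{\eta}''=\boldsymbol{\eta}$. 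The only difference from condition (*) in Theorem~\ref{def-mov3} is the extra requirement $\boldsymbol{\eta}''\mathbf{f}'=\mathbf{f}''$, which records that $\boldsymbol{\eta}''$ is a morphism of $W^X$ and not merely of $\text{H-CW}$. Since every object of $\text{H-CW}$ has the homotopy type of an ANR, conditions (ii) and (iii) of Definition~\ref{def-3} extend from ANRs to spaces of $\text{H-CW}$, and I may assume all codomains above are ANRs.

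For the direction ``$X$ strongly movable $\Rightarrow$ $W^X$ strongly movable'', I fix a strongly movable associated ANR-system $\mathbf{X}=\{X_\alpha,\mathbf{p}_{\alpha\alpha'},A\}$ and a homotopy class $\mathbf{f}\colon X\to Q$. By condition (ii) of Definition~\ref{def-3} I write $\mathbf{f}=\mathbf{h}_0\mathbf{p}_{\alpha_0}$, and I apply the defining property of a strongly movable ANR-system at $\alpha_0$ to obtain $\alpha_1\geqslant\alpha_0$; I set $\mathbf{f}'=\mathbf{p}_{\alpha_1}$ and $\boldsymbol{\eta}=\mathbf{h}_0\mathbf{p}_{\alpha_0\alpha_1}$, so $\boldsymbol{\eta}\mathbf{f}'=\mathbf{f}$. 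Given a competitor $\mathbf{f}''\colon X\to Q''$ and $\boldsymbol{\eta}'\colon Q''\to Q$ with $\boldsymbol{\eta}'\mathbf{f}''=\mathbf{f}$, I factor $\mathbf{f}''=\mathbf{k}\mathbf{p}_\beta$; enlarging $\beta$ and adjusting $\mathbf{k}$ I may take $\beta\geqslant\alpha_0$, and since $\boldsymbol{\eta}'\mathbf{k}\mathbf{p}_\beta=\mathbf{f}=\mathbf{h}_0\mathbf{p}_{\alpha_0\beta}\mathbf{p}_\beta$, condition (iii) lets me further enlarge $\beta$ so that $\boldsymbol{\eta}'\mathbf{k}=\mathbf{h}_0\mathbf{p}_{\alpha_0\beta}$. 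Strong movability of $\mathbf{X}$ with $\alpha''=\beta$ yields $\alpha^*\geqslant\alpha_1,\beta$ and $\mathbf{r}^{\alpha_1\beta}\colon X_{\alpha_1}\to X_\beta$ with $\mathbf{p}_{\alpha_0\beta}\mathbf{r}^{\alpha_1\beta}=\mathbf{p}_{\alpha_0\alpha_1}$ and $\mathbf{r}^{\alpha_1\beta}\mathbf{p}_{\alpha_1\alpha^*}=\mathbf{p}_{\beta\alpha^*}$; precomposing the latter with $\mathbf{p}_{\alpha^*}$ gives $\mathbf{r}^{\alpha_1\beta}\mathbf{p}_{\alpha_1}=\mathbf{p}_\beta$. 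Then $\boldsymbol{\eta}''=\mathbf{k}\mathbf{r}^{\alpha_1\beta}$ satisfies $\boldsymbol{\eta}''\mathbf{f}'=\mathbf{k}\mathbf{p}_\beta=\mathbf{f}''$ and $\boldsymbol{\eta}'\boldsymbol{\eta}''=\mathbf{h}_0\mathbf{p}_{\alpha_0\beta}\mathbf{r}^{\alpha_1\beta}=\mathbf{h}_0\mathbf{p}_{\alpha_0\alpha_1}=\boldsymbol{\eta}$, as needed.

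For the converse I assume the intrinsic condition and take \emph{any} associated ANR-system $\mathbf{X}=\{X_\alpha,\mathbf{p}_{\alpha\alpha'},A\}$, verifying it is strongly movable. Fixing $\alpha\in A$, I apply the intrinsic condition to $\mathbf{f}=\mathbf{p}_\alpha$, obtaining $\mathbf{f}'\colon X\to Q'$ and $\boldsymbol{\eta}\colon Q'\to X_\alpha$ with $\boldsymbol{\eta}\mathbf{f}'=\mathbf{p}_\alpha$. I factor $\mathbf{f}'=\mathbf{h}\mathbf{p}_{\alpha'}$ with $\alpha'\geqslant\alpha$ (condition (ii)); since $\boldsymbol{\eta}\mathbf{h}\mathbf{p}_{\alpha'}=\mathbf{p}_{\alpha\alpha'}\mathbf{p}_{\alpha'}$, condition (iii) lets me enlarge $\alpha'$ (and adjust $\mathbf{h}$) so that also $\boldsymbol{\eta}\mathbf{h}=\mathbf{p}_{\alpha\alpha'}$. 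This $\alpha'$ witnesses the required property: given $\alpha''\geqslant\alpha$, I apply the intrinsic condition to the factorization $\mathbf{p}_\alpha=\mathbf{p}_{\alpha\alpha''}\mathbf{p}_{\alpha''}$ to get $\boldsymbol{\eta}''\colon Q'\to X_{\alpha''}$ with $\boldsymbol{\eta}''\mathbf{f}'=\mathbf{p}_{\alpha''}$ and $\mathbf{p}_{\alpha\alpha''}\boldsymbol{\eta}''=\boldsymbol{\eta}$, and I put $\mathbf{r}^{\alpha'\alpha''}=\boldsymbol{\eta}''\mathbf{h}$. Then $\mathbf{p}_{\alpha\alpha''}\mathbf{r}^{\alpha'\alpha''}=\boldsymbol{\eta}\mathbf{h}=\mathbf{p}_{\alpha\alpha'}$ and $\mathbf{r}^{\alpha'\alpha''}\mathbf{p}_{\alpha'}=\boldsymbol{\eta}''\mathbf{f}'=\mathbf{p}_{\alpha''}$, so $\mathbf{r}^{\alpha'\alpha''}\mathbf{p}_{\alpha'\alpha^*}$ and $\mathbf{p}_{\alpha''\alpha^*}$ agree after precomposition with $\mathbf{p}_{\alpha^*}$ for $\alpha^*\geqslant\alpha',\alpha''$; one last use of condition (iii) produces an enlarged $\alpha^*$ with $\mathbf{r}^{\alpha'\alpha''}\mathbf{p}_{\alpha'\alpha^*}=\mathbf{p}_{\alpha''\alpha^*}$. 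Hence $\mathbf{X}$, and therefore $X$, is strongly movable.

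The genuinely delicate point is the bookkeeping with condition (iii) of Definition~\ref{def-3}: every equality of homotopy classes into an ANR that one extracts from associatedness holds only after passing to a cofinal index, so the argument must repeatedly enlarge the indices $\beta$, $\alpha'$, $\alpha^*$ without destroying commutativities already secured — this is exactly why the definition of a strongly movable ANR-system carries the auxiliary index $\alpha^*$ and why the supplementary identity $\boldsymbol{\eta}''\mathbf{f}'=\mathbf{f}''$ must be carried along in the intrinsic condition. Independence of the chosen associated ANR-system is then automatic, since the intrinsic condition refers to no system at all. Finally, running the very same argument with ``movable'' in place of ``strongly movable'' (i.e.\ dropping the auxiliary index and the identity $\boldsymbol{\eta}''\mathbf{f}'=\mathbf{f}''$) recovers the preceding characterization of movability via $W^X$ and the forgetful functor $\Omega$, so Theorem~\ref{th-main1} is precisely its ``strong'' counterpart.
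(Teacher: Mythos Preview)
The paper itself does not give a proof of Theorem~\ref{th-main1}; it merely records the result and cites the original source \cite{av-gev}. So there is no in-paper argument to compare against, and your proposal must stand on its own.

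Your argument is correct. The unwinding of Definition~\ref{movcat} for $K=L=W^X$, $\Phi=1_{W^X}$ is accurate: the extra clause $\boldsymbol{\eta}''\mathbf{f}'=\mathbf{f}''$ is precisely what distinguishes a morphism of $W^X$ from a morphism of $\text{H-CW}$, and this is exactly the ``strong'' ingredient matching the second identity $\mathbf{r}^{\alpha'\alpha''}\mathbf{p}_{\alpha'\alpha^*}=\mathbf{p}_{\alpha''\alpha^*}$ in the definition of a strongly movable ANR-system. Both directions are handled cleanly: in the forward direction the choice $\mathbf{f}'=\mathbf{p}_{\alpha_1}$ is the right one (so that the associated-system data can be fed directly into the categorical condition), and the uses of (ii) and (iii) of Definition~\ref{def-3} to enlarge $\beta$ and then $\alpha^*$ are legitimate since each enlargement only replaces a map by its composite with a bonding map. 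In the converse, applying the intrinsic condition to $\mathbf{f}=\mathbf{p}_\alpha$ and then factoring $\mathbf{f}'$ through the system via (ii) before invoking (iii) is the standard maneuver, and your final use of (iii) to upgrade $\mathbf{r}^{\alpha'\alpha''}\mathbf{p}_{\alpha'}=\mathbf{p}_{\alpha''}$ to the required equality at some $\alpha^*$ is exactly right. Your closing remark that dropping the clause $\boldsymbol{\eta}''\mathbf{f}'=\mathbf{f}''$ recovers the preceding movability criterion (Theorem~\ref{def-mov3} and the $\Omega$-version) is a useful sanity check and correctly identifies the relationship between the two results.
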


Similar theorems for \emph{uniformly movable} spaces and categories were proved by Gevorgyan and 
Pop~\cite{Gev-Pop-4}.

\begin{definition}\label{u-movcat}
A category $K$ is said to be \emph{uniformly movable} if, given any object $X\in K$, 
there exists an object $M(X)\in K$ and a morphism  $m_X \colon  M(X) \to X$ satisfying the 
following conditions: 

(i) for any object $Y\in K$ and any morphism $p \colon  Y\to X$ of the category $K$, there exists 
a morphism $u(p) \colon  M(X) \to Y$ such that $p u(p) = m(X)$;

(ii) for any objects $Y, Z \in K$ and morphisms $p\colon Y \to X$, $q\colon  Z \to X$, and 
$r\colon Z\to Y$ of $K$ satisfying the condition $p  r = q$, the relation $r  u(q) = u(p)$ holds. 
\end{definition}

\begin{theorem}[Gevorgyan and Pop \cite{Gev-Pop-4}]
A topological space $X$ is uniformly movable if and only if so is the category $W^X$. 
\end{theorem}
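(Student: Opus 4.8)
The plan is to transfer condition (UM) of Definition~\ref{def-mov2} for an associated ANR-system of $X$ back and forth across the identification of that system with a suitable part of $W^X$, just as in the movable case of Avakyan and Gevorgyan and the strongly movable case (Theorem~\ref{th-main1}). So I fix an associated inverse ANR-system $\mathbf{X}=\{X_\alpha,\mathbf{p}_{\alpha\alpha'},A\}$ of $X$ with projections $\mathbf{p}_\alpha\colon X\to X_\alpha$ (Definition~\ref{def-3}); each $\mathbf{p}_\alpha$ is an object of $W^X$, and for $\alpha\leqslant\alpha'$ the bonding class $\mathbf{p}_{\alpha\alpha'}$ is a morphism $\mathbf{p}_{\alpha'}\to\mathbf{p}_\alpha$ of $W^X$ because $\mathbf{p}_{\alpha\alpha'}\mathbf{p}_{\alpha'}=\mathbf{p}_\alpha$. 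Conditions (ii)--(iii) of Definition~\ref{def-3} say exactly that every object of $W^X$ receives a morphism from some $\mathbf{p}_\alpha$ and that any parallel pair out of $\mathbf{p}_\alpha$ becomes equal after precomposition with a suitable $\mathbf{p}_{\alpha\alpha'}$. I will also use the usual description of the morphism occurring in (UM): a morphism $\mathbf{r}\colon X_{\alpha'}\to\mathbf{X}$ in pro-H-CW is a coherent family $\mathbf{r}_\beta\colon X_{\alpha'}\to X_\beta$ ($\beta\in A$) with $\mathbf{p}_{\beta\gamma}\mathbf{r}_\gamma=\mathbf{r}_\beta$ for $\beta\leqslant\gamma$, and the equation $\mathbf{p}_\alpha\circ\mathbf{r}=\mathbf{p}_{\alpha\alpha'}$ merely says $\mathbf{r}_\alpha=\mathbf{p}_{\alpha\alpha'}$.

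For the implication ``$W^X$ uniformly movable $\Rightarrow$ $X$ uniformly movable'' I would check that $\mathbf{X}$ itself satisfies (UM), which suffices. Fix $\alpha\in A$ and apply Definition~\ref{u-movcat} to the object $\mathbf{p}_\alpha$: this gives $M(\mathbf{p}_\alpha)=(Q^{*},\mathbf{f}^{*})$ and a morphism $m\colon Q^{*}\to X_\alpha$ with $m\mathbf{f}^{*}=\mathbf{p}_\alpha$ obeying (i)--(ii). Using condition (ii) of Definition~\ref{def-3} for $\mathbf{f}^{*}$, passing to a larger index, and then using condition (iii) to absorb the discrepancy between $m\mathbf{h}$ and the bonding class into a further refinement, I can choose $\alpha'\geqslant\alpha$ and $\mathbf{h}\colon X_{\alpha'}\to Q^{*}$ with $\mathbf{h}\mathbf{p}_{\alpha'}=\mathbf{f}^{*}$ and $m\mathbf{h}=\mathbf{p}_{\alpha\alpha'}$. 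For each $\beta\geqslant\alpha$ the class $\mathbf{p}_{\alpha\beta}\colon\mathbf{p}_\beta\to\mathbf{p}_\alpha$ is a morphism of $W^X$, so (i) produces $u(\mathbf{p}_{\alpha\beta})\colon Q^{*}\to X_\beta$ with $u(\mathbf{p}_{\alpha\beta})\mathbf{f}^{*}=\mathbf{p}_\beta$ and $\mathbf{p}_{\alpha\beta}\,u(\mathbf{p}_{\alpha\beta})=m$, whence $u(\mathbf{p}_{\alpha\alpha})=m$. Setting $\mathbf{r}_\beta:=u(\mathbf{p}_{\alpha\beta})\mathbf{h}$ I get $\mathbf{r}_\alpha=m\mathbf{h}=\mathbf{p}_{\alpha\alpha'}$, and for $\alpha\leqslant\beta\leqslant\gamma$ condition (ii) of Definition~\ref{u-movcat} applied to $\mathbf{p}_{\alpha\beta}\colon\mathbf{p}_\beta\to\mathbf{p}_\alpha$, $\mathbf{p}_{\alpha\gamma}\colon\mathbf{p}_\gamma\to\mathbf{p}_\alpha$, $\mathbf{p}_{\beta\gamma}\colon\mathbf{p}_\gamma\to\mathbf{p}_\beta$ (note $\mathbf{p}_{\alpha\beta}\mathbf{p}_{\beta\gamma}=\mathbf{p}_{\alpha\gamma}$) gives $\mathbf{p}_{\beta\gamma}\,u(\mathbf{p}_{\alpha\gamma})=u(\mathbf{p}_{\alpha\beta})$, hence $\mathbf{p}_{\beta\gamma}\mathbf{r}_\gamma=\mathbf{r}_\beta$. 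Thus $(\mathbf{r}_\beta)_{\beta\geqslant\alpha}$ is coherent on the cofinal set $\{\beta\geqslant\alpha\}$, extends to a coherent family over $A$, and yields $\mathbf{r}\colon X_{\alpha'}\to\mathbf{X}$ in pro-H-CW with $\mathbf{p}_\alpha\circ\mathbf{r}=\mathbf{p}_{\alpha\alpha'}$ --- precisely (UM).

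For the converse I would start from an associated ANR-system $\mathbf{X}$ satisfying (UM), take an object $\mathbf{f}=\mathbf{h}_\alpha\mathbf{p}_\alpha$ of $W^X$, apply (UM) at $\alpha$ to get $\alpha'\geqslant\alpha$ and a coherent family $(\mathbf{r}_\beta)$ with $\mathbf{r}_\alpha=\mathbf{p}_{\alpha\alpha'}$, and then assemble from $(\mathbf{h}_\alpha,(\mathbf{r}_\beta))$ the object $M(\mathbf{f})=(Q^{*},\mathbf{f}^{*})$, the morphism $m_{\mathbf{f}}\colon Q^{*}\to Q$ with $m_{\mathbf{f}}\mathbf{f}^{*}=\mathbf{f}$, and, for each $\mathbf{p}\colon\mathbf{g}\to\mathbf{f}$ with $\mathbf{g}=\mathbf{k}_\beta\mathbf{p}_\beta$ (taking $\beta\geqslant\alpha$ and, after a refinement via Definition~\ref{def-3}(iii), $\mathbf{p}\mathbf{k}_\beta=\mathbf{h}_\alpha\mathbf{p}_{\alpha\beta}$), a lift $u(\mathbf{p})$ built from $\mathbf{k}_\beta\mathbf{r}_\beta$; the coherence of $(\mathbf{r}_\beta)$ then forces condition (ii) of Definition~\ref{u-movcat}, and $\mathbf{p}_{\alpha\beta}\mathbf{r}_\beta=\mathbf{r}_\alpha=\mathbf{p}_{\alpha\alpha'}$ forces $\mathbf{p}\,u(\mathbf{p})=m_{\mathbf{f}}$; all of this runs as in the proof of Theorem~\ref{th-main1}. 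The hard part is exactly here: since Definition~\ref{u-movcat} is internal to $W^X$, the lift $u(\mathbf{p})$ has to be a genuine $W^X$-morphism, i.e.\ $u(\mathbf{p})\mathbf{f}^{*}=\mathbf{g}$, so $M(\mathbf{f})$ must dominate every object lying over $\mathbf{f}$, and since $W^X$ has finite products (a product of two ANRs again has the homotopy type of an ANR) this forces $\mathbf{f}^{*}\colon X\to Q^{*}$ to be such that every projection $\mathbf{p}_\beta$ factors through it. The naive candidate $(X_{\alpha'},\mathbf{p}_{\alpha'})$ fails, because an arbitrary $\mathbf{g}\colon X\to Q'$ need not factor through $\mathbf{p}_{\alpha'}$; so the real content is to manufacture the correct $Q^{*}$ and $\mathbf{f}^{*}$, together with the coherent system of lifts, out of the single morphism $\mathbf{r}$ delivered by (UM) --- which is done by the same construction that underlies the strongly movable analogue.
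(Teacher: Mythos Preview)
The paper is a survey and does not contain a proof of this theorem; it merely states the result and cites \cite{Gev-Pop-4}. So there is nothing in the paper to compare your argument to, and your proposal must be judged on its own.

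Your direction ``$W^X$ uniformly movable $\Rightarrow$ $X$ uniformly movable'' is correct and carefully written: the passage from $M(\mathbf p_\alpha)$ to an index $\alpha'$ via Definition~\ref{def-3}(ii)--(iii) is clean, and the use of condition (ii) of Definition~\ref{u-movcat} to obtain the coherence $\mathbf p_{\beta\gamma}\mathbf r_\gamma=\mathbf r_\beta$ is exactly what is needed to produce the pro-morphism $\mathbf r\colon X_{\alpha'}\to\mathbf X$.

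The converse direction, however, has a genuine gap. You correctly observe that the lift $u(\mathbf p)\colon M(\mathbf f)\to\mathbf g$ must be a $W^X$-morphism, i.e.\ must satisfy $u(\mathbf p)\mathbf f^{*}=\mathbf g$, and hence that $\mathbf f^{*}$ must dominate every object of $W^X$ lying over $\mathbf f$ (and, via products, in fact every object of $W^X$). You then say this is ``done by the same construction that underlies the strongly movable analogue,'' but you never give that construction, and the paper does not prove Theorem~\ref{th-main1} either. More seriously, the analogy is misleading: strong movability of $\mathbf X$ carries the extra compatibility $\mathbf r^{\alpha'\alpha''}\mathbf p_{\alpha'\alpha^{*}}=\mathbf p_{\alpha''\alpha^{*}}$, which is precisely what would force $\mathbf r_\beta\mathbf p_{\alpha'}=\mathbf p_\beta$ after refinement and make the na\"ive candidate $(X_{\alpha'},\mathbf p_{\alpha'})$ work. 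Condition (UM) gives you only a coherent family $(\mathbf r_\beta)$ with $\mathbf r_\alpha=\mathbf p_{\alpha\alpha'}$, and there is no reason for $\mathbf r_\beta\mathbf p_{\alpha'}=\mathbf p_\beta$ when $\beta$ is large; so your proposed lifts $\mathbf k_\beta\mathbf r_\beta$ are not $W^X$-morphisms in general. Until you actually exhibit $Q^{*}$, $\mathbf f^{*}$, and the lifts $u(\mathbf p)$ satisfying both $\mathbf p\,u(\mathbf p)=m_{\mathbf f}$ and $u(\mathbf p)\mathbf f^{*}=\mathbf g$, this half of the argument is not a proof.
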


\section{Stable Spaces and Shape Retracts}

A topological space $X$ is said to be \emph{stable} if it has the shape of an ANR. Stability in 
the shape category was first considered by Porter \cite{porter-stability} and 
systematically studied by Demers  \cite{demers}, Edwards and Geoghegan \cite{ed-geogh-shapes}, 
\cite{ed-geogh-stab}, \cite{ed-geogh-stability}, Dydak \cite{dydak-asimple}, \cite{dydak-on}, 
\cite{dydak-1}, Geoghegan and Lacher \cite{geoghegan-lacher}, Porter \cite{porter-stabilityI}, 
\cite{porter-stabilityII}, and other authors.

Stability is a shape invariant. The notions of stability and pointed stability are equivalent, 
i.e.,  the following theorem  is valid (see Dydak's paper \cite{dydak-1} and Geoghegan's paper 
\cite{geoghegan-elementary}).

\begin{theorem}\label{th-ust}
A pointed space  $(X,*)$ is stable if and only if so is $X$. 
\end{theorem}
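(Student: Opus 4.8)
The plan is to prove the two implications separately. The forward one is purely formal: if $(X,*)$ is stable, choose a pointed shape equivalence $(X,*)\to(P,p_0)$ with $(P,p_0)$ a pointed ANR, and apply the basepoint-forgetting functor from the pointed shape category to the shape category; it carries a pointed ANR to an ANR and, being a functor, isomorphisms to isomorphisms, so $X$ acquires the shape of $P$ and is stable. All the content is in the converse, so assume $X$ is stable. I would first reduce to $X$ connected: a shape equivalence $F\colon X\to P$ onto an ANR $P$, followed by the locally constant map $P\to\pi_{0}(P)$ into a discrete (hence ANR) space, is a shape morphism into an ANR, hence by the defining property of the shape functor an honest continuous map; it splits $X$ as a topological sum $\bigsqcup_i X_i$ with each $X_i$ connected and shape equivalent to the corresponding connected summand of $P$, so stability of $(X,*)$ reduces to the connected case for the summand containing $*$ together with the (given) stability of the remaining summands. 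So assume $X$ connected. Take an associated ANR-system $(\mathbf{Y},\mathbf{y})=\{(Y_\beta,y_\beta),\mathbf{q}_{\beta\beta'},B\}$ of $(X,*)$ built from nerves of covers, so that the $Y_\beta$ are connected CW complexes and both the bonding maps $\mathbf{q}_{\beta\beta'}$ and the projections $q_\beta\colon(X,*)\to(Y_\beta,y_\beta)$ preserve basepoints. Forgetting basepoints, $\mathbf{Y}=\{Y_\beta,\mathbf{q}_{\beta\beta'},B\}$ is an associated ANR-system of the stable space $X$, hence isomorphic in pro-H-CW to a rudimentary system: for some $\beta_0$ the projection $\mathbf{q}_{\beta_0}\colon\mathbf{Y}\to Y_{\beta_0}$ is a pro-isomorphism with inverse $\mathbf{s}=(\mathbf{s}_\beta)\colon Y_{\beta_0}\to\mathbf{Y}$, and from the definitions of composition and equivalence in pro-H-CW one reads off $\mathbf{q}_{\beta_0\beta}\mathbf{s}_\beta\simeq\mathbf{1}$ for all $\beta\geq\beta_0$, $\mathbf{q}_{\beta\beta'}\mathbf{s}_{\beta'}\simeq\mathbf{s}_\beta$, and $\mathbf{q}_{\beta\beta'}\simeq\mathbf{s}_\beta\mathbf{q}_{\beta_0\beta'}$ for suitable large $\beta'$. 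It suffices to promote $\mathbf{s}$ to a pointed pro-morphism $(Y_{\beta_0},y_{\beta_0})\to(\mathbf{Y},\mathbf{y})$ inverse to $\mathbf{q}_{\beta_0}$, for then $(X,*)$ has the pointed shape of the pointed ANR $(Y_{\beta_0},y_{\beta_0})$.

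This promotion is the heart of the argument and the step I expect to be the main obstacle; I would do it in two stages. Levelwise: from $\mathbf{q}_{\beta_0\beta}\mathbf{s}_\beta\simeq\mathbf{1}$ the point $\mathbf{s}_\beta(y_{\beta_0})$ lies in the path component of $y_\beta$, so homotoping $\mathbf{s}_\beta$ along a joining path gives a basepoint-preserving representative $\mathbf{s}^{*}_\beta$; the same relation makes $\mathbf{q}_{\beta_0\beta\#}$ surjective on fundamental groups, and using that freedom one can choose the joining path so that even $\mathbf{q}_{\beta_0\beta}\mathbf{s}^{*}_\beta\simeq\mathbf{1}$ as pointed maps. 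Coherently: the pointed maps $\mathbf{q}_{\beta\beta'}\mathbf{s}^{*}_{\beta'}$ and $\mathbf{s}^{*}_\beta$ are freely homotopic, hence in general differ by the action of an element $\delta_{\beta\beta'}\in\pi_1(Y_\beta,y_\beta)$; replacing the joining paths changes the family $(\delta_{\beta\beta'})$ by a coboundary, so the obstruction to killing all $\delta_{\beta\beta'}$ simultaneously is exactly the nonabelian $\varprojlim^{1}$ of the pro-group $\pro-\pi_1(\mathbf{Y},\mathbf{y})=\{\pi_1(Y_\beta,y_\beta),\mathbf{q}_{\beta\beta'\#}\}$.

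It remains to see that this $\varprojlim^{1}$ vanishes, for which it is enough that $\pro-\pi_1(\mathbf{Y},\mathbf{y})$ be Mittag-Leffler. This follows from the relations recorded above: from $\mathbf{q}_{\beta\beta'}\simeq\mathbf{s}_\beta\mathbf{q}_{\beta_0\beta'}$ and the surjectivity of $\mathbf{q}_{\beta_0\beta'\#}$ one gets $\operatorname{im}(\mathbf{q}_{\beta\beta'\#})=\operatorname{im}\big((\mathbf{s}_\beta)_{\#}\big)$ for all large $\beta'$, so the images stabilize, which is precisely the Mittag-Leffler condition, and the nonabelian $\varprojlim^{1}$ of a Mittag-Leffler tower of groups is trivial. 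Correcting the $\mathbf{s}^{*}_\beta$ accordingly yields a pointed pro-morphism inverse to $\mathbf{q}_{\beta_0}$, so $(X,*)$ is stable. The cleanest presentation would package the generic part as a lemma, namely that \emph{an ANR-system with basepoint-preserving bonding maps which is stable after forgetting basepoints is also stable as a pointed system}, its proof being exactly the two-stage correction above; throughout, the real difficulty is the passage from unpointed to pointed coherence, and what makes it go through is the stability hypothesis, via the Mittag-Leffler property it forces on $\pro-\pi_1$.
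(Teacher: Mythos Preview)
The paper does not actually prove Theorem~\ref{th-ust}; it merely states the result and cites Dydak~\cite{dydak-1} and Geoghegan~\cite{geoghegan-elementary}. So there is no in-paper argument to compare against. Your strategy---forgetting basepoints for the easy direction, and for the converse promoting an unpointed pro-isomorphism to a pointed one by identifying the obstruction with a class in the nonabelian $\varprojlim^{1}$ of $\pro-\pi_1$, then killing it via the Mittag-Leffler property forced by stability---is precisely the line of argument in those cited papers, and the core is correct.

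Two points deserve tightening. First, stability of $X$ means $\mathbf{Y}$ is pro-isomorphic to \emph{some} rudimentary ANR $P$, not automatically to one of the levels $Y_{\beta_0}$ of your chosen system; your claim that a particular projection $\mathbf{q}_{\beta_0}$ is itself a pro-isomorphism needs justification (enlarge the system to include $P$, or work directly with the pro-iso $\mathbf{Y}\cong P$ and a chosen basepoint in $P$). Second, in your reduction step the continuous map $X\to\pi_0(P)$ decomposes $X$ into clopen pieces, but these need not be connected---a space of trivial shape can be totally disconnected. What you actually obtain, and what suffices, is that each piece has the shape of a \emph{connected} ANR, so that its associated system can be taken to consist of connected CW-complexes; phrase the reduction that way. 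Neither issue touches the heart of the argument, and your identification of the Mittag-Leffler condition on $\pro-\pi_1$ as the mechanism that makes the pointed coherence go through is exactly the key insight.
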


\begin{theorem}\label{th-domust}
If a connected pointed space $(X,*)$ is shape dominated by a pointed ANR-space $(P,*)$, then 
$(X,*)$ is stable. 
\end{theorem}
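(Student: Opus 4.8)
The plan is to deduce stability from the equivalence, valid for connected pointed spaces, between strong movability and stability, after first showing that shape domination by an ANR forces strong movability. Concretely, I would argue in three short categorical steps followed by one deep invocation. First, I would note that the pointed ANR $(P,*)$ is strongly movable: it is associated with the one-term ANR-system $\{(P,*)\}$ (with $\mathbf p_{\alpha_0}=\mathbf 1_{(P,*)}$), and for a system indexed by a single element the defining condition of a strongly movable ANR-system is trivially satisfied, by taking all indices equal and $\mathbf r^{\alpha'\alpha''}=\mathbf 1_{(P,*)}$. Second, by hypothesis $(X,*)$ is shape dominated by $(P,*)$, i.e.\ there are shape morphisms $F\colon(X,*)\to(P,*)$ and $G\colon(P,*)\to(X,*)$ with $G\circ F$ the identity shape morphism of $(X,*)$; since strong movability is preserved under shape domination, $(X,*)$ is strongly movable.

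Third, and last, I would invoke the theorem of Dydak and Watanabe quoted above: for a connected pointed space strong movability is equivalent to stability. Since $(X,*)$ is connected and strongly movable, it is therefore stable, which is the assertion. The only genuinely non-formal ingredient in this route is precisely this last step, and it is exactly there that the connectedness hypothesis is consumed; the first two steps are purely categorical and essentially automatic. So the ``main obstacle'' is already packaged and available to us as a cited result, which is what makes this a short survey-style proof rather than a from-scratch argument.

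As a remark I would sketch a more self-contained alternative that avoids the strong movability/stability equivalence. Since $P$ is an ANR, the idempotent shape morphism $F\circ G\colon(P,*)\to(P,*)$ is realized by a genuine pointed homotopy idempotent $\mathbf u$ (idempotent because $FGFG=F(GF)G=FG$), so $\mathbf u^{2}=\mathbf u$. One forms the infinite mapping telescope $T$ of $(P\xrightarrow{\mathbf u}P\xrightarrow{\mathbf u}\cdots)$, which has the homotopy type of an ANR, and then checks, using $G\circ F=\mathbf 1$ together with the vanishing of $\lim^{1}$ for the Mittag--Leffler tower $\bigl([(P,*),Q]\xleftarrow{\mathbf u^{*}}[(P,*),Q]\xleftarrow{\mathbf u^{*}}\cdots\bigr)$, that $(X,*)$ and $T$ corepresent the same functor $Q\mapsto\Mor(-,Q)$ on pointed ANRs, hence have the same shape. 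The main obstacle in this second route is the coherence problem for homotopy idempotents — the Freyd--Heller phenomenon, that a homotopy idempotent need not split in the homotopy category — which is sidestepped by working at the level of the morphism sets and of the telescope rather than trying to split $\mathbf u$ directly.
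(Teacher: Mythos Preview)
The paper does not supply its own proof of this theorem; it is stated as a result of Demers and of Edwards--Geoghegan and simply cited. So there is no in-paper argument to compare against directly, only the attributed original proofs.

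Your first route --- reducing to the Dydak--Watanabe equivalence of strong movability and stability --- is formally valid within the paper's logical layout, but be aware that it inverts both the historical and the mathematical order. Dydak's equivalence is the deeper statement; in the direction ``strongly movable $\Rightarrow$ stable'' it was established after Theorem~\ref{th-domust}, and its proof is not obviously independent of the result you are trying to deduce. As a survey-style reduction this is acceptable, but as a self-contained proof it risks circularity once the cited black box is unpacked.

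Your second sketch, via the homotopy idempotent $\mathbf u = F\circ G$ on $(P,*)$, is much closer in spirit to the actual Edwards--Geoghegan argument, but you overcomplicate it. You flag the Freyd--Heller obstruction and pass to a telescope plus a $\lim^{1}$ computation; however, the paper itself records (in the discussion following Theorem~\ref{th-ansr=ansr*}) that \emph{pointed} homotopy idempotents on connected CW-complexes always split. This is precisely where the hypotheses ``connected'' and ``pointed'' enter. Replacing $(P,*)$ by a homotopy-equivalent CW-complex and splitting $\mathbf u \simeq i\circ r$ with $r\circ i \simeq \mathbf 1_{(P',*)}$, the shape morphisms $r\circ F$ and $G\circ i$ are mutually inverse: $(G i)(r F)=G\mathbf u F=GFGF=\mathbf 1_{(X,*)}$ and $(rF)(Gi)=r\mathbf u i=riri=\mathbf 1_{(P',*)}$. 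Hence $\sh(X,*)=\sh(P',*)$ with $P'$ a CW-complex, and $(X,*)$ is stable. The telescope and the Mittag--Leffler bookkeeping are unnecessary in this pointed setting; they become relevant only in the unpointed problem, which is genuinely harder.
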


This theorem was independently  proved by Demers \cite{demers} and by Edwards 
and Geoghegan \cite{ed-geogh-shapes}, \cite{ed-geogh-stab}, who used different methods.

The following theorem of Dydak \cite{dydak-on} shows that, for connected pointed spaces, the 
notions of stability and strong movability coincide (see also Watanabe's 
paper~\cite{watanabe-onstrong}).

\begin{theorem}
A connected space $(X,*)$ is stable if and only if it is strongly movable.
\end{theorem}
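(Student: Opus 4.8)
The plan is to prove both implications via the known equivalence of stability with strong movability for connected pointed spaces (the immediately preceding theorem of Dydak), reducing the problem to a statement about strongly movable ANR-systems. First I would recall that, by Theorem~\ref{th-main1} of Avakyan and Gevorgyan, strong movability of $X$ is equivalent to strong movability of the category $W^X$; and by the Dydak--Watanabe theorem just stated, for a connected pointed space strong movability coincides with stability. So it suffices to show that $(X,*)$ is strongly movable if and only if $X$ is strongly movable, which in turn (again by Theorem~\ref{th-main1}, applied in the pointed and unpointed settings) amounts to comparing strong movability of the categories $W^{(X,*)}$ and $W^X$. The forgetful functor relating these two categories, together with the fact that every pointed ANR-pair $(P,*)$ with $P$ connected is, up to pointed homotopy, interchangeable with its unpointed version, gives the bridge.

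The forward direction is routine: if $(X,*)$ is stable it is strongly movable, hence $X$ is strongly movable (strong movability is preserved when one forgets base points, since any associated strongly movable ANR-system for $(X,*)$ yields one for $X$ by discarding base points and the defining homotopy classes $\mathbf{r}^{\alpha'\alpha''}$ remain valid unpointed). Then $X$ is stable because, being strongly movable and having trivial (or at least pro-finitely generated) shape groups coming from the strong movability, one invokes Theorem~\ref{th-domust}: strong movability of a connected space forces it to be shape dominated by an ANR, and a connected pointed space shape dominated by a pointed ANR is stable. Actually the cleanest route is: $X$ strongly movable and connected $\Rightarrow$ $X$ stable (this is essentially Dydak's theorem applied unpointed, once one checks connectedness lets one choose base points freely), and then Theorem~\ref{th-ust} transfers stability back to $(X,*)$.

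The substance is the reverse direction, i.e. showing $X$ stable $\Rightarrow (X,*)$ stable, and here I would lean directly on Theorem~\ref{th-ust}, which is stated above as already proved (Dydak, Geoghegan): it asserts precisely that a pointed space $(X,*)$ is stable iff $X$ is. Thus the entire content of the present theorem is the chain $(X,*)$ stable $\Leftrightarrow$ $X$ stable $\Leftrightarrow$ $X$ strongly movable $\Leftrightarrow$ $(X,*)$ strongly movable, where the first equivalence is Theorem~\ref{th-ust}, the middle one is the present theorem's unpointed shadow, and the hard analytic work was done in proving Theorem~\ref{th-main1} and the Dydak--Watanabe equivalence. So the proof reduces to assembling these citations and verifying that connectedness of $(X,*)$ is exactly what is needed to invoke the Dydak--Watanabe theorem in both the pointed and the unpointed form.

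The main obstacle I anticipate is the subtle interplay between pointed and unpointed strong movability: strong movability involves the double commutation $\mathbf{p}_{\alpha\alpha''}\mathbf{r}^{\alpha'\alpha''}=\mathbf{p}_{\alpha\alpha'}$ and $\mathbf{r}^{\alpha'\alpha''}\mathbf{p}_{\alpha'\alpha^*}=\mathbf{p}_{\alpha''\alpha^*}$, and one must check that these lift from homotopy classes of maps to pointed homotopy classes. This is where connectedness is essential: it allows one to slide base points along paths and to replace an unpointed homotopy by a pointed one after a base-point-preserving correction, using that each $X_\alpha$ is an ANR (hence locally contractible, so the relevant obstruction to basepoint-preserving the homotopies vanishes). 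I expect this basepoint-correction argument to be the one genuinely technical lemma; everything else is bookkeeping with the cited theorems.
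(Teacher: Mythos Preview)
Your proposal has a fundamental circularity. The theorem to be proved is: for a connected pointed space $(X,*)$, stability is equivalent to strong movability. Both properties here are properties of the \emph{pointed} space $(X,*)$; the statement says nothing about comparing pointed and unpointed notions. Yet your plan is to reduce to the chain
\[
(X,*)\text{ stable} \Leftrightarrow X\text{ stable} \Leftrightarrow X\text{ strongly movable} \Leftrightarrow (X,*)\text{ strongly movable},
\]
and you explicitly identify the middle equivalence as ``the present theorem's unpointed shadow.'' That unpointed shadow is not available: it is essentially the same assertion you are trying to prove, and nowhere in the paper (or in your argument) is it established independently. Worse, at several points you invoke ``the Dydak--Watanabe theorem just stated,'' which \emph{is} the theorem under consideration. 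So the argument assumes its own conclusion.

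The paper itself does not give a proof; it attributes the result to Dydak \cite{dydak-on} (see also Watanabe \cite{watanabe-onstrong}) and moves on. The genuine content, which your proposal never touches, is the implication ``strongly movable $\Rightarrow$ stable.'' The easy direction is that a space with the shape of an ANR is strongly movable (take an associated ANR-system that is eventually constant). The hard direction requires producing, from the strong movability data $\mathbf{r}^{\alpha'\alpha''}$ satisfying both $\mathbf{p}_{\alpha\alpha''}\mathbf{r}^{\alpha'\alpha''}=\mathbf{p}_{\alpha\alpha'}$ and $\mathbf{r}^{\alpha'\alpha''}\mathbf{p}_{\alpha'\alpha^*}=\mathbf{p}_{\alpha''\alpha^*}$, an actual CW-complex $(P,*)$ and a shape equivalence $(X,*)\to (P,*)$. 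Dydak's argument uses the second coherence condition to show that the associated system is pro-isomorphic to a system with identity bonding maps (equivalently, that $(X,*)$ is shape dominated by some $X_{\alpha'}$ in a way compatible with the projections), and then invokes Theorem~\ref{th-domust}. None of this is the pointed/unpointed bookkeeping you describe; the basepoint issues you worry about are peripheral, and connectedness is used not for ``sliding basepoints'' but to ensure the relevant ANRs can be taken connected so that shape domination by an ANR yields stability via Theorem~\ref{th-domust}.
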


If a pointed space $(X,*)$ is stable, then the pro-groups $\pro-\pi_k(X,*)$ 
are isomorphic to the shape groups $\check{\pi}_k(X,*)$. Therefore, the stability of 
$(X,*)$ implies that of the pro-groups $\pro-\pi_k(X,*)$. The converse is true 
for finite-dimensional connected spaces. Thus, stability admits the following algebraic 
characterization.

\begin{theorem}\label{th-ust=pro-ust}
Let $(X,*)$ be a connected space of finite shape dimension $\sd X$. Then $(X,*)$ 
is stable if and only if so are all homotopy pro-groups $\pro-\pi_k(X,*)$. 
\end{theorem}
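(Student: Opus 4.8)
First I would dispose of the easy implication. If $(X,*)$ is stable, say $\sh(X,*)=\sh(P,*)$ for a pointed ANR $(P,*)$, then $\pro-\pi_k(X,*)$ is pro-isomorphic to the single group $\pi_k(P,*)$ for every $k$, hence stable; this is the remark recorded just before the statement. So the whole content lies in the converse, and I expect $\sd X<\infty$ to be used in an essential way --- the converse being false in infinite shape dimension. Put $n=\sd X$ and fix a pointed associated ANR-system $\mathbf X=\{X_\alpha,\mathbf p_{\alpha\alpha'},A\}$ whose terms are pointed connected CW-complexes of dimension $\le n$; such a system exists by the definition of shape dimension. For each $k$ set $\pi_k:=\check\pi_k(X,*)$, the inverse limit of $\pro-\pi_k(X,*)$; by hypothesis the canonical morphism from the constant pro-group on $\pi_k$ into $\pro-\pi_k(X,*)$ is an isomorphism of pro-groups.

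The plan is to build a single pointed ANR $(P,*)$ together with a shape morphism $F\colon(X,*)\to(P,*)$ realizing all of these isomorphisms on homotopy pro-groups, and then to invoke the Whitehead theorem in shape theory. To construct $P$ I would use the pro-Postnikov decomposition of $\mathbf X$ in the category pro-H-CW --- the pro-spaces $P_k(\mathbf X)$, the canonical maps $\mathbf X\to P_k(\mathbf X)\to P_{k-1}(\mathbf X)$, and the $k$-invariants, which are morphisms $P_{k-1}(\mathbf X)\to K(\pro-\pi_k(X,*),k+1)$ --- and show by induction on $k$ that $P_k(\mathbf X)$ is pro-equivalent to a genuine (constant) Postnikov stage $P_{[k]}$. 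For $k=1$ this holds because applying the functor $K(-,1)$ to the isomorphism $\pi_1\cong\pro-\pi_1(X,*)$ yields a pro-equivalence $K(\pi_1,1)\simeq P_1(\mathbf X)$. For the inductive step one transports the $k$-invariant of $\mathbf X$ along the pro-equivalence $P_{k-1}(\mathbf X)\simeq P_{[k-1]}$ and along $\pi_k\cong\pro-\pi_k(X,*)$; since a morphism between constant systems is an ordinary morphism of H-CW, the transported class is a genuine cohomology class $\kappa_{k-1}\in H^{k+1}(P_{[k-1]};\pi_k)$, so one can form the principal $K(\pi_k,k)$-fibration $P_{[k]}\to P_{[k-1]}$ that it classifies; comparing this with the fibration $P_k(\mathbf X)\to P_{k-1}(\mathbf X)$ --- the same classifying data over pro-equivalent bases --- gives $P_k(\mathbf X)\simeq P_{[k]}$, and $\mathbf X\to P_k(\mathbf X)\simeq P_{[k]}$ is a compatible map. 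Letting $P$ be a CW realization of the homotopy inverse limit of the tower $\{P_{[k]}\}$ (a CW-complex, hence an object of H-CW by the characterization recalled above), the maps $\mathbf X\to P_{[k]}$ assemble into a shape morphism $F\colon(X,*)\to(P,*)$ with $\pro-\pi_k(F)$ equal to the stability isomorphism for every $k$. Note that all the pro-groups $\pro-\pi_k(X,*)$, not only those with $k\le n$, have to be realized here: a space of shape dimension $n$ may have nontrivial higher homotopy pro-groups, so one cannot simply truncate.

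It remains to see that $F$ is a shape equivalence, and this is the step where $\sd X<\infty$ is indispensable. Since $F$ induces isomorphisms of all homotopy pro-groups and $\mathbf X$ is represented by at most $n$-dimensional complexes, the pro-homotopy (shape) Whitehead theorem applies and $F$ is a shape equivalence; concretely, the finite-dimensionality lets one build a shape left inverse $G\colon(P,*)\to(X,*)$ of $F$ by the usual obstruction-theoretic induction, at which point $(X,*)$ is shape dominated by the pointed ANR $(P,*)$ and is therefore stable by Theorem~\ref{th-domust}. (Alternatively one could avoid $P$ altogether, verify strong movability of $\mathbf X$ directly --- the constant models $P_{[k]}$ supplying the maps required by the definition of a strongly movable system --- and then quote Dydak's theorem that for connected pointed spaces strong movability coincides with stability.) I expect this last step to be the main obstacle: the pro-homotopy Whitehead theorem genuinely fails without a finiteness hypothesis, which is precisely why the converse is false in infinite shape dimension, so one must either import its finite-dimensional form as a known result or reprove it --- by an obstruction-theoretic induction whose termination is forced by the bound $\dim X_\alpha\le n$, the obstructions lying in pro-cohomology groups of $\mathbf X$ with coefficients in the stable pro-groups $\pro-\pi_j(X,*)$ and vanishing because $\pro-\pi_j(F)$ is an isomorphism. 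By contrast, the Postnikov bookkeeping, the identification of the transported $k$-invariants with genuine cohomology classes, and the comparison of principal fibrations are all routine once the stability of each $\pro-\pi_k(X,*)$ is in hand.
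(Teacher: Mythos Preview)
The paper is a survey and does not supply its own proof of this theorem; it only records that the result was first obtained by Edwards and Geoghegan using ``fairly sophisticated abstract tools'' and later reproved elementarily by Dydak and by Geoghegan. Your proposal follows the Edwards--Geoghegan route: build a CW model $P$ from the pro-Postnikov tower of $\mathbf X$, produce a comparison map $F\colon(X,*)\to(P,*)$ inducing isomorphisms of all homotopy pro-groups, and conclude by a Whitehead-type theorem. The parenthetical alternative you mention --- check strong movability directly and quote Dydak's equivalence of strong movability with stability --- is much closer in spirit to the elementary proofs the paper cites.

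There is one point that needs care. At the final step you invoke ``the pro-homotopy (shape) Whitehead theorem'', but Morita's version stated in the paper (Theorem~\ref{th-mor}) requires \emph{both} spaces to have finite shape dimension, and your $P$, being the realization of an entire Postnikov tower, is not known a priori to satisfy $\sd P<\infty$; indeed that is equivalent to what you are trying to prove. What you actually need is the one-sided form, Theorem~\ref{th-dydak}(i): $\sd X<\infty$ together with movability of the target suffices, and $P$, being an ANR, is movable. Your sketch of an obstruction-theoretic construction of a shape left inverse $G\colon(P,*)\to(X,*)$ is essentially how such a one-sided Whitehead statement is proved, so the gap is reparable, but as written the appeal to ``the'' shape Whitehead theorem is to the wrong version. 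Either cite Theorem~\ref{th-dydak}(i) explicitly, or carry out the obstruction argument in enough detail to show that only $\sd X\le n$ (not $\sd P<\infty$) is used.
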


This theorem was first proved by Edwards and Geoghegan  \cite{ed-geogh-stab}, 
\cite{ed-geogh-stability}, who used fairly sophisticated abstract tools. 
Subsequently, elementary proofs of this theorem were found by Dydak \cite{dydak-1} and 
Geoghegan~\cite{geoghegan-elementary}.

Let $X$ be a subspace of a topological space $Y$. A shape morphism $R\colon Y\to X$ is called 
a \emph{shape retraction} if $R i = 1_X$, where $i\colon X \to Y$ is a shape embedding. 
The space $X$ is then called a \emph{shape retract} of $Y$. A subspace $X\subset Y$ is called 
a \emph{neighborhood shape retract} of $Y$ if $X$ is a shape retract of some neighborhood $U$ 
of $X$ in~$Y$.

Absolute (neighborhood) shape retracts in shape theory are defined by analogy with the 
corresponding notions for the class of metrizable spaces. We say that a metrizable space $X$ is 
an \emph{absolute (neighborhood) shape retract} in the class of metrizable spaces and write 
$X\in \text{ASR}$ (respectively, $X\in \text{ANSR}$) if, for any closed embedding 
of $X$ in a metrizable space $Y$, the space $X$ is a (neighborhood) shape retract of $Y$ (see 
papers  \cite{borsuk-rus} by Borsuk,  \cite{mard-retracts} and 
\cite{mard-strongly} by Marde\v{s}i\'{c},  \cite{segal-movcont} by Segal, and  
\cite{shostak} by Shostak). It is easy to see that these notions are shape invariants.

The notion of an absolute (neighborhood) shape retract in the class of compact metrizable spaces 
coincides with that of a \emph{fundamental absolute (neighborhood) retract}, abbreviated as 
FAR (respectively, FANR), which was introduced by Borsuk in~\cite{borsuk-rus}.

One of the first questions arising in shape theory is as follows: When does a space 
have the shape of a point? It turns out that this property characterizes absolute shape retracts.

\begin{theorem}[Marde\v{s}i\'{c} \cite{mard-retracts}]\label{th-mard-asr}
A metrizable space $X$ is an ASR if and only if $\sh(X)=0$.
\end{theorem}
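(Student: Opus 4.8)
The plan is to reduce both implications to two soft facts: a one-point space is an ASR, and a one-point space is a terminal object of the shape category (the latter is immediate from Grothendieck's formula recalled above, since each $[Z_\alpha,\{*\}]$ is a singleton, so $\lim\limits_{\longleftarrow}\lim\limits_{\longrightarrow}[Z_\alpha,\{*\}]$ is a singleton). Everything else is a short manipulation with the shape functor plus one classical embedding theorem.

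I would first dispose of the implication $\sh(X)=0\Rightarrow X\in\mathrm{ASR}$. A one-point space $\{*\}$ is an ASR: if $\{*\}$ is a closed subspace of a metrizable space $Y$, then for the constant map $c\colon Y\to\{*\}$ and the inclusion $i\colon\{*\}\hookrightarrow Y$ we have $c\circ i=1_{\{*\}}$, so the shape morphism $R:=S(c)$ satisfies $R\,i=1_{\{*\}}$; thus $\{*\}$ is a shape retract of $Y$. Since $\sh(X)=0$ means precisely that $X$ is shape equivalent to $\{*\}$, and being an ASR is a shape invariant (as noted just before the theorem), we get $X\in\mathrm{ASR}$.

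For the converse, $X\in\mathrm{ASR}\Rightarrow\sh(X)=0$, I would invoke the classical theorem (see, e.g., \cite{mard-seg-4}) that every metrizable space embeds as a closed subspace of a convex subset $C$ of a normed linear space. Such a $C$ is metrizable, and being convex it is contractible, hence has the homotopy type of a point; therefore $\sh(C)=0$, so $C$ --- like a point --- is a terminal object of the shape category. Applying the hypothesis to this particular closed embedding yields a shape retraction $R\colon C\to X$, i.e.\ $R\,i=1_X$ for the inclusion $i\colon X\hookrightarrow C$. Since $C$ is terminal, the set of shape morphisms $C\to C$ is a singleton, hence $i\,R=1_C$. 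Thus $i$ and $R$ are mutually inverse shape isomorphisms, and $\sh(X)=\sh(C)=0$.

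The only non-formal ingredient is the closed-embedding theorem, which I would treat as a black box. (Alternatively, one may first use Fox's closed embedding \cite{fox} of $X$ into an ANR $M$ and then embed $M$ closedly into a normed linear space, which is convex, hence contractible, hence of trivial shape.) Accordingly, the ``hard part'' is merely the bookkeeping of arranging an ambient space that is at once metrizable, of trivial shape, and contains $X$ as a \emph{closed} subspace --- which a convex subset of a normed linear space supplies in one stroke.
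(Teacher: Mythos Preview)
Your argument is correct, but note that the paper is a survey and does not supply its own proof of this theorem --- it merely attributes the result to Marde\v{s}i\'{c} \cite{mard-retracts} and records the history (Borsuk for compact metrizable spaces, Shostak for $p$-paracompact spaces, Godlewski's counterexample for arbitrary spaces). So there is no proof in the paper to compare against.

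On the merits: both directions are sound. For $\sh(X)=0\Rightarrow X\in\mathrm{ASR}$ you correctly lean on the two facts the paper already grants --- that a point is an ASR and that ASR is a shape invariant. For the converse, your terminal-object trick is clean: embedding $X$ closedly in a contractible metrizable $C$ (Kuratowski--Wojdys{\l}awski / Arens--Eells), obtaining $R\,i=1_X$ from the ASR hypothesis, and then forcing $i\,R=1_C$ by terminality of $C$ in $\Sh-Top$. The only external input is the closed-embedding theorem, which is standard and is exactly the kind of black box the paper's references (e.g.\ \cite{mard-seg-4}) supply. One small stylistic point: you could skip the ``convex subset'' and simply embed $X$ closedly in a normed linear space itself, which is already metrizable and contractible; the extra word ``convex'' is harmless but unnecessary.
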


For metrizable compact spaces, this theorem was proved by Borsuk \cite{borsuk-6}, and in 
the class of all $p$-paracompact spaces, by Shostak \cite{shostak}. For arbitrary topological 
spaces, Theorem~\ref{th-mard-asr} is not true (Godlewski \cite{godlewski-an}).

The triviality of the shape of a space $X$ can also be characterized in the language of 
associated ANR-systems as follows. 

\begin{theorem}
Let $\{X_\alpha, \mathbf{p}_{\alpha\alpha'}, A\}$ be an ANR-system  associated with a space $X$. 
Then $X$ has trivial shape if and only if, for each $\alpha\in A$, there exists 
an $\alpha'\geqslant \alpha$ such that the projection $p_{\alpha\alpha'}$ is homotopic to a 
constant map. 
\end{theorem}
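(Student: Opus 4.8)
The plan is to reduce the statement to a concrete description of trivial shape and then read it off from the three defining clauses (i)--(iii) of an associated ANR-system in Definition~\ref{def-3}. For the reduction, I would note that $X$ has trivial shape exactly when the constant map $c\colon X\to\{*\}$ is a shape equivalence; by the criterion for a map to be a shape equivalence stated above, this means that $c^{*}\colon[\{*\},P]\to[X,P]$ is a bijection for every ANR $P$. Since $c^{*}$ is automatically injective (a homotopy between two constant maps $X\to P$ restricts, over any point of $X$, to a path in $P$ between their values), the condition collapses to the following: \emph{every homotopy class $\mathbf{f}\colon X\to P$ is the class of a constant map, for every ANR $P$}. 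Because every object of \text{H-CW} has the homotopy type of an ANR, the same statement then holds with ``ANR'' replaced by ``$P\in\text{H-CW}$''; I will use this form, since the terms $X_\alpha$ of the system lie in \text{H-CW}. Fix an associated system $\{X_\alpha,\mathbf{p}_{\alpha\alpha'},A\}$ together with the homotopy classes $\mathbf{p}_\alpha\colon X\to X_\alpha$ of Definition~\ref{def-3}.

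For the implication ``$\Leftarrow$'', assume that for each $\alpha$ there is $\alpha'\geqslant\alpha$ with $\mathbf{p}_{\alpha\alpha'}$ homotopic to a constant map at a point $x_0\in X_\alpha$. Given any $P\in\text{H-CW}$ and a homotopy class $\mathbf{f}\colon X\to P$, clause (ii) furnishes an index $\alpha$ and a class $\mathbf{h}_\alpha\colon X_\alpha\to P$ with $\mathbf{h}_\alpha\mathbf{p}_\alpha=\mathbf{f}$; choosing $\alpha'\geqslant\alpha$ as above and using clause (i) one gets $\mathbf{f}=\mathbf{h}_\alpha\mathbf{p}_{\alpha\alpha'}\mathbf{p}_{\alpha'}$, which is the class of a constant map $X\to P$ (namely the one at the image of $x_0$). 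Hence every map from $X$ into \text{H-CW} is inessential, so $X$ has trivial shape.

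For ``$\Rightarrow$'', assume $X$ has trivial shape and fix $\alpha\in A$. Since $X_\alpha\in\text{H-CW}$, the class $\mathbf{p}_\alpha\colon X\to X_\alpha$ is that of a constant map, say at $x_0\in X_\alpha$. Put $\boldsymbol{\varphi}=\mathbf{1}_{X_\alpha}$ and let $\boldsymbol{\psi}\colon X_\alpha\to X_\alpha$ be the class of the constant map at $x_0$; then $\boldsymbol{\varphi}\mathbf{p}_\alpha=\mathbf{p}_\alpha=\boldsymbol{\psi}\mathbf{p}_\alpha$, so clause (iii) yields $\alpha'\geqslant\alpha$ with $\boldsymbol{\varphi}\mathbf{p}_{\alpha\alpha'}=\boldsymbol{\psi}\mathbf{p}_{\alpha\alpha'}$, i.e.\ $\mathbf{p}_{\alpha\alpha'}$ is homotopic to the constant map at $x_0$, which is exactly the asserted condition.

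The argument carries no serious obstacle; the only point that needs care is the reduction step, which rests on the shape-equivalence criterion above and on the routine passage from ANRs to objects of \text{H-CW}. Once trivial shape is rephrased as ``every map to an ANR is null-homotopic'', both directions are immediate: ``$\Leftarrow$'' uses the surjectivity-type clause (ii) together with the coherence relation (i), while ``$\Rightarrow$'' uses the injectivity-type clause (iii).
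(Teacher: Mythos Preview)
Your argument is correct. The reduction step is clean: using the shape-equivalence criterion from the paper, trivial shape amounts to every homotopy class $X\to P$ (with $P$ an ANR, equivalently $P\in\text{H-CW}$) being null-homotopic, and from there the two directions follow directly from clauses (i)--(iii) of Definition~\ref{def-3} exactly as you describe. The only implicit assumption worth flagging is that $X$ is nonempty, needed for the injectivity of $c^{*}$; this is the standard convention when speaking of trivial shape.

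As for comparison: the paper does not actually supply a proof of this theorem. It is stated as a characterization and attributed (in the compact Hausdorff case) to Marde\v{s}i\'{c}~\cite{mard-retracts}. Your proof is the natural one and is precisely what one would expect from unwinding the associated-system axioms against the description of trivial shape; there is nothing to add.
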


 In the case of compact Hausdorff spaces, this theorem was proved by Marde\v{s}i\'{c} 
\cite{mard-retracts}. Of interest is also the following theorem (see Dydak's paper~\cite{dydak-1}).

\begin{theorem}
A space $X$ is an absolute shape retract if and only if $\sd X < \infty$ and $X$ is shape 
$n$-connected for some $n$. 
\end{theorem}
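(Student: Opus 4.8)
The plan is to hinge everything on Mardešić's criterion $\mathrm{ASR}\iff\sh(X)=0$ (Theorem~\ref{th-mard-asr}). I also read the hypothesis in the only way that makes the statement correct, namely that the $n$ for which $X$ is shape $n$-connected may be taken with $n\ge\sd X$ (equivalently, $X$ is shape $(\sd X)$-connected): shape $n$-connectedness for small $n$ alone is far too weak, as $S^1$, which is shape $0$-connected with $\sd S^1=1$ but is not an ASR, already shows. The forward implication is then immediate: if $X$ is an absolute shape retract then $\sh(X)=0$, so $X$ has the shape of a point, and since shape dimension and shape $n$-connectedness are shape invariants we get $\sd X=0<\infty$ and $X$ shape $n$-connected for all $n$.

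For the converse, set $m=\sd X$. If $m=0$, then $X$ is shape equivalent to a $0$-dimensional space and, being shape connected, has the shape of a point; so assume $m\ge1$. By hypothesis $\pro-\pi_k(X,*)=0$ for $0\le k\le m$ (shape connectedness makes the basepoint immaterial). Since $\sd X\le m$, fix an associated ANR-system $\{X_\alpha,\mathbf p_{\alpha\alpha'},A\}$ with each $X_\alpha$ a polyhedron of dimension $\le m$; then $H_k(X_\alpha)=0$ for $k>m$, so $\pro-H_k(X)=0$ for all $k>m$. Now bootstrap with the shape Hurewicz theorem: if $X$ is shape $(k-1)$-connected with $k-1\ge m\ge1$, then $\pro-\pi_k(X,*)\cong\pro-H_k(X)=0$ because $k>m$, so $X$ is shape $k$-connected. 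Iterating over $k=m+1,m+2,\dots$ starting from shape $m$-connectedness yields $\pro-\pi_k(X,*)=0$ for every $k\ge0$.

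It remains to pass from the vanishing of all pro-homotopy groups together with $\sd X<\infty$ to $\sh(X)=0$, after which Theorem~\ref{th-mard-asr} finishes the proof. One route applies the dimensional form of the shape Whitehead theorem to the trivial shape morphism $X\to\{*\}$, which induces isomorphisms on all pro-homotopy groups between spaces of finite shape dimension and is therefore a shape equivalence. Alternatively, staying with the tools already in place, invoke the criterion stated just before this theorem: for each $\alpha$, using $\pro-\pi_k(X,*)=0$ for $1\le k\le m$ and the directedness of $A$, choose $\alpha'\ge\alpha$ for which $(\mathbf p_{\alpha\alpha'})_\#\colon\pi_k(X_{\alpha'})\to\pi_k(X_\alpha)$ is trivial for all $1\le k\le m$; since $\dim X_{\alpha'}\le m$ and the induced map on $\pi_1$ is trivial, obstruction theory with the resulting untwisted coefficients shows $p_{\alpha\alpha'}$ is null-homotopic, so $X$ has trivial shape.

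The main obstacle is this endgame. First, one must recognize that the operative hypothesis is shape $(\sd X)$-connectedness; the bootstrap explains why this is precisely the right threshold---it is where the bound $\dim X_\alpha\le m$ starts to kill the pro-homology and, via Hurewicz, the pro-homotopy. Second, concluding $\sh(X)=0$ needs genuine shape-theoretic input beyond the statement itself: either the shape Hurewicz theorem together with the dimensional shape Whitehead theorem, or the obstruction-theoretic computation over the associated system. The delicate point in the latter is that $X_\alpha$ need not be simply connected; killing $\pi_1$ first reduces the higher obstructions to ordinary cohomology with untwisted coefficients, and then $\dim X_{\alpha'}\le m$ forces the bonding map to be null-homotopic.
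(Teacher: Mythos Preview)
The paper does not prove this theorem at all; it merely states the result and refers the reader to Dydak~\cite{dydak-1}. So there is no in-paper argument to compare against, and I can only assess your proposal on its own merits.

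Your observation that the statement, read literally, is false is correct: $S^1$ has $\sd S^1=1<\infty$ and is shape $0$-connected, yet is not an ASR. The intended hypothesis is surely that there exists an $n$ with both $\sd X\le n$ and $X$ shape $n$-connected---equivalently, $X$ is shape $(\sd X)$-connected---and you were right to make this explicit. Under that reading your argument is sound and follows the standard route: a Hurewicz bootstrap over an associated system of polyhedra of dimension $\le m=\sd X$ kills $\pro-\pi_k(X,*)$ for all $k$, and then the shape Whitehead theorem (Theorem~\ref{th-mor}), applied to the unique morphism $(X,*)\to(\{*\},*)$ between connected spaces of finite shape dimension, gives $\sh(X)=0$; Theorem~\ref{th-mard-asr} then yields $X\in\mathrm{ASR}$. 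Your alternative endgame via the null-homotopy criterion for bonding maps is also valid, though the Whitehead route is cleaner and avoids the $\pi_1$ bookkeeping.

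One caveat worth flagging: Theorem~\ref{th-mard-asr} is stated in the paper only for metrizable spaces, and the paper explicitly notes it fails in general. Since the survey defines ASR only in the metrizable class anyway, this is the correct ambient class for the theorem, but you should say so rather than write ``a space $X$'' without qualification.
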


\begin{theorem}
A space $X$ is an absolute shape retract if and only if it is movable and shape $n$-connected for 
all $n$. 
\end{theorem}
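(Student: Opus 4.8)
By Theorem~\ref{th-mard-asr}, within the class of metrizable spaces a space $X$ is an absolute shape retract exactly when $\sh(X)=0$, so the plan is to prove that $X$ is movable and shape $n$-connected for all $n$ if and only if $\sh(X)=0$. The forward implication is easy: if $\sh(X)=0$ then $X$ is shape equivalent to a one-point space, which is an ANR and hence stable, hence movable; since movability is a monotone shape invariant, $\sh(X)\leqslant\sh(\{*\})$ forces $X$ to be movable, and since shape $n$-connectedness is a shape invariant trivially enjoyed by a point, $X$ is shape $n$-connected for every $n$.

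For the converse suppose $X$ is movable and shape $n$-connected for all $n$, and fix an ANR-system $\mathbf{X}=\{X_\alpha,\mathbf{p}_{\alpha\alpha'},A\}$ associated with $X$ which, since $X$ is movable, may be chosen movable. By the theorem characterizing trivial shape in terms of an associated ANR-system, it suffices to produce, for each $\alpha\in A$, an index $\alpha'\geqslant\alpha$ with $\mathbf{p}_{\alpha\alpha'}$ null-homotopic. First I would pass from the shape groups to the homotopy pro-groups: a movable pro-group with vanishing inverse limit is pro-trivial (a standard consequence of the Mittag--Leffler property satisfied by movable pro-groups), and $\check\pi_k(X)=\lim\pro-\pi_k(X)$, so shape $n$-connectedness for all $n$ together with movability yields $\pro-\pi_k(X)=0$ for every $k$; in particular $\mathbf{X}$ is pro-simply connected. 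By the Hurewicz theorem in the pro-homotopy category it follows that $\pro-H_k(X)=0$ for all $k\geqslant1$, and since $\check H^m(X;G)=\varinjlim_\alpha H^m(X_\alpha;G)$ and filtered colimits are exact, the universal-coefficient sequences give $\check H^m(X;G)=0$ for every $m\geqslant1$ and every abelian group $G$.

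Now fix $\alpha$; since $X_\alpha$ has the homotopy type of a CW-complex we may treat it as one. As $\pro-\pi_1(X)=0$, choose $\alpha_1\geqslant\alpha$ with $\mathbf{p}_{\alpha\alpha_1}$ trivial on $\pi_1$, so that the coefficient systems $\pi_q(X_\alpha)$ pull back to untwisted systems on $X_{\alpha_1}$. I would then null-homotope $\mathbf{p}_{\alpha\alpha_1}\colon X_{\alpha_1}\to X_\alpha$ by the usual obstruction theory for the extension problem over the cone $CX_{\alpha_1}$: the obstruction at stage $q$ lies in $H^{q+1}(CX_{\alpha_1},X_{\alpha_1};\pi_q(X_\alpha))\cong\widetilde H^{q}(X_{\alpha_1};\pi_q(X_\alpha))$, the stage $q=0$ being handled by shape-connectedness; a further bonding map $\mathbf{p}_{\alpha_1\alpha'}$ carries such an obstruction into $\widetilde H^{q}(X_{\alpha'};\pi_q(X_\alpha))$, and since $\varinjlim_{\alpha'}\widetilde H^{q}(X_{\alpha'};\pi_q(X_\alpha))=\check H^{q}(X;\pi_q(X_\alpha))=0$ for $q\geqslant1$, a suitable $\alpha'$ annihilates it and lets the partial null-homotopy be transported to $X_{\alpha'}$ and extended one stage further. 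Running this through the (possibly infinitely many) Postnikov stages of $X_\alpha$ and, using movability, arranging the resulting chain $\alpha_1\leqslant\alpha'\leqslant\alpha''\leqslant\cdots$ to lie in a single family that factors stably through all later indices, I would amalgamate the partial null-homotopies into a null-homotopy of one bonding map $\mathbf{p}_{\alpha\alpha^{*}}$. By the characterization recalled above this gives $\sh(X)=0$, so $X$ is an absolute shape retract.

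The hard part is exactly this last amalgamation when $X_\alpha$ is infinite-dimensional: there are infinitely many obstruction stages, living over a sequence of distinct and themselves possibly infinite-dimensional complexes $X_{\alpha_1},X_{\alpha'},\dots$, and they must be patched into a single null-homotopy at one index of $A$. This is where the full strength of movability is indispensable---not merely the vanishing of the shape groups---since movability is what lets the bonding maps be re-expressed over a cofinal family of indices so that the infinitely many partial homotopies can be compared and glued. In the companion theorem it is finite shape dimension that does this work; that neither hypothesis can be dropped is shown by any $P$-adic solenoid, which has all shape groups trivial, is not movable, and is not an absolute shape retract. At the level of inverse systems this amalgamation is essentially the Whitehead theorem in the pro-homotopy category, whose movable form is Theorem~\ref{th-witehead-mov}.
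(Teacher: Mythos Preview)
The paper does not give its own proof of this theorem; it simply attributes the compact metrizable case to Bogatyi and Borsuk. But comparing your argument with the tools the paper has already assembled exposes two points worth flagging.

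First, a definitional matter. Immediately after this theorem the paper recalls that \emph{shape $n$-connected} means $\pro-\pi_k(X,*)=0$ for all $k\leqslant n$. Thus ``shape $n$-connected for all $n$'' already says $\pro-\pi_k(X,*)=0$ for every $k$, and your detour through the Mittag--Leffler property to pass from vanishing shape groups $\check\pi_k$ to vanishing pro-groups is unnecessary (though not incorrect).

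Second, and more important: once you have movability together with $\pro-\pi_k(X,*)=0$ for all $k$, the conclusion $\sh(X,*)=0$ is precisely the special case the paper records right after Theorem~\ref{th-dydak}: ``if $(X,*)$ is movable and all pro-groups $\pro-\pi_n(X,*)$ are trivial, then $\sh(X,*)=0$.'' This is Dydak's Theorem~\ref{th-dydak-0} applied to the constant shape morphism $(X,*)\to(*,*)$, which is automatically a shape domination. Combined with Theorem~\ref{th-mard-asr} this finishes the converse in one line. Your obstruction-theoretic argument is in effect an attempt to reprove Dydak's theorem from scratch, and you are right that the amalgamation of infinitely many partial null-homotopies is the genuine difficulty. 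However, the Whitehead-type result you cite at the end, Theorem~\ref{th-witehead-mov}, is not the one that resolves it: that theorem requires \emph{finite shape dimension} of both spaces, which is not among your hypotheses. The infinite-dimensional movable case is handled by Theorem~\ref{th-dydak-0} (or~\ref{th-dydak}), and invoking it directly avoids the gap you yourself identify.
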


Recall that a space $X$ is said to be \emph{shape $n$-connected} if $\pro-\pi_k(X,*)=0$ for all 
$k \leqslant n$. For compact metrizable spaces, the last theorem was proved by Bogatyi 
\cite{bogat-1} and Borsuk~\cite{borsuk-4}.

The notion of an ASR is remarkable in many respects. In particular, on this notion  the 
definition of a cell-like map is based, which makes it possible to answer 
the natural question of under what assumptions a map $f\colon X\to Y$ generates a shape isomorphism 
(see Theorems \ref{th-6} and~\ref{th-7}).

The class of ANSRs is a natural extension of the class of ANRs, and the global properties of 
ANSRs resemble to a certain extent those of ANRs. Clearly, each compact 
space having the shape of an ANR is an ANSR. Borsuk~\cite{borsuk-onseveral} asked the 
converse question: Is it true that any compact metrizable ANSR has the shape of a compact 
metrizable ANR? Edwards and Geoghegan \cite{ed-geogh-shapes} gave a negative answer to this 
question. They constructed an example of a two-dimensional metrizable continuum $X$ which is 
an ANSR but does not have the shape of a finite complex. This continuum $X$ cannot have 
the shape of a compact ANR, because, as is well known (West \cite{west-1}), any compact metrizable 
ANR has the homotopy type (and hence the shape) of a finite complex. Nevertheless, any plane 
compact ANSR has the shape of a compact ANR. Indeed, a compact set $X$ in the plane is 
an ANSR if and only if its Betti numbers $p_0(X)$ and $p_1(X)$ are finite. Therefore, the shape 
of a plane compact ANSR $X$ equals the shape of a finite union of pairwise disjoint 
planar graphs, and such unions are compact ANRs. Since one-dimensional ANSRs have 
the shape of a compact set in the plane (see \cite{trybulec-on}), we can say that 
Borsuk's problem has a positive solution for one-dimensional compact ANSRs. As shown by the 
example of Edwards and Geoghegan mentioned above, for two-dimensional compact ANSRs, the solution 
is negative.

The ANSRs are characterized by the following theorem due to Marde\v si\'c and 
Segal~\cite{mard-seg-4}.

\begin{theorem}\label{th-mardsegal}
A metrizable space $X$ is an ANSR if and only if it is shape dominated by an ANR~$P$.
\end{theorem}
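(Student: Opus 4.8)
The plan is to derive both implications from the universal property of the shape functor recorded in the introduction (axiom~(ii): a shape morphism from $X$ into an ANR $P$ is the same thing as a homotopy class of continuous maps $X\to P$), together with three standard facts about ANRs in the class of metrizable spaces: (a)~every metrizable space embeds as a closed subspace of a metrizable AR (for instance a convex subset of a normed linear space, or the Hilbert cube when the space is compact); (b)~an open subspace of a metrizable ANR is again a metrizable ANR; and (c)~if $A$ is closed in a metrizable space $Y$ and $P$ is an ANR, then every map $A\to P$ extends to a map $U\to P$ over some neighborhood $U$ of $A$ in $Y$. Throughout, for a continuous map $h$ I write $S(h)$ for the shape morphism it induces, i.e.\ $S$ applied to the homotopy class of $h$.

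For the ``only if'' direction I would take $X$ to be an ANSR and, using~(a), embed it as a closed subspace of a metrizable AR $M$. By the definition of an ANSR, $X$ is a shape retract of some neighborhood $U$ of $X$ in $M$, so there are shape morphisms $i\colon X\to U$, induced by the closed inclusion, and $R\colon U\to X$ with $Ri=1_X$. By~(b), $U$ is a metrizable ANR, so the pair $(i,R)$ already exhibits $X$ as shape dominated by the ANR $P=U$.

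For the ``if'' direction I would start from shape morphisms $F\colon X\to P$ and $G\colon P\to X$ with $GF=1_X$, $P$ an ANR, and use axiom~(ii) to replace $F$ by a continuous map $f\colon X\to P$ with $S(f)=F$. Given any closed embedding of $X$ into a metrizable space $Y$, fact~(c) lets me extend $f$ to a map $\bar f\colon U\to P$ over a neighborhood $U$ of $X$ in $Y$. Then, putting $R=G\circ S(\bar f)\colon U\to X$ and letting $i\colon X\hookrightarrow U$ be the closed inclusion, one has $Ri=G\circ S(\bar f)\circ S(i)=G\circ S(\bar f\circ i)=G\circ S(f)=GF=1_X$, using $\bar f\circ i=\bar f|_X=f$. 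Hence $X$ is a shape retract of $U$, so a neighborhood shape retract of $Y$; as $Y$ was arbitrary, $X$ is an ANSR.

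I do not expect a serious obstacle here: the whole argument is a careful bookkeeping of known facts. The one step that is not purely formal is the passage in the ``if'' direction from the abstract shape morphism $F$ to an honest continuous map $f$ that can then be extended over a neighborhood of $X$ --- this is exactly where axiom~(ii) and the neighborhood-extension property~(c) of ANRs enter essentially. In the ``only if'' direction the corresponding essential inputs are the closed embeddability~(a) of metrizable spaces into metrizable ARs and the invariance~(b) of the ANR property under passage to open subspaces.
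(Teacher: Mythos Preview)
The paper does not actually prove Theorem~\ref{th-mardsegal}; it merely states it and attributes it to Marde\v{s}i\'{c} and Segal~\cite{mard-seg-4}. Your argument is correct and is essentially the standard one found in that reference: for the ``only if'' direction one embeds $X$ closedly in a metrizable AR and uses that open subsets of ANRs are ANRs, and for the ``if'' direction one realizes the shape morphism $F\colon X\to P$ by a continuous map via the universal property of the shape functor, extends it over a neighborhood using the ANR property of $P$, and composes with $G$ to obtain the shape retraction. There is nothing to add.
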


Importantly,  in the case of compact metrizable spaces, the shape dominating 
ANR $P$ can be chosen compact in this theorem.

\begin{theorem}\label{th-ansr-comp}
A compact metrizable space $X$ is an ANSR if and only if it is shape dominated by a compact 
ANR~$P$. 
\end{theorem}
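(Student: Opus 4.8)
The implication $(\Leftarrow)$ needs no work: a compact ANR is in particular an ANR, so if $X$ is shape dominated by a compact ANR it is shape dominated by an ANR, and Theorem~\ref{th-mardsegal} gives that $X$ is an ANSR. All the content lies in the forward direction, i.e.\ in showing that when the compact metrizable space $X$ is an ANSR, the dominating ANR produced by Theorem~\ref{th-mardsegal} can be replaced by a compact one.

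The plan for $(\Rightarrow)$ is to take the shape domination guaranteed by Theorem~\ref{th-mardsegal} and to ``shrink'' the dominating space. First I would fix an ANR $P$ together with shape morphisms $F\colon X\to P$ and $G\colon P\to X$ satisfying $GF=1_X$ in the shape category. By the characterization of H-CW recalled above (see \cite{mard-seg-4}), $P$ is homotopy equivalent to a CW-complex, and shape domination is plainly unaffected by replacing the dominating space by a homotopy equivalent one (conjugate $F$ and $G$ by the homotopy equivalence); hence we may assume from the outset that $P$ is a CW-complex. Next I would use that, $X$ being compact metrizable, it is the limit of an inverse sequence $\mathbf{X}=\{X_n,\mathbf{p}_{n,n+1},\mathbb{N}\}$ of compact ANRs, which is an ANR-system associated with $X$ (this is exactly the fact recalled in the excerpt for compact Hausdorff limits). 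Writing $\mathbf{p}_m\colon X\to X_m$ for the associated shape morphisms and using that $S(P)$ is the rudimentary system $\{P\}$, Grothendieck's formula (see the remark above and \cite{groth}) gives $\Mor(S(X),S(P))=\varinjlim_n[X_n,P]$, so that $F$ is represented by an ordinary map $f\colon X_m\to P$ for some $m$, in the precise sense that $F=S(f)\circ\mathbf{p}_m$.

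The key step I would carry out next exploits compactness of $X_m$: the image $f(X_m)$ is compact, hence contained in a finite subcomplex $P_0\subseteq P$, and a finite CW-complex is a compact polyhedron, so $P_0$ is a compact ANR. Factoring $f=j f_0$ with $f_0\colon X_m\to P_0$ and $j\colon P_0\hookrightarrow P$ the inclusion, I would set $F_0:=S(f_0)\circ\mathbf{p}_m\colon X\to P_0$ and $G_0:=G\circ S(j)\colon P_0\to X$. Then $S(j)\circ F_0=S(j)\circ S(f_0)\circ\mathbf{p}_m=S(f)\circ\mathbf{p}_m=F$, whence $G_0 F_0=G\circ S(j)\circ F_0=GF=1_X$ in the shape category, which exhibits $X$ as shape dominated by the compact ANR $P_0$ and finishes the argument.

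The step I expect to demand the most care is precisely the factorization of $F$ through a map out of a compact ANR: one has to genuinely invoke that $S(X)$ admits a representative inverse sequence of \emph{compact} ANRs and that morphisms into a rudimentary system are computed by a single direct limit, and then check that passing to the finite subcomplex $P_0$ (and to $G\circ S(j)$ in place of $G$) leaves the identity $GF=1_X$ intact --- which, once $F$ has been written in the factored form $S(j)\circ F_0$, is just associativity of composition. The preliminary reduction of $P$ to a CW-complex is routine, but it is what makes ``finite subcomplex'' available; without it one would be forced to seek a compact ANR neighbourhood of $f(X_m)$ inside $P$, and such a neighbourhood need not exist when $P$ fails to be locally compact.
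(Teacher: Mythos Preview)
The paper states Theorem~\ref{th-ansr-comp} without proof, presenting it simply as the compact-metrizable refinement of Theorem~\ref{th-mardsegal} (both results being attributed to Marde\v{s}i\'{c} and Segal~\cite{mard-seg-4}). There is therefore no proof in the paper to compare against.

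Your argument is correct and is the standard one. The essential point, which you isolate, is that a compact metrizable $X$ admits an associated inverse \emph{sequence} of \emph{compact} ANRs $X_n$, so that any shape morphism $F\colon X\to P$ into an ANR factors through a map $f\colon X_m\to P$ with compact domain; once $P$ has been replaced by a homotopy-equivalent CW-complex, the compact image $f(X_m)$ lies in a finite subcomplex $P_0$, a compact ANR, and the verification $G_0F_0=GF=1_X$ is immediate. One minor remark: you can bypass Grothendieck's formula entirely. The shape axiom recalled in the introduction says that $F=S(\mathbf{f}')$ for a unique homotopy class $\mathbf{f}'\colon X\to P$, and condition~(ii) of Definition~\ref{def-3} applied to $\mathbf{f}'$ yields the factorization $\mathbf{f}'=\mathbf{h}_m\mathbf{p}_m$ with $\mathbf{h}_m\colon X_m\to P$, giving $F=S(\mathbf{h}_m)\circ S(\mathbf{p}_m)$ directly. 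This is the same conclusion you reach, with slightly less machinery.
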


Theorem \ref{th-mardsegal} is also valid in the pointed case.

\begin{theorem}\label{th-mardsegalpunkt}
A metrizable space $(X,*)$ is an ANSR if and only if it is shape dominated by an ANR $(P,*)$. 
\end{theorem}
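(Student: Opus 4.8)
The plan is to establish both implications in parallel with the unpointed statement of Theorem~\ref{th-mardsegal}, carrying the base point through every construction and replacing maps, homotopies and shape morphisms by their pointed counterparts. Throughout I would use the pointed form of the shape realization principle (the pointed analogue of Morita's axiom~(ii) discussed in the Introduction): a pointed shape morphism $F\colon(X,*)\to(Q,*)$ into a pointed ANR $(Q,*)$ is induced by a unique pointed homotopy class $\mathbf f\colon(X,*)\to(Q,*)$, i.e. $S(\mathbf f)=F$. Granting this, the two directions go as follows.

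For the ``only if'' direction, suppose $(X,*)$ is an ANSR. I first embed $(X,*)$ as a closed pointed subspace of a pointed ANR $M$ (the standard embedding of a metrizable space as a closed subset of a convex subset of a normed linear space, as used by Fox). By the definition of a pointed ANSR applied to this closed embedding, there is an open neighborhood $U$ of $X$ in $M$ and a pointed shape retraction $R\colon(U,*)\to(X,*)$, that is, $R\circ S(i)=1_{(X,*)}$ where $i\colon(X,*)\hookrightarrow(U,*)$ is the inclusion. Since $U$ is open in the ANR $M$, it is itself an ANR, so the pair $S(i)$, $R$ exhibits $(X,*)$ as shape dominated by the pointed ANR $(U,*)$.

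For the ``if'' direction, assume $(X,*)$ is shape dominated by a pointed ANR $(P,*)$, so there are pointed shape morphisms $F\colon(X,*)\to(P,*)$ and $G\colon(P,*)\to(X,*)$ with $G\circ F=1_{(X,*)}$. By the pointed realization principle, $F=S(f)$ for a pointed map $f\colon(X,*)\to(P,*)$. Now take any closed pointed embedding of $(X,*)$ into a metrizable space $(Y,*)$. Since $P$ is an ANR and $X$ is closed in $Y$, the neighborhood extension theorem for ANRs yields an open neighborhood $U$ of $X$ in $Y$ and a map $\bar f\colon U\to P$ with $\bar f|_X=f$; in particular $\bar f(*)=f(*)=*$, so $\bar f$ is automatically a pointed map. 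Setting $R=G\circ S(\bar f)\colon(U,*)\to(X,*)$ and writing $i\colon(X,*)\hookrightarrow(U,*)$ for the inclusion, we have $\bar f\circ i=\bar f|_X=f$, hence $S(\bar f)\circ S(i)=F$, and therefore $R\circ S(i)=G\circ S(\bar f)\circ S(i)=G\circ F=1_{(X,*)}$. Thus $R$ is a pointed shape retraction of the neighborhood $U$ onto $X$, so $X$ is a pointed neighborhood shape retract of $Y$; as $(Y,*)$ was arbitrary, $(X,*)$ is an ANSR.

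The two ANR facts invoked — that open subsets of ANRs are ANRs and the neighborhood extension theorem — are classical and cause no difficulty in the metrizable category, so I treat them as routine. The one point that genuinely needs care, and which I regard as the crux, is the pointed bookkeeping: one must check that $F$ is realized by a \emph{base-point preserving} map $f$, that the neighborhood extension $\bar f$ is pointed simply because it restricts to $f$, and that $R\circ S(i)=1_{(X,*)}$ is an identity of \emph{pointed} shape morphisms rather than of free ones. Once the pointed version of Morita's realization principle is available, the argument is a faithful transcription of the proof of Theorem~\ref{th-mardsegal}.
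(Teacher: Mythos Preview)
Your proof is correct and is precisely the argument the paper has in mind: the paper does not give a separate proof of Theorem~\ref{th-mardsegalpunkt} but simply records that ``Theorem~\ref{th-mardsegal} is also valid in the pointed case,'' referring to Marde\v{s}i\'{c}--Segal~\cite{mard-seg-4}. Your write-up is exactly the pointed transcription of that standard argument, and your identification of the pointed realization principle (the base-point preserving form of the axiom $S(\mathbf f)=F$ for morphisms into ANRs) as the only point needing care is accurate.
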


Theorems~\ref{th-ust}, \ref{th-domust}, and~\ref{th-mardsegalpunkt} imply the following important 
property of pointed ANSR (see~\cite{mard-seg-4}).

\begin{theorem}\label{th-ansr=ust}
A connected metrizable space $(X,*)$ is an ANSR if and only if $(X,*)$ is stable, i.e., has  the 
shape of an ANR $(P,*)$. 
\end{theorem}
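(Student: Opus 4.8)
The plan is to deduce the equivalence directly from Theorems~\ref{th-ust}, \ref{th-domust}, and~\ref{th-mardsegalpunkt}, which together supply both implications with essentially no additional work; the only point requiring care is keeping track of whether ``stable'' is being used in the pointed or the unpointed sense, and reconciling the two is exactly the content of Theorem~\ref{th-ust}.

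For the ``only if'' direction I would argue as follows. Assume $(X,*)$ is a connected metrizable ANSR. By Theorem~\ref{th-mardsegalpunkt}, $(X,*)$ is shape dominated by some pointed ANR $(P,*)$. Since $(X,*)$ is connected, Theorem~\ref{th-domust} applies and yields that $(X,*)$ is stable, i.e.\ $\sh(X,*)=\sh(P',*)$ for some ANR $(P',*)$. If one prefers to read Theorem~\ref{th-domust} as a statement about the underlying space, one first obtains that $X$ is stable and then invokes Theorem~\ref{th-ust} to pass to pointed stability of $(X,*)$.

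For the converse, assume $(X,*)$ is stable, so that $\sh(X,*)=\sh(P,*)$ for some ANR $(P,*)$. A shape equivalence is in particular a shape domination, hence $(X,*)$ is shape dominated by the ANR $(P,*)$, and Theorem~\ref{th-mardsegalpunkt} immediately gives that $(X,*)$ is an ANSR. (Once more, if stability is available only for the space $X$, Theorem~\ref{th-ust} upgrades it to the pointed form needed to apply Theorem~\ref{th-mardsegalpunkt}.) There is no genuine obstacle here: all the substantive topology --- the characterization of ANSRs by shape domination, the stability of connected spaces shape dominated by a pointed ANR, and the coincidence of pointed and unpointed stability --- is already packaged into the three cited theorems, and the only thing to watch is the bookkeeping of base points together with the distinction between shape domination and shape equivalence, so that the hypotheses of Theorems~\ref{th-domust} and~\ref{th-mardsegalpunkt} are met precisely.
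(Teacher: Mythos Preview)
Your argument is correct and is exactly the approach indicated in the paper: the text states that Theorem~\ref{th-ansr=ust} follows from Theorems~\ref{th-ust}, \ref{th-domust}, and~\ref{th-mardsegalpunkt}, and you have spelled out precisely this deduction. The only minor remark is that Theorem~\ref{th-ust} is not strictly needed in the direction you present, since Theorem~\ref{th-domust} already yields pointed stability; you correctly flag this as optional bookkeeping.
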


For compact metrizable spaces, this theorem was proved by Edwards and Geoghegan 
\cite{ed-geogh-stab}, \cite{ed-geogh-stability}. In Theorem~\ref{th-ansr=ust} an ANR 
$(P,*)$ cannot be replaced by a compact ANR even in the case of a metrizable 
continuum. Indeed, the above-mentioned two-dimensional metrizable continuum constructed by Edwards 
and Geoghegan \cite{ed-geogh-shapes} is an ANSR but does not have the shape of a compact 
ANR. The question of when a compact metrizable space $(X,*)$ has the shape of a compact  
ANR has been answered by using tools of algebraic $K$-theory. To be more precise, 
a metrizable continuum $(X,*)$ has the shape of a finite complex precisely when 
the so-called Wall obstruction \cite{wall} $\sigma(X,*)$, which takes values in the reduced 
Grothendieck group $\widetilde{K}^0(\check{\pi}_1(X,*))$, is trivial (see~\cite{ed-geogh-shapes}).

Theorems~\ref{th-ust=pro-ust} and~\ref{th-ansr=ust} imply the following algebraic 
characterization of ANSRs.

\begin{theorem}\label{th-ansr=pro-ansr}
Let $(X,*)$ be a connected space of finite shape dimension $\sd X$. Then $(X,*)$ is 
an ANSR if and only if all homotopy pro-groups $\pro-\pi_k(X,*)$ are stable. 
\end{theorem}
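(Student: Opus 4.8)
Theorem \ref{th-ansr=pro-ansr} asserts: a connected space $(X,*)$ of finite shape dimension $\sd X$ is an ANSR if and only if all homotopy pro-groups $\pro-\pi_k(X,*)$ are stable.

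The plan is to obtain Theorem~\ref{th-ansr=pro-ansr} as a direct consequence of the two characterizations already established, namely Theorem~\ref{th-ansr=ust} and Theorem~\ref{th-ust=pro-ust}; essentially all that remains is to check that the hypotheses of these two results line up along the chain. First I would note that, since being an ANSR is defined only in the class of metrizable spaces, the assumption that $(X,*)$ is a connected space of finite shape dimension is in fact the assumption that $(X,*)$ is a connected \emph{metrizable} space with $\sd X<\infty$. Hence Theorem~\ref{th-ansr=ust} applies and gives the first half of the chain: $(X,*)$ is an ANSR if and only if $(X,*)$ is stable, i.e.\ has the shape of an ANR $(P,*)$.

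Next I would invoke Theorem~\ref{th-ust=pro-ust}, which is available precisely because we have assumed $\sd X<\infty$: it states that a connected space of finite shape dimension is stable if and only if all of its homotopy pro-groups $\pro-\pi_k(X,*)$ are stable. Chaining this biconditional with the one supplied by Theorem~\ref{th-ansr=ust} yields exactly the asserted equivalence, that $(X,*)$ is an ANSR if and only if every $\pro-\pi_k(X,*)$ is stable.

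The only point requiring care — the nearest analogue of an obstacle in what is really a short deduction — is the bookkeeping of the finite-shape-dimension hypothesis: it plays no role in the equivalence between being an ANSR and being stable (Theorem~\ref{th-ansr=ust}), but it is genuinely needed, and cannot be dropped, for the equivalence between stability and the stability of all the pro-groups $\pro-\pi_k(X,*)$, since that is already the situation in Theorem~\ref{th-ust=pro-ust}. No constructions, estimates, or pro-category computations beyond those two cited results are required.
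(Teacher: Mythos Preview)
Your proposal is correct and matches the paper's own justification: the theorem is stated as an immediate consequence of Theorems~\ref{th-ust=pro-ust} and~\ref{th-ansr=ust}, chained exactly as you describe. Your remark that the finite-shape-dimension hypothesis is needed only for the second link (Theorem~\ref{th-ust=pro-ust}) is also accurate.
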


The following important result was obtained by Hastings and Heller \cite{Hastings-heller}, 
\cite{Hastings-heller-split}.

\begin{theorem}\label{th-ansr=ansr*}
Let $X$ be a connected ANSR. Then $(X,*)$ is a pointed ANSR.
\end{theorem}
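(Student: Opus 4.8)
The plan is to reduce the statement to: \emph{a connected ANSR $X$ is stable}, i.e.\ has the shape of a CW complex. Granting this, Theorem~\ref{th-ust} gives that $(X,*)$ is stable for every basepoint, and then Theorem~\ref{th-ansr=ust} (or, more directly, Theorem~\ref{th-mardsegalpunkt}) gives that $(X,*)$ is a pointed ANSR. So the whole problem is to manufacture a CW complex of the shape of $X$ out of the \emph{unpointed} domination data.

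By Theorem~\ref{th-mardsegal} there are shape morphisms $F\colon X\to P$ and $G\colon P\to X$ with $GF=1_{X}$, where $P$ is an ANR. Since a connected space is shape connected, $F$ factors through a single component of $P$, so we may assume $P$ connected, and --- an ANR having the homotopy type of a CW complex --- that $P$ is a connected CW complex. As $P$ is an ANR, $FG\colon P\to P$ is an ordinary homotopy class; choose a cellular representative $e\colon P\to P$. From $GF=1_{X}$ we get $e^{2}\simeq e$, so $e$ is a homotopy idempotent; let $T=\mathrm{Tel}(e)$ be its mapping telescope, a connected CW complex, with stage inclusions $j_{n}\colon P\to T$ satisfying $j_{n}\simeq j_{n+1}e$, and let $\epsilon\colon T\to P$ be the collapsing map built from the constant family $(e,e,\dots)$ and the homotopies $e\simeq e^{2}$, so that $\epsilon j_{n}\simeq e$. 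Put $\phi=j_{0}F\colon X\to T$ and $\psi=G\epsilon\colon T\to X$. Then $\psi j_{0}=G(\epsilon j_{0})\simeq Ge=G(FG)=(GF)G=G$, hence $\psi\phi=(\psi j_{0})F\simeq GF=1_{X}$; and $\phi\psi=(j_{0}F)(G\epsilon)=j_{0}(FG)\epsilon\simeq j_{0}e\epsilon\simeq j_{0}\epsilon$, which is an idempotent self-map of $T$ --- since $(j_{0}\epsilon)^{2}=j_{0}(\epsilon j_{0})\epsilon\simeq j_{0}e\epsilon\simeq j_{0}\epsilon$ --- inducing the identity on $\pi_{*}(T)=\varinjlim\bigl(\pi_{*}(P)\xrightarrow{e_{*}}\pi_{*}(P)\to\cdots\bigr)$ because $e_{*}^{2}=e_{*}$. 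An idempotent self-map of a CW complex that induces the identity on all homotopy groups is a homotopy equivalence by Whitehead's theorem, and an idempotent homotopy equivalence is homotopic to the identity; hence $\phi\psi\simeq 1_{T}$, so $\phi$ and $\psi$ are inverse shape equivalences, $\sh(X)=\sh(T)$, and $X$ is stable.

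This argument has two gaps, and the first is the main obstacle. The collapsing map $\epsilon$ is well defined only up to homotopy once the homotopies $e\simeq e^{2}$ are chosen coherently along the infinite telescope --- equivalently, once $e$ is replaced, within its free homotopy class, by a \emph{based} homotopy idempotent; the same remark is what makes the colimit description of $\pi_{*}(T)$ with $e_{*}^{2}=e_{*}$ legitimate. The obstruction to such a coherent/based choice is a $\varprojlim^{1}$-term built from the tower of fundamental groups $\{\pi_{1}(X_{\alpha})\}$ of an ANR-expansion of $X$, and here one must genuinely use that $e$ arises from an honest space $X$ and not merely from an abstract idempotent (abstract homotopy idempotents need not split at all): the domination $X\to P\to X$ forces this tower to be pro-stable, in particular Mittag--Leffler, so the obstructing $\varprojlim^{1}$ vanishes and the coherent choice exists. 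Carrying this out carefully is essentially the coherent-homotopy argument of Hastings and Heller, and I expect it to be the technical heart of the proof. The second gap --- passing from ``induces the identity on homotopy groups'' to ``homotopic to the identity'' --- is, as indicated above, settled outright by the idempotent-homotopy-equivalence observation, with no finiteness hypothesis needed.
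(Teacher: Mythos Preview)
Your overall plan---show that a connected ANSR $X$ is stable, then invoke Theorems~\ref{th-ust} and~\ref{th-ansr=ust}---is exactly the paper's route, and the reduction to splitting the homotopy idempotent $e=FG\colon P\to P$ is also the right move.

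The genuine gap is your treatment of what you call the ``first gap.'' Your claim that the obstruction to basing $e$ (equivalently, to defining $\epsilon\colon T\to P$ coherently) is a $\varprojlim^{1}$ of the tower $\{\pi_{1}(X_{\alpha})\}$ of an ANR-expansion of $X$, killed by Mittag--Leffler, is not correct. The obstruction lives on $P$: one must choose a path from $e(*)$ to $*$ making $e_{*}^{2}=e_{*}$ hold \emph{exactly} in $\pi_{1}(P,*)$ rather than up to conjugacy, and this is a question about $\pi_{1}(P)$ and the action of $e$, not about pro-$\pi_{1}(X)$. In fact the tower you name is Mittag--Leffler for \emph{every} homotopy idempotent---the images of $e_{*}^{n}$ stabilize immediately---so your argument, were it valid, would split every homotopy idempotent, contradicting the Freyd--Heller/Dydak--Minc examples the paper cites just below the theorem. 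Your remark that ``$e$ arises from an honest space $X$'' gives no extra leverage either: any homotopy idempotent $e\colon P\to P$ produces such an $X$ (the Karoubi image, modelled by the inverse sequence $P\xleftarrow{e}P\xleftarrow{e}\cdots$), shape-dominated by $P$ and hence an ANSR, so the existence of $X$ encodes nothing beyond the existence of $e$.

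The paper's argument is different and does not go through your $\varprojlim^{1}$ mechanism. It invokes the Hastings--Heller theorem that homotopy idempotents on \emph{finite-dimensional} CW-complexes split; finite-dimensionality is the essential hypothesis you never use. (For compact metrizable $X$, Theorem~\ref{th-ansr-comp} supplies a compact dominating ANR $P$, which by West's theorem has the homotopy type of a finite complex.) The proof of the splitting theorem is not a coherence/$\varprojlim^{1}$ argument but a group-theoretic one: a non-splitting idempotent forces $\pi_{1}(P)$ to contain the universal Freyd--Heller group, which has infinite cohomological dimension and so cannot embed in $\pi_{1}$ of a finite-dimensional complex. Your telescope construction is fine once the idempotent is known to split, but it cannot replace the Hastings--Heller input.
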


The proof of this theorem has become possible after the splitting problem for homotopy 
idempotents was solved. Recall that a map  $f\colon X \to X$ is called a \emph{homotopy 
idempotent} if $f^2\simeq f$. A map  $f\colon X \to X$ is said to \emph{split} in the category H-CW 
if there exists a CW-complex $P$ and maps $u\colon P \to X$ and $v\colon X\to P$ such that 
$v\circ u \simeq 1_P$ and $u\circ v \simeq f$.

\begin{theorem}
Any homotopy idempotent $f\colon P \to P$ of a finite-dimensional CW-complex $P$ splits.
\end{theorem}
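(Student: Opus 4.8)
The plan is to realise the ``image'' of $f$ as an infinite mapping telescope and to exhibit it as a homotopy retract of $P$ whose retraction recovers $f$. Set
\[
Q=\mathrm{Tel}\bigl(P\xrightarrow{\,f\,}P\xrightarrow{\,f\,}P\xrightarrow{\,f\,}\cdots\bigr),
\]
the space obtained by gluing the mapping cylinders of successive copies of $f$, and let $j_n\colon P\hookrightarrow Q$ denote the inclusion of the $n$-th copy. Since $P$ is a finite-dimensional CW-complex, each mapping cylinder has dimension $\le\dim P+1$, so $Q$, an expanding union of them, is again a CW-complex with $\dim Q\le\dim P+1<\infty$; this is the one place where the finiteness hypothesis is used in an essential way, and the statement genuinely fails for infinite-dimensional $P$. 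The cylinder structure provides canonical homotopies $j_n\simeq j_{n+1}\circ f$; note that these are \emph{free} homotopies, sliding the basepoint along the telescope.

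Next I would produce the splitting data. Put $v:=j_1\colon P\to Q$. To build $u\colon Q\to P$, fix a homotopy $\eta\colon f\circ f\simeq f$ and let $u$ equal $f$ on each copy of $P$ and (the reverse of) $\eta$ along each cylinder coordinate; the two prescriptions agree on the shared copies of $P$, so they patch to a continuous map $u\colon Q\to P$ with $u\circ j_n=f$ for all $n$. Then $u\circ v=u\circ j_1=f$ literally, hence $uv\simeq f$, and everything reduces to proving $vu\simeq\mathbf 1_Q$.

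To see this I would compare $\mathbf 1_Q$ and $vu$ through the Milnor exact sequence
\[
\ast\longrightarrow {\varprojlim}^{1}_{n}\,[\Sigma P,Q]\longrightarrow[Q,Q]\longrightarrow{\varprojlim}_{n}[P,Q]\longrightarrow\ast ,
\]
whose bonding maps are precomposition with $f$. Naively the two self-maps agree at every level, since $\mathbf 1_Q\circ j_n=j_n$ and $vu\circ j_n=j_1\circ f$, and iterating the telescope identities together with $f\circ f\simeq f$ relates $j_n$ to $j_1\circ f$; one would then like to conclude that it only remains to kill the ${\varprojlim}^{1}$-term, which idempotency of $f$ makes Mittag--Leffler, hence zero. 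The catch, and the heart of the proof, is that the sequence above lives at the level of \emph{based} homotopy classes whereas the telescope identities are only free homotopies, so the levelwise comparison is twisted by the $\pi_1$-actions picked up while sliding basepoints through the telescope, and one must organise the infinitely many level homotopies into a single based homotopy $Q\times I\to Q$ compatibly with those actions. This is exactly where $\dim Q<\infty$ is indispensable: it bounds the number of stages at which an obstruction can occur, so that — using idempotency of $f$ to keep the relevant Mittag--Leffler conditions in force — the obstructions can be removed one stage at a time, yielding $vu\simeq\mathbf 1_Q$.

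The main obstacle is therefore this coherence step $vu\simeq\mathbf 1_Q$: each individual level, and each individual obstruction stage, is unproblematic precisely because $Q$ is finite-dimensional, but assembling infinitely many free homotopies into one globally defined, basepoint-consistent homotopy is delicate — and it is the failure of this assembly when $\dim P=\infty$ that accounts for the existence of homotopy idempotents that do not split.
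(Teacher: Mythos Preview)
The paper does not prove this theorem; it is quoted from Hastings and Heller \cite{Hastings-heller}, \cite{Hastings-heller-split} and stated without argument. So there is no in-paper proof to compare against, only the original literature.

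Your telescope framework is the correct one and is essentially the Hastings--Heller set-up: the maps $u,v$ are well defined, $uv=f$ is immediate, and you have correctly located the entire content of the theorem in the step $vu\simeq 1_Q$. But that step is not proved in your proposal, only described. Two specific problems. First, your Mittag--Leffler observation is fine and does kill the $\varprojlim^1$ term in the \emph{based} Milnor sequence, but you have not shown that $1_Q$ and $vu$ have the same image in $\varprojlim_n[P,Q]_\ast$: the telescope homotopies $j_n\simeq j_{n+1}f$ and the idempotency homotopy $\eta$ are free, so the levelwise identifications $j_n\simeq j_1 f$ you need are only available up to the $\pi_1(Q)$-action, and you never verify that the resulting twists can be made to vanish coherently. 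Second, your explanation of how finite-dimensionality enters --- ``it bounds the number of stages at which an obstruction can occur'' --- does not match the actual mechanism. The telescope is still infinite and the coherence problem still involves infinitely many homotopies; finite-dimensionality of $Q$ does not reduce that to finitely many. What $\dim P<\infty$ actually buys, in the Hastings--Heller argument, is control over the algebra on $\pi_1$: a (free) homotopy idempotent induces a ``conjugacy idempotent'' $\phi$ on $\pi_1(P)$ with $\phi^2$ equal to $\phi$ only up to an inner automorphism, and the finite-dimensional hypothesis is exactly what forces this to split in groups, after which the topological splitting follows. The Freyd--Heller/Dydak--Minc counterexamples live entirely in $\pi_1$ (on a $K(G,1)$), which already shows that the obstruction is algebraic rather than a cell-by-cell extension problem of the kind your sketch suggests. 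As written, your proposal is a correct outline that stops precisely at the point where the theorem begins.
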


In the case of infinite-dimensional CW-complexes, this statement is false. Dydak 
\cite{dydak-asimple} and Minc, and also Freyd and Heller  \cite{freyd-heller},  
independently proved the existence of a group $G$ and a homomorphism $\phi\colon G\to G$ 
which induces a nonsplitting homotopy idempotent $f\colon K(G,1)\to K(G,1)$ 
of the infinite-dimensional Eilenberg--MacLane complex $K(G,1)$. Moreover, the group $G$ is 
universal in the sense that if $f'\colon X\to X$ is a nonsplitting homotopy idempotent of an 
infinite-dimensional CW-complex $X$, then there exists an injection $G\to \pi_1(X)$ 
equivariant with respect to $f_*$ and~$f'_*$.

However, it is well known that any homotopy idempotent $f\colon (X,*)\to (Y,*)$ between 
pointed connected CW-complexes splits (see papers  \cite{brown} by Brown, 
\cite{ed-geogh-shapes} by Edwards and Geoghegan, and \cite{freyd} by~Freyd).

Note that, in the nonpointed case, the problem of splitting homotopy idempotents 
arises in various areas of topology. In homotopy theory this problem is closely related 
to Brown's theorem on the representability of half-exact functors (see papers 
\cite{freyd-heller} by Freyd and Heller and  \cite{heller-on} by Heller), and in the shape theory, 
to the study of ANSRs.

It follows from Theorems~\ref{th-ust} and~\ref{th-ansr=ansr*} that Theorem~\ref{th-ansr=ust}  
remains valid in the nonpointed case.

\begin{theorem}
A connected metrizable space $X$ is an ANSR if and only if $X$ is stable. 
\end{theorem}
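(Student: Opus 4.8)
The plan is to deduce this statement directly from the results already assembled in the excerpt, with no new work beyond combining them. The target is the nonpointed equivalence: a connected metrizable space $X$ is an ANSR if and only if $X$ is stable (has the shape of an ANR). Recall that Theorem~\ref{th-ansr=ust} already gives the corresponding pointed equivalence: a connected metrizable space $(X,*)$ is an ANSR if and only if $(X,*)$ is stable. So the strategy is to transfer the pointed statement to the nonpointed setting using Theorem~\ref{th-ust} (pointed stability $\Leftrightarrow$ stability) and Theorem~\ref{th-ansr=ansr*} (for connected spaces, $X$ is an ANSR $\Leftrightarrow$ $(X,*)$ is an ANSR). The sentence immediately preceding the statement in the excerpt essentially announces this, so the proof is really just spelling out the chain of biconditionals.

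First I would fix a basepoint $* \in X$; since $X$ is connected, the choice does not affect any of the properties involved (this is part of the content of the cited theorems). For the forward direction, assume $X$ is a connected ANSR. By Theorem~\ref{th-ansr=ansr*}, $(X,*)$ is then a pointed ANSR. By Theorem~\ref{th-ansr=ust}, $(X,*)$ is stable, i.e.\ has the shape of a pointed ANR $(P,*)$. By Theorem~\ref{th-ust}, the stability of $(X,*)$ implies the stability of $X$: the space $X$ has the shape of an ANR. Conversely, assume $X$ is stable. By Theorem~\ref{th-ust} again (the other implication), $(X,*)$ is stable. By Theorem~\ref{th-ansr=ust}, $(X,*)$ is an ANSR. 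Finally, by Theorem~\ref{th-ansr=ansr*} read in the appropriate direction — or, more simply, by the elementary observation that a pointed ANSR is in particular a nonpointed ANSR (a shape retraction of a neighborhood in the pointed category forgets to one in the unpointed category) — we conclude that $X$ is an ANSR.

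There is essentially no obstacle here: every implication used is a theorem quoted verbatim earlier in the text, and connectedness is exactly the hypothesis under which all of them apply. The only point requiring a word of care is the direction ``$(X,*)$ pointed ANSR $\Rightarrow$ $X$ ANSR'', which is not the content of Theorem~\ref{th-ansr=ansr*} (that theorem goes the other way, from nonpointed to pointed) but is the easy, ``routine'' transfer mentioned in the excerpt's remark that results pass readily from the pointed to the nonpointed case; it follows because any closed embedding $X \hookrightarrow Y$ becomes, after choosing a basepoint, a closed embedding of pointed spaces, and a pointed shape retraction of a neighborhood restricts to an unpointed one. Assembling these links yields the biconditional, and the statement follows.

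\begin{proof}
Fix a basepoint $*\in X$; since $X$ is connected, none of the notions below depends on this choice.

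Suppose first that $X$ is an ANSR. By Theorem~\ref{th-ansr=ansr*}, the pointed space $(X,*)$ is a pointed ANSR. Hence, by Theorem~\ref{th-ansr=ust}, $(X,*)$ is stable, i.e.\ it has the shape of a pointed ANR. By Theorem~\ref{th-ust}, the stability of $(X,*)$ is equivalent to the stability of $X$; therefore $X$ is stable.

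Conversely, suppose $X$ is stable. By Theorem~\ref{th-ust}, $(X,*)$ is stable as well. By Theorem~\ref{th-ansr=ust}, it follows that $(X,*)$ is a pointed ANSR. Finally, forgetting basepoints, $X$ is an ANSR: given any closed embedding of $X$ in a metrizable space $Y$, choose a basepoint in $X$; then $(X,*)$ is a pointed neighborhood shape retract of $(Y,*)$, and the corresponding shape retraction of a neighborhood of $X$ in $Y$, forgotten to the unpointed shape category, exhibits $X$ as a neighborhood shape retract of $Y$. Thus $X$ is an ANSR.
\end{proof}
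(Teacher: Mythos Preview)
Your proof is correct and follows precisely the approach indicated in the paper: the text states just before the theorem that the result follows from Theorems~\ref{th-ust} and~\ref{th-ansr=ansr*} applied to Theorem~\ref{th-ansr=ust}, and you have spelled out exactly this chain of implications. Your observation that the implication ``$(X,*)$ pointed ANSR $\Rightarrow$ $X$ ANSR'' is the easy forgetful direction (not the content of Theorem~\ref{th-ansr=ansr*}) is accurate and handled appropriately.
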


\section{Whitehead's and Hurewicz' Theorems in Shape Theory}

It was mentioned in the introduction that Whitehead's classical theorem is not valid for 
arbitrary spaces instead of CW-complexes. Extending Whitehead's theorem to more 
general spaces is one of the objectives of shape theory.

Let $(X,*)$ be a pointed space, and let $(\mathbf{X}, \boldsymbol{*}) = ((X_\alpha, *), 
\mathbf{p}_{\alpha\alpha'}, A)$ be  an inverse ANR-system associated with it. Applying the functors 
$\pi_n$, $n=0,1, \dots $, to this system, we obtain  the \emph{homotopy pro-groups} of the 
pointed space $(X,*)$: 
$$
\pro-\pi_n(X,*) = (\pi_n(X_\alpha, *), \mathbf{p}_{\alpha\alpha'\#}, A).
$$ %
The homotopy pro-groups $\pro-\pi_n(X,*)$ are an analogue of the homotopy groups 
$\pi_n(X,*)$ in shape theory.

Recall that any shape morphism $F\colon (X,*) \to (Y, *)$ is determined by a morphism 
$\mathbf{f} \colon (\mathbf{X}, \boldsymbol{*}) \to (\mathbf{Y}, \boldsymbol{*})$ in the category 
pro-H-CW; this suggest the natural definition of a \emph{morphism} 
$\pro-\pi_n(F)\colon \pro-\pi_n(X,*) \to \pro-\pi_n(Y,*)$ \emph{of homotopy pro-groups}.

It is easy to show that $\pro-\pi_n$ is a covariant functor from the shape category $\Sh-Top_*$ to 
the category  $\pro-\Set_*$ for $n=0$ and to the category $\pro-\Grp_*$ for $n=1,2, \dots$\,. 
In particular,  $\pro-\pi_n(X,*)$ is a shape invariant.

The inverse limits of the pro-homotopy groups $\pro-\pi_n(X,*)$ are called the \emph{shape groups} 
of the pointed space $(X,*)$ and denoted by $\check{\pi}_n(X,*)$: 
\[ \check{\pi}_n(X,*) = 
\lim_{\longleftarrow}(\pi_n(X_\alpha, *), p_{\alpha\alpha'\#}, A).
\] 
If $F\colon (X,*) \to (Y, *)$ is a shape morphism, then the passage to the limit yields 
a homomorphism  $\check{\pi}_n(F) \colon  \check{\pi}_n(X,*) \to \check{\pi}_n(Y,*)$ 
of shape groups. Therefore, $\check{\pi}_n$ is a covariant functor from the category $\Sh-Top_*$ 
to the category $\Set_*$ (for  $n=0$) or to the category $\Grp_*$ (for $n=1,2, \ldots$).

The \emph{homology pro-group} $\pro-H_n(X)$ is defined in a similar way:
\[
\pro-H_n(X) = (H_n(X_\alpha), H_n(p_{\alpha\alpha'}), A).
\]
Passing to the limit in this system, we obtain the well-known \emph{\v Cech homology group} 
$\check{H}_n(X)$ of the space~$X$.

It should be mentioned that, for spaces with good local structure, such as ANRs, 
the homotopy (homology) pro-groups can be replaced by shape groups (\v Cech homology groups). 
However, in the general case, a part of information is lost under the passage to the limit. 
Therefore, the homotopy or homology pro-groups  contain more information about 
the space $X$ than the shape or \v Cech homology groups.

The following theorem of Morita \cite{morita-2} is the most general version of Whitehead's theorem  
in shape theory.

\begin{theorem}[shape version of Whitehead's theorem, Morita \cite{morita-2}]\label{th-mor}
A shape morphism $F\colon  (X, *) \to  (Y, *)$ of finite-dimensional (in the sense of dimension sd) 
connected topological spaces is a shape equivalence if and only if the induced 
homomorphisms $\pro-\pi_n(F)\colon \pro-\pi_n(X,*) \to \pro-\pi_n(Y,*)$ of 
homotopy pro-groups are isomorphisms for all~$n$. 
\end{theorem}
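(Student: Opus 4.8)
The plan is to reduce the statement to the corresponding result in the pro-homotopy category $\pro\text{-}\mathrm{H\text{-}CW}$, where a Whitehead-type theorem is available, and then transfer it back to the shape category via the shape functor $S$. First I would fix inverse $\mathrm{ANR}$-systems $(\mathbf{X},\boldsymbol{*})=((X_\alpha,*),\mathbf{p}_{\alpha\alpha'},A)$ and $(\mathbf{Y},\boldsymbol{*})$ associated with $(X,*)$ and $(Y,*)$, and let $\mathbf{f}\colon(\mathbf{X},\boldsymbol{*})\to(\mathbf{Y},\boldsymbol{*})$ be a representative in $\pro\text{-}\mathrm{H\text{-}CW}$ of the shape morphism $F$. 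The ``only if'' direction is the easy one: $\pro\text{-}\pi_n$ is a functor on $\Sh\text{-}\mathrm{Top}_*$, so if $F$ is a shape equivalence then $\pro\text{-}\pi_n(F)$ is an isomorphism of pro-groups for every $n$; no finite-dimensionality is needed here.

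For the substantive ``if'' direction I would proceed in three steps. Step one: without loss of generality replace $(\mathbf{X},\boldsymbol{*})$ and $(\mathbf{Y},\boldsymbol{*})$ by systems indexed by the same cofinite directed set and replace $\mathbf{f}$ by a \emph{level morphism} $\mathbf{f}=(\mathbf{f}_\alpha)$, i.e. $\mathbf{f}_\alpha\colon X_\alpha\to Y_\alpha$ commuting with the bonding maps; this is the standard ``Morita trick'' / reindexing lemma for $\pro\text{-}\mathrm{H\text{-}CW}$, and since the spaces have finite shape dimension $\le m$ say, one may further arrange all $X_\alpha,Y_\alpha$ to be CW-complexes of dimension $\le m$ (up to homotopy), using that $\sd X\le m$ means $X$ admits an associated system of $m$-dimensional polyhedra. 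Step two: run the \emph{pro-homotopy Whitehead theorem}: a level morphism of systems of pointed connected CW-complexes of dimension $\le m$ inducing pro-isomorphisms on all $\pro\text{-}\pi_n$ is an isomorphism in $\pro\text{-}\mathrm{H\text{-}CW}$. Here the finite-dimensional hypothesis is essential — it lets the induction on skeleta terminate and avoids the $\lim^1$ pathologies that block the infinite-dimensional case — and the argument is a pro-categorical mapping-cylinder / obstruction-theory induction, using that the hypothesis gives, for each index, a later index where the relevant relative homotopy ``pro-set'' vanishes, so one can build up a pro-inverse $\mathbf{g}$ to $\mathbf{f}$ skeleton by skeleton. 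Step three: apply the shape functor. Since $S\colon \mathrm{H\text{-}Top}_*\to\pro\text{-}\mathrm{H\text{-}CW}$ is full and faithful on shape morphisms by construction (morphisms in $\Sh\text{-}\mathrm{Top}_*$ \emph{are} the pro-$\mathrm{H\text{-}CW}$ morphisms between associated systems), an isomorphism $\mathbf{f}$ in $\pro\text{-}\mathrm{H\text{-}CW}$ corresponds precisely to a shape equivalence $F$, completing the proof.

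The main obstacle is Step two: establishing the Whitehead theorem at the pro-homotopy level in the finite-dimensional setting. One must be careful that ``$\pro\text{-}\pi_n(F)$ is an isomorphism for all $n$'' is genuinely weaker than having a homotopy inverse at each finite level, so the inverse $\mathbf{g}$ cannot be built level-wise; instead one works with a cofinal tower of indices and at each stage chooses a later index to kill the obstruction class living in a $\lim$ (and, for the $\lim^1$ term, uses finite-dimensionality plus the pro-isomorphism hypothesis to show the relevant derived limit obstructions also die in the pro-category). The standard reference here is Morita's original argument, and the clean modern packaging is the statement that in $\pro\text{-}\mathrm{H\text{-}CW}$ the full subcategory of towers of finite-dimensional complexes satisfies a Whitehead theorem; I would cite \cite{morita-2} (and \cite{mard-seg-4}) for the technical heart and present only the reduction and transfer steps in detail, since those are what make the theorem a statement about $\Sh\text{-}\mathrm{Top}$ rather than about $\pro\text{-}\mathrm{H\text{-}CW}$.

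Finally, I would remark on sharpness: dropping the finite-dimensionality assumption makes the theorem false — a shape morphism may induce pro-isomorphisms on all homotopy pro-groups without being a shape equivalence — which is exactly why later refinements (movability hypotheses, as in Theorems~\ref{th-witehead-mov} and~\ref{th-gev-pop-1}) are needed, and I would point the reader there.
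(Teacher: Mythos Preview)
The paper does not actually give a proof of this theorem; it is a survey, and Theorem~\ref{th-mor} is simply stated with attribution to Morita \cite{morita-2}, followed by historical remarks and the Kahn-continuum counterexample showing sharpness. So there is no ``paper's own proof'' to compare against.

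That said, your outline is the standard one and is correct in broad strokes: reindex to a level morphism between systems of finite-dimensional CW-complexes (this is exactly what Theorem~\ref{th-1} in the paper licenses), invoke the Whitehead theorem in $\pro\text{-}\mathrm{H\text{-}CW}$ for finite-dimensional systems, and transfer back via the definition of the shape category. Your identification of Step~2 as the technical heart, and your decision to cite \cite{morita-2} and \cite{mard-seg-4} for it, is exactly what the survey does. Two small cautions: first, the $\lim^1$ language you use is native to \emph{towers} (countable cofinal systems), whereas Morita's theorem is stated for arbitrary topological spaces and hence arbitrary directed index sets, so the obstruction-theory induction should be phrased directly in pro-category terms rather than via derived limits; second, be careful with the phrase ``full and faithful'' for $S$ --- what you mean (and what is true by construction) is that shape morphisms are \emph{defined} as morphisms between associated systems in $\pro\text{-}\mathrm{H\text{-}CW}$, so Step~3 is a tautology, not an appeal to a fullness theorem. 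Your closing remark on sharpness matches the paper's discussion verbatim.
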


Unlike Whitehead's classical theorem, this theorem is valid for any spaces, and it deals with 
homotopy pro-groups rather than homotopy groups. However, it involves a new constraint, namely, the 
finite dimensionality of the spaces $X$ and $Y$.  This constraint cannot be removed. 
The corresponding example was constructed in \cite{draper-keesling} by using the Kahn metric 
continuum \cite{kahn}, which is defined as follows. 
First, a space $X_0$ is defined as the CW-complex obtained by attaching a $(2p + 1)$-cell 
to the sphere $S^{2p}$ by a map of prime degree $p$. Then, for each $n \geqslant 0$, a space 
$X_{n+1}$ is constructed by induction as the $(2p - 2)$-fold suspension 
of $X_n$: 
\[ X_{n+1}=\Sigma^{2p-2}(X_n). \] 
Between these spaces maps $f_n \colon X_n \to X_{n-1}$, $n > 1$, are 
defined, also by induction, as 
\[ f_n = \Sigma^{2p-2}(f_{n-1}), 
\] 
beginning with map $f_1\colon  X_1 \to X_0$ constructed in a certain special way.
					    
The Kahn compact space $K$ is the limit of the inverse sequence
\[
X_0 \overset{f_1}\longleftarrow  X_1 \overset{f_2}\longleftarrow  X_2 \longleftarrow \cdots
\]
Since $\pi_k\left(\Sigma^{n(2p-2)}, *\right)=0$ for $n(2p-2) \geqslant k$, it follows that 
$\pro-\pi_k(K,*)=0$ for all $k$. However, $K$ does not have the shape of a point, 
because the composition 
\[ 
f_1\circ f_2 \circ \ldots \circ f_n \colon  X_n \to X_0 
\] 
is essential (i.e., not homotopic to a constant) for any $n$ 
(see Adams' papers \cite{adams-lect} and \cite{adams}). This fact follows also from Toda's 
results~\cite{toda-on}.

The first Whitehead-type theorem in shape theory was proved by Moszynska \cite{mosz}. She 
proved Theorem~\ref{th-mor} for metrizable continua $(X, *)$ and  $(Y, *)$. Marde\v{s}i\'{c}  
\cite{mard-0} proved this theorem in the case where $(X, *)$ and $(Y, *)$ are compact and  $(Y, 
*)$ is metrizable and in the case where $(X, *)$ and $(Y, *)$ are any spaces and the shape morphism 
$F\colon (X, *) \to  (Y, *)$ is induced by a continuous map.

For movable spaces, the shape analogue of Whitehead's theorem (Theorem~\ref{th-mor}) takes a 
simpler form \cite{mosz}, \cite{keesling-8}, \cite{dydak-some}, \cite{dydak-1}.

\begin{theorem}\label{th-witehead-mov}
Let  $(X,*)$ and $(Y,*)$ be movable metrizable continua of finite shape dimension. Then a shape 
morphism $F\colon  (X, *) \to  (Y, *)$  is a shape equivalence if and only if all induced 
homomorphisms $\check{\pi}_n(F) \colon  \check{\pi}_n(X,*) \to \check{\pi}_n(Y,*)$ of shape groups 
are isomorphisms. 
\end{theorem}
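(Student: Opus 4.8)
The plan is to reduce Theorem~\ref{th-witehead-mov} to the general shape Whitehead theorem (Theorem~\ref{th-mor}) by trading shape groups for homotopy pro-groups. One direction is formal: if $F$ is a shape equivalence then, $\check{\pi}_n$ being a covariant functor on the pointed shape category, each $\check{\pi}_n(F)$ is an isomorphism, and no further hypothesis is used. For the converse, assume $\check{\pi}_n(F)\colon \check{\pi}_n(X,*)\to\check{\pi}_n(Y,*)$ is an isomorphism for every $n$. Because $X$ and $Y$ are movable compact metrizable spaces, the fact recorded above --- that for such spaces $\check{\pi}_n(F)$ and $\pro-\pi_n(F)$ are isomorphisms simultaneously (Theorem~\ref{th-mov-iso}) --- shows that $\pro-\pi_n(F)\colon \pro-\pi_n(X,*)\to\pro-\pi_n(Y,*)$ is an isomorphism in the corresponding pro-category for every $n$. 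Since $X$ and $Y$ are connected and $\sd X,\ \sd Y<\infty$, Theorem~\ref{th-mor} applies and yields that $F$ is a shape equivalence. This is the whole argument at the level of cited results.

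The only substantive input is Theorem~\ref{th-mov-iso}, and it is worth recalling how that reduction goes, since it is there that movability is spent. A metrizable continuum admits an associated ANR-system which is an inverse \emph{sequence} of compact polyhedra, and, after reindexing, $F$ can be represented by a level morphism of towers $f_k\colon \pi_n(X_k,*)\to\pi_n(Y_k,*)$. By the facts quoted earlier, movability of $X$ and $Y$ makes the pro-groups $\pro-\pi_n(X,*)$ and $\pro-\pi_n(Y,*)$ movable, hence Mittag-Leffler; consequently each of these towers is pro-isomorphic to its tower of eventual images, whose bonding maps are surjective and whose inverse limit maps onto every term. A diagram chase, using the Mittag-Leffler condition for both sequences, then shows that $\{f_k\}$ is both a pro-monomorphism (an element in the kernel of some $f_k$ is annihilated after a suitable further bonding map, by injectivity of $\check{\pi}_n(F)$) and a pro-epimorphism (from surjectivity of $\check{\pi}_n(F)$ one assembles a compatible family of preimages, i.e.\ a pro-morphism inverting $\{f_k\}$ up to the bonding maps). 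Since in the pro-category of groups a morphism that is simultaneously mono and epi is an isomorphism, $\pro-\pi_n(F)$ is an isomorphism.

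The main obstacle is therefore not the assembly above but the bookkeeping inside this reduction: one must (i) pass from the arbitrary directed indexing of an associated ANR-system to a cofinal \emph{sequence}, which is legitimate precisely because $X$ and $Y$ are metrizable continua, and (ii) carry out the Mittag-Leffler diagram chase base-point-equivariantly, i.e.\ compatibly with the action of the possibly nonabelian pro-group $\pro-\pi_1$ on the higher homotopy pro-groups. The finiteness of $\sd X$ and $\sd Y$ enters only through Theorem~\ref{th-mor}, where it cannot be dropped (the Kahn continuum provides the obstruction), and movability enters only through Theorem~\ref{th-mov-iso}. Accordingly I would write the proof in three lines --- invoke Theorem~\ref{th-mov-iso} to replace shape groups by homotopy pro-groups, then invoke Theorem~\ref{th-mor} --- and confine the Mittag-Leffler argument to the proof of Theorem~\ref{th-mov-iso} or to a separate lemma rather than reproducing it here.
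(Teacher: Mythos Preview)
Your proposal is correct and matches the paper's own argument exactly: the paper deduces Theorem~\ref{th-witehead-mov} in one line as a consequence of Theorem~\ref{th-mor} together with Theorem~\ref{th-mov-iso}, just as you suggest. Your additional sketch of the Mittag-Leffler mechanism behind Theorem~\ref{th-mov-iso} is sound but, as you correctly note, belongs to the proof of that separate result rather than here.
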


This theorem is a consequence of Theorem~\ref{th-mor} and the following result of 
independent interest  \cite{keesling-8}, \cite{dydak-1}.

\begin{theorem}\label{th-mov-iso}
Let  $F\colon  (X, *) \to  (Y, *)$ be a shape morphism movable metrizable continua. If, for 
some $n$, the induced homomorphism $\check{\pi}_n(F) \colon  \check{\pi}_n(X,*) \to 
\check{\pi}_n(Y,*)$ is an isomorphism (epimorphism) of shape groups, then $\pro-\pi_n(F)\colon 
\pro-\pi_n(X,*) \to \pro-\pi_n(Y,*)$ is an isomorphism (epimorphism) of pro-groups. 
\end{theorem}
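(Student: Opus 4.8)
The plan is to translate the statement into pure algebra of towers of groups and then to exploit the Mittag--Leffler property. Since $X$ and $Y$ are compact metrizable, I would fix for each an associated inverse sequence of compact ANRs; applying $\pi_n$ yields towers of groups $\mathbf{G}=\pro-\pi_n(X,*)$ and $\mathbf{H}=\pro-\pi_n(Y,*)$ with $\lim\mathbf{G}=\check{\pi}_n(X,*)$ and $\lim\mathbf{H}=\check{\pi}_n(Y,*)$. Every morphism of pro-objects over cofinite directed sets can, after reindexing, be represented by a level morphism (see \cite{mard-seg-4}), so I may assume that $\pro-\pi_n(F)$ is a level morphism $\mathbf{f}=(f_k)\colon\mathbf{G}\to\mathbf{H}$, with $\lim f_k=\check{\pi}_n(F)$. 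Because $X$ and $Y$ are movable and movability is preserved under functorial passages, $\mathbf{G}$ and $\mathbf{H}$ are movable pro-groups, and in particular both satisfy the Mittag--Leffler condition.

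Next I would pass from $\mathbf{G}$ and $\mathbf{H}$ to their stable-image subsystems. For a Mittag--Leffler tower the subgroups $G_k^{\sharp}=\bigcap_{l\geqslant k}\im(p_{kl})\subseteq G_k$ form a subtower $\mathbf{G}^{\sharp}$ with \emph{surjective} bonding maps, the inclusion $\mathbf{G}^{\sharp}\hookrightarrow\mathbf{G}$ is a pro-isomorphism, and hence $\lim\mathbf{G}^{\sharp}=\lim\mathbf{G}$ and $\lim^{1}\mathbf{G}^{\sharp}=0$; likewise for $\mathbf{H}^{\sharp}$. Since $f$ is a level morphism, $f_k(\im(p_{kl}))\subseteq\im(q_{kl})$, so $\mathbf{f}$ restricts to a level morphism $\mathbf{f}^{\sharp}=(f_k^{\sharp})\colon\mathbf{G}^{\sharp}\to\mathbf{H}^{\sharp}$, the square relating $\mathbf{f}$ and $\mathbf{f}^{\sharp}$ through the inclusions commutes, and $\lim\mathbf{f}^{\sharp}$ is an isomorphism (respectively injective, surjective) precisely when $\check{\pi}_n(F)$ is. Consequently $\mathbf{f}$ is a pro-isomorphism (respectively pro-epimorphism) if and only if $\mathbf{f}^{\sharp}$ is. Now, since the bonding maps of $\mathbf{H}^{\sharp}$ are surjective, the projection $\lim\mathbf{H}^{\sharp}\to H_k^{\sharp}$ is onto for every $k$; composing with $\lim\mathbf{f}^{\sharp}$, which is onto under either hypothesis, and noting that this composite factors as $\lim\mathbf{G}^{\sharp}\to G_k^{\sharp}\xrightarrow{f_k^{\sharp}}H_k^{\sharp}$, I conclude that every $f_k^{\sharp}$ is surjective. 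Thus $\mathbf{f}^{\sharp}$ is a level epimorphism, which already settles the epimorphism part of the theorem.

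It remains, in the isomorphism case, to show that the level epimorphism $\mathbf{f}^{\sharp}$ is a pro-monomorphism. Here I would use the short exact sequence of towers of groups $1\to\mathbf{K}\to\mathbf{G}^{\sharp}\xrightarrow{\mathbf{f}^{\sharp}}\mathbf{H}^{\sharp}\to1$, where $\mathbf{K}=(\ker f_k^{\sharp})$, together with the associated exact sequence $1\to\lim\mathbf{K}\to\lim\mathbf{G}^{\sharp}\xrightarrow{\lim\mathbf{f}^{\sharp}}\lim\mathbf{H}^{\sharp}\to\lim^{1}\mathbf{K}\to\lim^{1}\mathbf{G}^{\sharp}=0$. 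Injectivity of $\lim\mathbf{f}^{\sharp}=\check{\pi}_n(F)$ forces $\lim\mathbf{K}=1$, and surjectivity forces $\lim^{1}\mathbf{K}=0$. A tower of groups with trivial $\lim^{1}$ is Mittag--Leffler, so passing to the stable-image model of $\mathbf{K}$ produces a tower with surjective bonds and still trivial inverse limit, whence all of its terms are trivial and $\mathbf{K}$ is pro-trivial: for each $k$ there is $l\geqslant k$ with $\ker f_l^{\sharp}\subseteq\ker p_{kl}^{\sharp}$. Therefore $p_{kl}^{\sharp}$ factors through $G_l^{\sharp}/\ker f_l^{\sharp}\cong H_l^{\sharp}$, and the resulting maps $H_l^{\sharp}\to G_k^{\sharp}$ assemble, after the usual enlargement of indices, into a pro-morphism $\mathbf{H}^{\sharp}\to\mathbf{G}^{\sharp}$ which is two-sided inverse to $\mathbf{f}^{\sharp}$; hence $\pro-\pi_n(F)$ is a pro-isomorphism.

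The decisive step is this monomorphism part: obtaining compatible control of the kernels $\ker f_l^{\sharp}$ at all levels simultaneously, which is exactly what forces the use of the $\lim^{1}$ exact sequence and, through the vanishing $\lim^{1}\mathbf{G}^{\sharp}=0$, the movability of the \emph{source} $X$ (movability of $Y$ being needed earlier so that the stable-image reduction for $\mathbf{H}$ is legitimate). One further point requiring care is that for $n=0$ and $n=1$ the objects live in $\pro-\Set_{*}$ and in a category of possibly non-abelian towers, so the exact sequence above must be read in its pointed-set / non-abelian (Bousfield--Kan) form; this changes none of the logic but accounts for most of the bookkeeping.
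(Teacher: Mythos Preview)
The paper does not actually prove this theorem; it simply records it as ``a result of independent interest'' and attributes it to Keesling \cite{keesling-8} and Dydak \cite{dydak-1}. So there is no in-text argument to compare against. Your route---reduce to a level morphism between Mittag--Leffler towers, pass to the stable-image subtowers with surjective bonds, read off level surjectivity from surjectivity of $\check{\pi}_n(F)$, and in the isomorphism case control the kernel tower via the six-term $\lim/\lim^{1}$ sequence---is the standard one and is essentially how the cited sources proceed.

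There is one genuine omission you should repair. Your sentence ``a tower of groups with trivial $\lim^{1}$ is Mittag--Leffler'' is \emph{not} a general fact; it holds for towers of \emph{countable} groups (this is the Gray--McGibbon type result) but can fail otherwise. In the present situation countability is available: since $X$ and $Y$ are compact metrizable you may take the associated sequences to consist of compact ANRs, and compact ANRs have the homotopy type of finite CW-complexes (West \cite{west-1}), hence countable homotopy groups; thus each $K_k\subseteq G_k^{\sharp}\subseteq \pi_n(X_k,*)$ is countable and the implication $\lim^{1}\mathbf{K}=0\Rightarrow\mathbf{K}$ is ML is legitimate. But this has to be said explicitly---without it the monomorphism step (from $\lim\mathbf{K}=1$ and $\lim^{1}\mathbf{K}=0$ to pro-triviality of $\mathbf{K}$) does not go through. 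With that addition, and the care you already flag for $n\leqslant 1$ where the $\lim^{1}$ sequence lives in pointed sets, the argument is correct.
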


The first ``infinite-dimensional'' Whitehead-type theorems were proved by Edwards and Geoghegan 
in~\cite{ed-geogh} and generalized in \cite{dydak-on} and~\cite{dydak-some}. The most general 
results are the following two theorems of Dydak~\cite{dydak-1}.

\begin{theorem}\label{th-dydak-0}
Let $(X, *)$ and $(Y, *)$ be connected spaces. Suppose that $(X, *)$ is movable 
and $F\colon (X, *) \to (Y, *)$ is a shape domination. If 
$\pro-\pi_n(F)\colon \pro-\pi_n(X,*) \to \pro-\pi_n(Y,*)$ is an isomorphism for every $n$, then $F$  
is a shape equivalence. 
\end{theorem}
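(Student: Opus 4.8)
\emph{Reduction.} The plan is to strip $F$, together with a chosen section, down to a shape idempotent of $(X,*)$, reduce the theorem to showing that idempotent is the identity, and then produce a one-sided inverse for it by an obstruction argument in which movability does the work that finite shape dimension does in Theorem~\ref{th-mor}. Since $F$ is a shape domination, there is a shape morphism $G\colon (Y,*)\to (X,*)$ with $FG = 1_{(Y,*)}$. Set $e = GF$. Then $e^{2} = G(FG)F = GF = e$, so $e$ is a shape idempotent of $(X,*)$; and from $\pro-\pi_n(F)\,\pro-\pi_n(G) = \mathrm{id}$ together with the hypothesis that $\pro-\pi_n(F)$ is an isomorphism one gets $\pro-\pi_n(G) = \pro-\pi_n(F)^{-1}$ and hence $\pro-\pi_n(e) = \mathrm{id}$ for every $n$. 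It is enough to produce a single shape morphism $\sigma\colon (X,*)\to (X,*)$ with $\sigma e = 1_{(X,*)}$, for then $e = (\sigma e)e = \sigma e^{2} = \sigma e = 1_{(X,*)}$, so that $G$ is a two-sided shape inverse of $F$ and $F$ is a shape equivalence. Thus the problem reduces to: \emph{for connected movable $(X,*)$ and a shape idempotent $e$ of $(X,*)$ inducing the identity on every $\pro-\pi_n$, construct a left shape inverse of $e$.}

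\emph{Set-up.} Fix an ANR-system $\mathbf{X} = \{X_\alpha,\mathbf{p}_{\alpha\alpha'},A\}$ associated with $(X,*)$; since $X$ is movable one may take $\mathbf{X}$ movable with each $X_\alpha$ a pointed connected CW-complex, and after the standard reindexing of pro-objects (\cite{mard-seg-4}) one may assume $e$ is represented by a level endomorphism $\{e_\alpha\colon X_\alpha\to X_\alpha\}$ commuting with the bonds. The hypotheses then become concrete: for each $\alpha$ and $n$ there is $\alpha'\geqslant\alpha$ with $(e_\alpha\mathbf{p}_{\alpha\alpha'})_{\#} = (\mathbf{p}_{\alpha\alpha'})_{\#}$ on $\pi_n$, together with an idempotency relation up to a bond; and condition (M) supplies, for each $\alpha$, an $\alpha'\geqslant\alpha$ through which every later bond $\mathbf{p}_{\alpha\alpha''}$ factors up to homotopy.

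\emph{The construction.} I would build $\sigma$ as a morphism of pro-H-CW by induction on the Postnikov stages of the CW-models $X_\alpha$, following the pattern of the proof of Theorem~\ref{th-mor} but invoking movability in place of finite dimensionality. Given partial data, compatible with the bonds, that invert $e$ through $\pi_{<n}$, the obstruction to extending the correction over the $n$-cells lies in a group manufactured from $\pro-\pi_n(X,*)$; it vanishes because $\pro-\pi_n(e) = \mathrm{id}$, so the correction exists at each index, and the movability (equivalently, Mittag--Leffler) of $\mathbf{X}$ permits these corrections to be chosen coherently and guarantees that the inductive limit is again a morphism of pro-H-CW. Letting $n\to\infty$ yields $\sigma$ with $\sigma e = 1_{(X,*)}$, and the Reduction step then finishes the proof; the detailed bookkeeping of this construction is the technical core and is carried out in \cite{dydak-1}.

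\emph{Main obstacle.} The serious difficulty is precisely the construction above: coordinating the skeletal/Postnikov induction on the $X_\alpha$ with the reindexing forced by condition (M), so that the stagewise corrections assemble into one pro-morphism with no residual $\lim^{1}$-type obstruction. Movability is indispensable here --- for non-movable spaces the limit need not be a pro-morphism, which is why the statement fails for general spaces (the Kahn continuum \cite{kahn}, \cite{draper-keesling}) and why Theorem~\ref{th-mor} has instead to assume finite shape dimension.
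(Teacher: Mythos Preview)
The paper does not prove this theorem: it simply records it as one of Dydak's ``infinite-dimensional'' Whitehead-type results and cites \cite{dydak-1} for the proof. So there is no in-paper argument to compare your proposal against.

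Your reduction step is clean and correct: with $FG=1_{(Y,*)}$ the composite $e=GF$ is a shape idempotent of $(X,*)$, the hypothesis forces $\pro-\pi_n(e)=\mathrm{id}$ for all $n$, and the observation that a left inverse $\sigma$ of $e$ forces $e=1$ is the right algebraic trick. This recasts the theorem as a statement purely about a movable connected space and a shape self-map inducing identities on all homotopy pro-groups, which is the natural shape analogue of the classical ``idempotent that is a weak equivalence is the identity'' argument.

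The honest gap is exactly the one you flag: your ``Construction'' paragraph is a plausibility sketch, not a proof. Saying that the obstruction at stage $n$ ``lies in a group manufactured from $\pro-\pi_n(X,*)$'' and ``vanishes because $\pro-\pi_n(e)=\mathrm{id}$'' elides the real work---identifying the coefficient system, controlling the passage between indices imposed by condition~(M), and checking that no $\varprojlim^{1}$ term survives. You then defer all of this to \cite{dydak-1}, which is precisely what the paper itself does. So your proposal is not wrong, but as a self-contained proof it stops at the same point the survey does: a correct reduction plus a citation for the hard part.
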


\begin{theorem}\label{th-dydak}
Let $F\colon (X, *) \to (Y, *)$ be a shape morphism of connected spaces. Suppose that one of 
the following two conditions holds:

(i) $\sd X< \infty$ and $(Y, *)$ movable;

(ii)  $(X, *)$ is movable and $\sd Y< \infty$.

\noindent
If $\pro-\pi_n(F)\colon \pro-\pi_n(X,*) \to \pro-\pi_n(Y,*)$ is an isomorphism for every $n$, 
then $F$ is a shape equivalence.
\end{theorem}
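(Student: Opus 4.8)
The plan is to reduce both cases to Theorem~\ref{th-dydak-0}, the bridge in each case being the construction of a \emph{one-sided} shape inverse of $F$. Represent $F$ by a level morphism $\mathbf{f}\colon(\mathbf{X},\boldsymbol{*})\to(\mathbf{Y},\boldsymbol{*})$ of pointed ANR-systems, choosing the associated systems so that, for whichever of $X$, $Y$ has finite shape dimension, all the ANR's in the corresponding system have dimension $\leqslant m$, where $m$ is that shape dimension. The hypothesis that $\pro-\pi_n(F)$ is an isomorphism for all $n$ says that the ``pro-homotopy-fibre'' of $\mathbf f$ (equivalently, the pro-system of homotopy fibres of the $\mathbf f_\beta$, suitably reindexed) is pro-trivial: all of its homotopy pro-groups vanish, by the pro-version of the exact homotopy sequence of a fibration. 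This is the input that will kill the obstruction classes below.

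Case (ii): $(X,*)$ movable, $\sd Y=m<\infty$. Here I construct a right shape inverse $G\colon(Y,*)\to(X,*)$ with $F\circ G\simeq 1_{(Y,*)}$, by an obstruction-theoretic lifting argument through $\mathbf f$. For a fixed index $\beta$ of $\mathbf Y$ one seeks, at some deeper index, a map $Y_\beta\to X_\alpha$ lifting a bonding map $\mathbf q_{\beta\beta''}$ through $\mathbf f$; after replacing $\mathbf f$ by a fibration the successive obstructions lie in $H^{k+1}(Y_\beta;\pi_k(\text{pro-fibre of }\mathbf f))$, and since $\dim Y_\beta\leqslant m$ only finitely many of them (those with $k<m$) can be non-zero. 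Each such class is killed by passing far enough down the $\mathbf Y$-system, using pro-triviality of the fibre. Thus $F$ is a shape domination; since $(X,*)$ is movable and $\pro-\pi_n(F)$ is an isomorphism for all $n$, Theorem~\ref{th-dydak-0} yields that $F$ is a shape equivalence.

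Case (i): $\sd X=m<\infty$, $(Y,*)$ movable. I expect this to be the real obstacle, since shape theory is not self-dual and now the finite-dimensional space is the \emph{source} of $F$. I instead build a left shape inverse $G\colon(Y,*)\to(X,*)$ with $G\circ F\simeq 1_{(X,*)}$, i.e.\ I show $(X,*)$ is shape dominated by $(Y,*)$; then $\pro-\pi_n(G)=\pro-\pi_n(F)^{-1}$ is an isomorphism for every $n$, $(Y,*)$ is movable, and $G$ is a shape domination (with section $F$), so Theorem~\ref{th-dydak-0} applied to $G$ makes $G$, and hence $F\simeq G'\!$, a shape equivalence. To build $G$ I must extend the bonding maps $\mathbf p_{\alpha\alpha'}\colon X_{\alpha'}\to X_\alpha$ (source of dimension $\leqslant m$) over the mapping cylinders of the $\mathbf f_\beta$'s into the ANR's $X_\alpha$. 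Now the naive obstruction sequence has infinitely many potentially non-zero terms $H^{k+1}(\,\cdot\,;\pi_k(X_\alpha))$, $k\geqslant 0$, because $X_\alpha$ need not be aspherical above dimension $m$. The remedy is to factor through the $m$-th Postnikov stage $P_m(X_\alpha)$: since $\dim X_\alpha\leqslant m$, any map into $P_m(X_\alpha)$ from a complex of dimension $\leqslant m$ lifts back to $X_\alpha$, so it suffices to extend $\mathbf p_{\alpha\alpha'}$ into $P_m(X_\alpha)$, where $\pi_k$ vanishes for $k>m$ and only finitely many obstructions survive; each is killed, after refining the index $\beta$, by pro-triviality of the homotopy fibre of $\mathbf f$.

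The point where care is really required is the coherence in case (i): turning this level-by-level vanishing into a well-defined morphism $\mathbf g\colon\mathbf Y\to\mathbf X$ of pro-H-CW forces one to organize the deeper indices and the chosen lifts compatibly along the directed sets, and it is precisely the movability of $(Y,*)$ \ddf equivalently, the Mittag--Leffler behaviour it imposes on the homotopy pro-groups, which via $\pro-\pi_n(F)$ are shared with $X$ \ddf that provides the uniformity making the partial lifts assemble coherently. Routine bookkeeping of basepoints and of the reindexing functions $\varphi,\psi$ is omitted throughout.
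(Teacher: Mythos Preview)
The paper does not supply its own proof of this theorem; it is stated as a result of Dydak with a reference to~\cite{dydak-1}, so there is nothing in the text to compare against directly. I assess your proposal on its own terms.

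Your overall plan---reduce each case to Theorem~\ref{th-dydak-0} by first manufacturing a one-sided shape inverse via obstruction theory---is a reasonable route, and your outline of case~(ii) is essentially correct. With $\sd Y\leqslant m$ one may take $\dim Y_{\beta'}\leqslant m$, so the obstructions to lifting $q_{\beta\beta'}$ through $f_\beta$ live in $H^{k+1}(Y_{\beta'};\pi_k(\text{hofib}\,f_\beta))$ only for $k\leqslant m-1$; the pro-triviality of the homotopy fibre (which does follow from the long exact sequence in pro-groups once every $\pro-\pi_n(F)$ is an isomorphism) lets each be killed by passing to a deeper index. Then $F$ is a shape domination with movable source $(X,*)$, and Theorem~\ref{th-dydak-0} applies.

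Case~(i), however, has a genuine gap at the Postnikov step. You are right that extending $p_{\alpha\alpha'}$ along $f_\beta$ into $X_\alpha$ meets infinitely many obstructions, since $\pi_k(X_\alpha)$ need not vanish for $k>m$ even when $\dim X_\alpha\leqslant m$. Your fix is to extend instead into $P_m(X_\alpha)$ and then invoke: ``since $\dim X_\alpha\leqslant m$, any map into $P_m(X_\alpha)$ from a complex of dimension $\leqslant m$ lifts back to $X_\alpha$.'' That statement is correct, but the complex you must lift \emph{from} is $Y_\beta$ (equivalently the mapping cylinder of $f_\beta$), and $Y_\beta$ has \emph{arbitrary} dimension; the bijection $[K,X_\alpha]\to[K,P_m(X_\alpha)]$ holds only for $\dim K\leqslant m$. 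So a map $\tilde g_\alpha\colon Y_\beta\to P_m(X_\alpha)$ does not yield the required $g_\alpha\colon Y_\beta\to X_\alpha$. Nor can one repair this by appealing to a left inverse of $j\colon\mathbf X\to P_m(\mathbf X)$ in pro-H-CW: already for $\mathbf X=S^2$ one has $\pro-\pi_3(\mathbf X)=\mathbb Z$ while $\pro-\pi_3(P_2(\mathbf X))=0$, so $j$ is not a pro-isomorphism and admits no retraction. In short, finite-dimensionality of the \emph{target} $X_\alpha$ does not truncate the obstruction theory for maps \emph{into} it from high-dimensional sources. The movability of $(Y,*)$ must do substantially more work in case~(i) than merely organizing coherence, and your construction of the left inverse $G$, as written, does not go through.
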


In particular, if $(X, *)$ is movable and all pro-groups $\pro-\pi_n(X,*)$ are trivial, then 
$\sh(X,*)=0$. In the last theorem the finite-dimensionality of spaces cannot be 
replaced by their movability. Draper and Keesling \cite{draper-keesling} constructed a map  
$f\colon (X,*) \to(Y,*)$ of movable metrizable continua which induces isomorphisms of all homotopy 
pro-groups but is not a shape equivalence. A similar example was also constructed by 
Kozlowski and Segal \cite{kozlowski-segal-0}.

A shape analogue of Whitehead's theorem for movable morphisms was obtained by Gevorgyan 
and Pop \cite{Gev-Pop-1}.

\begin{theorem}\label{th-gev-pop-1}
Let $(X, *)$ and $(Y, *)$ be connected spaces. Suppose that $\sd X < \infty$ and $F\colon (X, *) 
\to (Y, *)$ is a movable shape morphism being a shape domination. If $\pro-\pi_n(F)\colon 
\pro-\pi_n(X,*) \to \pro-\pi_n(Y,*)$ is an isomorphism for every $n$, then $F$ is a shape 
equivalence. 
\end{theorem}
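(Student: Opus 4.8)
The plan is to deduce the statement from Dydak's finite‑dimensional Whitehead theorem, Theorem~\ref{th-dydak}(i), by showing that the two hypotheses on $F$ --- being a movable shape morphism and being a shape domination --- together force the target $(Y,*)$ to be movable. Once that is known, all the hypotheses of Theorem~\ref{th-dydak}(i) are met: $(X,*)$ and $(Y,*)$ are connected, $\sd X<\infty$, $(Y,*)$ is movable, and $\pro-\pi_n(F)$ is an isomorphism for every $n$; hence $F$ is a shape equivalence. So everything reduces to the following claim: \emph{if a shape morphism $F\colon (X,*)\to(Y,*)$ is a movable morphism and is a shape domination, then $(Y,*)$ is movable.}

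To prove the claim I would first use the shape domination to fix a shape morphism $G\colon (Y,*)\to(X,*)$ with $F\circ G=1_{(Y,*)}$, then pass to associated inverse ANR‑systems $\mathbf{X}=\{X_\alpha,\mathbf{p}_{\alpha\alpha'},A\}$ and $\mathbf{Y}=\{Y_\beta,\mathbf{q}_{\beta\beta'},B\}$ (with basepoints, suppressed from the notation) and choose representatives $(\mathbf{f}_\beta,\varphi)$ of $F$ and $(\mathbf{g}_\alpha,\psi)$ of $G$ in pro-H-CW, with $(\mathbf{f}_\beta,\varphi)$ chosen so as to witness the movability of $F$. Fix $\beta\in B$. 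Movability of $F$ yields an index $a\geqslant\varphi(\beta)$ such that for every $\beta''\geqslant\beta$ there is a homotopy class $\mathbf{u}_{\beta''}\colon X_a\to Y_{\beta''}$ with $\mathbf{f}_\beta\mathbf{p}_{\varphi(\beta)a}=\mathbf{q}_{\beta\beta''}\mathbf{u}_{\beta''}$. Since $G$ is a morphism of inverse systems and $a\geqslant\varphi(\beta)$, there is an index $\beta^*\in B$, $\beta^*\geqslant\psi(\varphi(\beta)),\psi(a)$, with $\mathbf{g}_{\varphi(\beta)}\mathbf{q}_{\psi(\varphi(\beta))\beta^*}=\mathbf{p}_{\varphi(\beta)a}\mathbf{g}_a\mathbf{q}_{\psi(a)\beta^*}$; and since $F\circ G=1_{(Y,*)}$, after enlarging we may pick an index $d\geqslant\beta^*,\beta$ with $\mathbf{f}_\beta\mathbf{g}_{\varphi(\beta)}\mathbf{q}_{\psi(\varphi(\beta))d}=\mathbf{q}_{\beta d}$. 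Stringing these three identities together gives, for every $\beta''\geqslant\beta$,
\[
\mathbf{q}_{\beta d}
=\mathbf{f}_\beta\mathbf{p}_{\varphi(\beta)a}\,\mathbf{g}_a\,\mathbf{q}_{\psi(a)d}
=\mathbf{q}_{\beta\beta''}\bigl(\mathbf{u}_{\beta''}\,\mathbf{g}_a\,\mathbf{q}_{\psi(a)d}\bigr).
\]
Thus, with $\beta':=d$ (which depends only on $\beta$) and $\mathbf{r}^{\beta'\beta''}:=\mathbf{u}_{\beta''}\mathbf{g}_a\mathbf{q}_{\psi(a)d}\colon Y_{\beta'}\to Y_{\beta''}$, one has $\mathbf{q}_{\beta\beta'}=\mathbf{q}_{\beta\beta''}\mathbf{r}^{\beta'\beta''}$ for all $\beta''\geqslant\beta$ --- this is precisely condition (M) of Definition~\ref{def-mov2}. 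Hence $\mathbf{Y}$, and therefore $(Y,*)$, is movable, which proves the claim; and the theorem follows as explained.

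The hard part is the index bookkeeping inside the claim: one must arrange the movability index $a$ coming from $F$, the index $\beta^*$ coming from $G$ being a morphism, and the coincidence index $d$ coming from $F\circ G=1_{(Y,*)}$ so that they fit into one chain of identities --- in particular that $d$ may be taken above $\beta^*$ and that it depends on $\beta$ alone, not on the auxiliary index $\beta''$. One must also check that the properties ``$F$ is a movable morphism'' and ``$\mathbf{Y}$ is movable'' do not depend on the chosen representatives or associated systems and carry over to the pointed setting; all of this is routine but requires care. As a fallback, instead of the reduction one could imitate Dydak's direct proof of Theorem~\ref{th-dydak}(i), running a skeletal/obstruction induction along $\sd X<\infty$ in which at each stage the movability of the morphism $F$ --- rather than of the space $Y$ --- supplies the liftings through the tower $\mathbf{Y}$, with $G$ serving as the candidate inverse; the reduction above is the shorter route.
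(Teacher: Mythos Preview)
The survey paper does not include a proof of Theorem~\ref{th-gev-pop-1}; it merely states the result and attributes it to \cite{Gev-Pop-1}. So there is no in-paper argument to compare against, and your proof must stand on its own.

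It does. Your reduction is sound: from the movability of $F$ and the existence of a section $G$ you correctly extract, for each $\beta$, a single index $\beta'=d$ (built from the movability index $a$ for $F$, a compatibility index $\beta^*$ for the morphism $G$, and a coincidence index for $F\circ G=1_{(Y,*)}$, all depending only on $\beta$) such that the maps $\mathbf{r}^{\beta'\beta''}=\mathbf{u}_{\beta''}\,\mathbf{g}_a\,\mathbf{q}_{\psi(a)d}$ witness condition~(M). This establishes that $(Y,*)$ is movable, and then Theorem~\ref{th-dydak}(i) applies verbatim. The only point worth tightening in the write-up is to state explicitly that $d$ is chosen as a common upper bound of $\beta^*$ and of the index furnished by the equivalence $(\mathbf{f}_\beta\mathbf{g}_{\varphi(\beta)},\psi\varphi)\sim 1_{\mathbf{Y}}$, and that the identities at $\beta^*$ and at that index persist at $d$ after post-composition with the appropriate bonding maps; you clearly have this in mind, but spelling it out removes any ambiguity.

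Your remark that the claim ``$F$ movable and a shape domination $\Rightarrow$ $(Y,*)$ movable'' does not use $\sd X<\infty$ is worth isolating: it is an interesting standalone lemma, and it makes transparent why Theorem~\ref{th-gev-pop-1} is a genuine common extension of Theorems~\ref{th-dydak-0} and~\ref{th-dydak}(i). The fallback you mention (rerunning Dydak's obstruction argument with the movability of $F$ supplying the lifts) would also work, but the reduction is cleaner and, as far as one can tell from the survey, in the same spirit as the intended proof.
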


Whitehead's theorem has also homology versions (see \cite{mard-01}, 
\cite{morita-2}, \cite{raussen}). In their proofs the key role is played by the following result 
(see \cite{morita-2}, \cite{raussen}).

\begin{theorem}
Let  $F\colon (X, *) \to (Y, *)$ be a shape morphism. If 
$$
\pro-\pi_0(X,*)=\pro-\pi_1(X,*)=\pro-\pi_0(Y,*)=\pro-\pi_1(Y,*)=0,
$$ 
then 
the following two conditions are equivalent for each $n\geqslant 2$:

(i) $\pro-\pi_k(F)$ is an isomorphism for $k<n$ and an epimorphism for $k=n$;

(ii) $\pro-H_k(F)$ is an isomorphism for $k<n$ and an epimorphism for $k=n$.
\end{theorem}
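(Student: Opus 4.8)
The plan is to run the classical proof of the homology Whitehead theorem ``one level at a time'' inside an inverse system and then translate level statements into statements in the pro-category, exactly in the spirit of the proof of Theorem~\ref{th-mor}. Using Morita's theorem that $(X,*)$ and $(Y,*)$ admit associated $\mathrm{ANR}$-systems, together with the standard fact that $F$ (with the structure maps) can be represented by a \emph{level} morphism $\mathbf f=(f_\alpha)\colon \mathbf X=(X_\alpha,\mathbf p_{\alpha\alpha'},A)\to\mathbf Y=(Y_\alpha,\mathbf q_{\alpha\alpha'},A)$ of pointed $\mathrm{ANR}$-systems over one common cofinite directed index set, with all $X_\alpha,Y_\alpha$ CW-complexes and each $f_\alpha$ cellular, I would first replace each $Y_\alpha$ by the mapping cylinder of $f_\alpha$ (extending the bonding maps) so that $\mathbf f$ becomes a level inclusion which is a cofibration at every level; write $(\mathbf Y,\mathbf X)$ for the resulting pro-pair. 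Since $\pro-\pi_0$ and $\pro-\pi_1$ vanish for both $\mathbf X$ and $\mathbf Y$, I may pass to a cofinal subsystem on which the bonding maps factor through universal covers, so that in addition every $X_\alpha$ and $Y_\alpha$ is simply connected. Then $\pi_1$ acts trivially on all relative homotopy groups in sight, and the exact sequence of the pair gives $\pro-\pi_1(\mathbf Y,\mathbf X)=0$.

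Next I would reformulate the two conditions. The long exact homotopy sequences of the pairs $(Y_\alpha,X_\alpha)$ are exact at each level, hence assemble into a long exact sequence of pro-groups (a level-wise exact sequence of inverse systems is exact in the pro-category); the same holds for reduced homology. A routine diagram chase through these sequences, using $\pro-\pi_1(\mathbf Y,\mathbf X)=0$, shows that condition (i) is equivalent to $\pro-\pi_k(\mathbf Y,\mathbf X)=0$ for all $k\le n$ and that condition (ii) is equivalent to $\pro-H_k(\mathbf Y,\mathbf X)=0$ for all $k\le n$. It therefore suffices to prove these two vanishing conditions equivalent.

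The heart of the argument is a pro-homotopy analogue of passing to highly connected covers: if $(\mathbf Y,\mathbf X)$ is as above and $\pro-\pi_k(\mathbf Y,\mathbf X)=0$ for all $k\le m$, then $(\mathbf Y,\mathbf X)$ is isomorphic, in the category of pro-pairs over H-CW, to a level cofibration pair $(\mathbf Y',\mathbf X')$ whose pairs $(Y'_\alpha,X'_\alpha)$ are $m$-connected \emph{at every level}. One builds it using that $\pro-\pi_k(\mathbf Y,\mathbf X)=0$ for $k\le m$ means: for each $\alpha$ there is $\alpha'\ge\alpha$ for which the bonding map $\pi_k(Y_{\alpha'},X_{\alpha'})\to\pi_k(Y_\alpha,X_\alpha)$ vanishes for all $k\le m$ simultaneously; one then kills the relative homotopy of the $Y$-terms by attaching cells of dimension $\le m+1$ along maps of pairs representing relative homotopy classes, obtaining level-wise $m$-connected pairs together with maps to and from the original system that are mutually inverse in the pro-category, the cofiniteness of $A$ being used to organize the construction coherently. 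This is the step I expect to be the main obstacle: pro-vanishing of $\pro-\pi_k$ is strictly weaker than level-wise vanishing, so one cannot merely invoke the classical relative Hurewicz theorem at each level, and the connective approximation must be performed inside the pro-category with all the attendant index bookkeeping.

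Granting this lemma, the equivalence follows by induction on $n$, with the classical relative Hurewicz theorem applied level-wise to an \emph{already} level-wise highly connected pair. For (i)$\Rightarrow$(ii): apply the lemma with $m=n$; since each $(Y'_\alpha,X'_\alpha)$ is simply connected and $n$-connected, the classical relative Hurewicz theorem gives $H_k(Y'_\alpha,X'_\alpha)=0$ for $k\le n$, hence $\pro-H_k(\mathbf Y,\mathbf X)=\pro-H_k(\mathbf Y',\mathbf X')=0$ for $k\le n$. For (ii)$\Rightarrow$(i): prove $\pro-\pi_k(\mathbf Y,\mathbf X)=0$ for $k\le m$ by induction on $m\le n$. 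The case $m=1$ is already known; for $m=2$, each pair is simply connected, so the level-wise Hurewicz isomorphism $\pi_2(Y_\alpha,X_\alpha)\cong H_2(Y_\alpha,X_\alpha)$ gives $\pro-\pi_2(\mathbf Y,\mathbf X)\cong\pro-H_2(\mathbf Y,\mathbf X)=0$; and for the step from $m$ to $m+1\le n$, the inductive hypothesis and the lemma make the pair level-wise $m$-connected, so the classical relative Hurewicz theorem yields the level-wise isomorphism $\pi_{m+1}(Y'_\alpha,X'_\alpha)\cong H_{m+1}(Y'_\alpha,X'_\alpha)$, whence $\pro-\pi_{m+1}(\mathbf Y,\mathbf X)\cong\pro-H_{m+1}(\mathbf Y,\mathbf X)=0$. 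This closes the induction and proves the theorem.
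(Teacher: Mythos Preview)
The paper does not actually prove this theorem; it is a survey and only cites the result to Morita \cite{morita-2} and Raussen \cite{raussen}. So there is no ``paper's own proof'' to compare against. That said, your outline is essentially the argument one finds in those references (and in Marde\v{s}i\'{c}--Segal \cite{mard-seg-4}): represent $F$ by a level map of CW-systems, pass to mapping cylinders, convert the two conditions into vanishing of the relative homotopy and homology pro-groups of the pair, and then use a connectivity-raising lemma in pro-H-CW so that the classical relative Hurewicz theorem can be applied level-wise inside an induction. You have correctly identified the connectivity-raising step as the crux.

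Two places deserve tightening. First, the sentence ``pass to a cofinal subsystem on which the bonding maps factor through universal covers, so that in addition every $X_\alpha$ and $Y_\alpha$ is simply connected'' is not literally what one does: vanishing of $\pro-\pi_1$ gives, for each $\alpha$, an $\alpha'\geqslant\alpha$ with $\pi_1(p_{\alpha\alpha'})=0$, so $p_{\alpha\alpha'}$ lifts to the universal cover $\widetilde{X}_\alpha$; one then builds a new system from such lifts (or, equivalently, attaches $2$-cells along generators of $\pi_1$ and checks the result is pro-isomorphic to the old system). This is not ``passing to a cofinal subsystem'' but replacing terms, and it must be done compatibly for $\mathbf X$ and $\mathbf Y$ simultaneously so that a level map survives. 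In practice this step is absorbed into the same lemma you state next, as its case $m=1$.

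Second, your description of the key lemma (``attaching cells of dimension $\le m+1$ along maps of pairs representing relative homotopy classes'') is the right idea but needs the precise mechanism: given that the bonding map $(Y_{\alpha'},X_{\alpha'})\to(Y_\alpha,X_\alpha)$ kills $\pi_k$ of the pair for $k\le m$, one factors it up to homotopy through a pair $(Y_\alpha,X_\alpha^{(m)})$ with $X_\alpha\subset X_\alpha^{(m)}\subset Y_\alpha$ and $(Y_\alpha,X_\alpha^{(m)})$ $m$-connected, obtained by adjoining to $X_\alpha$ cells bounding the relevant relative spheres; the cofiniteness of the index set is then used to thread these factorizations into a new level system pro-isomorphic to the old one. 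You should check that this replacement preserves not only $\pro-\pi_*$ but also $\pro-H_*$ of the pair (it does, since the comparison maps are isomorphisms in pro-H-CW and both are functorial invariants there), as your inductive step for (ii)$\Rightarrow$(i) uses both facts. With these details filled in, your argument is correct and is the standard one.
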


The well-known \emph{Hurewicz homomorphism}  $\phi_n\colon \pi_n(X,*) \to H_n(X)$ naturally 
generates a morphism $\pro-\phi_n\colon  \pro-\pi_n(X, *) \to \pro-H_n(X)$. The passage to the 
limit yields also the \emph{spectral Hurewicz homomorphism} $\check{\phi}_n \colon  
\check{\pi}_n(X,*) \to \check{H}_n(X)$.

Hurewicz' classical theorem asserts that \emph{if $(X,*)$ is an $(n-1)$-connected space, i.e., 
$\pi_k(X,*)=0$ for all $0\leqslant k\leqslant n-1$, then, for any $n\geqslant 2$,  the 
following conditions hold: 
\textup{(i)}~$H_k(X)=0$, $0\leqslant k\leqslant n-1$; \textup{(ii)}~$\phi_n\colon \pi_n(X,*) \to 
H_n(X)$ is an isomorphism; \textup{(iii)}~$\phi_{n+1}$ is an epimorphism; \textup{(iv)}~$\phi_1\colon \pi_1(X,*) \to 
H_1(X)$ is an epimorphism.}

This theorem was transferred to shape theory in various forms by many authors. The 
following version of Hurewicz' theorem for the pro-category can be found in~\cite{mard-seg-4}.

\begin{theorem}
Let $(X, *)$ be a shape $(n-1)$-connected space, i.e., $\pro-\pi_k(X,*)=0$, $k \leqslant n-1$. 
Then, for all $n \geqslant 2$, the following conditions hold:

(i) $\pro-H_k(X) = 0$, $1\leqslant k \leqslant n-1$;

(ii) $\pro-\phi_n\colon  \pro-\pi_n(X, *) \to \pro-H_n(X)$ is an isomorphism of pro-groups;

(iii) $\pro-\phi_{n+1}\colon  \pro-\pi_{n+1}(X, *) \to \pro-H_{n+1}(X)$ is an epimorphism 
if pro-groups.

\noindent
For $n=1$, 

(iv) $\pro-\phi_1\colon  \pro-\pi_1(X, *) \to \pro-H_1(X)$ is an epimorphism.
\end{theorem}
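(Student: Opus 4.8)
The plan is to replace the given associated ANR-system of $(X,*)$ by one all of whose terms are $(n-1)$-connected CW-complexes, and then to read off (i)--(iii) term by term from the classical Hurewicz theorem; part (iv) needs nothing beyond the fact that the degree-one Hurewicz map is abelianization. I use freely that all ANR-systems associated with $(X,*)$ are isomorphic in the pointed pro-homotopy category and that the Hurewicz transformation is natural, so that $\pro-\pi_n(X,*)$, $\pro-H_n(X)$ and the level morphism $\pro-\phi_n$ may be computed from whichever associated system is convenient. Fix an associated system $((X_\alpha,*),\mathbf p_{\alpha\alpha'},A)$ with the $X_\alpha$ CW-complexes. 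If $A$ has a maximal element the system is essentially constant, $X$ is stable, and the assertions follow at once from the classical Hurewicz theorem; so assume $A$ has no maximal element. Since $\pro-\pi_0(X,*)=0$, replacing each $X_\alpha$ by the component of its basepoint over a cofinal subset gives an associated system of connected CW-complexes. Since $\pro-\pi_k(X,*)=0$ for $1\leqslant k\leqslant n-1$, taking the upper bound of finitely many indices shows that for each $\alpha$ there is an $\alpha'>\alpha$ with $\mathbf p_{\alpha\alpha'}$ trivial on $\pi_k$ for every $k\leqslant n-1$, a property then shared by all $\mathbf p_{\alpha\alpha''}$ with $\alpha''\geqslant\alpha'$; a standard re-indexing of inverse systems makes the system isomorphic to one, still associated with $(X,*)$ and still without maximal indices, in which \emph{every} bonding map between distinct indices is trivial on $\pi_1,\dots,\pi_{n-1}$.

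The engine is a lifting lemma: if $j\geqslant 1$ and $Y,Y'$ are $(j-1)$-connected CW-complexes and $g\colon Y'\to Y$ is trivial on $\pi_j$, then $g$ lifts through the $j$-connected covering $q\colon Y\langle j\rangle\to Y$. For $j=1$ this is covering-space theory. For $j\geqslant 2$ the obstruction to lifting lies in $H^{j}(Y';\pi_j Y)$, which by universal coefficients equals $\operatorname{Hom}(H_j Y',\pi_j Y)$ because $H_{j-1}(Y')=0$; under this identification the obstruction is $g_*\colon H_j(Y')\to H_j(Y)$ followed by the Hurewicz isomorphism $H_j(Y)\xrightarrow{\ \cong\ }\pi_j(Y)$, which by naturality of Hurewicz is $g_\#$ on $\pi_j$ and hence zero. (The analogous vanishing in degree $j-1$ makes lifts unique up to based homotopy, ensuring coherence below.) Now induct on $j=1,\dots,n-1$: given an associated system of $(j-1)$-connected CW-complexes whose bonding maps between distinct indices are trivial on $\pi_j,\dots,\pi_{n-1}$, replace each term $X_\alpha$ by $X_\alpha\langle j\rangle$, lift each bonding map by the lemma and precompose with the covering of its source; uniqueness of lifts makes these compose correctly, and the covering projections together with the lifts (using an index shift $\alpha\mapsto\alpha^+>\alpha$) give a mutually inverse pair of morphisms in the pro-homotopy category, so the new system is isomorphic to the old one and therefore again associated with $(X,*)$. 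Its terms are $j$-connected and its bonding maps are trivial on $\pi_{j+1},\dots,\pi_{n-1}$, so the induction continues. After the step $j=n-1$ we have an associated ANR-system $((Y_\alpha,*),\mathbf q_{\alpha\alpha'},A')$ with every $Y_\alpha$ an $(n-1)$-connected CW-complex.

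Applying the classical Hurewicz theorem to each $Y_\alpha$ gives $H_k(Y_\alpha)=0$ for $1\leqslant k\leqslant n-1$, an isomorphism $\phi_n\colon\pi_n(Y_\alpha,*)\to H_n(Y_\alpha)$, and an epimorphism $\phi_{n+1}\colon\pi_{n+1}(Y_\alpha,*)\to H_{n+1}(Y_\alpha)$. Reading $\pro-H_k(X)$, $\pro-\phi_n$ and $\pro-\phi_{n+1}$ off the system $(Y_\alpha)$, we get $\pro-H_k(X)=0$ for $1\leqslant k\leqslant n-1$, the pro-group morphism $\pro-\phi_n$ an isomorphism (it is an isomorphism at every level), and $\pro-\phi_{n+1}$ a pro-epimorphism (it is onto at every level); this is (i)--(iii). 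For (iv) use the connected system of the first paragraph: each $\phi_1\colon\pi_1(X_\alpha,*)\to H_1(X_\alpha)$ is the abelianization, hence surjective, so $\pro-\phi_1$ is a level-wise surjective level morphism, i.e.\ a pro-epimorphism.

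The main obstacle is the inductive construction in the second paragraph: one must check that the lifting lemma is genuinely applicable at each stage, which is exactly where the hypothesis $\pro-\pi_k(X,*)=0$ --- and not merely $\check{\pi}_k(X,*)=0$ --- enters, through the re-indexing that turns the bonding maps honestly trivial on $\pi_k$; and one must verify that passing to $j$-connected covers term by term preserves the property of being an associated system. (That a single bonding map trivial on $\pi_1,\dots,\pi_{n-1}$ need not be trivial on $H_{n-1}$ --- witness a degree-one collapse $T^2\to S^2$ --- shows why the whole system, not one map, must be used.) Once these points are in place the theorem is the classical Hurewicz theorem level by level; note that the finite-dimensionality hypothesis of the shape Whitehead theorem is not needed, since lowering connectivity by passing to connected covers involves no dimension restriction.
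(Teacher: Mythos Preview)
The paper is a survey and does not prove this theorem; it merely states the result and refers the reader to Marde\v{s}i\'{c} and Segal's monograph \cite{mard-seg-4} (and to \cite{artin-mazur}, \cite{mard-ungar}, \cite{raussen} for variants). So there is no proof in the paper to compare against, and your argument must stand on its own.

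Your strategy---replace the associated system by one whose terms are $(n-1)$-connected CW-complexes via iterated Whitehead covers, then apply classical Hurewicz level by level---is sound and is essentially one of the standard routes to the result. The two places that deserve more detail are exactly the ones you flag. First, the ``standard re-indexing'': a clean way to do it is to pass to the index set $A^*=\{(\alpha,\alpha')\in A\times A:\alpha\leqslant\alpha',\ (p_{\alpha\alpha'})_\#=0\text{ on }\pi_1,\dots,\pi_{n-1}\}$ ordered by $(\alpha_1,\alpha_1')\leqslant(\alpha_2,\alpha_2')$ iff $\alpha_1\leqslant\alpha_2$ and $\alpha_1'\leqslant\alpha_2$, with $Y_{(\alpha,\alpha')}=X_\alpha$; one checks directly that $A^*$ is directed with no maximal element, that the projection $A^*\to A$ is cofinal, and that every bonding map in the new system factors through some $p_{\alpha\alpha'}$ with $(\alpha,\alpha')\in A^*$ and is therefore trivial on low homotopy. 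Second, the pro-isomorphism between $(X_\alpha)$ and $(X_\alpha\langle j\rangle)$: your uniqueness-of-lifts argument (vanishing of $H^{j-1}(X_\gamma;\pi_jX_\alpha)$ for $(j-1)$-connected $X_\gamma$) is correct and is what makes the coherence condition in pro-H-CW go through; it would strengthen the write-up to state this explicitly rather than leave it parenthetical. With these two points spelled out, your proof is complete; the observation that the new bonding maps remain trivial on $\pi_{j+1},\dots,\pi_{n-1}$ (because the covering projections are isomorphisms on $\pi_k$ for $k>j$) is the remaining routine check that keeps the induction running.
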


Somewhat different statements of this theorem were presented in  \cite{artin-mazur}, 
\cite{mard-ungar}, and \cite{raussen}.

Kuperberg \cite{kuper-1} proved the following theorem, which is a special case of a more general  
result due to Artin and Mazur~\cite{artin-mazur}.

\begin{theorem}\label{th-kuperberg}
If a compact metrizable space $(X, *)$ is shape $(n-1)$-connected for $n\geqslant 2$, then 
the Hurewicz spectral homomorphism $\check{\phi}_n \colon  \check{\pi}_n(X,*) \to \check{H}_n(X)$ 
is an isomorphism. 
\end{theorem}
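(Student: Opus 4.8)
The plan is to reduce the statement to the pro-homotopy version of Hurewicz' theorem recorded above and then to pass to inverse limits. Recall the setup: since $(X,*)$ is compact metrizable, it admits an associated ANR-system which may be taken to be an inverse sequence $((X_m,*),\mathbf{p}_{m\,m+1})$, $m\in\mathbb{N}$, of pointed CW-complexes; by definition $\pro-\pi_k(X,*)=(\pi_k(X_m,*),\mathbf{p}_{mm'\#},\mathbb{N})$, the shape group $\check{\pi}_k(X,*)$ is its inverse limit, the homology pro-groups $\pro-H_k(X)$ and their inverse limits $\check{H}_k(X)$ are defined analogously, and the spectral Hurewicz homomorphism is $\check{\phi}_n=\lim(\pro-\phi_n)$. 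The hypothesis that $(X,*)$ be shape $(n-1)$-connected is precisely $\pro-\pi_k(X,*)=0$ for $0\leqslant k\leqslant n-1$, which is exactly the hypothesis of the pro-Hurewicz theorem above.

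With this in hand the argument is short. First I would apply that theorem: among its conclusions is that $\pro-\phi_n\colon\pro-\pi_n(X,*)\to\pro-H_n(X)$ is an isomorphism in the pro-category of pro-groups (of abelian pro-groups, since $n\geqslant 2$). The inverse limit is a covariant functor from that pro-category to $\Grp$, and every functor preserves isomorphisms: applying $\lim$ to a two-sided inverse of $\pro-\phi_n$ produces a two-sided inverse of $\lim(\pro-\phi_n)$. Hence $\check{\phi}_n=\lim(\pro-\phi_n)\colon\check{\pi}_n(X,*)\to\check{H}_n(X)$ is an isomorphism, as claimed. No $\lim^{1}$-type correction is involved, because for an isomorphism of pro-objects one uses only functoriality, never a derived functor; in fact the compactness and metrizability of $X$ enter here only to guarantee the clean description of $\check{\pi}_n$ and $\check{H}_n$ as the limits of the corresponding pro-groups.

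If instead one wants a self-contained proof that does not quote the pro-Hurewicz theorem, the natural route is the classical one. Starting from the countable associated sequence $((X_m,*),\mathbf{p}_{m\,m+1})$ and using the vanishing of $\pro-\pi_k$ for $k\leqslant n-1$, one replaces it, up to isomorphism in the pro-homotopy category, by an inverse sequence of genuinely $(n-1)$-connected pointed CW-complexes --- attaching cells to kill the low-dimensional homotopy while choosing the new bonding maps compatibly. On such a sequence the classical Hurewicz theorem yields a level isomorphism $\pi_n(X_m,*)\cong H_n(X_m)$, so $\pro-\phi_n$ becomes a level isomorphism of pro-groups, and passing to inverse limits (invoking the continuity of \v Cech homology, which is where compactness of $X$ is needed) finishes the proof. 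The main obstacle, in either approach, is precisely this reshaping of the inverse sequence into one of highly connected complexes with coherent bonds --- it is the technical heart of the pro-Hurewicz theorem itself --- whereas the concluding passage to limits is purely formal.
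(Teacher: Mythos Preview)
Your argument is correct. The paper does not actually give a proof of this theorem; it merely attributes the result to Kuperberg (and to Artin--Mazur in greater generality) and moves on. What you do---invoke the pro-Hurewicz theorem stated immediately before, obtaining that $\pro-\phi_n$ is an isomorphism in the pro-category of (abelian) groups, and then apply the inverse-limit functor, which like any functor preserves isomorphisms---is exactly the natural deduction, and it is valid as written. Your remark that no $\lim^1$ correction is needed is apt: you are not computing the limit of an exact sequence, only transporting an isomorphism along a functor. The alternative ``self-contained'' route you sketch (replacing the tower by one of genuinely $(n-1)$-connected complexes and applying the classical Hurewicz levelwise) is essentially how Kuperberg's original argument proceeds, so both of your suggestions are on target.
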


For movable compact metrizable spaces $(X,*)$, the assumption that $(X,*)$ is shape 
$(n-1)$-connected can be replaced by the weaker assumption $\check{\pi}_{k}(X, *) = 0$, 
$k \leqslant n-1$ (see~\cite{kuper-1}).

\begin{theorem}\label{th-hur-mov}
Let $(X, *)$ be a movable compact metrizable space such that $\check{\pi}_k(X, *) = 0$ for 
all $k \leqslant n-1$, $n \geqslant 2$. Then $\check{\phi}_n \colon  \check{\pi}_n(X,*) \to 
\check{H}_n(X)$ is an isomorphism. 
\end{theorem}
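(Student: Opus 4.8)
The plan is to reduce the statement to Theorem~\ref{th-kuperberg} by using movability to upgrade the hypothesis from vanishing of the \emph{inverse limit}, $\check{\pi}_k(X,*)=0$, to pro-triviality of the whole \emph{pro-group}, $\pro-\pi_k(X,*)=0$, for every $k\leqslant n-1$. Once that upgrade is in place, $(X,*)$ is shape $(n-1)$-connected in the sense of Theorem~\ref{th-kuperberg}, and that theorem delivers at once that $\check{\phi}_n\colon\check{\pi}_n(X,*)\to\check{H}_n(X)$ is an isomorphism. (Alternatively one could rerun Kuperberg's argument directly under the weaker hypothesis, but the reduction below is shorter and isolates exactly where movability enters.)

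First I would fix an associated ANR-system for $(X,*)$ which is an inverse \emph{sequence} $((X_m,*),\mathbf{p}_{mm'},\mathbb{N})$; this is legitimate because $X$ is compact metrizable (Marde\v{s}i\'{c}--Segal). Since movability is preserved under functorial passages, each homotopy pro-group $\pro-\pi_k(X,*)=(\pi_k(X_m,*),\mathbf{p}_{mm'\#},\mathbb{N})$ is a movable pro-group and hence satisfies the Mittag-Leffler condition: for every $m$ there is $m'\geqslant m$ with $p_{mm''\#}(\pi_k(X_{m''},*))=p_{mm'\#}(\pi_k(X_{m'},*))=:I_m^{(k)}$ for all $m''\geqslant m'$.

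The key step is the purely pro-algebraic lemma that \emph{a pro-group over $\mathbb{N}$ satisfying the Mittag-Leffler condition whose inverse limit is trivial is pro-isomorphic to the trivial pro-group}. To prove it, observe that the stable images $I_m^{(k)}\subseteq\pi_k(X_m,*)$ form an inverse sequence with surjective bonding maps (restrict $p_{m,m+1\#}$ and use cofinality in $\mathbb{N}$ to pick a common large index), and that $\lim(I_\bullet^{(k)})=\lim\pro-\pi_k(X,*)=\check{\pi}_k(X,*)=0$; but a surjective inverse sequence of groups over $\mathbb{N}$ with trivial limit has all terms trivial, since the projection of the limit onto each term is onto by the standard step-by-step choice of preimages — an argument that does not use commutativity, which is precisely what is needed for the non-abelian case $k=1$. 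Hence $I_m^{(k)}=0$ for all $m$, so for each $m$ there is $m'\geqslant m$ with $p_{mm'\#}=0$, i.e.\ $\pro-\pi_k(X,*)=0$.

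The main obstacle is exactly this lemma: one must handle the non-abelian pro-group $\pro-\pi_1(X,*)$ carefully and must know that the index set can be taken sequential. The second point is guaranteed for compact metrizable $X$, and the first goes through because the surjectivity-of-the-limit argument is choice-theoretic, not algebraic. Applying the lemma to $\pro-\pi_k(X,*)$ for every $k\leqslant n-1$ (using the hypothesis $\check{\pi}_k(X,*)=0$) shows that $(X,*)$ is shape $(n-1)$-connected, and Theorem~\ref{th-kuperberg} then finishes the proof.
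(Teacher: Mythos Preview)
Your proposal is correct and follows essentially the approach implicit in the paper: the paper does not spell out a proof of Theorem~\ref{th-hur-mov} but presents it immediately after Theorem~\ref{th-kuperberg} with the remark that movability allows one to weaken shape $(n-1)$-connectedness to $\check{\pi}_k(X,*)=0$, and elsewhere (Theorem~\ref{th-mov-iso} and the Mittag-Leffler discussion) supplies exactly the tool you invoke --- that for movable metrizable compacta, triviality of the shape group forces triviality of the homotopy pro-group. Your direct verification of the pro-algebraic lemma via stable images is precisely the standard argument underlying that tool, so there is no substantive difference between your route and the paper's intended one.
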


The movability assumption in this theorem is essential. Indeed, let $X$ be the 2-fold suspension 
of the 3-adic solenoid. The space $X$ is connected, and $\check{\pi}_k(X, *) = 0$ for all $k 
\leqslant 3$. However,  $\check{\pi}_4(X, *)$ is not isomorphic to $\check{H}_4(X)$, because 
$\check{\pi}_4(X, *) \neq 0$, while $\check{H}_4(X)=0$ (see~\cite{kuper-anote}).

The metrizability assumption in Theorem~\ref{th-hur-mov} is essential as well. 
Keesling \cite{keesling-12} proved that, for any $n\geqslant 2$, there exists a nonmetrizable 
movable continuum $(X,*)$ such that $\check{\pi}_k(X, *) = 0$ for all $k \leqslant n-1$ 
but $\check{\pi}_n(X, *)$ is not isomorphic to $\check{H}_n(X)$.

In papers  \cite{porter-achech} by Porter,  \cite{mard-ungar} by Marde\v{s}i\'{c} and Ungar, and 
 \cite{ungar} by Ungar Theorem~\ref{th-kuperberg} was proved for pairs $(X, A)$ (see also 
\cite{morita-2} and \cite{watanabe-anote}). A Hurewicz theorem with an application of Steenrod 
homology is due to Kodama and Koyama~\cite{kodama-koyama}.

It should be mentioned that the first spectral Hurewicz theorem was proved by Christie 
\cite{christie} but only in strong shape theory. Subsequently, similar 
theorems with an application of Steenrod homology were proved in \cite{lisica}, 
\cite{kodama-koyama}, and \cite{quigley}.

\section{The Shape Dimension of Spaces and Maps}

The dimension of topological spaces is not a shape invariant. For example, a contractible space, 
which may have arbitrarily large dimension,  has the shape of a point, which is zero-dimensional. 
This leads to the necessity of introducing a numerical shape 
invariant playing the same role in shape theory as dimension plays in 
topology. Such an invariant, called \emph{fundamental dimension}, was introduced by Borsuk 
\cite{borsuk-2}, who defined it first for compact metrizable spaces.

\begin{definition}\label{def-4}
The \emph{fundamental dimension} of a compact metrizable space $X$, denoted by $\Fd X$, is 
defined as the least dimension $\dim Y$ of a compact metrizable space $Y$ such that 
$\sh Y \geqslant \sh X$: 
\[ 
\Fd X= \min \{\dim Y: \sh Y \geqslant \sh X\}.
\] 
\end{definition}

The fundamental dimension of compact metrizable spaces was thoroughly studied by Nov\'ak 
\cite{nowak-0, nowak-2, nowak, nowak-4, nowak-5, nowak-1, nowak-3} and  
Spie\.z~\cite{spiez-1,spiez-2}.

For arbitrary topological spaces, Dydak \cite{dydak} introduced the notion of \emph{deformation 
dimension}; its definition in a slightly modified form is as follows. 

\begin{definition}
A topological space $X$ has \emph{deformation dimension} $\ddim X \leqslant n$ if any map $f\colon 
X\to P$ to an ANR $P$ can be homotopically factored through an ANR $P'$ of dimension $\dim 
P'\leqslant n$, i.e., $f\simeq h\circ g$, where $g\colon X\to P'$ and $h\colon P' \to P$ are 
some maps. 
\end{definition}

Marde\v{s}i\'{c} and Segal \cite{mard-seg-4} defined \emph{shape dimension} as follows.

\begin{definition}\label{def-2}
A topological space $X$ has \emph{shape dimension} $\sd X\leqslant n$ if there exists an associated 
inverse ANR-system $\mathbf{X}=\{ X_\alpha , \mathbf{p}_{\alpha\alpha '}, A \}$ such that 
each $X_\alpha$, $\alpha \in A$, is homotopy dominated by an ANR of dimension $ \leqslant n$.

If $\sd X\leqslant n$ and $n$ is the least such number, then we say that $X$ has \emph{shape 
dimension $n$} and write $\sd X = n$.

If $\sd X\leqslant n$ for no integer $n\geqslant 0$, then we say that $X$ 
has \emph{infinite shape dimension} and write $\sd X = \infty$. 
\end{definition}

The following theorem is valid  \cite{dydak}.

\begin{theorem}\label{th-1}
A topological space $X$ has shape dimension $\sd X\leqslant n$ if and only if there exists 
an associated inverse ANR-system $\mathbf{X}=\{ X_\alpha , \mathbf{p}_{\alpha\alpha 
'}, A \}$ such that $\dim X_\alpha \leqslant n$ for all $\alpha \in A$. 
\end{theorem}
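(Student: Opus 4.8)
The plan is to prove the two implications separately, with the nontrivial direction being the necessity of the existence of an ANR-system with all terms of dimension $\leqslant n$.

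First I would dispose of the easy direction. Suppose there is an associated inverse ANR-system $\mathbf{X}=\{X_\alpha,\mathbf{p}_{\alpha\alpha'},A\}$ with $\dim X_\alpha\leqslant n$ for all $\alpha$. Since every ANR is homotopy dominated by itself, each $X_\alpha$ is homotopy dominated by an ANR (namely $X_\alpha$) of dimension $\leqslant n$, so Definition~\ref{def-2} gives $\sd X\leqslant n$ immediately.

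For the converse, assume $\sd X\leqslant n$, so by Definition~\ref{def-2} there is an associated ANR-system $\mathbf{X}=\{X_\alpha,\mathbf{p}_{\alpha\alpha'},A\}$ in which each $X_\alpha$ is homotopy dominated by an ANR $Q_\alpha$ with $\dim Q_\alpha\leqslant n$; fix domination data $u_\alpha\colon Q_\alpha\to X_\alpha$ and $v_\alpha\colon X_\alpha\to Q_\alpha$ with $u_\alpha v_\alpha\simeq 1_{X_\alpha}$. The idea is to replace each $X_\alpha$ by the corresponding $Q_\alpha$ while keeping the bonding structure coherent, i.e.\ to build a new inverse system $\mathbf{Q}=\{Q_\alpha,\mathbf{q}_{\alpha\alpha'},A\}$ over the same index set with $\mathbf{q}_{\alpha\alpha'}=\mathbf{v}_\alpha\,\mathbf{p}_{\alpha\alpha'}\,\mathbf{u}_{\alpha'}$, and homotopy classes $\mathbf{q}_\alpha=\mathbf{v}_\alpha\,\mathbf{p}_\alpha\colon X\to Q_\alpha$. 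One checks that the $\mathbf{q}_{\alpha\alpha'}$ are compatible (using $u_{\alpha'}v_{\alpha'}\simeq 1$ to collapse the middle terms in a composite $\mathbf{q}_{\alpha\alpha''}$ versus $\mathbf{q}_{\alpha\alpha'}\mathbf{q}_{\alpha'\alpha''}$), and that $\mathbf{q}_{\alpha\alpha'}\mathbf{q}_{\alpha'}=\mathbf{q}_\alpha$. It then remains to verify the three defining conditions of Definition~\ref{def-3} for $\mathbf{Q}$ with these projections: condition (i) is the compatibility just noted; for (ii), given $\mathbf{f}\colon X\to P$ with $P$ an ANR, apply condition (ii) for $\mathbf{X}$ to get $\mathbf{h}_\alpha\colon X_\alpha\to P$ with $\mathbf{h}_\alpha\mathbf{p}_\alpha=\mathbf{f}$, and then $(\mathbf{h}_\alpha\mathbf{u}_\alpha)\mathbf{q}_\alpha=\mathbf{h}_\alpha\mathbf{u}_\alpha\mathbf{v}_\alpha\mathbf{p}_\alpha=\mathbf{h}_\alpha\mathbf{p}_\alpha=\mathbf{f}$; for (iii), a relation $\boldsymbol{\varphi}\mathbf{q}_\alpha=\boldsymbol{\psi}\mathbf{q}_\alpha$ for $\boldsymbol{\varphi},\boldsymbol{\psi}\colon Q_\alpha\to P$ pulls back along $\mathbf{v}_\alpha$ to $\boldsymbol{\varphi}\mathbf{v}_\alpha\mathbf{p}_\alpha=\boldsymbol{\psi}\mathbf{v}_\alpha\mathbf{p}_\alpha$, and condition (iii) for $\mathbf{X}$ then yields $\alpha'\geqslant\alpha$ with $\boldsymbol{\varphi}\mathbf{v}_\alpha\mathbf{p}_{\alpha\alpha'}=\boldsymbol{\psi}\mathbf{v}_\alpha\mathbf{p}_{\alpha\alpha'}$, from which $\boldsymbol{\varphi}\mathbf{q}_{\alpha\alpha'}=\boldsymbol{\psi}\mathbf{q}_{\alpha\alpha'}$ follows after composing with $\mathbf{u}_{\alpha'}$ and using $\mathbf{u}_{\alpha'}\mathbf{v}_{\alpha'}\simeq 1$. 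Thus $\mathbf{Q}$ is an associated ANR-system for $X$ whose terms are the $Q_\alpha$, all of dimension $\leqslant n$, as required.

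The main obstacle I expect is verifying condition (iii) cleanly: the naive pullback-pushforward argument only controls $\boldsymbol{\varphi}\mathbf{u}_{\alpha'}\mathbf{v}_{\alpha'}$, not $\boldsymbol{\varphi}$ itself, so one has to be careful that composing back with $\mathbf{u}_{\alpha'}$ genuinely recovers the relation for the new bonding map, and that the index $\alpha'$ supplied by (iii) for $\mathbf{X}$ is legitimate in the new system (it is, since the index set is unchanged). A secondary point worth stating explicitly is that the $Q_\alpha$ really are objects of $\mathrm{H}$-$\mathrm{CW}$ of dimension $\leqslant n$; here one should note, if needed, that an ANR of covering dimension $\leqslant n$ has the homotopy type of an $n$-dimensional CW-complex, so that the resulting system lies in pro-$\mathrm{H}$-$\mathrm{CW}$ with the dimension bound, which is exactly what Definition~\ref{def-2} and the statement demand. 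Alternatively one can invoke the deformation-dimension characterization from Dydak's paper and avoid the CW-approximation step entirely.
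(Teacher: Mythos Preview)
Your proposal is correct and matches the paper's approach exactly: replace each $X_\alpha$ by the dominating ANR (your $Q_\alpha$, the paper's $Y_\alpha$), conjugate the bonding maps and projections by the domination data, and verify conditions (i)--(iii) of Definition~\ref{def-3} directly. Your worry about condition~(iii) is unfounded --- once you have $\boldsymbol{\varphi}\mathbf{v}_\alpha\mathbf{p}_{\alpha\alpha'}=\boldsymbol{\psi}\mathbf{v}_\alpha\mathbf{p}_{\alpha\alpha'}$, right-composing with $\mathbf{u}_{\alpha'}$ already yields $\boldsymbol{\varphi}\mathbf{q}_{\alpha\alpha'}=\boldsymbol{\psi}\mathbf{q}_{\alpha\alpha'}$ straight from the definition $\mathbf{q}_{\alpha\alpha'}=\mathbf{v}_\alpha\mathbf{p}_{\alpha\alpha'}\mathbf{u}_{\alpha'}$, with no appeal to $\mathbf{u}_{\alpha'}\mathbf{v}_{\alpha'}\simeq 1$ needed.
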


\begin{proof}
Let $\sd X\leqslant n$. Consider an associated inverse ANR-system $\mathbf{X}=\{ X_\alpha , 
\mathbf{p}_{\alpha\alpha '}, A \}$ satisfying the condition in Definition~\ref{def-2}. This means 
that, for each space $X_\alpha$, there exists an ANR $Y_\alpha$ of dimension $\dim Y_\alpha 
\leqslant n$ and homotopy classes $\mathbf{f}_\alpha \colon  X_\alpha \to Y_\alpha$ and 
$\mathbf{g}_\alpha \colon  Y_\alpha \to X_\alpha$ such that
\begin{equation}\label{eq_1}
\mathbf{g}_\alpha \mathbf{f}_\alpha = \mathbf{1}_{X_\alpha}.
\end{equation}

For any $\alpha, \alpha ' \in A$, $\alpha \leqslant \alpha '$, we define a homotopy class 
$\mathbf{q}_{\alpha\alpha'}\colon Y_{\alpha '} \to Y_{\alpha}$ by 
\begin{equation}\label{eq_2}
\mathbf{q}_{\alpha\alpha'} = \mathbf{f}_\alpha  \mathbf{p}_{\alpha\alpha '} \mathbf{g}_{\alpha '}.
\end{equation}
Note that if $\alpha \leqslant \alpha ' \leqslant \alpha''$, then
\begin{equation}
\mathbf{q}_{\alpha\alpha'} \mathbf{q}_{\alpha'\alpha''} = \mathbf{q}_{\alpha\alpha''}.
\end{equation}
Indeed, according to \eqref{eq_1} and \eqref{eq_2}, we have
\[
\mathbf{q}_{\alpha\alpha'} \mathbf{q}_{\alpha'\alpha''} = \mathbf{f}_\alpha \mathbf{p}_{\alpha\alpha '} \mathbf{g}_{\alpha '}\mathbf{f}_\alpha' \mathbf{p}_{\alpha'\alpha ''} \mathbf{g}_{\alpha ''}=
\mathbf{f}_\alpha \mathbf{p}_{\alpha\alpha '} \mathbf{1}_{X_{\alpha'}} \mathbf{p}_{\alpha'\alpha ''} \mathbf{g}_{\alpha ''} = \mathbf{f}_\alpha \mathbf{p}_{\alpha\alpha ''} \mathbf{g}_{\alpha ''} = \mathbf{q}_{\alpha\alpha''}.
\]

Thus, we have constructed an inverse ANR $\mathbf{Y}=\{ Y_\alpha , \mathbf{q}_{\alpha\alpha 
'}, A \}$, where $\dim Y_\alpha \leqslant n$ for all $\alpha \in A$.  Let us prove that this 
inverse system is associated with the space $X$ (see Definition~\ref{def-3}).

Let $\alpha \in A$ be any index. Consider the homotopy class $\mathbf{q}_\alpha \colon  X \to 
Y_\alpha$ defined by
\begin{equation}\label{eq_3} 
\mathbf{q}_{\alpha} = \mathbf{f}_\alpha \mathbf{p}_{\alpha}. 
\end{equation} 
This homotopy class satisfies the condition 
\begin{equation}\label{eq_4}
\mathbf{q}_{\alpha \alpha '} \mathbf{q}_{\alpha'} = \mathbf{q}_{\alpha},
\end{equation}
where $\alpha \leqslant \alpha '$.
Indeed, by virtue of \eqref{eq_1}, \eqref{eq_2}, and \eqref{eq_3} we have
\[
\mathbf{q}_{\alpha \alpha '} \mathbf{q}_{\alpha'} = \mathbf{f}_\alpha \mathbf{p}_{\alpha\alpha '} \mathbf{g}_{\alpha '}   \mathbf{f}_{\alpha'} \mathbf{p}_{\alpha'} = \mathbf{f}_\alpha \mathbf{p}_{\alpha\alpha '} \mathbf{1}_{X_{\alpha'}}  \mathbf{p}_{\alpha'} = \mathbf{f}_\alpha \mathbf{p}_{\alpha} = \mathbf{q}_{\alpha}.
\]

Now, let us verify condition (ii) in Definition~\ref{def-3}. Let  $P$ be any ANR, and let 
$\mathbf{f}\colon X\to P$ be any homotopy class. Then there exists an index $\alpha \in A$ and 
a homotopy class $\mathbf{h}_\alpha \colon  X_\alpha \to P$ such that 
\begin{equation}\label{eq-6} 
\mathbf{h}_\alpha \mathbf{p}_\alpha = \mathbf{f}. 
\end{equation} 
Consider $\tilde{\mathbf{h}}_\alpha\colon Y_\alpha \to P$ defined by 
\begin{equation}\label{eq-5} 
\tilde{\mathbf{h}}_\alpha = \mathbf{h}_\alpha \mathbf{g}_\alpha. 
\end{equation} 
We claim that $\tilde{\mathbf{h}}_\alpha \mathbf{q}_\alpha = \mathbf{f}$. 
Indeed, taking into account \eqref{eq_1}, \eqref{eq_3}, \eqref {eq-6}, and \eqref {eq-5}, we obtain 
\[ 
\tilde{\mathbf{h}}_\alpha \mathbf{q}_\alpha = \mathbf{h}_\alpha \mathbf{g}_\alpha 
\mathbf{f}_\alpha \mathbf{p}_{\alpha} = \mathbf{h}_\alpha \mathbf{1}_{X_\alpha} \mathbf{p}_{\alpha} 
= \mathbf{h}_\alpha \mathbf{p}_{\alpha} = \mathbf{f}. 
\]

It remains to verify condition (iii) in Definition~\ref{def-3}. Suppose that 
$\boldsymbol{\varphi}, \boldsymbol{\psi} \colon Y_\alpha \to P$ are  homotopy classes such that 
\begin{equation} 
\boldsymbol{\varphi} \mathbf{q}_\alpha = \boldsymbol{\psi} \mathbf{q}_\alpha.
\end{equation}
By virtue of~\eqref{eq_3} this relation implies 
\begin{equation}
\boldsymbol{\varphi} \mathbf{f}_\alpha \mathbf{p}_\alpha = \boldsymbol{\psi} \mathbf{f}_\alpha \mathbf{p}_\alpha.
\end{equation}
Hence there exists an $\alpha' \in A$, $\alpha '  \geqslant \alpha$, for which 
\begin{equation}\label{eq-7}
\boldsymbol{\varphi} \mathbf{f}_\alpha \mathbf{p}_{\alpha\alpha'} = \boldsymbol{\psi} \mathbf{f}_\alpha \mathbf{p}_{\alpha\alpha'}.
\end{equation}
Relations \eqref{eq-2} and \eqref{eq-7} directly imply
\[
\boldsymbol{\varphi} \mathbf{q}_{\alpha\alpha'} =  \boldsymbol{\varphi} \mathbf{f}_\alpha \mathbf{p}_{\alpha\alpha '} \mathbf{g}_{\alpha '} =  \boldsymbol{\varphi} \mathbf{f}_\alpha \mathbf{p}_{\alpha\alpha'} \mathbf{g}_{\alpha '} = \boldsymbol{\psi} \mathbf{f}_\alpha \mathbf{p}_{\alpha\alpha'} \mathbf{g}_{\alpha '} =
\boldsymbol{\psi} \mathbf{f}_\alpha \mathbf{p}_{\alpha\alpha '} \mathbf{g}_{\alpha '} = \boldsymbol{\psi} \mathbf{q}_{\alpha\alpha'}.
\]

The sufficiency part of the theorem is obvious.
\end{proof}

The following theorem \cite{mard-seg-4} shows that, for any topological space, the notions 
of deformation dimension and shape dimension are equivalent.

\begin{theorem}\label{th-2}
Let  $X$ be topological space. Then $\sd X \leqslant n$ if and only if $\ddim X \leqslant n$.
\end{theorem}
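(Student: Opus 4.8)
The plan is to prove the two implications separately, exploiting Theorem~\ref{th-1}, which already identifies $\sd X \leqslant n$ with the existence of an associated ANR-system all of whose terms have dimension $\leqslant n$. This reduces the task to relating the factorization property defining $\ddim X$ to the structure of a single associated ANR-system.

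First I would show that $\sd X \leqslant n$ implies $\ddim X \leqslant n$. Assume $\sd X \leqslant n$ and fix, by Theorem~\ref{th-1}, an associated inverse ANR-system $\mathbf{X}=\{X_\alpha,\mathbf{p}_{\alpha\alpha'},A\}$ with $\dim X_\alpha \leqslant n$ for all $\alpha$. Let $f\colon X\to P$ be a map to an ANR $P$. By condition (ii) in Definition~\ref{def-3} (the defining property of an associated system), there is an index $\alpha\in A$ and a homotopy class $\mathbf{h}_\alpha\colon X_\alpha\to P$ with $\mathbf{h}_\alpha\mathbf{p}_\alpha=\mathbf{f}$. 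Since each $X_\alpha$ is an ANR of dimension $\leqslant n$, this is precisely a homotopy factorization $f\simeq h_\alpha\circ p_\alpha$ of $f$ through an ANR of dimension $\leqslant n$, so $\ddim X\leqslant n$.

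For the converse, assume $\ddim X\leqslant n$ and start from \emph{any} associated ANR-system $\mathbf{X}=\{X_\alpha,\mathbf{p}_{\alpha\alpha'},A\}$ (for instance the one furnished by Morita's theorem). For each $\alpha$, apply the deformation hypothesis to the map $p_\alpha\colon X\to X_\alpha$: there is an ANR $P'_\alpha$ with $\dim P'_\alpha\leqslant n$, together with homotopy classes $\mathbf{g}_\alpha\colon X\to P'_\alpha$ and $\mathbf{h}_\alpha\colon P'_\alpha\to X_\alpha$ such that $\mathbf{h}_\alpha\mathbf{g}_\alpha=\mathbf{p}_\alpha$. I would then use condition (ii) of Definition~\ref{def-3} again, now applied to the map $\mathbf{g}_\alpha\colon X\to P'_\alpha$, to obtain an index $\alpha^*\geqslant\alpha$ and $\mathbf{k}_\alpha\colon X_{\alpha^*}\to P'_\alpha$ with $\mathbf{k}_\alpha\mathbf{p}_{\alpha^*}=\mathbf{g}_\alpha$; passing to a still larger index if necessary (using condition (iii)) one arranges that $\mathbf{h}_\alpha\mathbf{k}_\alpha\mathbf{p}_{\alpha^*}=\mathbf{p}_\alpha=\mathbf{p}_{\alpha\alpha^*}\mathbf{p}_{\alpha^*}$, and then by (iii) that $\mathbf{h}_\alpha\mathbf{k}_\alpha=\mathbf{p}_{\alpha\alpha''}$ for some $\alpha''\geqslant\alpha^*$, possibly after enlarging $\alpha''$. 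This exhibits $\mathbf{p}_{\alpha\alpha''}$ as factoring through the ANR $P'_\alpha$ of dimension $\leqslant n$, i.e.\ each bonding map of $\mathbf{X}$ is, cofinally, homotopy-dominated by such an ANR. One now extracts a cofinal subsystem of $\mathbf{X}$ indexed by a suitable directed subset, replaces each chosen level by the corresponding $P'_\alpha$ with bonding maps defined by the analogue of formula~\eqref{eq_2}, and checks the coherence relations exactly as in the proof of Theorem~\ref{th-1}; this yields an associated ANR-system with all terms of dimension $\leqslant n$, whence $\sd X\leqslant n$.

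The main obstacle is the second implication, and specifically the bookkeeping needed to pass from ``each $p_\alpha$ factors through an $n$-dimensional ANR'' to ``the whole associated system can be replaced by an $n$-dimensional one.'' The subtlety is that the dominations $\mathbf{h}_\alpha,\mathbf{g}_\alpha$ produced by the deformation hypothesis are chosen level by level and need not be compatible with the bonds $\mathbf{p}_{\alpha\alpha'}$; one must repeatedly invoke conditions (ii) and (iii) of Definition~\ref{def-3} to push indices upward until the factorizations become coherent, and then organize the resulting indices into a cofinal directed set so that the new system is genuinely associated with $X$. Once this cofinality-and-coherence argument is set up, the verification that the replacement system satisfies (i)--(iii) of Definition~\ref{def-3} is a routine diagram chase modeled verbatim on the computation in the proof of Theorem~\ref{th-1}.
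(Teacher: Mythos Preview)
Your argument for the implication $\sd X\leqslant n\Rightarrow\ddim X\leqslant n$ is correct and is exactly what the paper does.

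For the converse, your route differs substantially from the paper's, and the part you flag as ``routine'' is not. The paper does \emph{not} try to assemble the auxiliary ANRs $P'_\alpha$ into a new associated system. Instead it first replaces the associated system by one consisting of CW-complexes with \emph{cellular} bonding maps; then the $n$-skeleta $X_\alpha^n$ automatically form an inverse system with bonds $q_{\alpha\alpha'}=p_{\alpha\alpha'}|X_{\alpha'}^n$, so no coherence bookkeeping is needed at all. The deformation hypothesis is used only to manufacture the projections $q_\alpha\colon X\to X_\alpha^n$: factor $p_\alpha\simeq v_\alpha u_\alpha$ through some $P_\alpha$ with $\dim P_\alpha\leqslant n$, then cellularly approximate $v_\alpha$ so that it lands in $X_\alpha^n$, and set $q_\alpha=v_\alpha u_\alpha$. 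Checking that $\{X_\alpha^n,q_{\alpha\alpha'},A\}$ together with the $q_\alpha$ is associated with $X$ is then straightforward.

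The specific gap in your sketch is the appeal to ``the analogue of formula~\eqref{eq_2}.'' That formula in the proof of Theorem~\ref{th-1} works because one has a homotopy \emph{domination} $\mathbf{g}_\alpha\mathbf{f}_\alpha=\mathbf{1}_{X_\alpha}$, which is precisely what makes the composite bonds $\mathbf{q}_{\alpha\alpha'}=\mathbf{f}_\alpha\mathbf{p}_{\alpha\alpha'}\mathbf{g}_{\alpha'}$ satisfy $\mathbf{q}_{\alpha\alpha'}\mathbf{q}_{\alpha'\alpha''}=\mathbf{q}_{\alpha\alpha''}$. In your situation you only have a one-sided factorization $\mathbf{h}_\alpha\mathbf{g}_\alpha=\mathbf{p}_\alpha$ with $\mathbf{g}_\alpha$ defined on $X$, not on $X_\alpha$; there is no retraction available and hence no direct analogue of~\eqref{eq_2}. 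One can push through a construction of this type (essentially by reindexing over a cofinal set of pairs and using Morita's conditions (ii)--(iii) repeatedly to straighten the resulting diagrams), but it is a genuine pro-category argument, not the verbatim computation from Theorem~\ref{th-1}. The $n$-skeleton trick sidesteps all of this.
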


\begin{proof}
The implication $\sd X \leqslant n \implies \ddim X \leqslant n$ easily follows from 
Theorem~\ref{th-1}.

Let us prove the implication $\ddim X \leqslant n \implies \sd X \leqslant n$. Consider 
an inverse ANR-system $\{ X_\alpha , \mathbf{p}_{\alpha\alpha '}, A \}$ associated with $X$. 
In view of the  CW-approximation theorem,  we can 
assume without loss of generality that all $X_\alpha$ are CW-complexes and all  
$p_{\alpha\alpha '}$ are cell maps.

Since $\ddim X \leqslant n$, it follows that, for each map $p_\alpha \colon  X\to X_\alpha$, there 
exists a CW-complex $P_\alpha$ with $\dim P_\alpha \leqslant n$ and maps $u_\alpha \colon  
X\to P_\alpha$ and $v_\alpha \colon  P_\alpha \to X_\alpha$ such that $p_\alpha \simeq v_\alpha 
\circ u_\alpha$. In view of the CW-approximation theorem, we can also assume 
that $v_\alpha \colon  P_\alpha \to X_\alpha$ is a cell map, i.e., a map to the $n$-skeleton  
$X_\alpha^n \subset X_\alpha$.

Now, consider the inverse system $\{ X_\alpha^n, \mathbf{q}_{\alpha\alpha '}, A \}$, where  
each $X_\alpha^n$ is the $n$-skeleton of the CW-complex $X_\alpha$ and 
$q_{\alpha\alpha '}=p_{\alpha\alpha '}|X_{\alpha'}^n$. For each $\alpha \in A$, we define a 
map $q_\alpha \colon  X \to X_\alpha^n$ by $q_\alpha = v_\alpha  u_\alpha$. It is easy to 
show that this inverse ANR-system is associated with the space $X$. To complete the proof, it 
remains to apply Theorem~\ref{th-1}. 
\end{proof}

The following theorem is a corollary of Theorem~\ref{th-1}.

\begin{theorem}
For any topological space $X$, $\sd X \leqslant \dim X$.
\end{theorem}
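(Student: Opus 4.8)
The plan is to deduce this from Theorem~\ref{th-1} together with the construction of an associated ANR-system coming from normal locally finite open covers. First I would recall that, by the proof of Morita's theorem (the construction with nerves of normal locally finite open covers), the space $X$ admits an associated inverse ANR-system $\mathbf{X} = \{X_\alpha, \mathbf{p}_{\alpha\alpha'}, A\}$ in which each $X_\alpha$ is the nerve of a normal locally finite open cover $\mathcal{U}_\alpha$ of $X$. The key point is the classical fact that the nerve of an open cover of order $\leqslant n+1$ is a simplicial complex of dimension $\leqslant n$, and hence $\dim X_\alpha \leqslant n$ whenever the cover $\mathcal{U}_\alpha$ has order $\leqslant n+1$.

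The main step, then, is to show that if $\dim X \leqslant n$ then the covers in the associated system can be taken cofinally among those of order $\leqslant n+1$. This is exactly the definition of covering dimension: $\dim X \leqslant n$ means that every (normal, locally finite) open cover of $X$ admits a refinement which is a normal locally finite open cover of order $\leqslant n+1$. Thus the subfamily $A' \subseteq A$ of indices $\alpha$ for which $\mathcal{U}_\alpha$ has order $\leqslant n+1$ is cofinal in $A$, and restricting the associated system to $A'$ yields an associated inverse ANR-system $\{X_\alpha, \mathbf{p}_{\alpha\alpha'}, A'\}$ with $\dim X_\alpha \leqslant n$ for all $\alpha \in A'$. (That a cofinal restriction of an associated system is again associated is immediate from Definition~\ref{def-3}, since conditions (i)--(iii) are preserved under passing to a cofinal subset.)

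Having produced such a system, I would invoke Theorem~\ref{th-1}: the existence of an associated inverse ANR-system all of whose terms have dimension $\leqslant n$ gives $\sd X \leqslant n$. Taking $n = \dim X$ (and noting the statement is trivial when $\dim X = \infty$) yields $\sd X \leqslant \dim X$, as required.

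The part requiring the most care is the cofinality argument in the second paragraph: one must be slightly attentive to the fact that the index set $A$ in Morita's construction runs over \emph{all} normal locally finite open covers, ordered by refinement, so that a refinement of order $\leqslant n+1$ of a given cover indeed corresponds to an index $\geqslant$ the given one. One should also check that the subcomplex structure is not needed here — only the dimension bound $\dim X_\alpha \leqslant n$ on the nerves — and that the relation $\dim X \leqslant n$ is precisely the covering-dimension condition that makes the order-$(n+1)$ refinements cofinal. Everything else is bookkeeping already carried out in the proofs of the cited theorems.
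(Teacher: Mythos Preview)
Your argument is correct, and it takes a genuinely different route from the paper's. You work directly with Morita's nerve system and use the very definition of covering dimension (every normal locally finite open cover admits a normal locally finite open refinement of order $\leqslant n+1$) to see that the covers of order $\leqslant n+1$ are cofinal; their nerves have dimension $\leqslant n$, and a cofinal restriction of an associated system is still associated, so Theorem~\ref{th-1} applies. The paper instead starts from an arbitrary associated system of CW-complexes, uses cellular approximation to make the bonding maps cellular, passes to the system of $n$-skeleta $\{X_\alpha^n\}$, and invokes the homotopy characterization of $\dim X\leqslant n$ (every map $X\to X_\alpha$ is homotopic to one landing in $X_\alpha^n$) to produce the projections $q_\alpha$. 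Your approach is more elementary and self-contained, staying entirely within the combinatorics of covers and nerves; the paper's approach is independent of the particular choice of associated system and showcases the skeleton-restriction technique already used in the proof of Theorem~\ref{th-2}. The only point to make fully explicit in your write-up is that for general (possibly non-normal) spaces one is using Morita's formulation of $\dim$ in terms of normal covers, so that the required refinement is again a normal locally finite open cover and hence an index in $A$.
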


\begin{proof}
Let  $\dim X \leqslant n$, and let $\{ X_\alpha , \mathbf{p}_{\alpha\alpha '}, A \}$ be an 
associated inverse system consisting of CW-complexes $X_\alpha$. In view of  the 
 CW-approximation theorem, we can assume that all  $p_{\alpha\alpha '}$ are cell maps, 
i.e., $p_{\alpha\alpha '}(X_{\alpha'}^n)\subset X_\alpha^n$, where $X_\alpha^n$ is the 
$n$-skeleton of the CW-complex $X_\alpha$. Consider the inverse system $\{ X_\alpha^n , 
\mathbf{q}_{\alpha\alpha '}, A \}$, where $q_{\alpha\alpha'}=p_{\alpha\alpha '}|X_{\alpha'}$.  
Since $\dim X \leqslant n$, it follows that there exists a map $q_\alpha \colon  X \to X_\alpha^n$ 
such that $i\circ q_\alpha \simeq p_\alpha$, where $i\colon X_\alpha^n \to X_\alpha$ is the 
inclusion map. It is easy to prove that this system with limit projections $q_\alpha 
\colon X \to X_\alpha^n$, $\alpha \in A$, is associated with the space $X$. Now, applying 
Theorem~\ref{th-1}, we obtain $\sd X \leqslant n$. 
\end{proof}

\begin{theorem}\label{th-4}
If $X$ and $Y$ are any spaces and $\sh X \leqslant \sh Y$, then $\sd X \leqslant \sd Y$. 
\end{theorem}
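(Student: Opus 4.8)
The plan is to avoid inverse systems altogether and work with the deformation-dimension characterization of shape dimension supplied by Theorem~\ref{th-2}: since $\sd\,(\cdot)\leqslant n$ is equivalent to $\ddim(\cdot)\leqslant n$, and $\ddim$ is phrased purely as a factorization property of morphisms into ANRs, the monotonicity under shape domination should fall out by a formal chase in the shape category. Recall that $\sh X\leqslant\sh Y$ means precisely that $X$ is shape dominated by $Y$, i.e.\ there are shape morphisms $F\colon X\to Y$ and $G\colon Y\to X$ with $G\circ F=1_X$ in $\Sh\text{-}\mathrm{Top}$.

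First I would dispose of the trivial case $\sd Y=\infty$ and assume $\sd Y\leqslant n$ for some integer $n$; it then suffices to prove $\sd X\leqslant n$, and by Theorem~\ref{th-2} this amounts to $\ddim X\leqslant n$, while we may also use $\ddim Y\leqslant n$. So let $f\colon X\to P$ be an arbitrary map to an ANR $P$; the task is to factor $f$, up to homotopy, through an ANR of dimension $\leqslant n$. The idea is to transport $f$ to $Y$ along $G$, factor it there using $\ddim Y\leqslant n$, and then pull the factorization back along $F$, closing up with $G\circ F=1_X$.

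In detail: because $P$ is an ANR, the shape morphism $S(f)\circ G\colon Y\to P$ is induced by an honest continuous map $f_0\colon Y\to P$ (by the defining property of the shape functor, shape morphisms into an ANR are exactly homotopy classes of maps). Applying $\ddim Y\leqslant n$ to $f_0$ produces an ANR $P'$ with $\dim P'\leqslant n$ and maps $g'\colon Y\to P'$, $h'\colon P'\to P$ with $f_0\simeq h'\circ g'$, hence $S(f)\circ G=S(h')\circ S(g')$. Composing on the right with $F$ and using $G\circ F=1_X$ gives $S(f)=S(h')\circ S(g')\circ F$. Since $P'$ is an ANR, the shape morphism $S(g')\circ F\colon X\to P'$ is realized by a map $g_0\colon X\to P'$, so $S(f)=S(h')\circ S(g_0)=S(h'\circ g_0)$; the uniqueness clause for shape morphisms into ANRs then yields $f\simeq h'\circ g_0$. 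Thus every map from $X$ to an ANR factors homotopically through an ANR of dimension $\leqslant n$, so $\ddim X\leqslant n$, and Theorem~\ref{th-2} gives $\sd X\leqslant n$. Taking the infimum over all $n$ with $\sd Y\leqslant n$ yields $\sd X\leqslant\sd Y$.

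The only delicate point is the repeated use of the correspondence between shape morphisms into an ANR and homotopy classes of maps: both its surjectivity (to realize $S(f)\circ G$ and $S(g')\circ F$ by genuine maps $f_0$ and $g_0$) and its injectivity (to deduce $f\simeq h'\circ g_0$ from equality of the induced shape morphisms). Everything else is bookkeeping with the composition law in $\Sh\text{-}\mathrm{Top}$ and the relation $G\circ F=1_X$; in particular, no associated inverse systems need to be manipulated explicitly, since Theorem~\ref{th-2} has already converted the inverse-system definition of $\sd$ into the purely morphism-theoretic condition $\ddim\leqslant n$.
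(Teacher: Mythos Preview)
Your proof is correct and follows essentially the same approach as the paper: both reduce to the deformation-dimension characterization via Theorem~\ref{th-2}, transport $f\colon X\to P$ to $Y$ along $G$, factor there through a low-dimensional ANR, and pull back along $F$ using $G\circ F=1_X$. The only cosmetic difference is that the paper phrases the transport using Marde\v{s}i\'{c}'s description of a shape morphism $G\colon Y\to X$ as a functor $W^X\to W^Y$ (writing $G(f)$ and $F(u)$ directly), whereas you compose shape morphisms and invoke the ANR axiom to realize $S(f)\circ G$ and $S(g')\circ F$ by actual maps; these are two equivalent ways of saying the same thing.
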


\begin{proof}
Suppose that $\sh X \leqslant \sh Y$ and $\sd Y \leqslant n$. Consider any map $f\colon X \to P$ 
to an ANR $P$. By assumption there exist shape morphisms   $F\colon X \to Y$ and 
$G\colon Y \to X$ such that $G \circ F = 1_X$. To the map $f\colon X \to P$ the shape morphism $G$  
assigns a map $G(f)\colon Y \to P$ (see Definition~\ref{def-4}). 
Since $\sd Y \leqslant n$, it follows that there exists an ANR $P'$ with $\dim P' \leqslant 
n$ and maps $u\colon Y \to P'$ and $v\colon P' \to P$ such that $v \circ u \simeq G(f)$. But then 
$v \circ F(u) \simeq F(G(f))=f$. Therefore, according to Theorem~\ref{th-2}, we have $\sd X 
\leqslant n$. 
\end{proof}

\begin{corollary}\label{cor-1}
If $\sh X = \sh Y$, then $\sd X = \sd Y$.
\end{corollary}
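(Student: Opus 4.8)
The plan is to deduce the corollary immediately from Theorem~\ref{th-4} by exploiting the symmetry in the hypothesis $\sh X = \sh Y$. Recall that $\sh X = \sh Y$ means that $X$ and $Y$ are shape equivalent, so there are shape morphisms $F\colon X\to Y$ and $G\colon Y\to X$ with $G\circ F = 1_X$ and $F\circ G = 1_Y$. In particular this yields both $\sh X \leqslant \sh Y$ and $\sh Y \leqslant \sh X$ in the sense used in the statement and proof of Theorem~\ref{th-4}: the pair $(F,G)$ witnesses the first relation, and the pair $(G,F)$ witnesses the second.

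First I would apply Theorem~\ref{th-4} as stated: from $\sh X \leqslant \sh Y$ it gives $\sd X \leqslant \sd Y$. Then I would apply the same theorem with the roles of $X$ and $Y$ interchanged: from $\sh Y \leqslant \sh X$ it gives $\sd Y \leqslant \sd X$. Combining the two inequalities yields $\sd X = \sd Y$, which is exactly the claim.

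The main (and essentially only) point to be careful about is the harmless observation in the first paragraph, namely that a shape equivalence provides a shape domination in both directions, so that Theorem~\ref{th-4} is indeed applicable twice. There is no genuine obstacle here: all of the dimension-theoretic content already sits in Theorem~\ref{th-4} (and, through it, in Theorems~\ref{th-2} and~\ref{th-1}), and nothing further about pro-categories or associated ANR-systems needs to be examined to obtain the corollary.
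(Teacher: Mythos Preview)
Your proof is correct and is exactly the intended argument: the paper states the corollary immediately after Theorem~\ref{th-4} without further proof, since the two-directional application of that theorem to a shape equivalence is the obvious (and only) way to deduce it.
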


Thus, the dimension  $\sd $ is a shape invariant.

It follows from Corollary~\ref{cor-1}  that the shape dimension of a contractible space 
equals $0$. Thus, the shape dimension of the disk equals $0$. On the other hand, the shape 
dimension of the circle $S^1$ equals $1$. Therefore, the shape dimension $\sd X$ is not a monotone 
function of $X$. However, as  Theorem~\ref{th-4} show, it is a monotone function of the shape 
$\sh X$.

\begin{theorem}[Holszty\'{n}ski, see \cite{nowak-1}]\label{th-3}
Let $X$ be a compact metrizable space with $\sd X = n$. Then there exists a compact metrizable 
space $Y$ with $\dim Y =n$ such that $\sh X = \sh Y$. 
\end{theorem}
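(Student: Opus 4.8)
The plan is to realize the associated pro-H-CW object of $X$ as the limit of an honest inverse \emph{sequence} of compact polyhedra of dimension $\leqslant n$, and to take $Y$ to be that limit.

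First I would use that a compact metrizable space admits an associated inverse sequence $\{Q_k,\mathbf{q}_{k,k+1},\mathbb{N}\}$ of compact polyhedra (Marde\v{s}i\'{c}--Segal; see \cite{mard-seg-4}). Since $\sd X=n$, Theorem~\ref{th-2} gives $\ddim X=n$, so each limit projection $\mathbf{q}_k\colon X\to Q_k$ factors up to homotopy through an ANR of dimension $\leqslant n$; running the factorization-and-skeleton argument from the proof of Theorem~\ref{th-2} term by term, I obtain an associated inverse sequence $\mathbf{X}=\{X_k,\mathbf{p}_{k,k+1},\mathbb{N}\}$ of compact polyhedra with $\dim X_k\leqslant n$ for all $k$ (one may take $X_k$ to be the $n$-skeleton of $Q_k$, with $\mathbf{p}_{k,k+1}$ the class of the restricted cell map). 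Note that Theorem~\ref{th-1} already supplies an associated ANR-\emph{system} of dimension $\leqslant n$, but keeping a \emph{sequence} is essential, because the inverse limit taken below must stay metrizable.

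Next I would set $Y=\varprojlim\{X_k,p_{k,k+1},\mathbb{N}\}$, using continuous maps $p_{k,k+1}$ representing the homotopy classes $\mathbf{p}_{k,k+1}$. As a closed subspace of $\prod_k X_k$, the space $Y$ is compact metrizable. By the theorem of Marde\v{s}i\'{c} and Segal recalled in the Introduction --- a compact Hausdorff space that is the limit of an inverse system of compact ANRs has that ANR-system among its associated systems --- the sequence $\{X_k,\mathbf{p}_{k,k+1},\mathbb{N}\}$ is associated with $Y$. Since this same sequence is an associated ANR-system of $X$, and any two ANR-systems associated with a given space are isomorphic in pro-H-CW, the spaces $X$ and $Y$ have isomorphic images under the shape functor; hence $\sh X=\sh Y$. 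Finally, $\dim Y\leqslant\sup_k\dim X_k\leqslant n$ by the classical inequality $\dim(\varprojlim X_k)\leqslant\sup_k\dim X_k$ for inverse sequences of compact spaces, while Corollary~\ref{cor-1} gives $\sd Y=\sd X=n$; combining with the inequality $\sd Z\leqslant\dim Z$, valid for every topological space $Z$, we obtain $n=\sd Y\leqslant\dim Y\leqslant n$, so $\dim Y=n$.

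The step I expect to be the main obstacle is the second paragraph: turning the associated ANR-\emph{system} of dimension $\leqslant n$ furnished by Theorem~\ref{th-1} into an associated inverse \emph{sequence} of compact polyhedra of dimension $\leqslant n$. A directed index set need not contain a cofinal sequence, so the reduction must start from the sequence $\{Q_k\}$ provided by compact metrizability and then lower dimension through the factorization-and-skeleton device of Theorem~\ref{th-2}; one also has to verify directly that the resulting sequence satisfies conditions (i)--(iii) of Definition~\ref{def-3} relative to $X$, exactly as in that proof. Everything else is routine bookkeeping.
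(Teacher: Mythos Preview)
The paper does not actually supply a proof of this theorem; it only states the result with attribution to Holszty\'{n}ski and a reference to \cite{nowak-1}, then proceeds directly to its corollaries. So there is no in-paper argument to compare against.

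Your argument is correct and is essentially the standard one. The delicate point is exactly the one you flag: Theorem~\ref{th-1} gives an associated ANR-\emph{system} of dimension $\leqslant n$, but you need an associated inverse \emph{sequence} of \emph{compact} polyhedra of dimension $\leqslant n$ so that the inverse limit is compact metrizable. Your handling is right: start from a sequence $\{Q_k\}$ of compact polyhedra associated with $X$ (available because $X$ is compact metrizable), arrange the bonding maps to be cellular, and then the $n$-skeletons $Q_k^n$ --- still compact, since the $Q_k$ are finite complexes --- with the restricted bonding maps again form a sequence associated with $X$, by the same verification as in the proof of Theorem~\ref{th-2}. Choosing continuous representatives for the bonding maps is harmless for a sequence (consecutive bonds determine all compositions), the fact that a compact Hausdorff limit of compact ANRs has that ANR-system among its associated ones is exactly the observation recalled in the Introduction, and the chain $n=\sd Y\leqslant\dim Y\leqslant\sup_k\dim X_k\leqslant n$ closes the argument.
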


\begin{corollary}
Let $X$ be a compact metrizable space. Then $\sd X \leqslant n$ if and only if there exists a 
compact metrizable space $Y$ such that $\sh X \leqslant \sh Y$ and $\dim Y \leqslant n$. 
\end{corollary}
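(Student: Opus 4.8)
The plan is to derive both implications directly from results already established in the excerpt, so the argument is essentially a short synthesis. For the forward implication, I would assume $\sd X \leqslant n$. Since $X$ is compact metrizable and the hypothesis excludes the case $\sd X = \infty$, the number $m = \sd X$ is a well-defined non-negative integer with $m \leqslant n$. I would then invoke Theorem~\ref{th-3} (Holszty\'{n}ski) to produce a compact metrizable space $Y$ with $\dim Y = m$ and $\sh X = \sh Y$. In particular $\sh X \leqslant \sh Y$ and $\dim Y = m \leqslant n$, which is exactly the required conclusion. The only subtlety here is the preliminary reduction to finite shape dimension, i.e. observing that $\sd X \leqslant n$ forces $\sd X$ to be an honest integer $\leqslant n$, so that Holszty\'{n}ski's theorem is applicable.

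For the converse, suppose there is a compact metrizable space $Y$ with $\sh X \leqslant \sh Y$ and $\dim Y \leqslant n$. First I would apply the theorem stating that $\sd Z \leqslant \dim Z$ for every topological space $Z$ to obtain $\sd Y \leqslant \dim Y \leqslant n$. Then Theorem~\ref{th-4}, which asserts that $\sh X \leqslant \sh Y$ implies $\sd X \leqslant \sd Y$, immediately gives $\sd X \leqslant \sd Y \leqslant n$, completing the proof.

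Since every ingredient is already in hand, I do not expect a genuine technical obstacle: the whole content of the corollary is packaged into Theorem~\ref{th-3} (whose proof is not reproduced here but may be assumed) together with the monotonicity of $\sd$ under the shape preorder (Theorem~\ref{th-4}) and the bound $\sd \leqslant \dim$. The ``hard part,'' to the extent there is one, is purely bookkeeping: keeping the directions of the inequalities $\sh X \leqslant \sh Y$ and $\dim Y \leqslant n$ straight and matching them with the correct quoted theorem, and handling the degenerate case in the forward direction as noted above.
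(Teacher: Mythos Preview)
Your proof is correct and follows exactly the route the paper intends: the corollary is stated without proof immediately after Theorem~\ref{th-3}, and the intended derivation is precisely the combination of Theorem~\ref{th-3} for the forward direction with Theorem~\ref{th-4} and the inequality $\sd Y \leqslant \dim Y$ for the converse.
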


It  immediately follows from this corollary that $\sd X = \Fd X$ for any compact metrizable space 
$X$, i.e.,  Borsuk's notion of fundamental dimension (see Definition~\ref{def-4}) coincides with 
Marde\v si\'c's shape dimension (see Definition~\ref{def-2}).

\begin{remark}
Theorem \ref{th-3} shows that the condition $\sh Y 
\geqslant \sh X$ in the  definition of fundamental dimension (Definition~\ref{def-4}) can be 
replaced by $\sh Y = \sh X$. 
\end{remark}

The following theorem is due to Borsuk \cite{borsuk-2}.

\begin{theorem}
Let $X$ and $Y$ be compact metrizable spaces. Then
\begin{equation}\label{eq-8}
\sd (X\times Y)\leqslant \sd X + \sd Y.
\end{equation}
\end{theorem}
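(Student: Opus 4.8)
The plan is to deduce \eqref{eq-8} from the classical product inequality for covering dimension, by first replacing $X$ and $Y$ with finite-dimensional spaces of the same shape. If $\sd X=\infty$ or $\sd Y=\infty$ the inequality is vacuous, so assume both are finite. By Theorem~\ref{th-3} there is a compact metrizable space $X'$ with $\dim X'=\sd X$ and $\sh X'=\sh X$, and likewise a compact metrizable space $Y'$ with $\dim Y'=\sd Y$ and $\sh Y'=\sh Y$.

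The first real step is to promote these two shape equivalences to a shape equivalence of the products, i.e.\ to establish $\sh(X\times Y)=\sh(X'\times Y')$. For this I would pass to the inverse-system model: write $X=\lim\limits_{\longleftarrow}\{X_\alpha,p_{\alpha\alpha'},A\}$ and $Y=\lim\limits_{\longleftarrow}\{Y_\beta,q_{\beta\beta'},B\}$ as limits of inverse systems of compact ANRs, so that, by the fact recalled in the excerpt (the limit of an inverse system of compact ANRs carries an associated ANR-system), $\{X_\alpha,\mathbf{p}_{\alpha\alpha'},A\}$ is associated with $X$ and $\{Y_\beta,\mathbf{q}_{\beta\beta'},B\}$ with $Y$. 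Since a finite product of compact ANRs is again a compact ANR and $X\times Y=\lim\limits_{\longleftarrow}\{X_\alpha\times Y_\beta,p_{\alpha\alpha'}\times q_{\beta\beta'},A\times B\}$, the product system is associated with $X\times Y$, and similarly for $X'\times Y'$. A shape equivalence is precisely an isomorphism of associated systems in pro-H-CW, and the coordinatewise product of two such isomorphisms is again an isomorphism (with the product of the inverses as inverse). Applying this to an isomorphism $S(X)\to S(X')$ together with $1_{S(Y)}$, and then to $1_{S(X')}$ together with an isomorphism $S(Y)\to S(Y')$, yields $\sh(X\times Y)=\sh(X'\times Y)=\sh(X'\times Y')$. (Equivalently, one may work directly in Borsuk's category: if $(f_n)_{MN}\colon X\to X'$ and $(g_n)_{M'N'}\colon Y\to Y'$ are fundamental sequences, then $(f_n\times g_n)\colon M\times M'\to N\times N'$ is a fundamental sequence from $X\times Y$ to $X'\times Y'$, and this operation respects homotopy, composition and identities.)

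Once $\sh(X\times Y)=\sh(X'\times Y')$ is known, the proof closes by stringing together three facts already in the excerpt plus one classical theorem of dimension theory: by Corollary~\ref{cor-1}, $\sd(X\times Y)=\sd(X'\times Y')$; by the inequality $\sd Z\leqslant\dim Z$ proved above, $\sd(X'\times Y')\leqslant\dim(X'\times Y')$; and by the classical theorem on the covering dimension of a product of (compact) metrizable spaces, $\dim(X'\times Y')\leqslant\dim X'+\dim Y'$. Hence
\[
\sd(X\times Y)=\sd(X'\times Y')\leqslant\dim(X'\times Y')\leqslant\dim X'+\dim Y'=\sd X+\sd Y,
\]
which is exactly \eqref{eq-8}.

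The only ingredient that is neither a citation nor a formal consequence of the material in the excerpt is the functoriality of the cartesian product on the shape category (equivalently, the product of two shape equivalences of compact metrizable spaces is a shape equivalence), and this is where I expect the routine-but-careful bookkeeping to lie: one must check that the product of associated inverse systems of compact ANRs is again associated with the product space (using that products of compact ANRs are compact ANRs) and that products of pro-H-CW isomorphisms are isomorphisms. Everything after that is immediate.
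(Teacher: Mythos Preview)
The paper does not actually supply a proof of this theorem: it simply attributes the inequality to Borsuk \cite{borsuk-2} and moves on. So there is no proof in the paper to compare against.

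Your argument is correct and is essentially Borsuk's original one, phrased in the language of the survey. Borsuk works with Definition~\ref{def-4} of fundamental dimension, picks $X'$ and $Y'$ realizing $\Fd X$ and $\Fd Y$ with $\sh X'\geqslant \sh X$ and $\sh Y'\geqslant \sh Y$, observes that the product of shape dominations is a shape domination (via products of fundamental sequences, exactly as you sketch), and concludes $\Fd(X\times Y)\leqslant \dim(X'\times Y')\leqslant \dim X'+\dim Y'$. You instead invoke Theorem~\ref{th-3} to upgrade the dominations to shape equalities, which is a cosmetic difference; the substantive point in both cases is that the Cartesian product is functorial on the shape category of compact metrizable spaces, and your two justifications of this (product of associated compact-ANR inverse systems, or product of fundamental sequences) are exactly the standard ones. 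The remaining ingredients---$\sd Z\leqslant\dim Z$, Corollary~\ref{cor-1}, and the classical product inequality for $\dim$ on compact metrizable spaces---are all in the paper or classical, so there is no gap.
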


Nov\'ak \cite{nowak} showed that there exist compact connected polyhedra $P$ and $Q$ with $\dim P 
=m$ and $\dim Q = n$, where $m,n \geqslant 3$, such that $\dim (P\times Q)= \max \{m,n\}$. It 
follows that inequality~\eqref{eq-8} cannot  generally be 
replaced by the equality 
\begin{equation}\label{eq-9} 
\sd (X\times Y) = \sd X + \sd Y. 
\end{equation} 
Spie\.z \cite{spiez-1}, \cite{spiez-2} proved that there exists a metrizable 
continuum $X$ of shape dimension $\sd X=2$ such that 
\[ \sd (X\times S^n)< \sd X +n. \] 
In particular, for $n=1$, we obtain 
\begin{equation}\label{eq-10} 
\sd (X\times S^1) = \sd X =2. 
\end{equation} 
Moreover, if a continuum $X$ satisfies condition~\eqref{eq-10}, then 
\begin{equation} \sd (X\times Y) = \sd (S^1\times Y)
\end{equation}
for any continuum $Y$ with $\sd Y> 0$.

Nevertheless, the following theorem of Nov\'ak is valid \cite{nowak-3}.

\begin{theorem}
Let $X$ and $Y$ be compact metrizable spaces such that $\sd X \neq 2$, $\dim Y = n$,  and the  
cohomology group $H^n(Y,G)$ is nontrivial for any nontrivial Abelian group $G$. Then 
equality~\eqref{eq-9} holds. 
\end{theorem}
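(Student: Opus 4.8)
The inequality ``$\leqslant$'' in \eqref{eq-9} is exactly the inequality \eqref{eq-8} proved just above, so the whole content is the reverse bound $\sd(X\times Y)\geqslant \sd X+\sd Y$. The plan is to produce this bound cohomologically, using \v{C}ech cohomology with suitable coefficient groups as a shape invariant. First I would record the elementary reductions. Since \v{C}ech cohomology is continuous and, for a compact metrizable space, is computed from an associated ANR-system of polyhedra (Theorem~\ref{th-1}), one has $\check H^k(Z;G)=0$ for every $k>\sd Z$ and every coefficient group $G$; more generally the cohomological dimension satisfies $\dim_G Z\leqslant \sd Z$. Applying this to $Y$ with the given $G$ gives $n\leqslant \sd Y$, while $\sd Y\leqslant \dim Y=n$ by the theorem $\sd\leqslant\dim$; hence $\sd Y=n$, and it remains to show $\sd(X\times Y)\geqslant \sd X+n$. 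Write $m=\sd X$. If $m=0$ the claim is trivial, and if $m=\infty$ it follows from Theorem~\ref{th-4} applied to the shape inequality $\sh X\leqslant \sh(X\times Y)$ coming from a slice inclusion $X\hookrightarrow X\times\{y_0\}$ together with the projection $X\times Y\to X$. So assume $1\leqslant m<\infty$.

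Next I would pass to field coefficients, where a product formula for cohomological dimension is available: for compact metrizable spaces and any field $F$ one has $\dim_F(X\times Y)=\dim_F X+\dim_F Y$ (a packaged form of the \v{C}ech K\"unneth theorem, which also absorbs the relative-to-absolute bookkeeping). By the hypothesis on $Y$, $\check H^n(Y;F)\neq 0$ for every field $F$, so $\dim_F Y\geqslant n$; and by Aleksandrov's theorem $\dim_F Y\leqslant \dim_{\mathbb{Z}}Y=\dim Y=n$, hence $\dim_F Y=n$ for every field $F$. The crucial point is then to find a single field $F_0$ with $\dim_{F_0}X=m=\sd X$. Granting this, $\sd(X\times Y)\geqslant \dim_{F_0}(X\times Y)=\dim_{F_0}X+\dim_{F_0}Y=m+n$, which together with \eqref{eq-8} yields \eqref{eq-9}.

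The existence of such an $F_0$ is precisely where the hypothesis $\sd X\neq 2$ enters, and it is the main obstacle. It rests on the Bockstein structure theory of cohomological dimension: one must show that the shape dimension of a compactum is detected by its cohomological dimension over some prime field $\mathbb{Z}_p$ or over $\mathbb{Q}$, and the value $2$ is exactly the exceptional one — for a two-dimensional compactum the integral (equivalently $\mathbb{Z}_{p^\infty}$-) cohomological dimension can strictly exceed every field cohomological dimension, and then no K\"unneth argument over a field can reach $m+n$. This is the same mechanism that underlies Spie\.z's continuum with $\sd(X\times S^1)=\sd X=2$ in \eqref{eq-10}, which shows that the exclusion $\sd X\neq 2$ cannot be dropped.

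The remaining ingredients are classical and routine once Theorem~\ref{th-1} is in hand: continuity and shape-invariance of \v{C}ech cohomology, its vanishing above the shape dimension, the K\"unneth (product) formula over a field, and Aleksandrov's equality $\dim_{\mathbb{Z}}=\dim$ for finite-dimensional compacta. I expect essentially no difficulty in these; the entire weight of the proof sits on the field-realizability of $\sd X$ under the hypothesis $\sd X\neq 2$.
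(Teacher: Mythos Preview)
The paper does not give its own proof of this theorem; it is stated as a result of Nov\'ak with the citation~\cite{nowak-3} and no argument is supplied. So there is nothing in the paper to compare your attempt against.

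On the substance of your sketch, two points deserve comment. First, a slip: the inequality $\dim_G Z\leqslant \sd Z$ is false if $\dim_G$ denotes the usual (relative) cohomological dimension. For instance, $\dim_{\mathbb{Z}}D^2=2$ while $\sd D^2=0$. What is true, and what you use elsewhere, is the absolute statement $\check H^k(Z;G)=0$ for $k>\sd Z$; you should phrase the whole argument in terms of the top nonvanishing absolute \v Cech cohomology degree, not $\dim_G$, and then the K\"unneth step over a field goes through cleanly for compacta.

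Second, and more seriously, the step you yourself flag as ``the entire weight of the proof'' --- the existence of a field $F_0$ with $\check H^m(X;F_0)\neq 0$ where $m=\sd X\neq 2$ --- is asserted with an appeal to ``Bockstein structure theory'' but not argued. Bockstein's theory controls the (relative) cohomological dimension $\dim_G$, not the shape dimension, and these differ in general; bridging them is precisely the content of Nov\'ak's algebraic theory of fundamental dimension~\cite{nowak-0}, which introduces refined invariants (his coefficients of cyclicity) beyond field cohomology. Your outline correctly locates the difficulty and correctly explains why $\sd X=2$ is exceptional (witness Spie\.z's example~\eqref{eq-10}), but as it stands it reduces the theorem to a claim of roughly the same depth as the theorem itself. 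To turn this into a proof you would need to invoke or reprove the relevant parts of~\cite{nowak-0,nowak-3}.
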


Keesling  \cite{keesling-7} studied the shape dimension of topological groups; in particular, 
he proved the following theorem.

\begin{theorem}
If $X$ is a connected compact  group, then $\sd X = \dim X$.
\end{theorem}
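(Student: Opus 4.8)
The plan is as follows. Since the inequality $\sd X \leqslant \dim X$ holds for every topological space, it remains to prove $\sd X \geqslant \dim X$, and I would do this by exhibiting enough nonzero \v Cech cohomology of $X$. First I would invoke the Peter--Weyl theorem to write the compact connected group $X$ as an inverse limit $X = \lim_{\longleftarrow}\{X_\alpha, p_{\alpha\alpha'}, A\}$ in which every $X_\alpha = X/N_\alpha$ is a \emph{compact connected Lie group} and every bonding map $p_{\alpha\alpha'}\colon X_{\alpha'}\to X_\alpha$ is a \emph{surjective continuous homomorphism}. Since compact Lie groups are ANRs, this inverse system is an associated ANR-system for $X$, so for any coefficient group $A$ the \v Cech cohomology is $\check H^n(X;A) = \lim_{\longrightarrow}\{H^n(X_\alpha;A), p_{\alpha\alpha'}^{*}, A\}$.

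The second ingredient is a cohomological lower bound for shape dimension: \emph{if $\check H^n(X;A)\neq 0$ for some coefficient group $A$, then $\sd X \geqslant n$.} To see this I would use Theorem~\ref{th-1}: an associated ANR-system witnessing $\sd X \leqslant m$ consists of complexes $X_\alpha$ with $\dim X_\alpha \leqslant m$, for which $H^{k}(X_\alpha;A)=0$ when $k>m$; as \v Cech cohomology is a shape invariant equal to the direct limit of the $H^{k}(X_\alpha;A)$, this forces $\check H^{k}(X;A)=0$ for $k>m$. Thus $\sd X$ is at least the largest degree in which $\check H^{*}(X;A)$ is nonzero.

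It then suffices to show that $\check H^{m}(X;\mathbb{Q})\neq 0$ for $m=\dim X$ when $\dim X<\infty$, and for arbitrarily large $m$ when $\dim X=\infty$. The crucial step is that a surjective homomorphism $p\colon H\to G$ of compact connected Lie groups induces a \emph{monomorphism} on rational cohomology: writing $N=\ker p$ with identity component $N_0$, the structure theory of compact Lie groups splits $N_0$ off up to a finite subgroup, so $p$ factors as a finite covering followed by the projection of an almost-direct product onto a factor, and over $\mathbb{Q}$ each of these is injective on cohomology. Hence $H^{*}(X_\alpha;\mathbb{Q})$ embeds into $\check H^{*}(X;\mathbb{Q})$ for every $\alpha$. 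A compact connected Lie group of dimension $d$ is a closed orientable $d$-manifold, so $H^{d}(X_\alpha;\mathbb{Q})\cong\mathbb{Q}\neq 0$; since $\dim X=\sup_\alpha\dim X_\alpha$ for this inverse system, we get $\check H^{m}(X;\mathbb{Q})\neq 0$ with $m=\dim X$ (respectively, with $m$ unbounded), and therefore $\sd X\geqslant\dim X$. Together with $\sd X\leqslant\dim X$ this yields the theorem. For compact metrizable $X$ one could instead combine $\sd X=\Fd X$ with Theorem~\ref{th-3}, but the cohomological argument has the advantage of also covering the non-metrizable case; alternatively one may quote the dimension-theoretic fact that $\dim X$ equals the integral cohomological dimension of a compact group and feed $A=\mathbb{Z}$ into the same lower bound.

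The main obstacle is the monomorphism assertion for bonding maps in the third paragraph: it is the only place where the group structure of $X$ is genuinely used, and it is precisely what forces the top cohomology classes of the finite-dimensional quotients $X_\alpha$ to survive in the limit, so the infinite-dimensional case depends on it as well. A subsidiary point to verify is the equality $\dim X=\sup_\alpha\dim X_\alpha$ for the Peter--Weyl system, which is standard for compact groups but should be stated explicitly.
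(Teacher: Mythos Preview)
The paper does not supply its own proof of this theorem; it merely attributes the result to Keesling \cite{keesling-7} and moves on. So there is no in-paper argument to compare your proposal against.

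That said, your outline is correct and is essentially the standard route to this result. The three ingredients---(i) the Peter--Weyl presentation of a compact connected group as an inverse limit of compact connected Lie groups, which yields an associated ANR-system since the paper records that limits of compact ANR-systems are associated; (ii) the cohomological lower bound $\check H^{n}(X;A)\neq 0 \Rightarrow \sd X\geqslant n$, which you correctly derive from Theorem~\ref{th-1}; and (iii) the injectivity of $p^{*}$ on rational cohomology for a surjective homomorphism of compact connected Lie groups---combine to give $\sd X\geqslant \dim X$, while $\sd X\leqslant\dim X$ is already in the paper. Your justification of (iii) via the Lie-algebra splitting $\mathfrak h=\mathfrak g\oplus\mathfrak n$ for reductive algebras (hence $H^{*}(\mathfrak h;\mathbb Q)\cong H^{*}(\mathfrak g;\mathbb Q)\otimes H^{*}(\mathfrak n;\mathbb Q)$), together with the transfer for the residual finite cover, is the cleanest way to handle it. The equality $\dim X=\sup_\alpha\dim X_\alpha$ follows from continuity of dimension on compact inverse limits for $\leqslant$ and from the additivity $\dim X=\dim N_\alpha+\dim X_\alpha$ for closed normal subgroups of compact groups for $\geqslant$; both are standard but worth stating, as you note. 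Your proposal therefore constitutes a valid proof, and it is in the spirit of Keesling's original argument, which likewise exploits the cohomology of the Lie-group quotients.
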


The shape dimension of the Stone--\v Cech compactification $\beta X$ was studied in 
\cite{keesling-11}. We mention the following result.

\begin{theorem}
Let $X$ be a Lindel\"of space, and let $K\subset \beta X \backslash  X$ be a continuum 
in the Stone--\v Cech remainder of $X$. Then $\sd K = \dim K$. 
\end{theorem}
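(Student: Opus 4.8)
The bound $\sd K\leqslant\dim K$ is an instance of the theorem established above that $\sd Y\leqslant\dim Y$ for every topological space $Y$, so the whole content of the statement is the reverse inequality $\sd K\geqslant\dim K$. The plan is to squeeze $\sd K$ between the covering dimension and the integral cohomological dimension $\dim_{\mathbb Z}$, and then to invoke a purely dimension-theoretic property of Stone--\v{C}ech remainders of Lindel\"of spaces to conclude that the two dimensions of $K$ agree.

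First I would prove the general inequality $\sd Y\geqslant\dim_{\mathbb Z}Y$ for an arbitrary compact Hausdorff space $Y$. If $\sd Y\leqslant m$, then by Theorem~\ref{th-1} there is an associated inverse ANR-system $\{Y_\alpha,\mathbf{p}_{\alpha\alpha'},A\}$ with $\dim Y_\alpha\leqslant m$ for all $\alpha$, and for a compact space this expansion may be realised as a genuine inverse limit, so that $Y=\varprojlim Y_\alpha$. By the continuity of \v{C}ech cohomology (also for pairs of compacta) one gets, for every closed $B\subseteq Y$ and every $k>m$,
$$\check H^{k}(Y,B;\mathbb Z)\;=\;\varinjlim_{\alpha}\check H^{k}\bigl(Y_\alpha,p_\alpha(B);\mathbb Z\bigr)\;=\;0,$$
since each $Y_\alpha$ is a CW-complex of dimension $\leqslant m$; hence $\dim_{\mathbb Z}Y\leqslant m$. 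Combined with the elementary bound $\dim_{\mathbb Z}Y\leqslant\dim Y$ (valid for every normal space), this gives the chain
$$\dim_{\mathbb Z}K\;\leqslant\;\sd K\;\leqslant\;\dim K .$$

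It therefore suffices to prove that for a continuum $K$ in the remainder $\beta X\setminus X$ of a Lindel\"of space $X$ one has $\dim_{\mathbb Z}K=\dim K$ (with the convention that, when $\dim K=\infty$, for every finite $n$ there is a closed $B\subseteq K$ with $\check H^{n}(K,B;\mathbb Z)\neq0$). This is the step I expect to be the main obstacle, and it is where the hypothesis on $X$ enters. The point is that, although $K$ is typically far from metrizable, it is ``dimensionally full-valued'' because it inherits good dimension-theoretic behaviour from $\beta X$: since $X$ is Lindel\"of it is normal with $\dim X=\dim\beta X$, and the Lindel\"of property forces the relevant covers to be countable, so that the classical Alexandroff argument detecting covering dimension by integral \v{C}ech cohomology goes through. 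Concretely, given $n\leqslant\dim K$ I would take a closed set $A\subseteq K$ and a map $A\to S^{n-1}$ that does not extend over $K$ (witnessing $\dim K\geqslant n$), extend the data over a neighbourhood in $\beta X$, follow the resulting obstruction through the isomorphism $\check H^{n}(\beta X,-;\mathbb Z)\cong\check H^{n}(X,-;\mathbb Z)$, and restrict back to $K$ to produce a nonzero class in $\check H^{n}(K,A;\mathbb Z)$, whence $\dim_{\mathbb Z}K\geqslant n$. Feeding the equality $\dim_{\mathbb Z}K=\dim K$ back into the displayed chain forces $\sd K=\dim K$, as required.
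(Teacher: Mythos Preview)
The paper does not actually prove this theorem; it is quoted from Keesling~\cite{keesling-11} as one of several results on shape dimension and Stone--\v{C}ech compactifications, with no indication of the argument. So I can only assess your proposal on its own merits, and it has a fatal error.

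The inequality $\dim_{\mathbb Z}Y\leqslant\sd Y$ on which your whole strategy rests is simply false. Take $Y=D^{2}$, the closed $2$-disk. Since $D^{2}$ is contractible it has the shape of a point, so $\sd D^{2}=0$; but $\check H^{2}(D^{2},\partial D^{2};\mathbb Z)\cong\mathbb Z\neq0$, so $\dim_{\mathbb Z}D^{2}=2$. Thus the chain $\dim_{\mathbb Z}K\leqslant\sd K\leqslant\dim K$ does not hold in general. The mistake in your argument is the sentence ``for a compact space this expansion may be realised as a genuine inverse limit, so that $Y=\varprojlim Y_\alpha$'': an associated ANR-system in Morita's sense lives in the \emph{homotopy} category and is almost never an inverse-limit presentation of $Y$. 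Indeed, if it were, one would conclude $\dim Y\leqslant\sup_\alpha\dim Y_\alpha$ and hence $\sd Y=\dim Y$ for every compact space, which the disk already refutes. Cohomological dimension $\dim_{\mathbb Z}$ depends on \emph{all} closed pairs $(Y,A)$ and is not a shape invariant; it cannot be bounded above by $\sd$.

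Even ignoring this, your step~3 sketch is not a proof. The obstruction to extending a map $A\to S^{n-1}$ over $K$ is not a single class in $\check H^{n}(K,A;\mathbb Z)$ unless you already know $\dim K\leqslant n$; there are higher obstructions. And the proposed detour---extend to a neighbourhood in $\beta X$, pass through $\check H^{n}(\beta X,-)\cong\check H^{n}(X,-)$, restrict back to $K$---gives no mechanism for producing a \emph{nonzero} class on $K$, since restriction from a neighbourhood can annihilate cohomology. The Lindel\"of hypothesis is invoked only in passing and its actual role is not explained. Keesling's argument uses quite different structural properties of continua in Stone--\v{C}ech remainders; you would need to consult \cite{keesling-11} or \cite{keesling-14} to see how.
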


In relation to this theorem, we give the following interesting result of Winslow \cite{winslow}.

\begin{theorem}
The Stone--\v Cech remainder $\beta \mathbf{R}^3\backslash \mathbf{R}^3$ contains $\mathfrak{c}=2^{\aleph_0}$ 
continua of different shapes and $2^\mathfrak{c}$ continua of different topological types. 
\end{theorem}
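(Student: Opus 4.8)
The plan is to handle the two assertions separately, in each case realizing a ``test'' continuum inside $\beta\mathbf{R}^3\setminus\mathbf{R}^3$ as a subcontinuum of the \v{C}ech--Stone remainder of a carefully chosen \emph{closed} subset of $\mathbf{R}^3$. Throughout one uses three standard facts: a closed subset of a normal space is $C^*$-embedded, $\beta(K\times Y)=K\times\beta Y$ whenever $K$ is compact, and a decreasing intersection of continua in a compact Hausdorff space is again a continuum.

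\emph{Producing $\mathfrak{c}$ shapes.} As recalled above (Godlewski \cite{godlewski-on}, Marde\v{s}i\'{c} and Segal \cite{mard-seg-3}), there are $2^{\aleph_0}$ solenoids $S_P\subset\mathbf{R}^3$ with pairwise different shapes. Fix one such $S=S_P$ sitting in a ball of diameter less than $1$, and let $A=\bigcup_{n\in\mathbf{N}}(S+n e_1)$ be the union of its integer translates along the first coordinate axis. These copies are pairwise disjoint and escape to infinity, so $A$ is closed in $\mathbf{R}^3$ and, topologically, $A\cong S\times\mathbf{N}$ with $\mathbf{N}$ discrete. Since $\mathbf{R}^3$ is normal, $\operatorname{cl}_{\beta\mathbf{R}^3}A=\beta A$ and $\beta A\cap\mathbf{R}^3=A$, whence $\beta A\setminus A\subset\beta\mathbf{R}^3\setminus\mathbf{R}^3$; and $\beta(S\times\mathbf{N})=S\times\beta\mathbf{N}$ because $S$ is compact, so $\beta A\setminus A\cong S\times(\beta\mathbf{N}\setminus\mathbf{N})$. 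For any $p\in\beta\mathbf{N}\setminus\mathbf{N}$ the slice $S\times\{p\}$ is a subcontinuum of $\beta\mathbf{R}^3\setminus\mathbf{R}^3$ homeomorphic to $S$, hence of shape $\sh(S_P)$. Letting $P$ range over the prescribed family of $2^{\aleph_0}$ solenoid types gives $\mathfrak{c}$ subcontinua of $\beta\mathbf{R}^3\setminus\mathbf{R}^3$ with pairwise different shapes.

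\emph{Producing $2^{\mathfrak{c}}$ topological types.} Embed $[0,\infty)$ as a closed ray $R$ in $\mathbf{R}^3$; then $\operatorname{cl}_{\beta\mathbf{R}^3}R=\beta[0,\infty)$, so the standard remainder $\mathbb{H}^*=\beta[0,\infty)\setminus[0,\infty)$ is a subcontinuum of $\beta\mathbf{R}^3\setminus\mathbf{R}^3$, and it suffices to exhibit $2^{\mathfrak{c}}$ pairwise non-homeomorphic subcontinua of $\mathbb{H}^*$. For this one invokes the structure theory of $\mathbb{H}^*$: using the usual presentation of $\mathbb{H}^*$ as an inverse limit indexed by functions tending to $\infty$, one attaches to each point $p$ of $\beta\mathbf{N}\setminus\mathbf{N}$ a ``standard'' subcontinuum $H_p\subset\mathbb{H}^*$, and shows that the orbit of $p$ under the autohomeomorphism group of $\beta\mathbf{N}\setminus\mathbf{N}$ can be recovered from the topological type of $H_p$. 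Since $\beta\mathbf{N}\setminus\mathbf{N}$ possesses $2^{\mathfrak{c}}$ such orbits (a quantitative strengthening of Frol\'{\i}k's non-homogeneity theorem, via weak $P$-points in the sense of Kunen), this yields $2^{\mathfrak{c}}$ subcontinua of $\mathbb{H}^*$ of pairwise different topological types.

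\emph{The main difficulty.} The first part is routine once the solenoid fact is granted; the crux is the second. One must translate purely set-theoretic and combinatorial distinctions between points of $\beta\mathbf{N}\setminus\mathbf{N}$ into genuine \emph{topological} invariants of the associated subcontinua of $\mathbb{H}^*$ --- that is, show that continua coming from ``different types'' of ultrafilters cannot be homeomorphic. This rests on the full machinery surrounding $\mathbb{H}^*$ and $\beta\mathbf{N}\setminus\mathbf{N}$ (the standard-subcontinuum calculus, the abundance of point-types, weak $P$-point constructions), and it is in this step that the real work of Winslow's argument is concentrated.
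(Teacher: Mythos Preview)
The paper gives no proof of this theorem; it is simply quoted as a result of Winslow~\cite{winslow}. So there is nothing in the paper to compare against, and your proposal must be judged on its own.

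Your argument for the $\mathfrak{c}$ shapes is correct and self-contained. Placing integer translates of a solenoid $S_P$ as a closed copy of $S_P\times\mathbf{N}$ inside $\mathbf{R}^3$, invoking $C^*$-embedding of closed subsets of normal spaces to identify $\operatorname{cl}_{\beta\mathbf{R}^3}A$ with $\beta A$, and applying $\beta(K\times\mathbf{N})=K\times\beta\mathbf{N}$ for compact $K$, cleanly places a homeomorphic copy of each $S_P$ in the remainder; ranging over the $\mathfrak{c}$ solenoid shape types (as recorded in the paper via \cite{godlewski-on}, \cite{mard-seg-3}) finishes this half.

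For the $2^{\mathfrak{c}}$ topological types, the reduction to $\mathbb{H}^*=\beta[0,\infty)\setminus[0,\infty)$ via a closed ray is correct and standard. What remains---that $\mathbb{H}^*$ contains $2^{\mathfrak{c}}$ pairwise non-homeomorphic subcontinua---is precisely the content of Winslow's paper, and your outline (standard subcontinua $H_p$ indexed by $p\in\omega^*$, with the homeomorphism type of $H_p$ reflecting the orbit type of $p$, combined with a $2^{\mathfrak{c}}$ count of such orbits) is a plausible summary rather than a proof. Two points deserve care if you flesh this out: the passage from ultrafilter-theoretic distinctions to \emph{topological} invariants of the associated continua needs a precise definition of $H_p$ and a genuine rigidity lemma; and the ZFC fact that $\omega^*$ has $2^{\mathfrak{c}}$ autohomeomorphism orbits already follows from Frol\'{\i}k's theory of types and does not require Kunen's weak $P$-points. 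As you yourself note, this is where the real work lies---so your second half is, in effect, a pointer to the same literature the paper cites.
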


The notion of the shape dimension of shape morphisms and continuous maps between topological spaces 
was introduced in~\cite{Gev-Pop-3}.

We say that the \emph{dimension $\dim (\mathbf{f}_\beta,\varphi)$} of a morphism 
$(\mathbf{f}_\beta,\varphi)\colon \mathbf{X}= (X_\alpha, \mathbf{p}_{\alpha\alpha'}, A)\rightarrow 
\mathbf{Y}=(Y_\beta,\mathbf{q}_{\beta\beta'}, B)$ of inverse ANR-systems is $\leqslant n$ if, for 
any $\beta\in B$, there exists an $\alpha\geqslant \varphi(\beta)$ such that the map 
$f_{\beta\alpha}\colon =f_\beta p_{\varphi(\beta)\alpha}\colon X_\alpha\rightarrow Y_\beta$ 
can be homotopically factored through an ANR $P$ of dimension $\dim P\leqslant n$, i.e., 
there exist maps $u\colon X_\alpha\rightarrow P$ and $v\colon P\rightarrow Y_\beta$ such that 
$f_{\beta\alpha} \simeq v\circ u$. This property is preserved under the passage to equivalent 
morphisms of inverse systems. Therefore, we can define the \emph{dimension of morphisms 
in the category pro-H-CW} and the \emph{shape dimension} of a shape morphism. We say that the  
\emph{shape dimension $\sd F$} of a shape morphism $F\colon X\to Y$ is $\leqslant n$ if 
the corresponding morphism $\mathbf{f}\colon \mathbf{X} \to \mathbf{Y}$ in the category pro-H-CW 
has dimension $\dim \mathbf{f}\leqslant n$. For a continuous map $f\colon X\to Y$, the condition 
$\sd f \leqslant n$ means that $\sd S(f)\leqslant n$.

\begin{theorem}\label{cor.2.13}
A space $X$ has shape dimension $\sd X\leqslant n$ if and only if the identity map $1_X$ has shape 
dimension $\sd 1_X\leqslant n$. 
\end{theorem}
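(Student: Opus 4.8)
\emph{Proof proposal.} The plan is to unwind the definition of the shape dimension of a morphism for the identity map and then to match the two conditions $\sd X\leqslant n$ and $\sd 1_X\leqslant n$ against the two characterisations already established in Theorems~\ref{th-1} and~\ref{th-2}. Fix an inverse ANR-system $\mathbf{X}=\{X_\alpha,\mathbf{p}_{\alpha\alpha'},A\}$ associated with $X$, with limit homotopy classes $\mathbf{p}_\alpha\colon X\to X_\alpha$. The shape morphism $1_X$ corresponds to the identity morphism $\mathbf{1}_{\mathbf{X}}\colon\mathbf{X}\to\mathbf{X}$ of pro-H-CW, represented by the pair $(\mathbf{1}_{X_\alpha},\mathrm{id}_A)$; thus $\sd 1_X\leqslant n$ means precisely that for every $\beta\in A$ there is an $\alpha\geqslant\beta$ such that the map $f_{\beta\alpha}=\mathbf{1}_{X_\beta}\circ\mathbf{p}_{\beta\alpha}=\mathbf{p}_{\beta\alpha}\colon X_\alpha\to X_\beta$ factors, up to homotopy, through an ANR of dimension $\leqslant n$.

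For the implication $\sd X\leqslant n\Rightarrow\sd 1_X\leqslant n$, I would invoke Theorem~\ref{th-1} to choose the associated ANR-system $\mathbf{X}$ so that $\dim X_\alpha\leqslant n$ for all $\alpha\in A$. Then, for each $\beta\in A$, the bonding map $\mathbf{p}_{\beta\alpha}\colon X_\alpha\to X_\beta$ already factors through $X_\beta$ itself (as $\mathbf{p}_{\beta\alpha}=\mathbf{1}_{X_\beta}\circ\mathbf{p}_{\beta\alpha}$), and $\dim X_\beta\leqslant n$; hence $\dim\mathbf{1}_{\mathbf{X}}\leqslant n$, i.e. $\sd 1_X\leqslant n$.

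For the converse, I would pass through the deformation dimension. Assume $\sd 1_X\leqslant n$ and let $\mathbf{f}\colon X\to P$ be any homotopy class into an ANR $P$. By condition (ii) of Definition~\ref{def-3} there are an index $\alpha$ and a homotopy class $\mathbf{h}_\alpha\colon X_\alpha\to P$ with $\mathbf{h}_\alpha\mathbf{p}_\alpha=\mathbf{f}$. Applying the hypothesis to $\beta=\alpha$, there are $\alpha'\geqslant\alpha$, an ANR $Q$ with $\dim Q\leqslant n$, and homotopy classes $\mathbf{u}\colon X_{\alpha'}\to Q$, $\mathbf{v}\colon Q\to X_\alpha$ with $\mathbf{p}_{\alpha\alpha'}=\mathbf{v}\mathbf{u}$. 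Using $\mathbf{p}_\alpha=\mathbf{p}_{\alpha\alpha'}\mathbf{p}_{\alpha'}$ (condition (i)) we get $\mathbf{f}=\mathbf{h}_\alpha\mathbf{p}_{\alpha\alpha'}\mathbf{p}_{\alpha'}=(\mathbf{h}_\alpha\mathbf{v})\circ(\mathbf{u}\mathbf{p}_{\alpha'})$, a homotopy factorisation of $\mathbf{f}$ through $Q$. Hence $\ddim X\leqslant n$, and Theorem~\ref{th-2} gives $\sd X\leqslant n$.

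The only real point requiring care is the well-definedness of $\sd 1_X$: one must note that the identity morphisms attached to different associated systems of $X$ are conjugate by isomorphisms in pro-H-CW, so that the dimension of a shape morphism (as introduced before the statement) does not depend on the chosen associated system; with this in hand, both implications are essentially bookkeeping against Theorems~\ref{th-1} and~\ref{th-2}. I do not expect any genuine obstacle beyond correctly transcribing the definition of the dimension of a pro-morphism for the identity.
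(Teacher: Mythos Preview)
The paper states this theorem without proof (it is one of several results quoted from \cite{Gev-Pop-3}), so there is no in-text argument to compare against. Your proof is correct: the forward direction is immediate from Theorem~\ref{th-1}, and for the converse you correctly unwind $\sd 1_X\leqslant n$ on a fixed associated system to obtain a homotopy factorisation of any $\mathbf{f}\colon X\to P$ through an ANR of dimension $\leqslant n$, which is exactly $\ddim X\leqslant n$, whence $\sd X\leqslant n$ by Theorem~\ref{th-2}. Your remark about independence of the chosen associated system is the right caveat, and it follows from the paper's observation (just before the statement) that the dimension of a pro-morphism is invariant under equivalence, together with the fact that any two associated systems are isomorphic in pro-H-CW.

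As a side note, once Theorem~\ref{th_2} is available the result becomes a one-liner: taking $f=1_X$ there, the condition $\sd 1_X\leqslant n$ reads ``every $h\colon X\to Q$ with $Q$ an ANR factors, up to homotopy, through an ANR of dimension $\leqslant n$,'' which is precisely $\ddim X\leqslant n$, i.e.\ $\sd X\leqslant n$ by Theorem~\ref{th-2}. Your argument is essentially the direct verification of this special case without appealing to Theorem~\ref{th_2}.
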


\begin{theorem}\label{th-5}
Let $F\colon X\rightarrow Y$ be a shape morphism. Then $\sd F\leqslant \min (\sd X, \sd Y)$.
\end{theorem}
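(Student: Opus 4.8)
The plan is to prove the two inequalities $\sd F \leqslant \sd X$ and $\sd F \leqslant \sd Y$ separately, working with a concrete representative $(\mathbf{f}_\beta,\varphi)\colon \mathbf{X} \to \mathbf{Y}$ of the shape morphism $F$ in the category pro-H-CW, where $\mathbf{X}=\{X_\alpha,\mathbf{p}_{\alpha\alpha'},A\}$ and $\mathbf{Y}=\{Y_\beta,\mathbf{q}_{\beta\beta'},B\}$ are associated inverse ANR-systems of $X$ and $Y$ respectively. If either $\sd X$ or $\sd Y$ is infinite, the corresponding inequality is vacuous, so we may assume both are finite; say $\sd X \leqslant m$ and $\sd Y \leqslant n$.

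First I would handle $\sd F \leqslant \sd Y$. Suppose $\sd Y \leqslant n$. By Theorem~\ref{th-1} we may choose the associated ANR-system $\mathbf{Y}$ so that $\dim Y_\beta \leqslant n$ for every $\beta \in B$. Now fix $\beta \in B$ and take $\alpha = \varphi(\beta)$; the map $f_{\beta\alpha} = f_\beta p_{\varphi(\beta)\alpha} = f_\beta$ factors (trivially, through $Y_\beta$ itself) as $\mathrm{id}_{Y_\beta} \circ f_\beta$, and $Y_\beta$ is an ANR of dimension $\leqslant n$. Hence $\dim(\mathbf{f}_\beta,\varphi)\leqslant n$ by definition, so $\sd F \leqslant n = \sd Y$.

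Next I would handle $\sd F \leqslant \sd X$. Suppose $\sd X \leqslant m$. By Theorem~\ref{th-1} we may assume $\dim X_\alpha \leqslant m$ for every $\alpha \in A$. Fix $\beta \in B$ and put $\alpha = \varphi(\beta)$. Then $f_{\beta\alpha} = f_\beta\colon X_{\varphi(\beta)} \to Y_\beta$ factors as $f_\beta \circ \mathrm{id}_{X_{\varphi(\beta)}}$, i.e.\ homotopically through the ANR $X_{\varphi(\beta)}$, which has dimension $\leqslant m$. Thus $\dim(\mathbf{f}_\beta,\varphi)\leqslant m$, and therefore $\sd F \leqslant m = \sd X$. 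Combining the two estimates gives $\sd F \leqslant \min(\sd X,\sd Y)$.

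The only point requiring care is that the quantity $\dim(\mathbf{f}_\beta,\varphi)$ — and hence $\sd F$ — is well-defined, i.e.\ independent of the choice of representative of $F$ and of the associated ANR-systems; this is exactly the invariance noted in the paragraph preceding the statement of Theorem~\ref{th-5} (the property ``is preserved under the passage to equivalent morphisms of inverse systems''), together with the fact that all ANR-systems associated with a given space are isomorphic in pro-H-CW. So the main ``obstacle'' is really just bookkeeping: ensuring that the convenient associated systems furnished by Theorem~\ref{th-1} can be used simultaneously with the fixed representative $(\mathbf{f}_\beta,\varphi)$ of $F$, which is legitimate precisely because of this invariance. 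Everything else is the trivial observation that a map into (or out of) a low-dimensional ANR already exhibits itself as a factorization through a low-dimensional ANR.
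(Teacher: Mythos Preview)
The paper states Theorem~\ref{th-5} without proof (it is attributed to the reference \cite{Gev-Pop-3}), so there is no argument in the paper to compare against. Your proof is correct and is the natural one: once Theorem~\ref{th-1} supplies associated systems with $\dim X_\alpha\leqslant m$ (respectively $\dim Y_\beta\leqslant n$), each component map $f_\beta$ of a representative of $F$ automatically factors through its own domain (respectively codomain), which is already an ANR of the required dimension. Your final paragraph correctly identifies the only nontrivial point, namely that $\sd F$ is independent of the choice of associated systems and representative; this is precisely what the paper asserts just before the theorem, so you are entitled to use different convenient systems for the two inequalities.
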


In particular, for a continuous map $f\colon X\rightarrow Y$, we have $\sd f\leqslant \min (\sd X, 
\sd Y)$.

It follows from Theorem~\ref{th-5} that maps between shape finite-dimensional spaces have finite 
shape dimension. There arises the natural question of whether the image of a shape 
finite-dimensional space  under a shape finite-dimensional map can be shape 
infinite-dimensional. In \cite{Gev-Pop-3}, this question was answered in the affirmative,  
and an example of a shape finite-dimensional surjective map $f\colon X\to 
Y$ between shape infinite-dimensional spaces $X$ and $Y$ was constructed.

\begin{theorem}\label{cor.2.15}
Let $F\colon X\rightarrow Y$ be a shape equivalence of topological spaces with $\sd F=n$. 
Then $\sd X=\sd Y=n$. 
\end{theorem}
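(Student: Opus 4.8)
The plan is to deduce the statement from three results already established in the paper, together with one short auxiliary observation. The results are Theorem~\ref{th-5} (so that $\sd F\leqslant\min(\sd X,\sd Y)$), Theorem~\ref{cor.2.13} (so that $\sd X\leqslant n$ is equivalent to $\sd 1_X\leqslant n$), and Corollary~\ref{cor-1} (shape‑equivalent spaces have equal shape dimension). The auxiliary observation is that \emph{postcomposition does not raise the shape dimension of a morphism}: if $F\colon X\to Y$ and $G\colon Y\to Z$ are shape morphisms, then $\sd(G\circ F)\leqslant\sd F$.

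First I would record the easy inequality. Since $F$ is in particular a shape morphism $X\to Y$, Theorem~\ref{th-5} gives $n=\sd F\leqslant\sd X$, so it suffices to prove the reverse inequality $\sd X\leqslant n$.

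Next I would prove the auxiliary observation by unwinding the definitions in pro-H-CW. Let $\mathbf f=[(\mathbf f_\beta,\varphi)]\colon\mathbf X=(X_\alpha,\mathbf p_{\alpha\alpha'},A)\to\mathbf Y=(Y_\beta,\mathbf q_{\beta\beta'},B)$ and $\mathbf g=[(\mathbf g_\gamma,\psi)]\colon\mathbf Y\to\mathbf Z=(Z_\gamma,\mathbf r_{\gamma\gamma'},\Gamma)$ be the pro-morphisms representing $F$ and $G$; by the definition of composition, $G\circ F$ is represented by $(\mathbf g_\gamma\circ\mathbf f_{\psi(\gamma)},\varphi\circ\psi)$. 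Fix $\gamma\in\Gamma$ and put $\beta=\psi(\gamma)$. Since $\dim\mathbf f\leqslant\sd F$, there is an index $\alpha\geqslant\varphi(\beta)=\varphi(\psi(\gamma))$, an ANR $P$ with $\dim P\leqslant\sd F$, and homotopy classes $u\colon X_\alpha\to P$, $v\colon P\to Y_\beta$ such that $f_\beta\,p_{\varphi(\beta)\alpha}\simeq v\circ u$. Composing with $g_\gamma$ on the left, the map $(\mathbf g_\gamma\circ\mathbf f_{\psi(\gamma)})\,p_{\varphi(\psi(\gamma))\alpha}=g_\gamma\circ(f_\beta\,p_{\varphi(\beta)\alpha})\simeq(g_\gamma\circ v)\circ u$ factors homotopically through the same $P$. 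As $\gamma\in\Gamma$ was arbitrary, $\dim(\mathbf g\circ\mathbf f)\leqslant\sd F$, i.e. $\sd(G\circ F)\leqslant\sd F$; since the shape dimension of a shape morphism is by definition the dimension of its pro-H-CW representative, this is what was claimed.

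Finally I would assemble the pieces. As $F$ is a shape equivalence there is a shape morphism $G\colon Y\to X$ with $G\circ F=1_X$; the auxiliary observation gives $\sd 1_X=\sd(G\circ F)\leqslant\sd F=n$, and Theorem~\ref{cor.2.13} then yields $\sd X\leqslant n$. Combined with the easy inequality, $\sd X=n$. Moreover a shape equivalence $F\colon X\to Y$ witnesses $\sh X=\sh Y$, so Corollary~\ref{cor-1} gives $\sd Y=\sd X=n$, which is the assertion. The only step with any real content is the auxiliary observation, and the one point that needs care there is the bookkeeping with the index functions $\varphi$ and $\psi$ when chasing the factorization through the composite system; everything after that is a direct chaining of the quoted results. (One could alternatively run the same computation through the deformation-dimension criterion of Theorem~\ref{th-2}, factoring an arbitrary homotopy class $X\to P$ through $Y$ and invoking $\sd F\leqslant n$, but the pro-category formulation keeps the indices under better control.)
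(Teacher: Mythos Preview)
Your proof is correct. The paper itself, however, is a survey and states Theorem~\ref{cor.2.15} without proof, merely citing it as a result from~\cite{Gev-Pop-3}; there is no argument in the text to compare against. Your derivation from Theorems~\ref{cor.2.13} and~\ref{th-5} and Corollary~\ref{cor-1}, together with the easy lemma that postcomposition cannot raise the shape dimension of a morphism, is a clean and natural way to obtain the result, and the index bookkeeping in pro-H-CW is handled correctly.
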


The following theorem gives an important characterization of the shape dimension of a map $f\colon 
X\to Y$.

\begin{theorem}\label{th_2}
A map $f\colon X\rightarrow Y$ has shape dimension $\sd f\leqslant n$ if and only if, given any map 
$h\colon Y\rightarrow Q$ to an ANR-space $Q$, the composition $h\circ f$ can be homotopically  
factored through an ANR $P$ of dimension $\dim P\leqslant n$, i.e., there exist maps 
$u\colon X\rightarrow P$ and $v\colon P\rightarrow Q$ such that $h\circ f \simeq v\circ u$. 
\end{theorem}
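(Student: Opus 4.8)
\emph{Proof proposal.} The plan is to reduce everything to the description of $\sd f$ via associated inverse ANR-systems together with the three defining properties (i)--(iii) of Definition~\ref{def-3}. So I would begin by fixing associated inverse ANR-systems $\mathbf{X}=\{X_\alpha,\mathbf{p}_{\alpha\alpha'},A\}$ for $X$ and $\mathbf{Y}=\{Y_\beta,\mathbf{q}_{\beta\beta'},B\}$ for $Y$, with limit homotopy classes $\mathbf{p}_\alpha\colon X\to X_\alpha$ and $\mathbf{q}_\beta\colon Y\to Y_\beta$, and a representative $(\mathbf{f}_\beta,\varphi)$ of the shape morphism $S(f)$; since each $Y_\beta$ is an ANR, I may arrange that $\mathbf{f}_\beta\mathbf{p}_{\varphi(\beta)}=\mathbf{q}_\beta[f]$ in $[X,Y_\beta]$ for all $\beta$. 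Unwinding the definition, $\sd f\leqslant n$ means $\dim\mathbf{f}\leqslant n$, i.e.\ for every $\beta\in B$ there is an $\alpha\geqslant\varphi(\beta)$ such that $\mathbf{f}_\beta\mathbf{p}_{\varphi(\beta)\alpha}$ factors, up to homotopy, through an ANR of dimension $\leqslant n$.

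For the \emph{necessity} direction I would take an arbitrary $h\colon Y\to Q$ with $Q$ an ANR and, using property (ii) for $\mathbf{Y}$, write $[h]=\mathbf{h}_\beta\mathbf{q}_\beta$ for some $\beta\in B$ and $\mathbf{h}_\beta\colon Y_\beta\to Q$. Choosing, as provided by $\dim\mathbf{f}\leqslant n$, an $\alpha\geqslant\varphi(\beta)$ and an ANR $P$ with $\dim P\leqslant n$ such that $\mathbf{f}_\beta\mathbf{p}_{\varphi(\beta)\alpha}=\mathbf{v}'\mathbf{u}'$ with $\mathbf{u}'\colon X_\alpha\to P$, $\mathbf{v}'\colon P\to Y_\beta$, one computes
\[
[h\circ f]=\mathbf{h}_\beta\mathbf{q}_\beta[f]=\mathbf{h}_\beta\mathbf{f}_\beta\mathbf{p}_{\varphi(\beta)\alpha}\mathbf{p}_\alpha=(\mathbf{h}_\beta\mathbf{v}')(\mathbf{u}'\mathbf{p}_\alpha),
\]
so any representative $u$ of $\mathbf{u}'\mathbf{p}_\alpha$ and any representative $v$ of $\mathbf{h}_\beta\mathbf{v}'$ give $h\circ f\simeq v\circ u$ with $\dim P\leqslant n$.

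For the \emph{sufficiency} direction I would fix $\beta\in B$ and feed a canonical map $q_\beta\colon Y\to Y_\beta$ (which lands in an ANR) into the hypothesis, obtaining an ANR $P$ with $\dim P\leqslant n$ and maps $u\colon X\to P$, $v\colon P\to Y_\beta$ with $q_\beta\circ f\simeq v\circ u$, that is $\mathbf{f}_\beta\mathbf{p}_{\varphi(\beta)}=\mathbf{v}\mathbf{u}$ in $[X,Y_\beta]$. Since $P$ is an ANR, property (ii) for $\mathbf{X}$ yields an index which, after replacing it by a common upper bound with $\varphi(\beta)$ in $A$ and composing with the appropriate bonding class, may be taken to be some $\alpha\geqslant\varphi(\beta)$ together with $\widetilde{\mathbf{u}}\colon X_\alpha\to P$ satisfying $\widetilde{\mathbf{u}}\mathbf{p}_\alpha=\mathbf{u}$. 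Then $\mathbf{f}_\beta\mathbf{p}_{\varphi(\beta)\alpha}\mathbf{p}_\alpha=\mathbf{v}\widetilde{\mathbf{u}}\mathbf{p}_\alpha$, and property (iii) for $\mathbf{X}$, applied to $\boldsymbol{\varphi}=\mathbf{f}_\beta\mathbf{p}_{\varphi(\beta)\alpha}$ and $\boldsymbol{\psi}=\mathbf{v}\widetilde{\mathbf{u}}$, produces an $\alpha'\geqslant\alpha$ with $\mathbf{f}_\beta\mathbf{p}_{\varphi(\beta)\alpha'}=\mathbf{v}(\widetilde{\mathbf{u}}\mathbf{p}_{\alpha\alpha'})$. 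Hence $\mathbf{f}_\beta\mathbf{p}_{\varphi(\beta)\alpha'}$ factors up to homotopy through $P$, $\dim P\leqslant n$; as $\beta$ was arbitrary, $\dim\mathbf{f}\leqslant n$, i.e.\ $\sd f\leqslant n$.

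The step I expect to be the crux is the one in the sufficiency argument where the equality $\mathbf{f}_\beta\mathbf{p}_{\varphi(\beta)}=\mathbf{v}\mathbf{u}$, a priori only an equality of homotopy classes \emph{out of $X$}, must be descended to an honest equality of homotopy classes $X_{\alpha'}\to Y_\beta$; this is exactly what property (iii) of an associated system provides, and the only care needed is to keep the index manipulations (common upper bounds in $A$ and $B$, composition with bonding classes) consistent with the fixed representative $(\mathbf{f}_\beta,\varphi)$ of $S(f)$ and with the normalization $\mathbf{f}_\beta\mathbf{p}_{\varphi(\beta)}=\mathbf{q}_\beta[f]$. Everything else is a routine diagram chase. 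One could alternatively deduce the statement from Theorem~\ref{th-2} by a relative version of the deformation-dimension argument, but the direct route above seems cleaner and, specialized to $f=1_X$ via Theorem~\ref{cor.2.13}, recovers Theorem~\ref{th-2}.
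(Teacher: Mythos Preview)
Your argument is correct. Both directions are handled cleanly: in the necessity direction you correctly pull an arbitrary $h\colon Y\to Q$ back to some $Y_\beta$ via property~(ii) of the associated system $\mathbf{Y}$, and then compose the given $n$-dimensional factorization of $\mathbf{f}_\beta\mathbf{p}_{\varphi(\beta)\alpha}$ with $\mathbf{p}_\alpha$ on one side and $\mathbf{h}_\beta$ on the other; in the sufficiency direction you feed $q_\beta$ into the hypothesis, lift the resulting map $u\colon X\to P$ to some $X_\alpha$ by property~(ii) of $\mathbf{X}$, and then invoke property~(iii) to descend the equality of classes out of $X$ to an equality out of $X_{\alpha'}$. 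The normalization $\mathbf{f}_\beta\mathbf{p}_{\varphi(\beta)}=\mathbf{q}_\beta[f]$ is legitimate, since this is exactly how one constructs a representative of $S(f)$ starting from the maps $q_\beta\circ f$ and property~(ii) for $\mathbf{X}$.

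As for comparison with the paper: the paper is a survey and states Theorem~\ref{th_2} without proof, attributing it to~\cite{Gev-Pop-3}. Your proof is the natural direct argument from the definitions and is essentially what one would expect the original source to contain; it is also parallel in spirit to the proof the paper does give of the closely related Theorem~\ref{th-2} (the equivalence $\sd X\leqslant n\iff\ddim X\leqslant n$), which likewise hinges on properties~(ii) and~(iii) of an associated system. Your closing remark that specializing to $f=1_X$ recovers Theorem~\ref{th-2} via Theorem~\ref{cor.2.13} is a nice consistency check.
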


\begin{theorem}\label{cor.3.5}
Let $F\colon X\rightarrow Y$ be a shape morphism of dimension $\sd F\leq n$. Then, for any Abelian 
group $G$ and any  $k>n$, the homomorphisms $\check{F}_k\colon \check{H}_k(X;G)\rightarrow 
\check{H}_k(Y;G)$ are trivial. 
\end{theorem}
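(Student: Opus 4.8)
The plan is to reduce the statement to the elementary fact that a finite-dimensional ANR has vanishing homology above its dimension, combined with the description of the condition $\sd F\leqslant n$ at the level of the associated inverse systems. Fix inverse ANR-systems $\mathbf{X}=\{X_\alpha,\mathbf{p}_{\alpha\alpha'},A\}$ and $\mathbf{Y}=\{Y_\beta,\mathbf{q}_{\beta\beta'},B\}$ associated with $X$ and $Y$, respectively, and let $(\mathbf{f}_\beta,\varphi)$ be a representative of the morphism $\mathbf{f}\colon\mathbf{X}\to\mathbf{Y}$ in the category pro-H-CW that corresponds to $F$. Recall that
$$\check{H}_k(X;G)=\lim_{\longleftarrow}\bigl(H_k(X_\alpha;G),\,H_k(p_{\alpha\alpha'}),\,A\bigr),$$
that $\pro-H_k(F)$ is represented by $\bigl(H_k(f_\beta),\varphi\bigr)$, and that $\check{F}_k$ is obtained from $\pro-H_k(F)$ by passing to inverse limits; concretely, for a thread $x=(x_\alpha)\in\check{H}_k(X;G)$ one has $(\check{F}_k(x))_\beta=H_k(f_\beta)(x_{\varphi(\beta)})$.

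First I would fix $\beta\in B$. By definition, $\sd F\leqslant n$ means $\dim\mathbf{f}\leqslant n$, which in turn provides an index $\alpha\geqslant\varphi(\beta)$ such that the composite $f_{\beta\alpha}=f_\beta\,p_{\varphi(\beta)\alpha}\colon X_\alpha\to Y_\beta$ factors up to homotopy through an ANR $P$ with $\dim P\leqslant n$; say $f_{\beta\alpha}\simeq v\circ u$ with $u\colon X_\alpha\to P$ and $v\colon P\to Y_\beta$. Being an ANR of dimension $\leqslant n$, $P$ has the homotopy type of a CW-complex of dimension $\leqslant n$, so $H_k(P;G)=0$ whenever $k>n$; hence $H_k(f_{\beta\alpha})=H_k(v)\,H_k(u)\colon H_k(X_\alpha;G)\to H_k(Y_\beta;G)$ is the zero homomorphism for every $k>n$.

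Now let $k>n$ and let $x=(x_\alpha)\in\check{H}_k(X;G)$ be arbitrary. For the index $\alpha\geqslant\varphi(\beta)$ just chosen, the threading condition gives $x_{\varphi(\beta)}=H_k(p_{\varphi(\beta)\alpha})(x_\alpha)$, so by functoriality of $H_k$,
$$(\check{F}_k(x))_\beta=H_k(f_\beta)\bigl(H_k(p_{\varphi(\beta)\alpha})(x_\alpha)\bigr)=H_k(f_{\beta\alpha})(x_\alpha)=0.$$
Since $\beta\in B$ was arbitrary, every coordinate of $\check{F}_k(x)$ vanishes, whence $\check{F}_k(x)=0$; thus $\check{F}_k$ is trivial. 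Equivalently, the same computation verifies condition (**) for $\pro-H_k(F)$ and the zero morphism with the index $\alpha$ above, so $\pro-H_k(F)$ is equivalent to the zero morphism in $\pro-\Grp$ for $k>n$, and $\check{F}_k$ inherits this.

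I do not expect a serious obstacle: the argument is essentially bookkeeping with the definitions of $\sd F$, of the \v Cech homology pro-group, and of the induced homomorphism, together with the remark that a homomorphism which factors through $H_k(P;G)=0$ is zero. The one point requiring care is the standard homotopy-theoretic input that a metrizable ANR of covering dimension $\leqslant n$ has the homotopy type of an $n$-dimensional CW-complex (this is what forces $H_k(P;G)=0$ for $k>n$), together with the routine check that $\sd F$, $\check{H}_k(X;G)$, and $\check{F}_k$ are independent of the chosen associated systems and of the representative $(\mathbf{f}_\beta,\varphi)$, so that fixing these at the start is harmless.
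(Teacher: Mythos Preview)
Your argument is correct and is exactly the natural one: factor $f_{\beta\alpha}$ through an ANR $P$ with $\dim P\leqslant n$, use $H_k(P;G)=0$ for $k>n$, and read off that every coordinate of $\check F_k(x)$ vanishes. The paper states this result without proof (it is quoted from \cite{Gev-Pop-3}), so there is nothing to compare against; your write-up would serve perfectly well as the omitted proof.

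One small remark on justification: the sentence ``$P$ has the homotopy type of a CW-complex of dimension $\leqslant n$'' is stronger than what you actually need and is not entirely trivial to justify directly. A cleaner route is: since $P$ is an ANR it has the homotopy type of a CW-complex, so singular and \v Cech homology of $P$ agree, and $\check H_k(P;G)=0$ for $k>\dim P$ by the standard dimension argument via nerves of covers. Either way the conclusion $H_k(P;G)=0$ for $k>n$ stands, and the rest of your bookkeeping with threads and the representative $(\mathbf f_\beta,\varphi)$ is fine.
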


\section{Embeddings in Shape Theory}

The notion of a \emph{shape embedding} is defined by analogy with topological embeddings and 
embeddings up to homotopy type. We say that a compact metrizable space $X$ is \emph{shape embedded} 
in a space $Y$ if there exists a compact metrizable subspace $X'$ of $Y$ for which $\sh X = \sh 
X'$.

First, we recall a classical embedding theorem for finite-dimensional compact metrizable spaces 
(see~\cite{alekc-pas}).

\begin{theorem}[Menger-N\"{o}beling-Pontryagin]
Any $n$-dimensional compact metrizable space $X$ is embedded in Euclidean space $\mathbf{R}^{2n+1}$.
\end{theorem}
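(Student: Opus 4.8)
The plan is to use the classical Baire category argument in a function space. First I would equip $C(X,\mathbf{R}^{2n+1})$ with the supremum metric; since $\mathbf{R}^{2n+1}$ is complete and $X$ is compact, this function space is a complete metric space, so the Baire category theorem applies. The aim is to realize the set of embeddings as a dense $G_\delta$ subset.

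For each $\varepsilon>0$ let $U_\varepsilon$ be the set of maps $f\in C(X,\mathbf{R}^{2n+1})$ such that $f(x)=f(y)$ implies $\dist(x,y)<\varepsilon$. The first, routine step is to check that $U_\varepsilon$ is open: the set $\{(x,y)\in X\times X:\dist(x,y)\geqslant\varepsilon\}$ is compact, so for $f\in U_\varepsilon$ the number $\delta=\inf\{|f(x)-f(y)|:\dist(x,y)\geqslant\varepsilon\}$ is positive, and every $g$ with $\|g-f\|<\delta/2$ still lies in $U_\varepsilon$. Granting that each $U_\varepsilon$ is also dense, the set $\bigcap_{k=1}^\infty U_{1/k}$ is a dense $G_\delta$, hence nonempty; any $g$ in it is injective, and a continuous injection of a compact space into a Hausdorff space is a homeomorphism onto its image, i.e.\ an embedding. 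So everything reduces to density of $U_\varepsilon$.

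This density statement is the heart of the matter. Given $f$ and $\eta>0$, I would invoke $\dim X\leqslant n$ together with compactness and uniform continuity of $f$ to produce a finite open cover $\mathcal{U}=\{U_1,\dots,U_m\}$ of $X$ of mesh $<\varepsilon$, of order $\leqslant n+1$ (no point lying in more than $n+1$ members), and with $\diam f(U_i)\leqslant\eta/2$ for each $i$; one gets this by first taking a fine cover with these mesh and oscillation properties and then refining it, using $\dim X\leqslant n$, to one of order $\leqslant n+1$, since a refinement preserves smallness of mesh and of $f$-oscillation. Choosing a partition of unity $\{\varphi_i\}$ subordinate to $\mathcal{U}$ gives a canonical map of $X$ into the nerve $N(\mathcal{U})$, a simplicial complex of dimension $\leqslant n$. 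Next I would choose vertex images $v_i\in\mathbf{R}^{2n+1}$ in \emph{general position} --- meaning every $2n+2$ of them are affinely independent, a generic and hence achievable condition --- with moreover $|v_i-p_i|<\eta/2$ for a chosen point $p_i\in f(U_i)$. Setting $g(x)=\sum_i\varphi_i(x)v_i$ (the composite $X\to N(\mathcal{U})\to\mathbf{R}^{2n+1}$), the oscillation bound gives $\|g-f\|<\eta$, because $g(x)$ is a convex combination of those $v_i$ with $x\in U_i$, each within $\eta$ of $f(x)$. And $g\in U_\varepsilon$: if $g(x)=g(y)$, the common value lies in the closed simplex spanned by $\{v_i:x\in U_i\}$ and also in the one spanned by $\{v_i:y\in U_i\}$; these two simplices together involve at most $2n+2$ vertices in general position, so they intersect only in the face on their common vertices, which is therefore nonempty, forcing some $U_i$ to contain both $x$ and $y$, whence $\dist(x,y)\leqslant\diam U_i<\varepsilon$.

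I expect the main obstacle --- and the only place where the exponent $2n+1$ is genuinely used --- to be the general-position step together with the accompanying geometric fact that two simplices spanned by subsets of $2n+2$ affinely independent points meet exactly in the simplex on their shared vertices; verifying that the general-position condition is dense in the space of vertex assignments, and nailing down this intersection property, is the combinatorial-geometric core. The supporting ingredients --- the covering-dimension refinement, the canonical map to the nerve via a partition of unity, and the Baire category wrap-up --- are standard, and I would present them only briefly.
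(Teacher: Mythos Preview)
Your argument is the standard Baire-category proof of the Menger--N\"obeling--Pontryagin theorem and is correct as outlined; the only place requiring care is the general-position step, and you have identified it accurately. Note, however, that the paper does not give its own proof of this theorem: it merely states the result as classical with a reference to \cite{alekc-pas}, so there is no in-paper proof to compare against. Your approach is exactly the textbook one that such a reference would point to.
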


Much  effort has been devoted to reducing the dimension of the ambient 
Euclidean space $\mathbf{R}^{2n+1}$. Flores \cite{flores} proved that this cannot be done without 
additional assumptions, having constructed  an example of a compact 
$n$-dimensional polyhedron not embedded in $\mathbf{R}^{2n}$ for each $n$. However, any 
smooth $n$-manifold is smoothly embedded in $\mathbf{R}^{2n}$ (see Whitney's paper~\cite{whitney}).

Below we recall the well-known theorem of Stallings on homotopy embeddings of finite 
CW-complexes, which has turned out to be very useful in the study of shape embeddings. A simple  
proof of Stallings' theorem can be found in Dranishnikov and Repov\v{s}' paper~\cite{dran-repovs}.

\begin{theorem}[Stallings \cite{stallings}]\label{th-stallings}
Any finite $n$-dimensional ($n>0$) CW-complex $K$ is embedded in $\mathbf{R}^{2n}$ up to homotopy 
type, i.e., there exists a polyhedron $M$ having the homotopy type of the CW-complex $K$ which is 
embedded in $\mathbf{R}^{2n}$. 
\end{theorem}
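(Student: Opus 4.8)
The plan is to reduce to a finite simplicial complex and then induct on $n$, at each stage building the required polyhedron $M$ by attaching the top-dimensional cells of $K$ to an already-embedded copy of its $(n-1)$-skeleton. First I would replace $K$ by a homotopy equivalent finite simplicial complex of dimension $n$, and dispose of the base case $n=1$: a finite graph is homotopy equivalent to a disjoint union of wedges of circles, which embeds in $\mathbf{R}^2$. So assume $n\geqslant 2$ and write $K=L\cup_{\psi_1,\dots,\psi_r}(e^n_1\sqcup\dots\sqcup e^n_r)$, where $\dim L\leqslant n-1$ and $\psi_j\colon S^{n-1}\to L$ are the attaching maps of the $n$-cells. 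By the inductive hypothesis there is a polyhedron $M_L$ with $M_L\simeq L$ and a PL embedding $M_L\hookrightarrow\mathbf{R}^{2n-2}\subset\mathbf{R}^{2n}$; fix a homotopy equivalence $h\colon L\to M_L$ and let $N$ be a regular neighborhood of $M_L$ in $\mathbf{R}^{2n}$, so that $N$ is a compact $2n$-dimensional PL submanifold with $N\simeq M_L\simeq L$.

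Next I would realize the attaching maps inside this picture. Composing with $h$ gives maps $\varphi_j=h\circ\psi_j\colon S^{n-1}\to N$; since $2(n-1)<2n$, general position in the $2n$-manifold $N$ allows us to homotope each $\varphi_j$ to a PL embedding with $\varphi_1(S^{n-1}),\dots,\varphi_r(S^{n-1})$ pairwise disjoint in $\mathrm{int}\,N$ (this is harmless, since we only need $M\simeq K$, not a literal identification of attaching maps). Each $\varphi_j(S^{n-1})$ has codimension $n+1\geqslant 3$ in $\mathbf{R}^{2n}$, hence is unknotted by the Zeeman unknotting theorem, so it bounds an embedded PL $n$-disk; moreover two disjoint $(n-1)$-spheres in $\mathbf{R}^{2n}$ are unlinked, because the relevant linking obstruction lies in $\pi_{n-1}(S^{n})=0$. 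Using this, I would choose, one index at a time, embedded PL $n$-disks $\Delta_j\subset\mathbf{R}^{2n}$ with $\Delta_j\cap N=\partial\Delta_j=\varphi_j(S^{n-1})$ and $\Delta_1,\dots,\Delta_r$ pairwise disjoint; at stage $j$ this is possible because $\varphi_j(S^{n-1})$ is null-homotopic in the complement of $M_L\cup\Delta_1\cup\dots\cup\Delta_{j-1}$ (all linking obstructions vanishing). Then $M:=N\cup\Delta_1\cup\dots\cup\Delta_r$ is a polyhedron PL embedded in $\mathbf{R}^{2n}$, and since $N\simeq L$ and the $\Delta_j$ are attached along maps homotopic (in $N$) to the images of the $\psi_j$, the standard fact that attaching cells along homotopic maps yields homotopy equivalent complexes gives $M\simeq L\cup_{\psi_1,\dots,\psi_r}(e^n_1\sqcup\dots\sqcup e^n_r)=K$.

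The step I expect to be the main obstacle is arranging the capping disks $\Delta_j$ to be pairwise disjoint, i.e. re-choosing each $\Delta_j$ inside the complement of the earlier ones so that it is \emph{embedded} there rather than merely immersed. This is the only place the codimension matters: for $n\geqslant 3$ the ambient manifold $\mathbf{R}^{2n}$ has dimension $\geqslant 6$ and the intersections of two $n$-disks are isolated points, so they can be removed by the Whitney trick (or by direct general-position arguments in the highly connected complement). For $n=2$ the ambient dimension is $4$ and the Whitney trick is unavailable; there one must instead exploit that disjoint circles in $\mathbf{R}^{4}$ are always unlinked — so $\varphi_j(S^1)$ automatically bounds a $2$-disk in the complement of the previously placed disks — together with a careful argument that such a circle bounds an embedded disk there. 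This low-codimension bookkeeping, together with the routine verification that every modification above is performed through genuine homotopy equivalences, is where the real content of the theorem lies; an alternative route, starting from a Menger--N\"obeling--Pontryagin embedding $K\hookrightarrow\mathbf{R}^{2n+1}$ and resolving the isolated double points of a generic projection to $\mathbf{R}^{2n}$ by piping, runs into the same codimension-$2$ difficulty.
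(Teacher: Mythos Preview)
The paper does not contain a proof of this theorem. It is stated as a classical result of Stallings, with a citation to the original mimeographed notes \cite{stallings} and a pointer to ``a simple proof'' in Dranishnikov and Repov\v{s} \cite{dran-repovs}; the paper then moves on to discuss the sharpness of the bound and shape-theoretic embedding questions. So there is nothing in the paper to compare your argument against.

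That said, your outline is broadly the standard inductive scheme for Stallings' theorem (embed the $(n-1)$-skeleton in $\mathbf{R}^{2n-2}$, thicken to a regular neighborhood in $\mathbf{R}^{2n}$, realize the attaching spheres as disjoint PL embeddings by general position, then cap by embedded $n$-disks), and you have correctly located the genuine content: producing the capping disks $\Delta_j$ \emph{embedded} and mutually disjoint, with $\Delta_j\cap N=\partial\Delta_j$. A couple of points deserve care. First, the disk $\Delta_j$ furnished by Zeeman unknotting bounds $\varphi_j(S^{n-1})$ in $\mathbf{R}^{2n}$, not in the complement of $N$; you still have to arrange $\mathrm{int}\,\Delta_j\cap N=\varnothing$, and since $\dim\Delta_j+\dim N=n+2n>2n$, na\"ive general position does not do this for you. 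The usual fix is to first isotope the attaching spheres into $\partial N$ via a collar and then invoke unknotting in the complement (which is simply connected of high dimension), or to use the product structure coming from $M_L\subset\mathbf{R}^{2n-2}\subset\mathbf{R}^{2n}$ to push the disk interiors into the two extra coordinates. Second, your treatment of $n=2$ is honest about the difficulty but not yet an argument: ``a careful argument that such a circle bounds an embedded disk there'' is precisely the nontrivial part, and in dimension $4$ one typically appeals to a specific ad hoc construction (as in \cite{dran-repovs}) rather than to the Whitney trick. If you want a self-contained write-up, consult \cite{dran-repovs} for how these two steps are handled cleanly.
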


In this theorem the dimension of $\mathbf{R}^{2n}$  cannot 
be reduced (at least for $n=2^k$, $k\geqslant 1$), because it is known \cite{peterson} 
that the real projective plane $\mathbf{P}^n$ cannot be embedded in 
$\mathbf{R}^{2n-1}$ up to homotopy type for such~$n$.

In shape theory,  the following natural question arises (it was asked by Borsuk 
in~\cite{borsuk-rus}).

\begin{problem}\label{prob-1}
Is it true that any $n$-dimensional metrizable continuum is shape embedded in 
$\mathbf{R}^{2n}$? 
\end{problem}

Clearly, the answer is ``no'' already for $n=1$, because solenoids, which are 
one-dimensional continua, cannot be shape embedded in $\mathbf{R}^2$. Indeed, solenoids are 
not movable, while all compact sets in the plane are movable. Borsuk \cite{borsuk-rus} 
proved a more general theorem, according to which a one-dimensional metrizable 
continuum $X$ is shape embedded in $\mathbf{R}^2$ if and only if $X$ is movable.

Problem~\ref{prob-1} also has a negative solution in the case of metrizable continua $X$ of shape 
dimension $\sd X = n$. Duvall and Husch \cite{duvall-husch-embedding}  
constructed a metrizable continuum $X$ of shape dimension $\sd X = n$, $n=2^k$, which is not shape 
embedded in $\mathbf{R}^{2n}$ for each $k>1$. Nevertheless, under certain additional 
constraints, Problem~\ref{prob-1} has a  positive solution. The first results 
in this direction were obtained by Ivan\v{s}i\'{c} \cite{ivansic-embedding}.

\begin{theorem}
Let $X$ be a pointed 1-movable metrizable continuum with $\sd X =n \geqslant 3$. Then $X$ is shape 
embedded in $\mathbf{R}^{2n}$. 
\end{theorem}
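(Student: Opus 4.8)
The plan is to produce a compact subset $X'\subset\mathbf{R}^{2n}$ with $\sh X'=\sh X$, by writing $\sh X$ as the pro-homotopy type of an inverse sequence of finite polyhedra of dimension $\leqslant n$ and then realizing that sequence, up to pro-isomorphism, as a decreasing tower of compact polyhedral neighborhoods in $\mathbf{R}^{2n}$. First I would invoke Theorem~\ref{th-1}: since $\sd X=n$ and $X$ is a compact metrizable continuum, $X$ admits an associated inverse sequence $\{X_i,\mathbf{p}_{i\,i+1},\mathbb{N}\}$ in which each $X_i$ is a connected compact polyhedron with $\dim X_i\leqslant n$. Next I would exploit $1$-movability: by Krasinkiewicz's criterion quoted above, $\pro-\pi_1(X,*)$ satisfies the Mittag--Leffler condition, so after passing to a subsequence (which changes neither the pro-isomorphism class of the system nor $\sh X$) we may assume that for every $i$ the image $\im\bigl(\pi_1(X_j,*)\to\pi_1(X_i,*)\bigr)$ is a fixed (``stable'') subgroup of $\pi_1(X_i,*)$, independent of $j\geqslant i+1$.

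The heart of the argument is an inductive construction of compact piecewise-linear $2n$-manifolds-with-boundary $P_1\supset P_2\supset\cdots$ in $\mathbf{R}^{2n}$ with $P_{i+1}\subset\operatorname{int}P_i$ and $\diam P_i\to 0$, together with homotopy equivalences $\theta_i\colon P_i\to X_i$ satisfying $\theta_i\circ\mathbf{j}_i\simeq\mathbf{p}_{i\,i+1}\circ\theta_{i+1}$, where $\mathbf{j}_i\colon P_{i+1}\hookrightarrow P_i$ denotes the inclusion. For the base step, Stallings' Theorem~\ref{th-stallings} yields a polyhedron homotopy equivalent to $X_1$ that PL-embeds in $\mathbf{R}^{2n}$; let $P_1$ be a regular neighborhood of its image. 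For the inductive step, fix an $n$-polyhedron $M_{i+1}\simeq X_{i+1}$ and consider the composite $g\colon M_{i+1}\simeq X_{i+1}\xrightarrow{\mathbf{p}_{i\,i+1}}X_i\xrightarrow{\theta_i^{-1}}P_i$, pushed into $\operatorname{int}P_i$. By general position $g$ is homotopic in $\operatorname{int}P_i$ to a PL map whose only singularities are finitely many transverse double points on the top-dimensional cells (indeed $\dim M_{i+1}+\dim M_{i+1}-2n\leqslant 0$). Since $n\geqslant 3$ forces $2n\geqslant 6\geqslant 5$, the Whitney trick applies and the double points cancel in pairs, \emph{provided} the associated Whitney circles bound embedded $2$-disks missing the relevant sheets inside $\operatorname{int}P_i$; the requisite null-homotopies are furnished precisely by the stabilization of $\im\bigl(\pi_1(M_{i+1})\to\pi_1(P_i)\cong\pi_1(X_i)\bigr)$ arranged above. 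This produces a PL embedding $\bar g\colon M_{i+1}\hookrightarrow\operatorname{int}P_i$ with $\bar g\simeq g$ in $P_i$; take $P_{i+1}$ to be a small regular neighborhood of $\bar g(M_{i+1})$ inside $\operatorname{int}P_i$ and $\theta_{i+1}\colon P_{i+1}\to M_{i+1}\xrightarrow{\simeq}X_{i+1}$. The compatibility relation then holds because $\theta_i\circ\mathbf{j}_i\simeq\theta_i\circ g\simeq\mathbf{p}_{i\,i+1}\circ\theta_{i+1}$.

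Finally, set $X':=\bigcap_i P_i\subset\mathbf{R}^{2n}$. The $P_i$ are compact ANR neighborhoods with $P_{i+1}\subset\operatorname{int}P_i$ and shrinking diameters, hence they form a neighborhood basis of $X'$ in $\mathbf{R}^{2n}$, so $\{P_i,\mathbf{j}_i,\mathbb{N}\}$ is an ANR-system associated with $X'$. The homotopy equivalences $\theta_i$ constitute an isomorphism $\{P_i,\mathbf{j}_i\}\to\{X_i,\mathbf{p}_{i\,i+1}\}$ in pro-H-CW; since the first system is associated with $X'$ and the second with $X$, we conclude $\sh X'=\sh X$. Thus $X$ is shape embedded in $\mathbf{R}^{2n}$.

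The main obstacle is the inductive embedding step: one must perform the general-position and Whitney-trick surgery so that each bonding class $\mathbf{p}_{i\,i+1}$ is literally realized by the inclusion of one regular neighborhood into the next, and in particular one must verify that the $1$-movability reduction really does kill the fundamental-group obstruction to spanning the Whitney circles, given that the neighborhoods $P_i$ are in general not simply connected. Both hypotheses are consumed here — the dimension bound $n\geqslant 3$ (equivalently $2n\geqslant 5$) for the Whitney lemma, and $1$-movability for the needed $\pi_1$-control. A secondary point requiring care is bookkeeping: passing to subsequences and replacing maps by homotopic ones at every stage must be checked to preserve the pro-isomorphism type, so that the identification of shapes in the last paragraph is legitimate.
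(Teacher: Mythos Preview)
The paper does not prove this theorem; it is quoted as a result of Ivan\v{s}i\'{c} and then observed to be the case $r=0$ of the more general Husch--Ivan\v{s}i\'{c} theorem stated immediately afterwards. So there is no ``paper's own proof'' to compare against beyond that remark. Your outline follows the correct architecture one finds in the original sources --- an associated sequence of $n$-dimensional polyhedra, Stallings for the initial embedding, an inductive realization of bonding maps by inclusions of nested regular neighborhoods, and intersection.

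Two problems, one small and one substantive. First, the condition $\diam P_i\to 0$ is wrong: it forces $X'=\bigcap_i P_i$ to be a single point, whereas $\sd X=n\geqslant 3$ rules out trivial shape. You do not need it; the nesting $P_{i+1}\subset\operatorname{int}P_i$ alone makes $\{P_i\}$ a neighborhood basis of $X'$ by a standard compactness argument (if some open $U\supset X'$ contained no $P_i$, the compacta $P_i\setminus U$ would be nested and nonempty, yet their intersection is $X'\setminus U=\emptyset$).

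Second, and this is the real issue you flag yourself, the Whitney-trick step is not justified as written. The Mittag--Leffler condition on $\pro\text{-}\pi_1$ stabilizes the images $\im\bigl(\pi_1(X_j)\to\pi_1(X_i)\bigr)$; it does \emph{not} trivialize them, and a Whitney circle for a pair of double points of $g\colon M_{i+1}\to P_i$ represents an element of that stable image, which has no reason to be null-homotopic in $P_i$. Saying ``stabilization furnishes the null-homotopies'' conflates a stable image with a trivial one. In the actual proofs, $1$-movability is used beforehand to rearrange the inverse system so that the maps one must embed are $1$-connected (so that Hudson/Irwin-type embedding theorems in codimension $n\geqslant 3$ and ambient dimension $2n\geqslant 6$ apply), or else one bypasses the Whitney lemma altogether via a Stallings-type piping argument adapted to the relative situation. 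Either way, the passage from Mittag--Leffler to an embedding of the bonding map is precisely where the work lies, and your sketch does not supply it.
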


This theorem is a corollary of the following more general theorem (see Husch and Ivan\v{s}i\'{c}'s 
paper~\cite{hus-ivan-embedding}).

\begin{theorem}
Let $X$ be a shape $r$-connected $(r+1)$-movable metrizable continuum with $\sd X =n \geqslant 
3$. Then $X$ is shape embedded in $\mathbf{R}^{2n-r}$. 
\end{theorem}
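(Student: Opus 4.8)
The plan is to approximate $X$, up to shape, by an inverse sequence of $r$-connected finite polyhedra of dimension $\leqslant n$, to embed each term in $\mathbf{R}^{2n-r}$ by a connectivity-sharpened form of Stallings' Theorem~\ref{th-stallings}, to make those embeddings coherent by regular-neighborhood theory, and to recover a compactum $X'\subset\mathbf{R}^{2n-r}$ with $\sh X'=\sh X$ as a nested intersection. First I would write $X=\varprojlim\{K_i,p_{i,i+1},\mathbb{N}\}$ with each $K_i$ a finite polyhedron; since $\sd X=n$, Theorem~\ref{th-1} lets me take $\dim K_i\leqslant n$, and after reindexing I may assume every bonding map is simplicial. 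If $r=n$, then a shape $r$-connected space of shape dimension $\leqslant n$ has trivial shape and the assertion is vacuous, so I may assume $r\leqslant n-1$. Since $X$ is shape $r$-connected, $\pro-\pi_k(X,*)=0$ for $k\leqslant r$, so after passing to a subsequence I may assume that $p_{i,i+1\#}\colon\pi_k(K_{i+1})\to\pi_k(K_i)$ is trivial for all $k\leqslant r$.

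Next I would upgrade the system to one with $r$-connected terms. Attaching cells of dimensions $2,\dots,r+1$ to each $K_i$ along generators of $\pi_1,\dots,\pi_r$ produces $r$-connected complexes $L_i$ with $\dim L_i\leqslant\max\{n,r+1\}=n$ together with inclusions $j_i\colon K_i\hookrightarrow L_i$; because $p_{i,i+1}$ kills $\pi_k(K_{i+1})$ for $k\leqslant r$, the map $j_i\,p_{i,i+1}$ extends over the attached cells to a map $\ell_{i,i+1}\colon L_{i+1}\to L_i$. The point at which $(r+1)$-movability (Definition~\ref{def-n-mov}) is essential is that these extensions, and the choice of attaching data, can be made so that $\{L_i,\ell_{i,i+1}\}$ is isomorphic in the category pro-H-CW to $\{K_i,p_{i,i+1}\}$; since isomorphic associated ANR-systems determine the same shape, this gives $\sh(\varprojlim L_i)=\sh X$.

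Then I would invoke the $r$-connected refinement of Stallings' embedding theorem: an $r$-connected finite complex of dimension $\leqslant n$ has the homotopy type of a compact PL polyhedron that embeds in $\mathbf{R}^{2n-r}$ — this is where the codimension $2n-r$, together with $n\geqslant 3$, enters, and the relevant general-position/engulfing range has to be verified. Realizing the homotopy equivalences $L_i\simeq P_i\subset\mathbf{R}^{2n-r}$ and the bonds $\ell_{i,i+1}$ by PL maps $P_{i+1}\to P_i$, I would use regular-neighborhood theory and general position to isotope so that $P_{i+1}$ lies in $\operatorname{int}N_{i+1}$ with $N_{i+1}\subset\operatorname{int}N_i$ for regular neighborhoods $N_i$ of $P_i$, and so that each inclusion $N_{i+1}\hookrightarrow N_i$ is homotopic to $\ell_{i,i+1}$. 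Then $X'=\bigcap_i N_i\subset\mathbf{R}^{2n-r}$ is a compactum whose associated ANR-system $\{N_i\}$ is isomorphic to $\{L_i\}$, whence $\sh X'=\sh X$ and $X$ is shape embedded in $\mathbf{R}^{2n-r}$.

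The main obstacle is the combination of the second and third steps: constructing the tower $\{L_i\}$ of $r$-connected $n$-dimensional complexes that is pro-equivalent to $\{K_i\}$ — which is exactly the place where $(r+1)$-movability does its work and which is the delicate pro-homotopy part of the argument — and then realizing the connective embeddings coherently inside a single $\mathbf{R}^{2n-r}$, i.e.\ producing nested regular neighborhoods inducing the correct bonding maps. The latter is where the hypothesis $n\geqslant 3$ and the implicit metastable-range restrictions are used, and where the careful PL general-position bookkeeping lives; the homotopy-theoretic part additionally requires keeping track of $\pi_{r+1}$ so that passing to $r$-connected models does not alter the shape.
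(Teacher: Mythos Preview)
The paper does not prove this theorem; it merely states it, attributes it to Husch and Ivan\v{s}i\'{c} \cite{hus-ivan-embedding}, and notes that the preceding theorem of Ivan\v{s}i\'{c} is a corollary. So there is no proof in the paper against which to compare your proposal.

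As a sketch of the Husch--Ivan\v{s}i\'{c} approach, your outline is on the right track: replace the associated sequence by one with $r$-connected finite polyhedra of dimension $\leqslant n$, embed each term in $\mathbf{R}^{2n-r}$ via a connectivity-improved version of Stallings' theorem, and assemble the embeddings into a nested sequence of regular neighborhoods whose intersection realizes the shape. You also correctly flag the two genuinely delicate points. One clarification is worth making, however. Your description of where $(r{+}1)$-movability enters is imprecise. Attaching cells of dimension $\leqslant r+1$ to the $K_i$ and extending the bonds gives a level morphism $\{K_i\}\to\{L_i\}$, and one can produce maps back $L_{i}\to K_{i-1}$ because the attaching spheres die under the bonding maps; the obstruction to these assembling into a pro-inverse lives in $\pi_{r+1}$ of the targets, coming from the top-dimensional $(r{+}1)$-cells. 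It is this $\pi_{r+1}$-obstruction that $(r{+}1)$-movability is used to kill, not the mere existence of the extensions $\ell_{i,i+1}$. You allude to this at the very end, but the body of your argument says $(r{+}1)$-movability is what lets the extensions ``be made'' compatible, which understates the actual role of the hypothesis. Similarly, the ``connectivity-sharpened Stallings theorem'' you invoke is a real PL result, but it requires specific codimension hypotheses (engulfing/general position in the range $2n-r\geqslant n+3$, with separate low-codimension arguments when $r$ is close to $n$); you should state precisely which embedding theorem you are using and verify its hypotheses from $n\geqslant 3$ and $0\leqslant r\leqslant n-1$.
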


In the last theorem, the $(r+1)$-movability assumption can be removed 
at the expense of increasing the dimension of $\mathbf{R}^{2n-r}$ by 1 (Husch and 
Ivan\v{s}i\'{c} \cite{hus-ivan-embedding}).

\begin{theorem}
Let $X$ be a shape $r$-connected metrizable continuum with $\sd X =n$ and $n-r \geqslant 2$. Then 
$X$ is shape embedded in $\mathbf{R}^{2n-r+1}$. 
\end{theorem}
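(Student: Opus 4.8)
The plan is to realise $X$ concretely as the intersection of a decreasing sequence of compact polyhedral neighbourhoods inside $\mathbf{R}^{2n-r+1}$, with the shape of that intersection controlled by the shape dimension and the shape connectivity of $X$. First I would fix a good inverse sequence for $X$. As a metrizable continuum, $X$ is the limit of an associated inverse sequence of finite connected polyhedra, and since $\sd X\leqslant n$, Theorem~\ref{th-1} lets us take all of them of dimension $\leqslant n$. Using that $X$ is shape $r$-connected, i.e.\ $\pro-\pi_k(X,*)=0$ for $k\leqslant r$, a standard reindexing-and-cell-attaching argument (see \cite{mard-seg-4}) replaces this by an associated inverse sequence $\{P_i,q_i\}$, with bonding maps $q_i\colon P_{i+1}\to P_i$, in which each $P_i$ is a finite \emph{$r$-connected} polyhedron with $\dim P_i\leqslant n$: one passes to a subsequence along which each bonding map is trivial on $\pi_k$ for $k\leqslant r$, kills these groups by attaching cells of dimension $\leqslant r+1$ (which does not push the dimension above $n$), and extends the bonding maps over the new cells.

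Next I would reduce the theorem to a purely geometric statement: it suffices to construct compact regular neighbourhoods $N_1\supset N_2\supset\cdots$ in $\mathbf{R}^{2n-r+1}$ with $N_{i+1}\subset\operatorname{int}N_i$, together with homotopy equivalences $\rho_i\colon N_i\to P_i$ that are compatible with the inclusions $N_{i+1}\hookrightarrow N_i$ and the bonding maps $q_i$ (the resulting squares homotopy-commute). Then $K:=\bigcap_i N_i$ is a compactum in $\mathbf{R}^{2n-r+1}$ for which the $N_i$ form a neighbourhood basis, so $\{N_i,\text{incl}\}$ is an ANR-sequence associated with $K$; the $\rho_i$ give a level isomorphism of $\{N_i,\text{incl}\}$ with $\{P_i,q_i\}$ in the category pro-H-CW, whence $\sh K=\sh X$, i.e.\ $X$ is shape embedded in $\mathbf{R}^{2n-r+1}$.

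The heart is the inductive construction of the $N_i$, which rests on embedding theorems in the metastable range. The relevant numerology is that an $n$-complex in $\mathbf{R}^{2n-r+1}$ has codimension $n-r+1\geqslant 3$ — this is exactly the hypothesis $n-r\geqslant 2$ — and that $2n-(2n-r+1)=r-1$, so every $r$-connected $n$-complex, and every map from one into a $(2n-r+1)$-manifold, lies in the metastable range. Hence $P_1$ embeds up to homotopy type in $\mathbf{R}^{2n-r+1}$ by the metastable-range embedding theorem for polyhedra (which sharpens Stallings' Theorem~\ref{th-stallings}); let $N_1$ be a regular neighbourhood of the image and $\rho_1$ its retraction. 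Given $N_i$ with retraction $\rho_i\colon N_i\to P_i\subset\operatorname{int}N_i$, regard $q_i$ as a map $P_{i+1}\to P_i\subset\operatorname{int}N_i$ into the open $(2n-r+1)$-manifold $\operatorname{int}N_i$; Hudson's embedding-of-maps theorem makes it homotopic to a PL embedding $g_{i+1}\colon P_{i+1}\hookrightarrow\operatorname{int}N_i$ with $\rho_i\circ g_{i+1}\simeq q_i$. Take $N_{i+1}$ to be a small regular neighbourhood of $g_{i+1}(P_{i+1})$, which lies in $\operatorname{int}N_i$; its deformation retraction onto the core gives $\rho_{i+1}\colon N_{i+1}\to P_{i+1}$, and by construction $\rho_i|_{N_{i+1}}\simeq q_i\circ\rho_{i+1}$. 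Iterating yields the nested sequence.

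I expect the geometric induction to be the main obstacle: one must check that the metastable embedding theorems for an $r$-connected $n$-complex and for maps between such complexes genuinely apply into $\mathbf{R}^{2n-r+1}$ under only the assumption $n-r\geqslant 2$, and that the regular neighbourhoods can be taken strictly nested with compatible homotopy equivalences. The single extra dimension compared with the Husch--Ivan\v{s}i\'{c} theorem \cite{hus-ivan-embedding} for $(r+1)$-movable continua is the price of abandoning $(r+1)$-movability: lacking it, one has no control over the homotopy classes of the bonding maps $q_i$, so they must be turned into embeddings by general position alone, which forces codimension $\geqslant 3$, i.e.\ $n-r+1\geqslant 3$. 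Finally, for $r=0$ the statement follows at once from Theorem~\ref{th-3} together with the Menger--N\"{o}beling--Pontryagin embedding theorem.
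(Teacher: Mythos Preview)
The paper itself gives no proof of this statement---it is a survey, and the theorem is simply quoted from Husch and Ivan\v{s}i\'{c} \cite{hus-ivan-embedding}. Your proposal is the standard argument for such results and matches the approach of the original source: realize the shape of $X$ by an inverse sequence of finite $r$-connected polyhedra of dimension $\leqslant n$, then embed these successively as nested regular neighbourhoods in $\mathbf{R}^{2n-r+1}$ via the Irwin--Hudson metastable-range embedding theorem, and take the intersection. Your numerology is correct: the codimension is $n-r+1\geqslant 3$, and the connectivity hypotheses (source $(2n-m)$-connected, target $(2n-m+1)$-connected with $m=2n-r+1$, i.e.\ source $(r-1)$-connected and target $r$-connected) are satisfied because the $P_i$ and $\operatorname{int}N_i\simeq P_i$ are $r$-connected. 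The ambient dimension $2n-r+1=n+(n-r+1)\geqslant n+3\geqslant 5$, so there are no low-dimensional obstructions to the PL embedding theorems. Finally, any nested sequence of compacta with $N_{i+1}\subset\operatorname{int}N_i$ automatically has the $N_i$ as a neighbourhood basis of $K=\bigcap N_i$ (if some open $U\supset K$ contained no $N_i$, the nonempty compacta $N_i\setminus U$ would have nonempty intersection in $K\setminus U=\varnothing$), so $\{N_i,\text{incl}\}$ is indeed associated with $K$.

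One point worth making explicit: after attaching cells of dimension $\leqslant r+1$ to each $P_i$ to obtain $r$-connected $P_i'$, you must check that the resulting system $\{P_i',q_i'\}$ is still associated with $X$. This is routine obstruction theory---since each bonding map $q_i$ has been arranged to be trivial on $\pi_k$ for $k\leqslant r$, it extends over $P_{i+1}'$ to a map $s_i\colon P_{i+1}'\to P_i$, and the two composites $P_{i+1}'\to P_i'$ one can form agree on $P_{i+1}$ and hence are homotopic (the obstructions live in groups with coefficients $\pi_k(P_i')=0$, $k\leqslant r$). This shows the inclusions $P_i\hookrightarrow P_i'$ assemble into a pro-isomorphism. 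It is implicit in your reference to \cite{mard-seg-4}, but deserves a sentence.
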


Of interest is also the following problem, which was posed by Borsuk in~\cite{borsuk-rus}.

\begin{problem}\label{prob-2}
Let $X$ and $Y$ be compact metrizable spaces such that $\sh X \leqslant \sh Y$ and $Y\subset 
\mathbf{R}^n$. Is it true that $X$ is shape embedded in $\mathbf{R}^n$? 
\end{problem}

Husch and Ivan\v{s}i\'{c} \cite{hus-ivan-shape} obtained a positive solution of this problem 
under fairly strong constraints.

\begin{theorem}
Let $X$ and $Y$ be metrizable continua such that $Y \subset \mathbf{R}^n$, $\dim Y = k$, $X$ 
has the shape of a finite complex, and $\sh X \leqslant \sh Y$. If $3k < 2(n-1)$ and $k \geqslant 
3$, then $X$ is shape embedded in $\mathbf{R}^n$. 
\end{theorem}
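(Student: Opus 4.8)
The plan is to reduce the assertion to a metastable-range polyhedral embedding problem and then solve that problem via the deleted-product criterion. First I would dispose of the shape: since $\sh X\leqslant\sh Y$, Theorem~\ref{th-4} gives $\sd X\leqslant\sd Y\leqslant\dim Y=k$, and since $X$ has the shape of a finite complex it has the shape of a finite CW-complex $K$ which, because $k\geqslant 3$, may be taken with $\dim K=:m\leqslant k$ (a finite complex of shape dimension $\leqslant k$ is homotopy equivalent to a $k$-dimensional one, by Wall's theorem). As ``shape embedded in $\mathbf{R}^n$'' depends only on $\sh X$, it suffices to shape-embed $K$. The arithmetic hypothesis $3k<2(n-1)$ rewrites as $2n\geqslant 3(k+1)\geqslant 3(m+1)$, so $(m,n)$ lies in the metastable range; moreover $n-m\geqslant n-k\geqslant\frac{k+3}{2}\geqslant 3$, so we work in codimension at least $3$.

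Next I would realize the shape domination by honest maps. Write $\sh X\leqslant\sh Y$ as shape morphisms $i\colon K\to Y$ and $r\colon Y\to K$ with $r\circ i=1_K$. Since $Y$ is compact in the ANR $\mathbf{R}^n$ and $K$ is an ANR, $r$ is induced by a map $\rho\colon N\to K$ defined on a compact PL $n$-manifold neighborhood $N$ of $Y$ in $\mathbf{R}^n$; composing $i$ with $Y\hookrightarrow N$ (both ANRs) yields a map $\iota\colon K\to N$, and the identity $r\circ i=1_K$ forces $\rho\circ\iota\simeq 1_K$. Thus the composite $\iota\colon K\to N\subset\mathbf{R}^n$ exhibits $K$ as a homotopy retract of the $n$-dimensional polyhedron $N$, and the task is to homotope $\iota$ to a PL embedding. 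If $2m<n$ this is immediate from general position (indeed Stallings' Theorem~\ref{th-stallings} already embeds $K$ up to homotopy in $\mathbf{R}^{2m}\subset\mathbf{R}^n$ with no use of $Y$ at all); the substance of the theorem is the metastable range $n\leqslant 2m\leqslant 2k$.

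In that range I would use the deleted-product criterion. By Weber's metastable-range embedding theorem, the finite $m$-complex $K$ PL embeds in $\mathbf{R}^n$ as soon as there is a $\mathbb{Z}/2$-equivariant map from the deleted product $\widetilde K=(K\times K)\setminus\Delta$ to $S^{n-1}$. Such a map is available on $Y$: the Gauss map $(y,y')\mapsto (y-y')/\lVert y-y'\rVert$ is a $\mathbb{Z}/2$-equivariant map $\widetilde Y\to S^{n-1}$, which (as $S^{n-1}$ is an ANR) extends to a pro-map out of the pro-deleted-product of $Y$, whose terms are homotopy equivalent to polyhedra of dimension $\leqslant 2k$ because $\dim Y=k$. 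In the metastable range the equivariant homotopy type of the deleted product through dimension $n-1$ (note $n-1\leqslant 2m\leqslant 2k$ here) is a pro-homotopy invariant in the sense of Haefliger and Weber, so the shape morphism $i\colon K\to Y$ transports the Gauss map to a $\mathbb{Z}/2$-equivariant map $\widetilde K\to S^{n-1}$; the relevant obstructions lie in $H^{q+1}_{\mathbb{Z}/2}(\widetilde K;\pi_q(S^{n-1}))$ with $n-1\leqslant q\leqslant\dim\widetilde K\leqslant 2m$, and $2n\geqslant 3(m+1)$ pushes these into a range where they are governed by the corresponding vanishing data for the $k$-dimensional polyhedra approximating $Y$. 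Weber's theorem then produces a PL embedding $g\colon K\hookrightarrow\mathbf{R}^n$, and $X':=g(K)$ is a compact polyhedron with $\sh X'=\sh K=\sh X$, so $X$ is shape embedded in $\mathbf{R}^n$.

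The main obstacle is the third step: making rigorous that the shape-level domination $\sh K\leqslant\sh Y$ together with the embedding $Y\subset\mathbf{R}^n$ forces the Haefliger--Weber embedding obstruction of $K$ to vanish. This is exactly where the inequality $3k<2(n-1)$ is used essentially — it keeps us in the metastable range, where the (pro-)deleted product is a homotopy invariant and the obstruction groups above the critical dimension die, and it guarantees codimension $\geqslant 3$, which is needed both for that argument and for the general-position and Whitney-trick removal of double points underlying Weber's theorem. Everything else (the reduction to a finite complex, the extension of $r$ over a neighborhood, and the passage from the PL embedding to a shape embedding) is routine.
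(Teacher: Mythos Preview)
The paper is a survey and does not prove this theorem; it merely states the result and attributes it to Husch and Ivan\v{s}i\'{c} \cite{hus-ivan-shape}. There is therefore no proof in the paper to compare your argument against.

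Your sketch has a genuine gap precisely where you yourself flag ``the main obstacle,'' and it is more serious than your closing paragraph lets on. The deleted product is not functorial in maps: a map $\iota\colon K\to N$ induces $\iota\times\iota\colon K\times K\to N\times N$, but this does \emph{not} carry $\widetilde K=(K\times K)\setminus\Delta$ into $\widetilde N$ unless $\iota$ is injective --- and injectivity of $\iota$ is exactly what you are trying to prove. Hence there is no pullback of the Gauss map $\widetilde N\to S^{n-1}$ along $\iota$, and the sentence ``the shape morphism $i\colon K\to Y$ transports the Gauss map to a $\mathbb{Z}/2$-equivariant map $\widetilde K\to S^{n-1}$'' has no content as written. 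Your appeal to ``the equivariant homotopy type of the deleted product through dimension $n-1$ is a pro-homotopy invariant in the sense of Haefliger and Weber'' is not a standard theorem and is not established here; Haefliger--Weber theory gives, in the metastable range, a bijection between isotopy classes of embeddings $K\hookrightarrow\mathbf{R}^n$ and equivariant homotopy classes $[\widetilde K,S^{n-1}]_{\mathbb{Z}/2}$, but it does not make $\widetilde{(-)}$ a homotopy functor, nor does it produce maps of deleted products out of non-injective maps or shape morphisms. The obstruction-group bookkeeping you sketch afterwards presupposes exactly this missing functoriality. So what remains is not a routine technicality: the central mechanism by which the embedding of $Y$ is supposed to force embeddability of $K$ is simply absent, and supplying it would itself be a theorem of roughly the same depth as the one you are trying to prove.
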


An example constructed by Kadlof \cite{kadlof-anexample} shows that Problem~\ref{prob-2} has 
a negative answer. To be more precise, there exists a compact connected two-dimensional CW-complex 
$X$ which is homotopy dominated by a compact connected polyhedron $Y \subset \mathbf{R}^3$ 
but is not shape embedded in~$\mathbf{R}^3$.

Recall that a space $X$ is said to be \emph{$\Pi$-similar}, where $\Pi$ is a family of finite 
polyhedra, if  each open finite  cover of $X$ has an open refinement whose nerve is homeomorphic 
to a polyhedron $P$ in $\Pi$. If $\Pi=\{T^n, n\geqslant 0\}$, where each $T^n = S^1 \times \ldots 
\times S^1$ is the $n$-torus, then $X$ is said to be \emph{$T^n$-similar}. Any 
$\Pi$-similar continuum is the limit of an inverse system consisting of elements of the family 
$\Pi$. Questions concerning  shape embeddings of $T^n$-similar continua in Euclidean spaces were 
studied by Keesling and Wilson \cite{keesling-wilson}. They proved the following theorems.

\begin{theorem}
An arbitrary $n$-dimensional compact connected Abelian topological group $A$ is (topologically) 
embedded in $\mathbf{R}^{n+2}$. 
\end{theorem}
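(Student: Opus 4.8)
The plan is to embed $A$ as a decreasing intersection of ``solid $n$-tori'' inside $\mathbf{R}^{n+2}$, in direct analogy with the classical way a solenoid sits inside $\mathbf{R}^{3}$. First I would use Pontryagin duality to pin down the structure of $A$. Since $A$ is connected, the dual $\widehat A$ is torsion-free, and $\operatorname{rank}\widehat A=\dim A=n$; hence $\widehat A$ is a countable torsion-free group of rank $n$, and in particular $A$ is metrizable. Writing $\widehat A$ as an increasing union of rank-$n$ free subgroups and inserting, between consecutive ones, a composition series of the (finite) quotient, one obtains $\widehat A=\bigcup_{k\ge 0}G_k$ with $G_0\subset G_1\subset\cdots$ and each index $[G_k:G_{k-1}]=p_k$ prime. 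Dualizing turns this into an inverse sequence $A\cong\varprojlim\{T^n\xleftarrow{q_1}T^n\xleftarrow{q_2}\cdots\}$ in which every bonding map $q_k\colon T^n\to T^n$ is a covering of prime degree $p_k$; crucially, $\ker q_k\cong\mathbb{Z}/p_k$ is cyclic.

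The geometric heart of the argument is a ``winding'' construction. For each $k$ I would build an embedding $E_k\colon T^n\times D^2\hookrightarrow\operatorname{int}(T^n\times D^2)$ satisfying $\mathrm{pr}_1\circ E_k=q_k\circ\mathrm{pr}_1$, where $\mathrm{pr}_1$ denotes the projection to $T^n$. Choose a character $\chi_k\colon T^n\to S^1$ whose restriction to $\ker q_k$ is faithful --- possible precisely because $\ker q_k$ is cyclic --- and a small number $\delta_k\in(0,\tfrac12)$; writing $D^2=\{z\in\mathbb{C}:|z|\le 1\}$, set $E_k(x,z)=\bigl(q_k(x),\ \tfrac12\chi_k(x)+\delta_k z\bigr)$. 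One checks that $E_k$ maps into the open solid torus (since $\delta_k<\tfrac12$), that it is injective (if $q_k(x)=q_k(x')$ then $x'=x+t$ with $t\in\ker q_k$, and $\tfrac12|1-\chi_k(t)|=\delta_k|z'-z|\le 2\delta_k$ forces $\chi_k(t)=1$, hence $t=0$, once $\delta_k<\tfrac14\min_{t\ne 0}|1-\chi_k(t)|$; then $z=z'$), and that it is an immersion, since its differential is block-triangular with invertible diagonal blocks $dq_k$ and $\delta_k\cdot\mathrm{id}$. This is the step where codimension two is exactly enough room: a prime-degree cover has cyclic deck group, and a single disk coordinate suffices to pull the $p_k$ sheets apart.

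To assemble the embedding, fix an embedding $T^n\subset\mathbf{R}^{n+1}$ (its normal bundle there, an orientable line bundle over $T^n$, is trivial, so the normal bundle in $\mathbf{R}^{n+2}$ is trivial of rank $2$), and let a closed tubular neighborhood give $V_0:=T^n\times D^2\subset\mathbf{R}^{n+2}$. Put $V_k:=(E_1\circ\cdots\circ E_k)(T^n\times D^2)$; these are compact and nested with $V_{k+1}\subset\operatorname{int}V_k$. Because each $E_j$ is injective, $\bigcap_k V_k$ is canonically homeomorphic to $\varprojlim\{T^n\times D^2,E_k\}$, and the commuting squares $\mathrm{pr}_1\circ E_k=q_k\circ\mathrm{pr}_1$ give a continuous surjection $\varprojlim\{T^n\times D^2,E_k\}\to\varprojlim\{T^n,q_k\}\cong A$. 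This surjection is injective: along a thread one has $z_k=\tfrac12\chi_{k+1}(a_{k+1})+\delta_{k+1}z_{k+1}$, where $a_{k+1}$ is determined by the first coordinates, so two threads with equal image satisfy $|z_k-z_k'|=\bigl(\prod_{i=1}^{m}\delta_{k+i}\bigr)|z_{k+m}-z_{k+m}'|\le 2\cdot 2^{-m}\to 0$, whence $z_k=z_k'$ for all $k$. Therefore $\bigcap_k V_k\cong A$, and since $\bigcap_k V_k\subset V_0\subset\mathbf{R}^{n+2}$, the theorem follows.

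I expect the main obstacle to be the winding construction of the second paragraph: one needs an honest \emph{embedding} --- not merely the obvious $p_k$-fold covering map $q_k\times\mathrm{id}$ --- of the solid torus into itself inducing $q_k$ on cores, and with the transverse data controlled tightly enough that the pieces $V_k$ remain standard solid tori and the intersection comes out \emph{homeomorphic} to $A$ rather than only shape equivalent. The preliminary reduction to prime-degree bonding maps is exactly what makes this feasible in codimension two; for covers with non-cyclic deck group a single $D^2$-coordinate no longer separates the sheets, which is in essence why $\mathbf{R}^{n+2}$, and not $\mathbf{R}^{n+1}$, is the correct target.
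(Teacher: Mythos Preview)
Your argument is correct. The survey does not prove this theorem in detail; it merely cites Keesling and Wilson and remarks that the proof ``essentially uses McCord's theorem on an embedding of the limit of an inverse system in $\mathbf{R}^n$.'' What you have written is precisely a self-contained instance of McCord's nested-neighborhood technique, specialized to the case $A\cong\varprojlim\{T^n,q_k\}$: you realize each bonding map by an embedding of the solid torus $T^n\times D^2$ into its own interior and then identify $\bigcap_k V_k$ with the inverse limit. So the route is the same in spirit; you simply carry out by hand, for tori, what the cited reference packages as a general lemma.

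Two small points worth tightening. First, the reduction via Pontryagin duality deserves one more word: $\dim A=n$ forces $\operatorname{rank}\widehat A=n$, and since $\widehat A$ is torsion-free it embeds in $\mathbb{Q}^n$, hence is countable---this is what makes $A$ metrizable and the inverse \emph{sequence} available. Second, your injectivity estimate for $E_k$ requires $\delta_k<\tfrac14\min_{0\ne t\in\ker q_k}|1-\chi_k(t)|$; it is harmless but worth saying explicitly that you choose the $\delta_k$ after fixing $\chi_k$, so this bound and the condition $\delta_k<\tfrac12$ can both be met. With these in place the argument is complete and matches the approach the paper points to.
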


\begin{theorem}
Let $X$ be a $T^n$-similar continuum. Then $X$ is shape embedded in $\mathbf{R}^{n+2}$.
\end{theorem}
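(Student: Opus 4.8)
The plan is to deduce the statement from the two theorems stated just above --- Keesling's result that every $T^n$-like continuum has the shape of a compact connected Abelian topological group, and the Keesling--Wilson topological embedding theorem for such groups --- so that the only work is a dimension count.

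First I would note that a $T^n$-similar continuum $X$ is $T^n$-like, i.e.
\[
X=\lim\limits_{\longleftarrow}\{T^n,\,p_{ii'},\,\mathbb{N}\}
\]
for suitable bonding maps between copies of the $n$-torus $T^n=S^1\times\dots\times S^1$. Since each $T^n$ is a compact ANR, the corresponding homotopy inverse system $\{T^n,\mathbf{p}_{ii'},\mathbb{N}\}$ is an ANR-system associated with $X$, and as $\dim T^n=n$ for every term, Theorem~\ref{th-1} gives $\sd X\leqslant n$.

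Next, by Theorem~\ref{th-kess} there is a compact connected Abelian topological group $A$ with $\sh X=\sh A$. Since shape dimension is a shape invariant (Corollary~\ref{cor-1}), $\sd A=\sd X\leqslant n$; and since $\dim A=\sd A$ for a connected compact group, we get $\dim A\leqslant n$. Set $m=\dim A$. The Keesling--Wilson embedding theorem then provides a topological embedding of the $m$-dimensional compact connected Abelian group $A$ into $\mathbf{R}^{m+2}\subseteq\mathbf{R}^{n+2}$; denote its image by $A'\subset\mathbf{R}^{n+2}$, a compact metrizable subspace satisfying $\sh A'=\sh A=\sh X$. By the very definition of a shape embedding, $X$ is shape embedded in $\mathbf{R}^{n+2}$.

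I do not expect a real obstacle, since the argument only assembles results already at hand. The one point that will need care is the transfer of the dimension bound to the group $A$: covering dimension is not a shape invariant, so $\dim A\leqslant n$ must be reached by first bounding $\sd X\leqslant n$ through the associated system of $n$-tori, then using the shape invariance of $\sd$, and only afterwards the equality $\dim=\sd$ for compact connected groups. The substantive ingredients --- Theorem~\ref{th-kess} and the Keesling--Wilson embedding theorem --- are used as black boxes, which is legitimate here.
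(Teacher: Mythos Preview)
Your proposal is correct and follows essentially the route the paper indicates: the paper notes that the proof ``essentially uses McCord's theorem on an embedding of the limit of an inverse system in $\mathbf{R}^n$ and a theorem of Keesling (Theorem~\ref{th-kess}) characterizing $T^n$-similar continua,'' and your argument is precisely the reduction to the preceding Keesling--Wilson embedding theorem (which itself rests on McCord) via Keesling's shape characterization, together with the dimension bound $\dim A=\sd A=\sd X\leqslant n$. The only organizational difference is that you invoke the group embedding theorem as a black box rather than McCord directly, which is entirely legitimate given how the results are stated.
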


The proofs of these theorems essentially use McCord's theorem 
\cite{mccord-embedding} on an embedding of the limit of an inverse system in $\mathbf{R}^n$ and 
a theorem of Keesling \cite{keesling-7} (see Theorem~\ref{th-kess}) characterizing 
$T^n$-similar continua.

In \cite{keesling-wilson} is was also proved that any $T^n$-similar continuum is embedded in 
$\mathbf{R}^{2n}$. For $I^n$-similar continua, this was proved by Isbell~\cite{isbell-embeddings}.

\section{Cell-Like Maps and Shape Theory}

Cell-like maps (or, briefly, CE-maps) play a very important role in geometric topology. They were 
defined by Armentrout \cite{armentrout-1} in terms of $UV$-properties and have already been 
extensively studied when shape theory was discovered. Here we discuss those aspects of cell-like 
maps which are of particular interest for shape theory. Information about other questions 
can be found in the fairly extensive literature on CE-maps, which includes, in particular, the 
survey papers \cite{lacher-2} by Lacher, \cite{edwardsR.} by Edwards, 
\cite{geoghegan-1} by Geoghegan, \cite{rubin-1} by Rubin, and \cite{dra-schepin} by Dranishnikov 
and Shchepin.

\begin{definition}
A map $f\colon X\to Y$ is said to be {\it cell-like} if all fibers $f^{-1}(y)$, $y\in Y$, 
have trivial shape, i.e., $\sh(f^{-1}(y))=0$. 
\end{definition}

If $X$, $Y$, and $Z$ are metrizable ANRs and $f\colon X\to Y$ and $g \colon  Y\to Z$ 
are cell-like maps, then the composition $g\circ f \colon  X \to Z$ is cell-like as well. 
Therefore, the compact metrizable ANRs and cell-like maps form a category. This is not the 
case for compact sets not being ANRs (see Taylor's paper~\cite{taylor}).

The following version of Smale's theorem in shape theory was proved by Dydak \cite{dydak} (see also 
papers  \cite{kuper-2} by Kuperberg,  \cite{kodama-2} by Kodama, and \cite{bogat} by Bogatyi).

\begin{theorem}\label{th-dyd-sm}
Let $X$ and $Y$ be compact metrizable sets, and let $f\colon X\to Y$ be a cell-like map. Then 
the induced homomorphisms  
$$
\pro-\pi_n(f)\colon \pro-\pi_n(X,*)\to  \pro -\pi_n(Y,*)
$$ 
of homotopy pro-groups are isomorphisms for all $n$. 
\end{theorem}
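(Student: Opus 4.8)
The plan is to represent $f$ by a level morphism between inverse ANR-systems of the kind occurring in Borsuk's construction, and then to verify, \emph{separately for each $n$}, a concrete partial-invertibility criterion for morphisms of pro-groups, the geometric input being the classical approximate lifting property of cell-like maps over finite-dimensional polyhedra. The restriction of the argument to one fixed $n$ is essential: lifting is available over spheres and disks of a fixed dimension but not uniformly in the dimension, which is exactly why a cell-like map of compacta need not be a shape equivalence (cf.\ Taylor's example) even though it induces isomorphisms of all homotopy pro-groups.

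\emph{Setup and reduction.} Embed $X$ and $Y$ as compacta in the Hilbert cube $Q$ and extend $f$ to $F\colon Q\to Q$ with $F|_X=f$ (possible since $Q$ is an AR). Choose nested ANR-neighborhood bases $U_1\supseteq U_2\supseteq\cdots$ of $X$ and $V_1\supseteq V_2\supseteq\cdots$ of $Y$ in $Q$ with $\bigcap_iU_i=X$ and $\bigcap_iV_i=Y$; passing to subsequences and using continuity of $F$ we may arrange $F(U_i)\subseteq V_i$, obtaining level maps $F_i=F|_{U_i}\colon U_i\to V_i$ with $F_i|_X=f$. As in Borsuk's construction, $\{U_i,\text{incl}\}$ and $\{V_i,\text{incl}\}$ are inverse ANR-systems associated with $X$ and $Y$, and $\{F_i\}$ represents the shape morphism $S(f)$; hence $\pro-\pi_n(f)$ is the morphism of pro-groups induced by $\{\pi_n(F_i,*)\}$, with $*\in X$ and $f(*)$ the basepoint of $Y$. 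By the standard characterization of isomorphisms in $\pro-\Grp$ (resp.\ $\pro-\Set$ for $n=0$) — existence of partial inverses compatible with the bonds — it suffices to show: for every $n$ and every $i$ there are $i'\geqslant i$ and a homomorphism $r\colon\pi_n(V_{i'})\to\pi_n(U_i)$ with $r\circ\pi_n(F_{i'})=\pi_n(j^U_{ii'})$ and $\pi_n(F_i)\circ r=\pi_n(j^V_{ii'})$, where $j^U_{ii'}$ and $j^V_{ii'}$ are the inclusion bonds.

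\emph{The key lemma.} The only use of the cell-like hypothesis is the approximate lifting property of Vietoris--Begle--Lacher type: since each fibre $f^{-1}(y)$ has trivial shape, equivalently is $UV^\infty$ in an ANR neighborhood, for every finite polyhedron $K$ with subpolyhedron $K_0$, every map $h\colon K\to Y$, every partial lift of $h|_{K_0}$ into $X$, and every open cover $\mathcal V$ of $Y$, there is a map $\tilde h\colon K\to X$ extending that partial lift up to $\mathcal V$-closeness with $f\tilde h$ $\mathcal V$-close to $h$; moreover any two such lifts for sufficiently fine $\mathcal V$ are homotopic rel $K_0$ within a prescribed coarser cover. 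This is proved by induction over the skeleta of $K$ from the $UV^\infty$-property of the fibres and compactness of $Y$ — finite-dimensionality of $K$ entering here. Transporting this along $F$ gives the neighborhood form we need: given $i$, for all large enough $i'$, every map $\sigma\colon K\to V_{i'}$ from a finite polyhedron admits an approximate lift $\tilde\sigma\colon K\to U_i$ with $F\tilde\sigma\simeq\sigma$ in $V_i$, unique up to homotopy in $U_i$ relative to any chosen subpolyhedron.

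\emph{Construction of $r$ and conclusion.} Fix $n$ and $i$ and choose $i'\geqslant i$ large enough for the neighborhood lifting lemma to apply (with control coarse at level $i$) over $S^n$, $S^n\times I$, and finite wedges of $n$-spheres. For $[\sigma]\in\pi_n(V_{i'},*)$ with $\sigma\colon(S^n,*)\to(V_{i'},f(*))$, pick an approximate lift $\tilde\sigma\colon(S^n,*)\to(U_i,*)$ with $F\tilde\sigma\simeq\sigma$ in $V_i$; by the uniqueness clause the class $[\tilde\sigma]\in\pi_n(U_i,*)$ is well defined, and compatibility of lifting with the pinch map $S^n\to S^n\vee S^n$ makes $r\colon[\sigma]\mapsto[\tilde\sigma]$ a homomorphism (with the obvious pointed adjustments for $n=0,1$). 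Then $\pi_n(F_i)\,r[\sigma]=[F_i\tilde\sigma]=[F\tilde\sigma]=[\text{incl}\circ\sigma]=\pi_n(j^V_{ii'})[\sigma]$, and for $[\tau]\in\pi_n(U_{i'})$ one has $r\,\pi_n(F_{i'})[\tau]=r[F\tau]=[\widetilde{F\tau}]$, where $\widetilde{F\tau}$ and $\text{incl}\circ\tau$ are both lifts of $F\tau$ and hence equal in $\pi_n(U_i)$ by uniqueness, so $r\circ\pi_n(F_{i'})=\pi_n(j^U_{ii'})$. This verifies the criterion, so $\pro-\pi_n(f)$ is an isomorphism for every $n$. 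The main obstacle is the approximate lifting lemma itself together with the bookkeeping of how fine the covers and how large $i'$ must be at each step of the skeletal induction; it is exactly this dependence of $i'$ on $n$ that prevents assembling the per-$n$ statements into a shape equivalence without a finite shape-dimension hypothesis.
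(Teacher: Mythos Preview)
The paper is a survey and does not give a proof of this theorem: it merely states the result and cites Dydak, with supplementary references to Kuperberg, Kodama, and Bogatyi. So there is no in-paper proof to compare against.

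Your argument is essentially the standard one from those sources. The reduction to a level morphism between towers of ANR-neighborhoods in $Q$, the identification of the cell-like hypothesis with the $UV^\infty$ property of the fibres, and the derivation from this of an approximate homotopy lifting property over finite polyhedra (together with a uniqueness-up-to-homotopy clause at a coarser level) is exactly the Lacher--Dydak machinery. Using the lifting over $S^n$, $S^n\times I$, and wedges to manufacture the partial inverse $r$ and then checking the two compatibility relations is the standard way to verify the pro-isomorphism criterion for towers. Your emphasis that the choice of $i'$ depends on $n$, and that this is precisely what blocks upgrading the conclusion to a shape equivalence without a bound on shape dimension, is correct and matches the paper's remark that Theorem~\ref{th-7} follows from Theorem~\ref{th-dyd-sm} together with the shape Whitehead theorem.

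Two minor points of bookkeeping you should make explicit in a full write-up. First, the ``uniqueness'' clause for approximate lifts only gives a homotopy at a coarser level than the one at which the lift was produced; consequently the choice of $i'$ must be made in two steps (one index for existence, a further one for uniqueness), and the relation $r\circ\pi_n(F_{i'})=\pi_n(j^U_{ii'})$ in general only holds after composing with a further bond $\pi_n(j^U_{i'i''})$. This does not affect the conclusion, since the pro-isomorphism criterion tolerates such a shift, but your statement of the criterion as two exact equalities with a single $i'$ is slightly too strong as written. Second, the basepoint needs a moment of care: the approximate lift $\tilde\sigma$ of a based map need not be based on the nose, only up to a small path, and one uses that $U_i$ is an ANR (hence locally contractible) to correct this without changing the homotopy class.
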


The following theorem is a homotopy characterization of cell-like maps.

\begin{theorem} (Lacher \cite{lacher-2}).
A map $f \colon  X \to Y$ between compact ANRs $X$ and $Y$ is cell-like if and only if, given 
any open set $V\subset Y$, the map $f|f^{-1}(V) \colon  f^{-1}(V) \to V$ is a homotopy equivalence. 
\end{theorem}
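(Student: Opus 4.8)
The plan is to prove the two implications separately; the forward one I would reduce to the statement that a proper cell-like map between ANRs is a homotopy equivalence, for which the essential tool is an approximate lifting theorem for cell-like maps.

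$(\Rightarrow)$ First I would note that the restriction is again of the same kind. If $V\subseteq Y$ is open, then $f^{-1}(V)$ and $V$ are open subsets of the ANRs $X$ and $Y$, hence themselves ANRs; the map $f|\colon f^{-1}(V)\to V$ is proper (preimages of compacta are closed in the compactum $X$, hence compact); and its point-inverses are the fibers $f^{-1}(v)$, $v\in V$, which have trivial shape by hypothesis. So $f|$ is once more a proper cell-like map between ANRs, and it suffices to show that \emph{every proper cell-like map $g\colon X'\to Y'$ between ANRs is a homotopy equivalence}. Here I would invoke the approximate lifting theorem for cell-like maps (see Lacher \cite{lacher-2}): if $g\colon X'\to Y'$ is a proper cell-like map and $Y'$ is an ANR, then for every open cover $\mathcal{U}$ of $Y'$ and every map $\psi\colon Z\to Y'$ there is a map $\widetilde\psi\colon Z\to X'$ with $g\widetilde\psi$ $\mathcal{U}$-close to $\psi$, and the lift may be prescribed on a closed subset of $Z$ on which a lift compatible with $\psi$ is already given. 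This is exactly where the cell-like hypothesis is used: each fiber $g^{-1}(y)$ has trivial shape, hence is $UV^{\infty}$ inside the ANR $X'$, and one builds $\widetilde\psi$ by induction over the skeleta of a suitable triangulation, extending over each new cell by pushing into a $UV$-neighborhood of a nearby fiber. Applying this to $\psi=1_{Y'}$ with $\mathcal{U}$ fine enough that $\mathcal{U}$-close maps into $Y'$ are homotopic gives $s\colon Y'\to X'$ with $gs\simeq 1_{Y'}$. For the other composite I would note $g(sg)=(gs)g\simeq g$, choose a homotopy $H\colon X'\times I\to Y'$ with $H_{0}=gsg$ and $H_{1}=g$, and apply the relative form of the lifting theorem to $H$ with the lift prescribed to equal $sg$ on $X'\times\{0\}$ and $1_{X'}$ on $X'\times\{1\}$ — this is consistent since $g\circ(sg)=H_{0}$ and $g\circ 1_{X'}=H_{1}$ exactly. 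The resulting lift is a homotopy $sg\simeq 1_{X'}$, so $g$, and hence $f|$, is a homotopy equivalence.

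$(\Leftarrow)$ Conversely, assume $f|\colon f^{-1}(V)\to V$ is a homotopy equivalence for every open $V\subseteq Y$, and fix $y\in Y$. Since $Y$ is a compact ANR it is metrizable, so I would pick a decreasing neighborhood basis $V_{1}\supseteq V_{2}\supseteq\cdots$ of $y$ with $\bigcap_{i}V_{i}=\{y\}$. A compactness argument shows that the sets $f^{-1}(V_{i})$ are cofinal among the open neighborhoods of $f^{-1}(y)$ in $X$: if $U\supseteq f^{-1}(y)$ is open in $X$, then $f(X\setminus U)$ is compact and misses $y$, so $f^{-1}(V)\subseteq U$ for some neighborhood $V$ of $y$, whence $f^{-1}(V_{i})\subseteq U$ for all large $i$. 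Consequently the inverse sequences $\mathbf{V}=\{V_{i},\ \mathrm{incl}\}$ and $\mathbf{W}=\{f^{-1}(V_{i}),\ \mathrm{incl}\}$, which consist of ANRs, are ANR-systems associated with $\{y\}$ and with $f^{-1}(y)$ respectively — this is Fox's description of shape via a neighborhood system of a closed subset of an ANR, recalled in the introduction. The maps $f|\colon f^{-1}(V_{i})\to V_{i}$ commute strictly with the inclusions, so they form a level morphism $\mathbf{W}\to\mathbf{V}$ in pro-H-CW; since each $f|$ is a homotopy equivalence, choosing homotopy inverses (which then commute with the bonding maps up to homotopy) exhibits this level morphism as an isomorphism in pro-H-CW. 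As $\{y\}$ has trivial shape, so does $f^{-1}(y)$; equivalently, one could observe that the bonding inclusions of $\mathbf{W}$ become null-homotopic and apply the theorem on ANR-systems with null-homotopic projections. Since $y$ was arbitrary, every fiber of $f$ has trivial shape, i.e. $f$ is cell-like.

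The step I expect to be the main obstacle is the approximate lifting theorem: it is precisely where the shape-theoretic triviality of the point-inverses ($UV^{\infty}$ inside the ambient ANR) must be turned into genuine geometric liftability, and its proof needs careful control of the lifts over successive skeleta together with the usual passage from non-proper to proper maps. Everything else — the reduction of $(\Rightarrow)$ to proper cell-like maps, the cofinality argument in $(\Leftarrow)$, the pro-H-CW bookkeeping, and the formal cancellation $g(sg)\simeq g\Rightarrow sg\simeq 1_{X'}$ — should be routine once that tool is available.
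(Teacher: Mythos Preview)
The paper does not prove this theorem; it is stated with attribution to Lacher \cite{lacher-2} and immediately used as a black box to derive Theorem~\ref{th-6}. So there is no proof in the paper to compare your proposal against.

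That said, your argument is essentially the standard one from Lacher's survey: for $(\Rightarrow)$ you correctly reduce to the statement that a proper cell-like map between ANRs is a homotopy equivalence and derive this from the approximate (relative) lifting property coming from the $UV^{\infty}$ nature of the fibers; for $(\Leftarrow)$ you correctly identify the shape of $f^{-1}(y)$ via the cofinal ANR neighborhood system $\{f^{-1}(V_i)\}$ and the level homotopy equivalences $f|$. Two small points worth tightening: first, note explicitly that the paper's definition of cell-like forces $f$ to be surjective (empty fibers do not have the shape of a point), so the restriction $f|\colon f^{-1}(V)\to V$ is indeed onto; second, in the step $g(sg)\simeq g\Rightarrow sg\simeq 1_{X'}$ you need the version of the approximate lifting theorem in which the lift is prescribed \emph{exactly} on the closed subset $X'\times\{0,1\}$ --- this is available precisely because your boundary data satisfy $g\circ(sg)=H_0$ and $g\circ 1_{X'}=H_1$ on the nose, but it is worth flagging, since the ``approximate'' version where the lift is only close on the boundary would not immediately give a genuine homotopy from $sg$ to $1_{X'}$.
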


This theorem directly implies the following assertion. 

\begin{theorem}\label{th-6}
Any cell-like map $f\colon X\to Y$ between compact ANRs $X$ and $Y$ is a homotopy equivalence.
\end{theorem}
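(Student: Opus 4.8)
The plan is to read off the statement as the special case $V=Y$ of Lacher's homotopy characterization of cell-like maps proved just above. Indeed, $Y$ is an open subset of itself, $f^{-1}(Y)=X$, and the restriction $f|f^{-1}(Y)$ is literally $f$ itself; so applying the preceding theorem to the cell-like map $f$ and the open set $V=Y$ shows at once that $f\colon X\to Y$ is a homotopy equivalence. That is the whole argument: there is essentially no obstacle once Lacher's theorem is in hand.

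For perspective I would also indicate the conceptual reason, which makes clear where the ANR hypothesis is used. By the shape version of Smale's theorem (Theorem~\ref{th-dyd-sm}), a cell-like map between compact metrizable spaces induces isomorphisms of all homotopy pro-groups, and, combined with the analogous statements for shape groups and \v Cech homology, it is in fact a shape equivalence. If moreover $X$ and $Y$ are ANRs, then they have the homotopy type of CW-complexes and hence lie in the subcategory H-CW, on which the shape functor restricts to an isomorphism onto H-CW (as recalled in the Introduction); so a shape equivalence between them is a genuine homotopy equivalence.

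The one point that really requires care is that the ANR hypothesis is indispensable: a cell-like map between arbitrary compact metrizable spaces need not be a shape equivalence at all \ddf this is Taylor's example cited above \ddf so the argument cannot bypass the local-structure assumption. Since in our situation $X$ and $Y$ are compact ANRs, the short deduction from Lacher's theorem with $V=Y$ is the cleanest, and that is what I would present.
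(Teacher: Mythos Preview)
Your proposal is correct and matches the paper's own approach exactly: the paper states that Lacher's characterization ``directly implies'' Theorem~\ref{th-6}, which is precisely your specialization $V=Y$. Your additional remarks on the shape-theoretic perspective and on the necessity of the ANR hypothesis are fine context but go beyond what the paper itself provides here.
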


However, cell-like maps between arbitrary compact spaces (not necessarily being 
absolute neighborhood retracts) do not have this property, they are not necessarily shape 
equivalences. Taylor \cite{taylor} constructed a cell-like map 
from the Kahn compact space $X$ \cite{kahn}  to the Hilbert cube $Q$ which not is a shape 
equivalence, because the shape of the Kahn compact space $X$ is not trivial. Taylor's construction 
was based on Adams' result in~\cite{adams}. Keesling \cite{keesling-3} used Taylor's example 
to construct a cell-like map of the Hilbert cube $Q$ to a non-movable compact metrizable space $Y$.  
Koslowski and Segal \cite{kozlowski-segal-0} and Dydak \cite{dydak} constructed examples 
of cell-like maps between movable compact metrizable spaces which are not shape 
equivalences either. Developing Keesling's idea \cite{keesling-3}, van~Mill \cite{vanmill}  
constructed a cell-like map $f\colon Q\to X$, where $X$ is not movable and all fibers 
$f^{-1}(x)$, $x\in X$, are homeomorphic to~$Q$.

Nevertheless, in some cases, cell-like maps between compact metrizable spaces are shape equivalences 
(see papers \cite{sher} by Sher,  \cite{bogat} and \cite{bogat-1} by Bogatyi, and 
\cite{kodama-2} by Kodama).

\begin{theorem}\label{th-7}
A cell-like map $f\colon X\to Y$ between finite-dimensional compact metrizable spaces $X$ and $Y$  
is a shape equivalence. 
\end{theorem}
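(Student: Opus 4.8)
The plan is to combine the shape‑theoretic form of Smale's theorem (Theorem~\ref{th-dyd-sm}) with the shape‑theoretic Whitehead theorem (Theorem~\ref{th-mor}), after a routine reduction to the pointed connected case.

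First I would dispose of connectedness. Since each fiber $f^{-1}(y)$ has trivial shape, it is nonempty and connected, so $f$ is a surjection with connected fibers; being a continuous map of compact metrizable spaces, it is also closed. A short argument then shows that for every connected component $D\subseteq Y$ the preimage $f^{-1}(D)$ is a connected component of $X$: if $f^{-1}(D)=A\sqcup B$ with $A,B$ nonempty and clopen in $f^{-1}(D)$, then each fiber over $D$ lies wholly in $A$ or wholly in $B$, so $D=f(A)\sqcup f(B)$ expresses the connected set $D$ as a union of two disjoint nonempty closed subsets ($A,B$ are closed in $X$ because $f^{-1}(D)$ is, and $f$ is closed), a contradiction; maximality of $f^{-1}(D)$ among connected sets is clear. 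Hence $f$ induces a homeomorphism of the (compact, zero‑dimensional) spaces of components of $X$ and $Y$, and restricting $f$ over each component gives a cell‑like map $f_D\colon f^{-1}(D)\to D$ between connected finite‑dimensional compact metrizable spaces. It therefore suffices to prove that each $f_D$ is a shape equivalence and to reassemble these equivalences; choosing $y_0\in D$ and $x_0\in f^{-1}(y_0)$ makes $f_D$ a pointed map.

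The core of the argument is then immediate. By Theorem~\ref{th-dyd-sm} the cell‑like map $f_D$ induces isomorphisms $\pro-\pi_n(f_D)\colon \pro-\pi_n(f^{-1}(D),x_0)\to\pro-\pi_n(D,y_0)$ of all homotopy pro‑groups, $n=0,1,2,\dots$. Moreover $\sd f^{-1}(D)\leqslant\dim f^{-1}(D)\leqslant\dim X<\infty$ and likewise $\sd D<\infty$, since $\sd\leqslant\dim$ always. Thus $f^{-1}(D)$ and $D$ are connected spaces of finite shape dimension, so Theorem~\ref{th-mor} applies and shows that the shape morphism $S(f_D)$ is a shape equivalence; the passage from the pointed to the unpointed category is harmless.

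The conceptual work has already been done inside the two cited theorems, so the remaining issues are bookkeeping. I expect the one genuine point to check to be the reassembly step when $Y$ has infinitely many components: one must verify that the componentwise shape inverses of the $f_D$ combine into a single shape morphism $S(Y)\to S(X)$ inverse to $S(f)$. This can be arranged by building an associated ANR‑system for $Y$ levelwise from associated systems for the components and transporting it via the homeomorphism of component spaces; together with the minor care needed for base points, this is the only place where anything has to be verified — everything else is a direct application of Theorems~\ref{th-dyd-sm} and~\ref{th-mor}.
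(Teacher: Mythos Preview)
Your approach is exactly the paper's: combine Theorem~\ref{th-dyd-sm} (Smale) with Theorem~\ref{th-mor} (Whitehead in shape). The paper's argument is in fact a one-line sketch that invokes these two theorems and nothing else; you have been more scrupulous than the paper in noting that Theorem~\ref{th-mor} is stated for \emph{connected pointed} spaces and therefore inserting a reduction to components. That reduction is routine and your outline of it is fine; the reassembly over a possibly infinite zero-dimensional space of components is indeed the only place that needs a word of justification, and it can be handled, e.g., by observing that a map of compact metrizable spaces inducing a homeomorphism on component spaces and a shape equivalence on each component is a shape equivalence (work with ANR-expansions split over clopen decompositions). The paper simply suppresses this point.
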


Indeed, by virtue of Smale's theorem~\ref{th-dyd-sm}, the map $f$ induces isomorphisms of all 
homotopy pro-groups, and since $\dim X, \dim Y < \infty$, it follows by Whitehead's 
theorem~\ref{th-mor} that $f$ is a shape equivalence. Both the Kahn compact space and the Hilbert 
cube are infinite-dimensional; therefore,  Taylor's example mentioned above shows that the  
finite-dimensionality assumption on the compact metrizable  spaces in the last theorem  is 
essential.

Thus, if $X$ and $Y$ have finite dimension or are ANRs, then any cell-like map $f \colon  X \to 
Y$ is a shape equivalence.

An important strengthening of the notion of a cell-like map is the notion of a \emph{hereditary 
shape equivalence}, which was introduced by Kozlowski in \cite{kozlowski-1} and~\cite{kozlowski-2}.

\begin{definition}
A surjective map $f \colon  X \to Y$ between compact metrizable spaces is called a \emph{hereditary 
shape equivalence} if, for any closed set $B\subset Y$, the map $f|f^{-1}(B) \colon  f^{-1}(B)  \to 
B$ is a shape equivalence. 
\end{definition}

Obviously, any hereditary shape equivalence $f \colon  X \to Y$ is a cell-like map. However, 
the converse is false: the map of the Kahn compact space on the Hilbert cube constructed by Taylor 
in \cite{taylor} is cell-like, but it is not a shape equivalence.

Nevertheless, there exists a large class of cell-like maps being hereditary shape 
equivalences. The following two theorems are due to Koslowski \cite{kozlowski-1} (see 
also~\cite{dydak-segal-1}).

\begin{theorem}\label{th-10}
Any cell-like map $f\colon X\to Y$ between compact ANRs is a hereditary shape equivalence.
\end{theorem}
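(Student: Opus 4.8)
The plan is to combine Lacher's homotopy characterization of cell-like maps with the observation that a ``level'' morphism of inverse ANR-sequences all of whose level maps are homotopy equivalences is an isomorphism in pro-H-CW.

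First I would fix a closed set $B\subseteq Y$ and put $A=f^{-1}(B)$. Since every fibre of a cell-like map is nonempty, $f$ is onto, so $f(A)=B$ and $f|A\colon A\to B$ is a continuous surjection. As $Y$ is a compact metrizable ANR and $B$ is closed, there is a decreasing sequence of open neighbourhoods $V_1\supseteq V_2\supseteq\cdots$ of $B$ in $Y$ with $\bigcap_n V_n=B$; each $V_n$, being an open subset of an ANR, is itself an ANR, and the inverse sequence formed by the $V_n$ with inclusions is associated with $B$. Put $U_n=f^{-1}(V_n)$. Using that $f$ is a closed map, one checks that for every open neighbourhood $W$ of $A$ in $X$ there is an $n$ with $U_n\subseteq W$: take $V$ to be the complement of $f(X\setminus W)$, which is an open neighbourhood of $B$, and note $f^{-1}(V)\subseteq W$. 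Hence the $U_n$ form a cofinal family of open neighbourhoods of $A$, so the inverse sequence of the $U_n$ with inclusions is an ANR-system associated with $A$. Because $U_n=f^{-1}(V_n)$, Lacher's theorem stated above gives that $f|U_n\colon U_n\to V_n$ is a homotopy equivalence for every $n$; since these restrictions commute strictly with the inclusions, they form a level morphism $\mathbf{f}$ in pro-H-CW, and the shape morphism induced by $f|A$ is represented by $\mathbf{f}$.

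The heart of the argument is that such a level morphism is invertible in pro-H-CW. Choosing homotopy inverses $h_n\colon V_n\to U_n$ of $f|U_n$, one verifies from the identity $(f|U_n)\circ\iota=\iota\circ(f|U_{n+1})$ (with $\iota$ the relevant inclusions) that the $h_n$ commute with the inclusions up to homotopy, so they define a morphism of inverse systems; the two composites are, level by level, homotopic to the identities, hence equal to the identity morphisms in pro-H-CW. Thus $\mathbf{f}$ is an isomorphism, $f|A$ is a shape equivalence, and since $B$ was arbitrary, $f$ is a hereditary shape equivalence.

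The main obstacle is the bookkeeping in the middle paragraph: confirming that the $U_n$ constitute an \emph{associated} ANR-system for $A$ (this is exactly where compactness and the closedness of $f$ enter), and then the pro-category verifications that the homotopy inverses assemble into a genuine pro-morphism with the correct composites. A slightly shorter alternative avoids inverse systems: by the criterion that $f|A$ is a shape equivalence iff $(f|A)^{*}\colon[B,P]\to[A,P]$ is bijective for every ANR $P$, it suffices to use $[B,P]=\lim\limits_{\longrightarrow}[V_n,P]$ and $[A,P]=\lim\limits_{\longrightarrow}[U_n,P]$ and to observe that each homotopy equivalence $f|U_n$ induces a bijection $[V_n,P]\to[U_n,P]$ compatibly with the colimit maps, so the induced map on colimits is a bijection.
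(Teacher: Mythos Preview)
The paper does not give its own proof of this theorem; it merely states the result and attributes it to Kozlowski \cite{kozlowski-1} (with a cross-reference to \cite{dydak-segal-1}). So there is no proof in the paper to compare against.

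Your argument is correct and is in fact the standard route. The ingredients are all in place: Lacher's characterization (quoted in the paper just before Theorem~\ref{th-6}) gives that each $f|U_n\colon U_n\to V_n$ is a homotopy equivalence; the neighbourhood systems $\{V_n\}$ and $\{U_n\}$ are associated ANR-systems for $B$ and $A$ respectively (the paper explicitly records, in the discussion of Fox's approach, that the system of open neighbourhoods of a closed subset of a metrizable ANR is associated with that subset, and your cofinality argument via closedness of $f$ is exactly what is needed to see that the $U_n=f^{-1}(V_n)$ are cofinal among neighbourhoods of $A$); and a level morphism in pro-H-CW whose components are all isomorphisms in H-CW is an isomorphism in pro-H-CW, which is a general categorical fact valid in any pro-category. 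The identification of the pro-morphism $\mathbf{f}$ with $S(f|A)$ follows because the inclusions $A\hookrightarrow U_n$, $B\hookrightarrow V_n$ serve as the structure maps $\mathbf{p}_n$, $\mathbf{q}_n$ of the associated systems and satisfy $(f|U_n)\circ\mathbf{p}_n=\mathbf{q}_n\circ(f|A)$ on the nose.

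Your alternative via the bijectivity criterion $(f|A)^*\colon[B,P]\to[A,P]$ is equally valid and perhaps cleaner; the identifications $[A,P]=\varinjlim[U_n,P]$ and $[B,P]=\varinjlim[V_n,P]$ are immediate from the ANR extension and homotopy-extension properties, and the level bijections pass to the colimit.
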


\begin{theorem}\label{th-11}
Let $f\colon X\to Y$ be a cell-like map between compact metrizable spaces. If $\dim Y < \infty$,  
then $f$ is a hereditary shape equivalence. 
\end{theorem}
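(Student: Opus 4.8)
The plan is to reduce the assertion to a single fact about cell-like maps with finite-dimensional range, and then to prove that fact by a Whitehead-type argument in which only the finiteness of $\dim B$ is used.

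First I would make the obvious reduction. Let $B\subset Y$ be an arbitrary closed set. Then $A:=f^{-1}(B)$ is compact metrizable, the restriction $g:=f|_{A}\colon A\to B$ is surjective, and each of its point-inverses $g^{-1}(y)=f^{-1}(y)$, $y\in B$, has trivial shape; hence $g$ is cell-like. Since $B$ is closed in $Y$, we have $\dim B\leqslant\dim Y<\infty$. Consequently it suffices to prove that \emph{every cell-like map $g\colon A\to B$ of compact metrizable spaces with $\dim B<\infty$ is a shape equivalence}: applying this to $g=f|f^{-1}(B)$ for every closed $B\subset Y$ shows at once that $f$ is a hereditary shape equivalence.

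To prove this reduced statement, fix $n\geqslant\dim B$ and choose an associated inverse sequence $\mathbf B=\{B_{i},\mathbf q_{ii'}\}$ of finite polyhedra with $\dim B_{i}\leqslant n$ for all $i$, together with an associated inverse sequence $\mathbf A=\{A_{j},\mathbf p_{jj'}\}$ of finite polyhedra whose dimensions are \emph{not} assumed bounded (for $\sd A$ may well be infinite). Then $g$ is represented by a morphism $\mathbf g\colon\mathbf A\to\mathbf B$ of inverse systems, and by Smale's theorem in shape theory (Theorem~\ref{th-dyd-sm}) the induced homomorphisms $\pro-\pi_{k}(g)$ of all homotopy pro-groups are isomorphisms. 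It remains to produce an inverse $\mathbf r\colon\mathbf B\to\mathbf A$ of $\mathbf g$: for each index $j$ one builds, over the finite polyhedron $B_{i}$ for a suitable $i=i(j)$, a homotopy class $\mathbf r_{j}\colon B_{i}\to A_{j}$ lifting the appropriate bonding class through the bonding maps of $\mathbf A$ and compatible with $\mathbf g$, by the usual skeleton-by-skeleton obstruction procedure; the obstructions lie in the cohomology groups of $B_{i}$ with coefficients in the relevant homotopy groups, and they vanish once the skeletal dimension exceeds $n$ because $\dim B_{i}\leqslant n$, and in the lower range because $\pro-\pi_{k}(g)$ is an isomorphism, after passing to a sufficiently large index in $\mathbf B$. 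The relations $\mathbf g\mathbf r=1_{\mathbf B}$ and $\mathbf r\mathbf g=1_{\mathbf A}$ are verified by the same device, so $\mathbf g$, hence $g$, is an isomorphism in the shape category. If one also knew $\sd A<\infty$ — in particular if $A$ were finite-dimensional, which together with $\dim B<\infty$ is precisely the hypothesis of Theorem~\ref{th-7} — this construction could be bypassed by invoking the shape version of Whitehead's theorem (Theorem~\ref{th-mor}); the novelty of the present theorem is that no restriction is placed on the domain. Alternatively, one may package the same obstruction arguments around the ANR criterion for a shape equivalence, proving that $g^{*}\colon[B,P]\to[A,P]$ is bijective for every ANR $P$ and reducing by compactness of $A$ and $B$ to finite-dimensional $P$.

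The step I expect to be the main obstacle is this obstruction-theoretic construction of $\mathbf r$ when $\mathbf A$ consists of polyhedra of unbounded dimension — in particular the verification of $\mathbf r\mathbf g=1_{\mathbf A}$, where a careless Whitehead argument would demand finite-dimensionality of the domain. This is exactly the place where $\dim B<\infty$ must be exploited, and it cannot be dispensed with: for infinite-dimensional $B$ the theorem fails, as Taylor's cell-like map of the (infinite-dimensional) Kahn continuum onto the Hilbert cube shows, since the Kahn continuum has nontrivial shape.
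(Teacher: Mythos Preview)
The paper does not prove this theorem; it is stated (together with Theorem~\ref{th-10}) as a result of Kozlowski~\cite{kozlowski-1}, with a reference also to~\cite{dydak-segal-1}. So there is no proof in the paper to compare against, and your proposal must be judged on its own.

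Your reduction is correct: it is enough to show that any cell-like map $g\colon A\to B$ of compact metrizable spaces with $\dim B<\infty$ is a shape equivalence, since the restriction over any closed $B\subset Y$ is again cell-like and $\dim B\leqslant\dim Y$.

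The difficulty you flag is real and, as written, the argument does not close. You extract from the cell-like hypothesis only its global consequence, Theorem~\ref{th-dyd-sm}, that $\pro-\pi_k(g)$ is an isomorphism for every $k$, and then try to run a Whitehead argument using only $\sd B<\infty$. Constructing a right inverse $\mathbf r$ with $\mathbf g\mathbf r=1_{\mathbf B}$ by obstruction theory over the $B_i$ (all of dimension $\leqslant n$) is fine. But the step $\mathbf r\mathbf g=1_{\mathbf A}$ asks for homotopies of maps $A_{j'}\to A_j$ with $\dim A_{j'}$ unbounded, and knowing that the idempotent $\mathbf e=\mathbf r\mathbf g$ induces the identity on all homotopy pro-groups does \emph{not} force $\mathbf e=1_{\mathbf A}$: the Kahn continuum shows that a self-morphism inducing the identity on all $\pro-\pi_k$ need not be the identity. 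None of the Whitehead-type theorems available in the paper (Theorems~\ref{th-mor}, \ref{th-dydak-0}, \ref{th-dydak}, \ref{th-gev-pop-1}) apply here, since you have no control on $\sd A$ and no movability hypothesis. Your alternative, proving bijectivity of $g^*\colon[B,P]\to[A,P]$ via ``the same obstruction arguments'', inherits the same gap: Smale's theorem does not by itself produce the required lifts.

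What is missing is the \emph{local} content of the cell-like hypothesis: each fibre $g^{-1}(b)$ has trivial shape, so $g$ is a $UV^\infty$-map. Kozlowski's proof uses this directly. Given $h\colon A\to P$ with $P$ an ANR, one covers $B$ by sets so small that $h$ is shape-inessential on their preimages, and then builds $k\colon B\to P$ with $kg\simeq h$ by a skeleton-by-skeleton extension over the nerve of the cover; the process terminates because $\dim B\leqslant n$, and the extensions exist by the $UV^\infty$ property of the fibres. Injectivity of $g^*$ is handled the same way with $P$ replaced by $P^I$. The point is that the finite-dimensionality is used on $B$ to bound an inductive lifting construction driven by the \emph{local} triviality of fibres, not to run a global Whitehead argument driven by $\pro-\pi_*$.
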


Kozlowski \cite{kozlowski-1} proved that hereditary shape equivalences have the following 
important property.

\begin{theorem}\label{th-9}
Let $f \colon  X \to Y$ be a hereditary shape equivalence of compact metrizable spaces. If $X$ is 
an ANR, then so is $Y$. 
\end{theorem}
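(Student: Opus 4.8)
The plan is to verify that $Y$ satisfies the standard approximate‑domination characterization of compact metrizable ANRs: a compactum $Z$ is an ANR if and only if for every $\varepsilon>0$ there are a finite polyhedron $P$ and maps $\alpha\colon Z\to P$ and $\beta\colon P\to Z$ with $\beta\circ\alpha$ joined to $\mathrm{id}_Z$ by an $\varepsilon$‑homotopy (equivalently, fixing an embedding $Z\hookrightarrow Q$, that $Z$ is an $\varepsilon$‑neighbourhood retract of $Q$ for every $\varepsilon$). I would manufacture such dominations of $Y$ by transporting the polyhedral dominations of the compact ANR $X$ across $f$; the role of the word \emph{hereditary} is to repair the fact that $f$ need not have a genuine section.

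First I would extract an honest ``shape section''. Taking the closed set $B=Y$ in the definition shows that $f$ itself is a shape equivalence, so $S(f)$ has a two‑sided inverse in the shape category. Since $X$ is an ANR, shape morphisms into $X$ are exactly homotopy classes of maps, so this inverse is $S(s)$ for a genuine map $s\colon Y\to X$, unique up to homotopy. From $S(s)\circ S(f)=S(s\circ f)=1_X=S(\mathrm{id}_X)$ and the injectivity of $S$ on $[X,X]$ (as $X$ lies in H-CW) I then obtain a genuine homotopy $s\circ f\simeq\mathrm{id}_X$.

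Now fix $\varepsilon>0$ together with a modulus of uniform continuity of $f$. Because $X$ is a compact ANR it admits arbitrarily fine polyhedral dominations: a finite polyhedron $P$ and maps $a\colon X\to P$, $b\colon P\to X$ with $b\circ a$ joined to $\mathrm{id}_X$ by an $\eta$‑homotopy, where $\eta$ is chosen so small that $f$ sends $\eta$‑small sets to $(\varepsilon/2)$‑small sets. Put $\beta:=f\circ b\colon P\to Y$ and $\alpha:=a\circ s\colon Y\to P$. Composing the $\eta$‑homotopy $b a\simeq\mathrm{id}_X$ first with $s$ and then with $f$ shows that $\beta\circ\alpha=f\circ b\circ a\circ s$ is $(\varepsilon/2)$‑homotopic to $f\circ s$. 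Hence everything reduces to the single claim that $f\circ s\colon Y\to Y$ is $(\varepsilon/2)$‑homotopic to $\mathrm{id}_Y$.

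This claim is the heart of the matter, and it is precisely here that the hereditary hypothesis is indispensable: from $s\circ f\simeq\mathrm{id}_X$ one gets only $(f\circ s)\circ f\simeq f$, i.e.\ $f\circ s$ and $\mathrm{id}_Y$ agree after precomposition with $f$, and $f$ is not an epimorphism of the homotopy category — Taylor's cell‑like map of the Kahn compactum onto $Q$ (recalled above) is not even a shape equivalence, and van~Mill's cell‑like map of $Q$ onto a non‑movable compactum shows that neither cell‑likeness nor having an ANR domain can suffice, so something strictly stronger must intervene. What ``hereditary'' supplies is that $f|f^{-1}(B)\colon f^{-1}(B)\to B$ is a shape equivalence for \emph{every} closed $B\subset Y$; over a fine closed cover of $Y$ these restrictions are compatible shape equivalences, and one descends the homotopy $(fs)f\simeq f$ to a genuine $(\varepsilon/2)$‑homotopy $fs\simeq\mathrm{id}_Y$ by a controlled, skeleton‑by‑skeleton construction: a partial homotopy over a cell is lifted into $X$ (where it extends, $X$ being an ANR) and pushed back down by $f$, the mismatches between cells being absorbable precisely because every fibre $f^{-1}(y)$ has trivial shape. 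Organizing this induction with uniform size control is the only genuine work; it may be packaged as a controlled‑homotopy argument, or, after crossing everything with $Q$, via the Hilbert‑cube‑manifold machinery of Chapman and Edwards. Granting it, we obtain for every $\varepsilon>0$ a finite polyhedron $\varepsilon$‑dominating $Y$, whence $Y$ is a compact ANR.
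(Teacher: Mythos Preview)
The paper does not actually prove this theorem; it merely states it and attributes it to Kozlowski~\cite{kozlowski-1}. So there is no ``paper's own proof'' to compare against, and the question becomes whether your argument stands on its own.

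It does not. You correctly identify the crux of the matter --- showing that $f\circ s$ is $(\varepsilon/2)$-homotopic to $\mathrm{id}_Y$ --- and you correctly observe that this is exactly where the hereditary hypothesis must enter. But then you do not prove it. The paragraph beginning ``What `hereditary' supplies\ldots'' is a description of what a proof might look like, not a proof: you speak of ``descending'' a homotopy over a fine closed cover by a ``skeleton-by-skeleton construction,'' with mismatches ``absorbable precisely because every fibre has trivial shape,'' and then concede that ``organizing this induction with uniform size control is the only genuine work'' before deferring to unspecified ``controlled-homotopy'' or ``Hilbert-cube-manifold machinery.'' That genuine work \emph{is} the theorem. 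Everything preceding it (the section $s$, the polyhedral domination of $X$, the reduction to $f\circ s\approx\mathrm{id}_Y$) is straightforward; the entire content of Kozlowski's result lies in the step you have only gestured at.

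There is also a subtler issue hiding in your sketch: you want to lift partial homotopies from $Y$ back into $X$ ``where they extend, $X$ being an ANR,'' but $f$ has no lifting property in general --- it is not a fibration, and the section $s$ is only a homotopy section, not a right inverse. Making sense of ``lift and push back down'' requires precisely the kind of local selection/approximation argument that constitutes the substance of the proof, and you have not supplied it.
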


In this theorem the condition that the  map $f \colon  X \to Y$ is a hereditary shape equivalence 
cannot be replaced by the weaker condition that this is a cell-like map.  Indeed, the cell-like 
map constructed by Keesling in \cite{keesling-3} maps the Hilbert cube $Q$ to a non-movable 
compact metrizable space $Y$ which is not an ANR.

Thus, the image of a compact ANR under a cell-like map is not necessarily a compact ANR.

The following theorem of West \cite{west-2} shows that all compact metrizable ANRs are images 
of $Q$-manifolds under cell-like maps.

\begin{theorem}
For any metrizable compact ANR $X$, there exists a $Q$-manifold $M$ and a cell-like map 
$f\colon M\to X$. 
\end{theorem}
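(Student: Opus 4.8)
The plan is to realize $X$ as a cell-like retract of a compact $Q$-manifold neighbourhood of it in the Hilbert cube. First, embed $X$ in $Q$ as a $Z$-set (as recalled above, every compact metrizable space admits such an embedding); since $X$ is a compact metrizable ANR and $Q$ is an AR, the ANR property yields an open neighbourhood $U$ of $X$ in $Q$ together with a retraction $r\colon U\to X$. The set $U$ is open in $Q$, hence is itself a (noncompact) $Q$-manifold, and $X$ is a $Z$-set in it.

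The core of the argument is to produce a closed \emph{mapping cylinder neighbourhood} of $X$ inside $U$: a compact set $N$ with $X\subset N\subset U$ which is simultaneously a compact $Q$-manifold and is homeomorphic to the mapping cylinder $M_\lambda=(L\times I)\cup_\lambda X$ of some map $\lambda\colon L\to X$ from a compact $Q$-manifold $L$, in such a way that the canonical cylinder projection corresponds to a retraction $c\colon N\to X$. Granting this, the proof is quickly finished: for every open set $V\subset X$ the preimage $c^{-1}(V)$ is the mapping cylinder of $\lambda|_{\lambda^{-1}(V)}\colon\lambda^{-1}(V)\to V$, which deformation retracts onto $V$, so $c|_{c^{-1}(V)}\colon c^{-1}(V)\to V$ is a homotopy equivalence for every open $V$. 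Since $N$ (being a $Q$-manifold) and $X$ are compact ANRs, Lacher's homotopy characterization of cell-like maps recalled above shows that $c$ is cell-like. Thus $M:=N$ is a compact $Q$-manifold and $f:=c\colon M\to X$ is the desired cell-like map.

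The main obstacle is exactly the existence of such a neighbourhood $N$; this is West's genuinely new contribution, and it is where the theory of $Q$-manifolds enters in an essential way. One constructs $N$ by an inductive, limiting process over a handle decomposition of $U$ near the $Z$-set $X$, repeatedly invoking Chapman's theorem that a cell-like map between $Q$-manifolds is a near-homeomorphism (so that the local mapping cylinders that arise are products, keeping the pieces $Q$-manifolds) together with cell-like map and hereditary shape equivalence techniques (cf.\ Theorems~\ref{th-10} and~\ref{th-11}) to ensure that the cylinder projections remain cell-like at every stage. A much shorter route would be to invoke Edwards' theorem that $X\times Q$ is a $Q$-manifold whenever $X$ is a compact ANR and take $M=X\times Q$ with $f$ the projection, whose fibres $\{x\}\times Q\cong Q$ have trivial shape; but Edwards' theorem is itself proved using the present result, so this shortcut is circular, and West's original handle-theoretic argument is what makes the proof self-contained.
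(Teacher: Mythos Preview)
The paper does not supply a proof of this theorem; it simply states the result with attribution to West~\cite{west-2}. There is therefore no argument in the paper to compare your proposal against.

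Taken on its own, your outline is a faithful sketch of West's original strategy: embed $X$ as a $Z$-set in $Q$, use the ANR property to obtain an open $Q$-manifold neighbourhood $U$ retracting onto $X$, and then appeal to the existence of a compact mapping-cylinder neighbourhood $N\cong M_\lambda$ of $X$ inside $U$. You correctly identify this last step as the genuine content of the theorem and correctly name the $Q$-manifold ingredients (Chapman's near-homeomorphism theorem, handle techniques) that go into it. One small simplification: once $N\cong M_\lambda$ is in hand, the fibre $c^{-1}(x)$ of the collapse is the cone on $\lambda^{-1}(x)$, hence contractible, so cell-likeness follows immediately from the definition; the detour through Lacher's open-set criterion is unnecessary.

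Your closing remark about circularity is overstated. Edwards' theorem that $X\times Q$ is a $Q$-manifold for every locally compact ANR $X$ (and, independently, Toru\'nczyk's characterization~\cite{torunczyk-on}) does not rest on West's result; historically the arguments developed in parallel, and once Edwards' theorem is available the projection $X\times Q\to X$, with fibres homeomorphic to $Q$, is the cleanest realization of the desired cell-like map. So the shortcut is not circular---merely anachronistic relative to West's 1977 paper---and in a survey context it would be entirely legitimate to cite it.
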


This theorem solves Borsuk's problem \cite{borsuk-9} of the finite-dimensionality 
of the homotopy type of a compact ANR, because any $Q$-manifold has the homotopy type of a finite 
CW-complex (Chapman \cite{chapman-3}) and cell-like maps between compact ANRs are 
homotopy equivalences (see Theorem~\ref{th-6}).

As we have already mentioned in relation to Theorem~\ref{th-9}, the image of a compact ANR 
under a cell-like map is not necessarily a compact ANR. The following question 
arises: \emph{Is it true that the image of any finite-dimensional compact ANR under a cell-like 
map is a compact ANR?} It follows from results of Koslowski \cite{kozlowski-1} that this 
question is equivalent to the following one: \emph{Is it true that the image 
of any finite-dimensional compact ANR under a cell-like map is finite-dimensional?}

All these questions are closely related to the 
\emph{dimension-raising problem for cell-like maps}:

(i) Is it true that the image of any finite-dimensional compact metrizable space under a cell-like 
map is a finite-dimensional compact space?

In this connection we mention that the cell-like maps of compact metrizable spaces do not increase 
cohomological dimension, and maps of compact metrizable spaces which are hereditary shape 
equivalences do not increase the dimension $\dim$ (Kozlowski~\cite{kozlowski-1}).

In shape theory, problem (i) can also be formulated in the following equivalent form (see 
\cite{kozlowski-1}): \emph{Let $X$ be a finite-dimensional compact metrizable space, and let 
$f\colon X\to Y$ be a cell-like map. Is it true that $f$ is a shape equivalence?}

Problem (i) of raising dimension by cell-like maps has turned out to be difficult and 
instructive. Many topologists have long been taking efforts to solve it. 
Edwards \cite{edwardsR.-0} and Walsh \cite{walsh} proved that a negative answer to 
question (i) is equivalent to a positive solution of the following well-known problem of 
Aleksandrov \cite{aleks-01}, \cite{aleks-02}:

(ii) Does there exist an infinite-dimensional compact metrizable space of finite cohomological 
dimension?

Aleksandrov posed this problem after proving the following fundamental theorem of 
homological dimension theory.

\begin{theorem}[Aleksandrov \cite{aleks-01}]
\label{last theorem}
The dimension of any finite-dimensional compact metri\-zable space coincides with its cohomological dimension. 
\end{theorem}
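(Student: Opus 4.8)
The plan is to establish the two inequalities $\dim_{\mathbb Z}X\leqslant\dim X$, which holds for every nonempty compact metrizable space, and $\dim X\leqslant\dim_{\mathbb Z}X$, which holds under the standing hypothesis $\dim X<\infty$; here $\dim_{\mathbb Z}X$ denotes the cohomological dimension, i.e.\ the least $n$ with $\check H^{k}(X,A;\mathbb Z)=0$ for all $k>n$ and all closed $A\subset X$. The first inequality needs no finiteness: if $\dim X\leqslant n$, then $X$ has a cofinal family of finite open covers of order $\leqslant n+1$, hence $X$ is the limit of an inverse sequence of compact polyhedra $X_{i}$ with $\dim X_{i}\leqslant n$; by continuity of \v Cech cohomology, $\check H^{k}(X,A;\mathbb Z)=\varinjlim H^{k}(X_{i},A_{i};\mathbb Z)=0$ for $k>n$, since each $X_{i}/A_{i}$ is a CW complex of dimension $\leqslant n$. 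The same computation gives $\check H^{k}(X,A;G)=0$ for $k>n$ and every coefficient group $G$, a fact I will use below.

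For the reverse inequality I would argue by contradiction. Put $m=\dim X<\infty$; the cases $m\leqslant 1$ are immediate (for $m=1$ one takes two distinct points of a nondegenerate component of $X$ and observes that the cokernel of $\check H^{0}(X;\mathbb Z)\to\check H^{0}(A;\mathbb Z)$ injects into $\check H^{1}(X,A;\mathbb Z)$), so assume $m\geqslant 2$ and suppose, contrary to the claim, that $\dim_{\mathbb Z}X\leqslant m-1$, i.e.\ $\check H^{m}(X,A;\mathbb Z)=0$ for every closed $A\subset X$. By Aleksandrov's essential mapping characterization of covering dimension, the failure of $\dim X\leqslant m-1$ yields a closed set $A\subset X$ and a map $f\colon A\to S^{m-1}$ with no continuous extension over $X$; I will contradict this by constructing such an extension. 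Let $\iota\in H^{m-1}(S^{m-1};\mathbb Z)$ be the fundamental class. From the exact sequence of the pair $(X,A)$ and the vanishing of $\check H^{m}(X,A;\mathbb Z)$, the class $f^{*}\iota\in\check H^{m-1}(A;\mathbb Z)$ is the restriction of some $\bar\xi\in\check H^{m-1}(X;\mathbb Z)$. Since for a compact metrizable space $\check H^{m-1}(-;\mathbb Z)=[-,K(\mathbb Z,m-1)]$, represent $\bar\xi$ by a map $g\colon X\to K(\mathbb Z,m-1)$; the canonical map $p\colon S^{m-1}\to K(\mathbb Z,m-1)$ pulls the fundamental class back to $\iota$, so $g|_{A}\simeq p\circ f$, and by Borsuk's homotopy extension theorem for the closed pair $(X,A)$ and the ANR $K(\mathbb Z,m-1)$ we may take $g|_{A}=p\circ f$.

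Now I would lift $g$ through the fibration $F\to S^{m-1}\xrightarrow{p}K(\mathbb Z,m-1)$ relative to the lift $f$ already defined on $A$. The homotopy exact sequence of $p$ shows that $F$ is $(m-1)$-connected, with $\pi_{k}F\cong\pi_{k}S^{m-1}$ for $k\geqslant m$. The obstructions to extending the lift over $X$ lie in the groups $\check H^{k+1}(X,A;\pi_{k}F)$: those of degree $\leqslant m$ vanish because the corresponding homotopy groups $\pi_{k}F$ with $k\leqslant m-1$ are trivial, and those of degree $>m$ vanish because $\check H^{k+1}(X,A;G)=0$ for $k+1>m=\dim X$ and every $G$, by the first part. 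Hence a lift $X\to S^{m-1}$ exists and extends $f$, contradicting the choice of $f$. This forces $\dim_{\mathbb Z}X\geqslant m$, and with the first inequality, $\dim_{\mathbb Z}X=\dim X$.

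The point that requires genuine care is the legitimacy of the obstruction-theoretic step over the non-polyhedral space $X$, where there is no skeletal filtration and one is working with \v Cech cohomology. I would settle this by the standard polyhedral passage: write $X=\varprojlim X_{i}$ with the $X_{i}$ compact polyhedra of dimension $\leqslant m$ (possible since $\dim X=m$); as $S^{m-1}$ and $K(\mathbb Z,m-1)$ are ANRs, both $g$ and $f$ factor up to homotopy through a common finite level $(X_{i},A_{i})$, on which classical cellular obstruction theory applies verbatim --- the groups $H^{k+1}(X_{i},A_{i};\pi_{k}F)$ vanish for $k+1>m$ by $\dim X_{i}\leqslant m$ and for $k\leqslant m-1$ by the connectivity of $F$ --- and the resulting lift is pulled back to $X$; a routine cofinality argument shows that the \v Cech classes one needs to vanish, being direct limits, already vanish at a stage where the construction is defined. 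Equivalently, the whole implication is ``$X\,\tau\,K(\mathbb Z,m-1)\Rightarrow X\,\tau\,S^{m-1}$ when $\dim X<\infty$'' in the language of extension dimension, and one invokes the by now standard machinery for that statement. I expect this passage, rather than any single homotopy computation, to be where the work lies.
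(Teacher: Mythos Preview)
The paper does not prove this theorem; it merely quotes it as Aleksandrov's classical 1932 result, as historical context for the discussion of cell-like maps and Aleksandrov's problem. There is therefore nothing in the paper to compare your argument against.

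Your proof is correct and is the standard modern route: the easy inequality via polyhedral expansions and continuity of \v Cech cohomology, then the hard inequality by the Hopf extension characterization of $\dim$ combined with obstruction theory for lifting through the fibration $S^{m-1}\to K(\mathbb Z,m-1)$. The analysis of the fiber's connectivity and the degrees in which the obstructions land is right, and you correctly identify the one genuinely delicate point---that obstruction theory is cellular, so one must pass to a polyhedral approximation of the pair $(X,A)$ of dimension $\leqslant m$ and use that \v Cech cohomology is a direct limit to push the vanishing of $\check H^{m}(X,A;\mathbb Z)$ down to a finite stage. Two minor remarks: first, you should say explicitly that the pair $(X,A)$ admits an inverse sequence of polyhedral pairs $(X_i,A_i)$ with $\dim X_i\leqslant m$ (take nerves of covers of order $\leqslant m+1$ and their traces on $A$); second, for $m=2$ one has $S^{1}=K(\mathbb Z,1)$, so the fibration step is vacuous and only the Hopf step is needed---worth noting, since your general argument assumes $m\geqslant 2$ and the $m=1$ sketch is a bit terse.
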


Problem (ii) essentially reduces to that of the validity of Theorem~\ref{last theorem}
without the assumption that the given  compact metrizable space is finite-dimensional. In 1988 
Dranishnikov \cite{dranish-1}, \cite{dranish-2} solved Aleksandrov's problem by constructing 
an example of an infinite-dimensional compact space $X$ having finite cohomological dimension 
$c$-$\dim_{\mathbb{Z}} X \leqslant 3$.


\end{document}